\newif\ifpersonal
\setlist[enumerate]{label=(\arabic*), ref=\arabic*, itemsep=0pt, topsep=3pt}
\newcommand\blfootnote[1]{%
	\begingroup
	\renewcommand\thefootnote{}\footnote{#1}%
	\addtocounter{footnote}{-1}%
	\endgroup
}
\numberwithin{equation}{section}
\theoremstyle{plain}
\newtheorem{theorem}[equation]{Theorem}
\newtheorem*{theorem*}{Theorem}
\newtheorem{lemma}[equation]{Lemma}
\newtheorem*{lemma*}{Lemma}
\newtheorem{claim}[equation]{Claim}
\newtheorem*{claim*}{Claim}
\newtheorem{proposition}[equation]{Proposition}
\newtheorem*{proposition*}{Proposition}
\newtheorem{corollary}[equation]{Corollary}
\theoremstyle{definition}
\newtheorem{definition}[equation]{Definition}
\newtheorem{definition-theorem}[equation]{Definition-Theorem}
\newtheorem{definition-lemma}[equation]{Definition-Lemma}
\newtheorem{notation}[equation]{Notation}
\newtheorem{example}[equation]{Example}
\newtheorem*{example*}{Example}
\newtheorem{remark}[equation]{Remark}
\newtheorem{variant}[equation]{Variant}
\newcommand{\personal}[1]{\textcolor[rgb]{0,0,1}{(Personal: #1)}}
\newcommand{\personal}[1]{\ignorespaces}
\newcommand{\discussion}[1]{\ignorespaces}
\providecommand{\abs}[1]{\lvert#1\rvert}
\newcommand{\bbC}{\mathbb C}
\newcommand{\bbD}{\mathbb D}
\newcommand{\bbG}{\mathbb G}
\newcommand{\bbN}{\mathbb N}
\newcommand{\bbP}{\mathbb P}
\newcommand{\bbQ}{\mathbb Q}
\newcommand{\bbR}{\mathbb R}
\newcommand{\bbZ}{\mathbb Z}
\newcommand{\fX}{\mathfrak X}
\newcommand{\fZ}{\mathfrak Z}
\newcommand{\cA}{\mathcal A}
\newcommand{\cE}{\mathcal E}
\newcommand{\cH}{\mathcal H}
\newcommand{\cK}{\mathcal K}
\newcommand{\cM}{\mathcal M}
\newcommand{\cO}{\mathcal O}
\newcommand{\cR}{\mathcal R}
\newcommand{\cT}{\mathcal T}
\newcommand{\hU}{\widehat U}
\newcommand{\tB}{\widetilde{B}}
\newcommand{\tG}{\widetilde G}
\newcommand{\tH}{\widetilde H}
\newcommand{\tQ}{\widetilde Q}
\newcommand{\tU}{\widetilde U}
\newcommand{\tX}{\widehat{X}}
\newcommand{\tbeta}{\widetilde\beta}
\newcommand{\ttau}{\widetilde\tau}
\let\save@mathaccent\mathaccent
\newcommand*\if@single[3]{%
	\setbox0\hbox{${\mathaccent"0362{#1}}^H$}%
	\setbox2\hbox{${\mathaccent"0362{\kern0pt#1}}^H$}%
	\ifdim\ht0=\ht2 #3\else #2\fi
}
\newcommand*\rel@kern[1]{\kern#1\dimexpr\macc@kerna}
\newcommand*\widebar[1]{\@ifnextchar^{{\wide@bar{#1}{0}}}{\wide@bar{#1}{1}}}
\newcommand*\wide@bar[2]{\if@single{#1}{\wide@bar@{#1}{#2}{1}}{\wide@bar@{#1}{#2}{2}}}
\newcommand*\wide@bar@[3]{%
	\begingroup
	\def\mathaccent##1##2{%
		\let\mathaccent\save@mathaccent
		\if#32 \let\macc@nucleus\first@char \fi
		\setbox\z@\hbox{$\macc@style{\macc@nucleus}_{}$}%
		\setbox\tw@\hbox{$\macc@style{\macc@nucleus}{}_{}$}%
		\dimen@\wd\tw@
		\advance\dimen@-\wd\z@
		\divide\dimen@ 3
		\@tempdima\wd\tw@
		\advance\@tempdima-\scriptspace
		\divide\@tempdima 10
		\advance\dimen@-\@tempdima
		\ifdim\dimen@>\z@ \dimen@0pt\fi
		\rel@kern{0.6}\kern-\dimen@
		\if#31
		\overline{\rel@kern{-0.6}\kern\dimen@\macc@nucleus\rel@kern{0.4}\kern\dimen@}%
		\advance\dimen@0.4\dimexpr\macc@kerna
		\let\final@kern#2%
		\ifdim\dimen@<\z@ \let\final@kern1\fi
		\if\final@kern1 \kern-\dimen@\fi
		\else
		\overline{\rel@kern{-0.6}\kern\dimen@#1}%
		\fi
	}%
	\macc@depth\@ne
	\let\math@bgroup\@empty \let\math@egroup\macc@set@skewchar
	\mathsurround\z@ \frozen@everymath{\mathgroup\macc@group\relax}%
	\macc@set@skewchar\relax
	\let\mathaccentV\macc@nested@a
	\if#31
	\macc@nested@a\relax111{#1}%
	\else
	\def\gobble@till@marker##1\endmarker{}%
	\futurelet\first@char\gobble@till@marker#1\endmarker
	\ifcat\noexpand\first@char A\else
	\def\first@char{}%
	\fi
	\macc@nested@a\relax111{\first@char}%
	\fi
	\endgroup
}
\newcommand{\op}[1]{\operatorname{#1}}
\newcommand{\dbb}[1]{[\![#1]\!]}
\newcommand{\dbp}[1]{(\!(#1)\!)}
\newcommand{\strongemph}[1]{\textbf{#1}}
\newcommand{\loc}{\mathrm{loc}}
\newcommand{\sF}{\mathsf F}
\newcommand{\Atoms}{\mathsf{Atoms}}
\newcommand{\HAtoms}{\mathsf{HAtoms}}
\newcommand{\an}{\mathrm{an}}
\newcommand{\prim}{\mathrm{prim}}
\newcommand{\QDM}{\mathrm{QDM}}
\newcommand{\la}{\mathrm{la}}
\newcommand{\even}{\mathrm{ev}}
\newcommand{\odd}{\mathrm{odd}}
\newcommand{\td}{\tilde d}
\newcommand{\tq}{\tilde q}
\newcommand{\fq}{\mathfrak q}
\newcommand{\fs}{\mathfrak s}
\newcommand{\fy}{\mathbf{y}}
\newcommand{\Se}{S^\mathrm{e}}
\newcommand{\Aff}{\mathsf{Aff}}
\newcommand{\MT}{\mathsf{MT}}
\newcommand{\hooklongrightarrow}{\lhook\joinrel\longrightarrow}
\newcommand{\Gm}{\mathbb G_\mathrm{m}}
\newcommand{\Ga}{\mathbb G_\mathrm{a}}
\newcommand{\gs}{\geqslant}
\newcommand{\tphi}{\widetilde\phi}
\newcommand{\id}{\mathrm{id}}
\newcommand{\ev}{\mathrm{ev}}
\newcommand{\pr}{\mathrm{pr}}
\newcommand{\longto}{\longrightarrow}
\newcommand{\bbk}{\Bbbk}
\newcommand{\obbk}{\overline{\bbk}}
\newcommand{\bkappa}{\boldsymbol{\kappa}}
\newcommand{\nc}{\textsf{nc}}
\newcommand{\Deg}{\mathsf{Deg}}
\newcommand{\Gr}{\mathsf{Gr}}
\newcommand{\GW}{Gromov-Witten\xspace}
\newcommand{\Fbun}{\operatorname{{\sf Fbun}}}
\newcommand{\bw}{\boldsymbol{\mathsf{w}}}
\newcommand{\hodge}{\mathsf{Hod}}
\newcommand{\ztwogr}{\mathsf{S}}
\newcommand{\HHgr}{\mathsf{H}}
\newcommand{\motM}{\mathsf{Mot}}
\newcommand{\bdelta}{\boldsymbol{\delta}}
\newcommand{\Mbar}{\overline{\mathcal{M}}}
\newcommand{\homeq}{{\boldsymbol{\sim}_{\mathsf{hom}}}}
\newcommand{\NS}{\mathsf{NS}}
\newcommand{\maxeigbup}{\mathbb{U}}
\newcommand{\univ}{\mathfrak{U}}
\newcommand{\univop}{\mathfrak{M}}
\newcommand{\folunivop}{\mathfrak{L}}
\newcommand{\ball}{\mathbb{B}}
\newcommand{\CA}[1]{\mathcal{C}^{#1}}
\newcommand{\GA}[1]{\mathsf{G}^{#1}}
\newcommand{\bfh}{\boldsymbol{\mathfrak{h}}}
\newcommand{\bfH}{\mathbf{H}}
\newcommand{\bbNE}{\underline{\mathsf{NE}}}
\newcommand{\Nov}{\mathsf{Nov}}
\DeclareMathOperator{\Cone}{Cone}
\DeclareMathOperator{\Coeff}{Coeff}
\DeclareMathOperator{\im}{im}
\DeclareMathOperator{\Sp}{Sp}
\DeclareMathOperator{\Spec}{Spec}
\DeclareMathOperator{\Rep}{Rep}
\DeclareMathOperator{\Gal}{Gal}
\DeclareMathOperator{\codim}{codim}
\DeclareMathOperator{\End}{End}
\DeclareMathOperator{\Aut}{Aut}
\DeclareMathOperator{\NE}{NE}
\DeclareMathOperator{\GL}{GL}
\DeclareMathOperator{\Lie}{Lie}
\DeclareMathOperator{\Eu}{\mathsf{Eu}}
\DeclareMathOperator{\Spf}{Spf}
\newcommand{\directint}{\mathop{\mathpalette\direct@int\relax}\!\int}
\newcommand{\direct@int}[2]{%
  \sbox\z@{%
    \m@th
    \ooalign{%
      \hidewidth$\demote@style{#1}{\boxplus}$\hidewidth\cr
      $#1\phantom{\int}$\cr
    }%
  }%
  \wd\z@=\z@\box\z@
}
\newcommand{\demote@style}[2]{%
  \ifx#1\displaystyle\scriptstyle#2\else
    \raise.5\fontdimen22\textfont2\hbox{$\scriptscriptstyle#2$}%
  \fi
}
\DeclareMathAlphabet{\mathscrbf}{OMS}{mdugm}{b}{n}
\newcommand{\mycal}[1]{\mathscr{#1}}
\newcommand{\mycalb}[1]{\mathscrbf{#1}}
\DeclareFontFamily{U}{MnSymbolC}{}
\DeclareSymbolFont{MnSyC}{U}{MnSymbolC}{m}{n}
\DeclareFontShape{U}{MnSymbolC}{m}{n}{
    <-6>  MnSymbolC5
   <6-7>  MnSymbolC6
   <7-8>  MnSymbolC7
   <8-9>  MnSymbolC8
   <9-10> MnSymbolC9
  <10-12> MnSymbolC10
  <12->   MnSymbolC12}{}
\DeclareMathSymbol{\iprod}{\mathbin}{MnSyC}{'270}
\DeclareMathAlphabet{\mathbbold}{U}{bbold}{m}{n}
\newcommand{\bplus}{
  \mathop{
    \vphantom{\bigoplus} 
    \mathchoice
      {\vcenter{\hbox{\resizebox{\widthof{$\displaystyle\bigoplus$}}{!}{$\boxplus$}}}}
      {\vcenter{\hbox{\resizebox{\widthof{$\bigoplus$}}{!}{$\boxplus$}}}}
      {\vcenter{\hbox{\resizebox{\widthof{$\scriptstyle\oplus$}}{!}{$\boxplus$}}}}
      {\vcenter{\hbox{\resizebox{\widthof{$\scriptscriptstyle\oplus$}}{!}{$\boxplus$}}}}
  }\displaylimits 
}
\DeclarePairedDelimiter\floor{\lfloor}{\rfloor}
\newcommand{\kay}{\cK}
\newcommand{\kaybar}{\overline{\kay}}
\newcommand{\kbar}{\overline{k}}
\newcommand{\qup}{\star}
\newcommand{\forB}[1]{B_{#1}^{\mathbf{f}}}
\newcommand{\forcH}{\cH^{\mathbf{f}}}
\newcommand{\fornabla}{\nabla^{\mathbf{f}}}
\newcommand{\affa}[1]{{\mathfrak{a}}_{#1}}
\newcommand{\fp}{\circ}
\newcommand{\CH}{\mathsf{CH}} 
\newcommand{\CHhom}{\mathsf{CH}^{\mathsf{hom}}}
\newcommand{\omB}{\mycalb{B}}
\newcommand{\omH}{\mycalb{H}}
\newcommand{\bnabla}{\boldsymbol{\nabla}}
\newcommand{\cf}{\mathfrak{cf}}
\newcommand{\CF}{\mathsf{CF}}
\newcommand{\germG}{\mathcal{G}}
\newcommand{\balpha}{\boldsymbol{\alpha}}
\newcommand{\hpl}{\boldsymbol{P}}
\newcommand{\smallbase}{\mathfrak{b}}
\newcommand{\smallfiber}{\mathfrak{h}}
\newcommand{\imageH}{\boldsymbol{R}}
\newcommand{\dd}{\boldsymbol{d}}
\newcommand{\rr}{\color{red}{1}}
\newcommand{\str}{{\color{red}{*}}}
\newcommand{\net}{\boldsymbol{A}}
\newcommand{\actleft}{\rotatebox[origin=c]{-90}{$\boldsymbol{\circlearrowright}$}}
\newcommand{\exch}{\boldsymbol{\mathsf{n}}}
\newcommand{\bpsi}{\boldsymbol{\psi}}
\newcommand{\bPsi}{\boldsymbol{\Psi}}
\newcommand{\triv}{\mathbbold{1}}
\newcommand{\bluebf}[1]{\textcolor{black}{\mathbf{#1}}}
\title{\large\bfseries \uppercase{Birational Invariants from Hodge Structures  \\ 
and Quantum Multiplication}}
\author{Ludmil Katzarkov, Maxim Kontsevich, Tony Pantev, Tony Yue Yu}
\date{}
\begin{document}

\maketitle

\blfootnote{\emph{Date}: August 7, 2025 (Revised March 5, 2026).}


\begin{quotation}
\small
\textbf{Abstract. }
We introduce new invariants of smooth complex projective varieties, called Hodge atoms. Their construction combines rational Gromov-Witten invariants with classical Hodge theory and relies on the notion of an F-bundle, which is a non-archimedean version of a non-commutative Hodge structure. The Hodge atoms arise from the spectral decomposition of the F-bundle under the Euler vector field action, and behave additively under blowups, in accordance with Iritani's blowup theorem. We compute several examples and demonstrate applications to birational geometry. In particular, we prove that a very general cubic fourfold is not rational. 
We also obtain a new proof of the equality of Hodge numbers of birational Calabi-Yau manifolds in any dimension.
Furthermore, we show that the framework naturally extends to representations of other motivic Galois groups. This enables the theory of atoms to produce new obstructions to rationality over non-algebraically closed fields of characteristic zero as well.
\end{quotation}

\tableofcontents

\section{Introduction}

We propose new birational invariants of algebraic
varieties that arise from a combination of classical Hodge theory and
Gromov-Witten theory. Even though we were guided and inspired by ideas
from Homological Mirror Symmetry (HMS), our arguments do not use the
mirror duality directly. Instead of HMS we utilize motivic information
encoded in the algebraic and symplectic geometry on the same
projective manifold.  This approach leads to new non-rationality
results, and resolves a number of classical problems studied in the
foundational works of Beauville, Donagi, Debarre, Hassett, Huybrechts, Kulikov,
Kuznetsov, Pirutka, Thomas, Tschinkel, Voisin, and many others
\cite{Beauville-quadrics,Beauville-Donagi,DK-GM,DK-GMIJ,debarre-GM,
Hassett-cubic4fold,Hassett-survey,HPT-qb,HPT-ds,HPT-threeq,
Huybrechts-survey,Hyubrechts-K3cubic,
kulikov-cubic4fold,Kuznetsov-cubic4fold,Kuznetsov-derived,KP-Mukai,
KP-Fano3nc,Pirutka-0cycles,ABP-qb,AT,
Voisin-unirational,Voisin-decomposition,Voisin-stable,Voisin-integral}.
In particular, we prove the non-rationality of very general
cubic fourfolds over the complex numbers.
We also study the non-rationality of varieties over non-algebraically closed fields, as well as applications and examples in higher dimensions.

In this work, we only scratch the surface of the realm of potential applications of quantum cohomology to birational geometry.
Given a smooth complex projective variety $X$, our main invariants, called \emph{Hodge atoms}, are pieces in an intrinsic decomposition of the so-called $\mathsf{A}$-model F-bundle associated to $X$, which is a non-archimedean version of variation of $\mathsf{A}$-model non-commutative Hodge structures  (or \nc-Hodge structures for short, see \cite{Katzarkov_Hodge_theoretic_aspects}).
The $\mathsf{A}$-model F-bundle consists of enumerative data (coming from Gromov-Witten theory) on top of the $\bbZ/2$-weighted version\footnote{Having $\bbZ/2$-weights here  means that we have folded the $\bbZ$-grading of cohomology to a $\bbZ/2$-grading, or equivalently,  that we consider Hodge structures modulo Tate twists.
The loss of the $\bbZ$-grading is natural from the point of view of non-commutative geometry, see \cite{Katzarkov_Hodge_theoretic_aspects}.} of the classical polarized $\bbQ$-Hodge structure $H$ of $X$.
We show that at a general point in the locus of Hodge classes inside the base of the F-bundle (i.e.\ the Frobenius manifold), the generalized eigenspaces for the quantum multiplication by the Euler vector field are compatible with  the Hodge structure on $H$. Concretely, we show that each generalized eigenspace  decomposes canonically into $\widebar{\bbQ}$-linear representations of the Mumford-Tate group $\hodge$ of $\bbZ/2$-graded polarizable pure Hodge structures, which we call \emph{Hodge atoms}.
We prove that the \emph{chemical formula} or  \emph{atomic composition} of $X$, i.e.\ the collection of Hodge atoms appearing in the decomposition of $H$, behaves additively under blowups and can be used as an obstruction to birational equivalence.

The setup readily generalizes to other realizations of motives leading to variants of the notion of Hodge atoms.
The formalism of motivic atoms we develop here is just the beginning of a more  intricate story.
Atoms are fairly coarse building blocks of \nc-Hodge structures.
For instance there are no natural morphisms of atoms, so they do not form a category and thus are not well-suited for studying specific birational maps.
Atoms work well as first level obstructions to birational equivalence but are far from being complete birational invariants.
We will explore other more enhanced theories in future works.

\medskip

Here is a more detailed overview of our work. Suppose for concreteness $X$ is a complex smooth projective variety. 
Let $H\coloneqq H^{\bullet}_{B}(X,\bbQ )$  be the rational Betti cohomology of $X$ taken with its \linebreak $\bbZ/2$-grading, let 
$\{T_i\}$ be a homogeneous basis of $H$, and let $t=(t_i)$ be the dual coordinates on $H$.
Write $\NE(X,\bbZ) \subset \mathsf{N}_{1}(X,\bbZ) = \op{im}\big(\CH_{1}(X) \to H_{2}(X,\bbQ)\big)$ for the monoid of numerically effective curve classes  on $X$, and let
$\bbQ \dbb{q}$ denote the completion of the monoid algebra \linebreak $\bbQ [\NE (X,\bbZ )] = \bbQ [q^{\beta}\, \vert\, \beta\in \NE (X,\bbZ )]$ with respect to the maximal ideal $(q^{\beta} , \,\beta\neq 0)$.
The quantum product for $X$
\[
\qup \colon \, H\otimes H \longrightarrow H\dbb{q,t}.
\]
is a deformation of the cup product on $H$ over the big Novikov ring $\Nov_{X} \coloneqq \bbQ\dbb{q,t}$ of $X$. This deformation is  
given by Gromov-Witten invariants (see \cite{Kontsevich_Gromov-Witten_classes}, which we will review in \cref{sssec:Amodelformal}).
It gives rise to a structure of formal Frobenius manifold on a quotient of $\Spf \Nov_{X}$ by a foliation given by the divisor axiom (see \cite{Dubrovin_Geometry_of_2D,Manin_Frobenius_manifolds}).

\medskip

To extract  birational invariants from symplectic data, we will work with a variant of the notion of Frobenius manifold, which we call a F-bundle (see \cref{sec:definition_of_F-bundles}).
It is a non-archimedean version of several existing notions in the literature: TE-structure \cite{hertling_tt*_geometry,hertling-frobenius}, quantum $D$-module \cite{Iritani_Quantum_D-modules}, and a variation of \nc-Hodge structures \cite{Katzarkov_Hodge_theoretic_aspects}.
We have to pass to non-archimedean geometry for two reasons. First, the complex analytic convergence of the series defining the
quantum product is not known in general. 
Second, and more crucially, the spectral decomposition theorem does not hold over the complex numbers due to the Stokes phenomenon \cite{Katzarkov_Hodge_theoretic_aspects,HYZZ_Decomposition}.

Abstractly a $\bbk$-analytic  F-bundle is specified by data $(\cH,\nabla)/B$, where $B$ is a $\bbk$-analytic supermanifold, $\cH$ is a $\bbk$-analytic vector bundle on $B\times \bbD$, where $\bbD$ denotes the germ at $0$ in an analytic disk with coordinate $u$, and $\nabla$ is a meromorphic connection on $\cH$ with  poles at most at $u =0$, and such that 
$\nabla_{u^{2}\partial_{u}}$ and $\nabla_{u\xi}$ have no poles for all $\bbk$-analytic vector 
fields $\xi \in T_{B}$. Sections $\xi \in T_{B}$ act on $\cH|_{u=0}$ by the residues of $\nabla_{\xi}$ at $u=0$, and we say that the F-bundle is \strongemph{overmaximal} if locally on B we can find a cyclic vector for the action, and further say it is  \strongemph{maximal} if the evaluation on the cyclic vector gives an isomorphism between $T_{B}$ and $\cH|_{u=0}$.

The $\mathsf{A}$-model F-bundle $(\omH,\bnabla)/\omB_{X}$ associated to $X$ consists of a base $\omB_{X}$ which is roughly a non-archimedean $\bbk$-analytic space obtained from $\Spf\bbQ\dbb{q,t}$, a trivial vector bundle $\omH$ with fiber $H$ over $\omB\times \bbD$, and a meromorphic flat connection $\nabla$ on $\cH$ called the quantum connection:
\[
\left| \ 
\begin{aligned}
    \nabla_{u\partial_u} &= u\partial_u - u^{-1}\Eu\qup(-) + \Gr,\\
    \nabla_{\partial_{t_i}} &= \partial_{t_i} + u^{-1} T_i \qup(-),\\
    \nabla_{\xi q\partial_q} &= \xi q\partial_q + u^{-1}\xi\qup(-) \quad\text{ for }\xi\in H^2(X,\bbk),
\end{aligned}
\right.
\]
where $\Eu \in \Gamma(\omB,\omH)$ is the Euler element, $\Gr$ is the grading operator, and $\xi q\partial_q$ is the derivation of $\bbk\dbb{q}$ given by $\xi q\partial_q(q^\beta)=(\beta\cdot\xi)q^\beta$ (see \cref{subsec:Amodelexample} for the details).

The $\mathsf{A}$-model F-bundle $(\omH,\bnabla)/\omB_{X}$ is overmaximal and can be made maximal by removing redundant degree $2$ variables on the base $\omB_{X}$ (implied by the divisor axiom of Gromov-Witten invariants). Write $(\cH,\nabla)/B_{X}$ for the resulting maximal F-bundle. For a rigid point $b \in B_{X}$, the generalized eigenspace decomposition of the $\Eu_{b}\qup(-)$ action on $\cH_{b,0}$ extends to a subbundle decomposition of $\cH|_{u=0}$ locally near $b$. The spectral decomposition theorem from \cite{HYZZ_Decomposition} implies  the base $B_{X}$ also locally splits into a product $\prod B_i$, and the F-bundle $(\cH,\nabla)/B$ splits into an external direct sum of maximal F-bundles $(\cH_i,\nabla_i)/B_i$, extending the above generalized eigenspace decomposition.

\medskip
Let us look at an example of the spectral decomposition in the case of blowups.
Consider the blowup $\pi\colon\tX\to X$ along a smooth closed subvariety $Z \subset X$ of codimension $r \geq 2$.
We have a classical decomposition of cohomology
\[ H^{\bullet}(\tX,\bbQ) \simeq H^{\bullet}(X,\bbQ) \oplus \bigoplus_{i=1}^{r-1} H^{\bullet}(Z,\bbQ)[-2i].\]
This decomposition is in fact the eigenspace decomposition of the Euler vector field action at the limiting point $b_\infty$ of $B$ corresponding to the pullback $\pi^*$ of an ample class on $X$.
The eigenvalues are $0$ and $(r-1)e^{\frac{\pi \sqrt{-1}(2j-1)}{r-1}}$ for $j=1,\dots,r-1$.
The $0$-eigenspace is isomorphic to $H^{\bullet}(X,\bbQ)$, and the others all isomorphic to $H^{\bullet}(Z,\bbQ)$.
As we move away from the limiting point, each eigenvalue above may further split, as shown in Figure~\ref{fig:eigenvalues}.
In Section~\ref{sec:blowup}, we identify the factors resulting from the spectral decomposition with the $\mathsf{A}$-model F-bundle of $X$ and $(r-1)$ copies of the $\mathsf{A}$-model F-bundles of $Z$, using Iritani's blowup formula \cite{Iritani_blowup}.

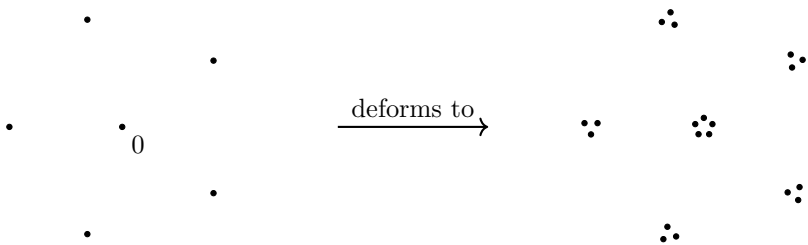
\begin{figure}[!ht] \label{fig:eigenvalues}
\centering
\begin{tikzpicture}
  \begin{scope}[xshift=-110pt]
    \filldraw[black] (0,0) circle (1pt) node[below right] {0};
    \def\radius{1.5}
    \foreach \j in {1,...,5} {
      \pgfmathsetmacro\angle{180*(2*\j - 1)/5} 
      \filldraw[black] ({\radius*cos(\angle)},{\radius*sin(\angle)}) circle (1pt);
    }
  \end{scope}

  \draw[->, thick] (-1,0) -- (1,0) node[midway, above] {deforms to};
  
  \begin{scope}[xshift=110pt]
    \def\clusterradius{0.1}
    \foreach \k in {0,1,2,3,4} {
      \pgfmathsetmacro\clusterangle{90 + 72*\k}
      \filldraw[black] ({\clusterradius*1.2*cos(\clusterangle)},{\clusterradius*1.2*sin(\clusterangle)}) circle (1pt);
    }
    
    \def\radius{1.5}
    \foreach \j in {1,...,5} {
      \pgfmathsetmacro\angle{180*(2*\j - 1)/5} 
      \pgfmathsetmacro\centerx{\radius*cos(\angle)}
      \pgfmathsetmacro\centery{\radius*sin(\angle)}
      
      \foreach \k in {0,1,2} {
        \pgfmathsetmacro\clusterangle{\angle + 90 + 120*\k} 
        \pgfmathsetmacro\pointx{\centerx + \clusterradius*cos(\clusterangle)}
        \pgfmathsetmacro\pointy{\centery + \clusterradius*sin(\clusterangle)}
        \filldraw[black] (\pointx,\pointy) circle (1pt);
      }
    }
  \end{scope}
\end{tikzpicture}
\caption{Eigenvalues of the $\Eu$-action at $b_\infty$}
\end{figure}

\medskip

Next we explain the construction of Hodge atoms.
Let again $X$ be a smooth complex projective variety. Consider the subspace of its even
cohomology spanned by the Hodge classes
\[
H(X)^{\hodge} \coloneqq \bigoplus_i H^{i,i}(X) \cap H^{2i}(X, \mathbb{Q}) .
\]
It gives a closed $\bbk$-analytic subspace $B^\hodge_{X} \subset B_{X}$ of the base of the maximal $\mathsf{A}$-model F-bundle of $X$.
Let $\tB_{X} \to B_{X}$ be the ramified spectral  covering given by the eigenvalues of the $\Eu\qup (-)$-action, let be $U_X \subset B^\hodge$ the locus where the number of eigenvalues of the $\Eu\qup (-)$-action is maximal, and $\tU_X \coloneqq \tB_{X,\op{red}} \times_B U_X$ be the unramified part of the reduced spectral cover.
The \strongemph{set of local Hodge atoms} of $X$ is the set 
$\pi_{0}(\tU_{X})$ of connected components of this unramified spectral cover.
The spectral decomposition theorem (see \cite{HYZZ_Decomposition} and Theorem~\ref{thm:K-decomposition}), blowup theorem (see \cite{Iritani_blowup} and Theorem~\ref{thm:blowup}), and the projective bundle  decomposition theorem (see \cite{Iritani_Koto,HYZZ_Decomposition} and Theorem~\ref{thm:projective_bundle}) imply that 
there are natural correspondences between the local Hodge atoms of $X$ and the local Hodge atoms of a blowup of $X$ or the local Hodge atoms of a projective bundles over $X$. Using these correspondences we define the \strongemph{set of Hodge atoms of smooth complex projective varieties} as the quotient set
\[
\HAtoms \, \coloneqq \, \Bigg( \bigsqcup_{[X]} \, \pi_{0}(\tU_{X})/\op{Aut}(X)\Bigg)\Bigg/\sim, 
\] 
where the union is taken over isomorphism classes of complex smooth projective varieties, and the equivalence relation $\sim$ is generated by the elementary equivalences corresponding to 
disjoint unions, blowups with smooth centers, and projective bundles.

The \strongemph{atomic composition} or \strongemph{chemical formula} of a particular variety $X$ is then the multiset encoded in the finite support function $\CF(X) = \sum_{\alpha\in\pi_0(\tU_{X})/\Aut(X)} \, \mathsf{mult}_{\alpha} \delta_{\balpha} \colon \HAtoms \to \bbZ_{\geq 0}$, where
$\mathsf{mult}_{\alpha}$ is the degree of the cover $\tU_{X,\alpha} \to U_{X}$ corresponding to the connected component $\alpha$, and $\delta_{\balpha}\colon \HAtoms \to \bbZ_{\geq 0}$ is the delta function supported at the atom $\balpha \in \HAtoms$ which is represented by the local atom $\alpha$.

\begin{example*}
If the canonical class $K_X$ of $X$ is nef, then $\CF(X)$ is a singleton, because the $\Eu$-action has a single eigenvalue in this case (see Lemma~\ref{lem:nefK}).
\end{example*}

The above discussion on blowups implies that $\CF(\tX) = \CF(X) + (r-1) \CF(Z)$.
Since birational equivalences between smooth projective varieties are generated by blowups with smooth centers of codimension $\geq 2$ we arrive at the following.

\paragraph*{\bfseries Non-rationality criterion:}
    \emph{Let $X$ be a $d$-dimensional smooth complex project variety (with $d\ge 2$).
    If there is an Hodge atom in the Hodge atomic composition of $X$ that does not appear in the Hodge atomic composition of any variety of dimension less than or equal to $d-2$, then $X$ is not rational.}

\medskip

Next we apply this criterion to show the non-rationality of certain $4$-dimensional varieties.
To that end, we have to understand the Hodge atoms coming from varieties of dimension less than or equal to $2$, i.e.\ from points, curves and surfaces.
Moreover, for each birational class of surfaces it is sufficient to consider the atomic composition of one representative in the class.

For every Hodge atom $\balpha$ represented by a local atom $\alpha \in \pi_{0}(\tU_{X})/\op{Aut}(X)$, we get a well defined F-bundle  $(\cH^{\alpha},\nabla)/U_{X}$,  which is a generalized eigenspace for the action of $\Eu\qup (-)$ on the maximal $\mathsf{A}$-model F-bundle $(\cH,\nabla)/B_{X}$ of $X$. The atoms formalism assures that this generalized eigenspace F-bundle comes equipped with a natural action of the universal Mumford-Tate group $\hodge$ controlling the $\bbZ/2$-weighted, pure, rational Hodge structures. The isomorphism class of the finite dimensional $\hodge$-representation $\cH^{\alpha}|_{b,u=0}$ is well-defined independent of the choice the representative local atom $\alpha$ or of the subsequent choice of a point $b$.
In particular, we consider the following invariants of $\balpha$:
\begin{enumerate}[wide]
\item \strongemph{dimension $\rho_{\alpha}$ of the space of Hodge classes}: the dimension of the $\hodge$-fixed subspace of the fiber $\cH^{\balpha}|_{b,u=0}$.
\item \strongemph{Hodge polynomial} $P_{\balpha} \in \mathbb{Z}[t,t^{-1}]$: the coefficient at $t^k$ is equal to the dimension of the subspace of the fiber $\cH^{\balpha}|_{b,u=0}$ with $(p-q)$-degree equal to $k$.
\end{enumerate}

\medskip

Let us calculate these invariants for atoms coming from points, curves and surfaces.
\begin{enumerate}[wide]
    \item For any Hodge atom $\balpha$ coming from points or curves, we have $\Coeff_{t^2}(P_{\balpha}) = 0$.
    \item For any Hodge atom $\balpha$ coming from a surface $X$ with $h^{2,0}=0$, we have $\Coeff_{t^2}(P_{\balpha}) = 0$.
    \item By the classification of surfaces, we know that a minimal surface with a geometric genus $\neq 0$ is always birational to a smooth $S$ for which  $K_S \ge 0$.
    Hence by the above example, a surface with geometric genus $\neq 0$  will contribute  a single Hodge atom $\balpha$ with $\rho_{\balpha} \ge 3$ (containing the classes in $H^0$, $H^4$ and the first Chern class of an ample line bundle in $H^{2}$).
\end{enumerate}

\medskip

Let $X$ be a very general cubic 4-fold.
Its Hodge diamond splits into the primitive part and an irreducible transcendental part (see \cite{Voisin-cubicTorelli}).

\newcommand{\matrixtemplate}[7]{%
  \vcenter{\xymatrix@=1.8pt@M=1.8pt{
    & & & & & *=0{} \ar@{-}[dddddlllll] \ar@{-}[dddddrrrrr] & & & & & \\
    & & & & & #1 & & & & & \\
    & & & & 0 & & 0 & & & & \\
    & & & 0 & & #2 & & 0 & & & \\
    & & 0 & & 0 & & 0 & & 0 & & \\
    *=0{} & 0 & & #3 & & #4 & & #5 & & 0 & *=0{} \\
    & & 0 & & 0 & & 0 & & 0 & & \\
    & & & 0 & & #6 & & 0 & & & \\
    & & & & 0 & & 0 & & & & \\
    & & & & & #7 & & & & & \\
    & & & & & *=0{} \ar@{-}[uuuuulllll] \ar@{-}[uuuuurrrrr] & & & & &
  }}%
}

\[
  \matrixtemplate{\bluebf{1}}{\bluebf{1}}{\bluebf{1}}{\bluebf{21}}{\bluebf{1}}{\bluebf{1}}{\bluebf{1}}
  \;=\;
  \matrixtemplate{\bluebf{1}}{\bluebf{1}}{0}{\bluebf{1}}{0}{\bluebf{1}}{\bluebf{1}}
  \;\bigoplus\;
  \matrixtemplate{0}{0}{\bluebf{1}}{\bluebf{20}}{\bluebf{1}}{0}{0}
\]

Let $(\cH,\nabla)/B_{X}$ be the maximal $\mathsf{A}$-model F-bundle associated to $X$.
By Givental's calculation \cite{Givental_Homological_geometry_and_mirror_symmetry} of the quantum product at a particular point $b\in B$, the $\Eu$-action has eigenvalues $\{0, 9, 9 e^{2\pi i/3}, 9 e^{4\pi i/3}\}$.
The generalized eigenspace $V$ corresponding to the eigenvalue 0 has dimension 24, containing the whole transcendental part.
We have $\dim V^{\hodge} = 2$, $\dim V^{p-q=-2}=\dim V^{p-q=2}=1$ and $\dim V^{p-q=0}=22$.
The other three eigenspaces are 1-dimensional.
As we move from $b$ to generic points in $U_X$, $V$ may further split, contributing more atoms,  but there will always be one atom $\balpha$ with $\rho_{\balpha} \le 2$ and $\Coeff_{t^2}(P_{\balpha}) = 1$, which \emph{can not} come from varieties of dimensions less than or equal to 2.
We conclude that $X$ can not be rational. 

\begin{theorem*}[see Theorem~\ref{thm:cubic4}]
A very general 4-dimensional cubic hypersurface in $\bbC\bbP^5$ is not rational.
\end{theorem*}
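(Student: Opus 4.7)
The strategy is to apply the non-rationality criterion: it suffices to exhibit a Hodge atom in $\CF(X)$ that cannot appear in $\CF(Y)$ for any smooth projective $Y$ with $\dim Y \leq 2$. Reading off the three observations immediately preceding the theorem statement, every atom $\balpha$ occurring in dimensions $\leq 2$ satisfies either $\Coeff_{t^2}(P_\balpha) = 0$, or else comes from a surface of positive geometric genus and has $\rho_\balpha \geq 3$. In particular, no atom with $\Coeff_{t^2}(P_\balpha) = 1$ and $\rho_\balpha \leq 2$ can appear in dimensions $\leq 2$, so the task reduces to producing such an atom for $X$.

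Next I would select a rigid point $b \in B_X^\hodge$ at which Givental's mirror computation gives the quantum product in closed form. At $b$ the spectrum of multiplication by $\Eu$ is $\{0, 9, 9\omega, 9\omega^2\}$ with $\omega = e^{2\pi\sqrt{-1}/3}$: the three nonzero eigenspaces are lines, while the $0$-eigenspace $V$ has dimension $24$ and contains the entire transcendental sub-Hodge structure of $H^4(X)$ together with a rank-two subspace of Hodge classes, giving $\dim V^\hodge = 2$, $\dim V^{p-q = \pm 2} = 1$, and $\dim V^{p-q = 0} = 22$.

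I would then deform $b$ to a generic point $b' \in U_X \cap B_X^\hodge$ and apply the spectral decomposition theorem. Since the decomposition is defined over $\obbQ$ and is functorial with respect to the Betti realization, each summand is a $\hodge$-subrepresentation of the ambient generalized eigenspace. For a very general cubic fourfold the transcendental Hodge structure of $H^4(X)$ is simple as a $\hodge$-module by Voisin's Torelli-type theorem, hence it must lie entirely inside a single local atom $\balpha_0$. This forces $\Coeff_{t^2}(P_{\balpha_0}) = 1$ and, using $\dim V^\hodge = 2$, also $\rho_{\balpha_0} \leq 2$. Thus $\balpha_0$ fails the profile of the first step, and the non-rationality criterion concludes.

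The main obstacle I anticipate is the final step: the spectral decomposition theorem is stated purely in terms of the underlying F-bundle, so one must verify that at points of $B_X^\hodge$ the decomposition is actually compatible with the additional $\hodge$-structure on $\cH|_{u=0}$ inherited from the Betti realization, upgrading it to a splitting of polarizable pure Hodge structures. Once this compatibility is established, the simplicity of the transcendental Hodge structure of a very general cubic fourfold is exactly what is needed to keep the relevant atom together and force the dimension bounds above.
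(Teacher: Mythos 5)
Your proposal follows essentially the same route as the paper's proof of Theorem~\ref{thm:cubic4}: Givental's computation of the quantum spectrum $\{0,9,9\zeta,9\zeta^2\}$ at a point of $B_X^{\hodge}$, the spectral decomposition theorem, the invariants $\rho_{\balpha}$ and $\Coeff_{t^2}(P_{\balpha})$, and the elimination of atoms of varieties of dimension $\leq 2$ via the surface classification and Lemma~\ref{lem:nefK}. The only divergences are inessential: the compatibility of the decomposition with the $\hodge$-action that you flag as the main obstacle is already built into the equivariant framework of Section~\ref{sec:Gatoms} (the atoms live over $B_X^{\hodge}$ and the eigenbundles of the equivariant operator $\Eu\qup(-)$ are automatically $\hodge$-stable), and the simplicity of the transcendental Hodge structure is not actually needed for the theorem, since the $p-q=2$ part is one-dimensional and hence lands in a single atom regardless --- the paper invokes Voisin's Torelli theorem only for the existence of Noether-Lefschetz general cubics and, in Corollary~\ref{cor:atomscubic4}, to determine the full atomic composition.
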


In Section~\ref{ssec:birCY}, we give another application of Hodge atoms.
We obtain a new proof of Batyrev's theorem \cite[Corollary~6.29]{Batyrev-StringyHodge} asserting that birationally equivalent Calabi-Yau manifolds have equal Hodge numbers.
The traditional proofs of this result \cite{Batyrev-StringyHodge,Wang-Kequivalence,Ito} use either motivic integration, or $p$-adic Hodge theory and C\v{e}botarev density. The proof via atoms is based on completely different technology.

\begin{theorem*}[see Theorem~\ref{thm:two.norms}]
Birationally equivalent Calabi-Yau manifolds have equal Hodge numbers.   
\end{theorem*}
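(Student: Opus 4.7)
The strategy is to combine the blowup additivity of the atomic composition with the weak factorization theorem, exploiting the fact that the Calabi--Yau condition forces the chemical formula to consist of a single atom of maximal cohomological support.

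Let $X_1$ and $X_2$ be birationally equivalent smooth projective Calabi--Yau manifolds of dimension $n$. Since $K_{X_i} = 0$ is nef, the example immediately preceding this theorem shows that $\CF(X_i) = \delta_{\balpha_{X_i}}$ consists of a single atom, and that the F-bundle summand corresponding to $\balpha_{X_i}$ has fiber equal to the full rational cohomology $H^\bullet(X_i,\bbQ)$ together with its classical Hodge structure and cohomological grading. By the weak factorization theorem, choose a smooth projective variety $Y$ and birational morphisms $f_i \colon Y \to X_i$ factoring as iterated blowups with smooth centers of codimension at least $2$. Applying the blowup additivity $\CF(\tX) = \CF(X) + (r-1)\CF(Z)$ iteratively along each $f_i$ produces two expressions
\[
\CF(Y) \;=\; \delta_{\balpha_{X_1}} + \sum_k m_{1,k}\,\CF(Z_{1,k}) \;=\; \delta_{\balpha_{X_2}} + \sum_l m_{2,l}\,\CF(Z_{2,l}),
\]
where each blowup center $Z_{i,\bullet}$ is a smooth projective variety of dimension at most $n-2$.

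I would then single out $\balpha_{X_i}$ inside $\CF(Y)$ by the property that it is the unique atom whose associated F-bundle summand carries a nonzero class of top cohomological degree $2n$. Indeed, via Iritani's blowup formula the summand for $X_i$ embeds into the F-bundle of $Y$ while preserving the fundamental class $[\pt_{X_i}] \in H^{2n}(X_i,\bbQ)$. On the other hand every atom in any $\CF(Z_{i,k})$ is carried by the F-bundle of a smooth projective variety of dimension at most $n-2$ and appears in the decomposition of $Y$ with a degree shift $[-2j]$ for some $1 \le j \le r_{i,k}-1$, so its $\Gr$-support is bounded above by $2(n - r_{i,k}) + 2(r_{i,k} - 1) = 2n - 2$. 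Consequently $\balpha_{X_1} = \balpha_{X_2}$ as atoms in $\CF(Y)$, and matching the corresponding F-bundle summands with their full $\hodge$-representation and $\Gr$-grading yields an isomorphism of $\bbZ$-graded rational Hodge structures $H^\bullet(X_1,\bbQ) \cong H^\bullet(X_2,\bbQ)$. In particular $h^{p,q}(X_1) = h^{p,q}(X_2)$ for all $p,q$.

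The main obstacle, I expect, is that the Hodge polynomial $P_{\balpha}$ of an atom in isolation only records the sums $\sum_{p-q=k}h^{p,q}$, which is insufficient to separate individual Hodge numbers. For this reason one must work at the level of F-bundles rather than atoms alone, and verify that Iritani's blowup decomposition is a splitting of F-bundles that is compatible with the grading operator $\Gr$ as well as with the classical $\hodge$-action inherited from Hodge theory. Once this compatibility is in place, the distinguishing role of the top-degree class in $H^{2n}$ makes the identification of $\balpha_{X_1}$ with $\balpha_{X_2}$ via the common resolution $Y$ rigorous, and the Hodge numbers of $X_1$ and $X_2$ necessarily coincide.
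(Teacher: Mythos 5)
The first half of your argument follows the paper's route (single atom for nef canonical class, blowup additivity, weak factorization), but with two slips. Weak factorization does not produce a single $Y$ mapping onto both $X_1$ and $X_2$ by iterated smooth blowups — that would be strong factorization, which is open; one must track the distinguished atom along the zigzag of blowups and blowdowns. Moreover, your criterion for singling out the Calabi--Yau atom, namely "carries a nonzero class in cohomological degree $2n$", is not an invariant of an atom: atoms retain only the $\bbZ/2$-folded grading and the $\hodge$-action, not $\Gr$. The paper instead uses the Hodge-polynomial coefficient at $t^{\pm d}$ (i.e.\ $h^{d,0}=1$), which is an atom invariant and which no atom contributed by a center of dimension $\le d-2$ can possess. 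Both of these points are repairable, and with them you recover the paper's Lemma~\ref{lem:birCYatoms}: $\boldsymbol{\eta}(X_1)=\boldsymbol{\eta}(X_2)$, hence equality of the folded sums $\sum_{p-q=k}h^{p,q}$.

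The genuine gap is the final step, which you flag but do not close. Your proposed remedy — verifying that Iritani's blowup splitting is "compatible with $\Gr$" — is not available and would not deliver the bigrading: the blowup and spectral decompositions are isomorphisms of F-bundles over interior analytic domains; they intertwine the full quantum connections, in which $\Gr$ enters only through the irregular $u$-direction term $u^{-1}\Gr - u^{-2}\Eu\qup(-)$, but they are not graded maps of the fibers, so the cohomological grading cannot be transported through the chain of identifications, and in particular you cannot conclude an isomorphism of $\bbZ$-graded Hodge structures this way. The paper's Theorem~\ref{thm:two.norms} supplies the missing mechanism: since $K_X$ is numerically trivial, Claim~\ref{claim:nefK} shows the restriction of the quantum connection to $\{b\}\times\bbD$ has a regular singularity at $u=0$ and that the gauge transformation $u^{\mathbf{g}}$ carries the intrinsic lattice $H^{\op{int}}$ to Deligne's canonical extension $H^{\op{can}}$ inside $H^{\bullet}(X)\dbp{u}$; comparing these two lattices reconstructs the filtration by $p+q$ from the atomic F-bundle alone, and combining it with the $p-q$ grading from the $\hodge$-action recovers every $h^{p,q}$. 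Without this (or an equivalent device, intrinsic to the atomic F-bundle and again using the Calabi--Yau hypothesis), your argument proves only the equality of the folded Hodge numbers, not of the individual ones.
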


By design, the theory of Hodge atoms is motivic in nature. It is a special case of a theory of $G$-atoms governed by some proreductive algebraic group $G$ which controls a particular realization of Andr\'{e} motives.  In the case of Hodge atoms the relevant group is the  Galois group $\hodge$ of the Tannakian category of polarizable pure $\bbZ/2$-weighted $\bbQ$-Hodge structures. 
We develop the general theory of $G$-atoms for a general motivic Galois group $G$ in Section~\ref{sec:Gatoms}.
One variant of particular interest is the theory of \strongemph{motivated atoms}, given by $G=\motM^{\kay}$, the Galois group of the category of a $\bbZ/2$-graded version of André's motives \cite{Andre_Pour_une_theorie_inconditionnelle_des_motifs} equipped with the fiber functor of Betti cohomology.

Motivated atoms are useful for detecting non-rationality over a subfield $\kay\subset \bbC$ which is not algebraically closed.
To illustrate the idea, consider a smooth hypersurface $X$ of multi-degree $(1, 1, 1, 1)$ in 
$\bbP^1 \times \bbP^1 \times \bbP^1 \times \bbP^1$, defined over the algebraic closure $\kaybar$ of $\kay$. It is isomorphic to the blowup of $\bbP^1 \times \bbP^1 \times \bbP^1$ at an elliptic curve $E$, and hence the motivated atomic decomposition of $X$ contains 8 atoms associated to points and one more complicated atom $\alpha_E$ associated to $E$.

Now we consider a model $\fX$ of $X$ defined over the non-closed field $\kay$ such that the Galois group $\Gal(\kaybar/\kay)$ acts by permuting transitively the 4 factors in the product $\bbP^1 \times \bbP^1 \times \bbP^1 \times \bbP^1$.
Let $(\cH,\nabla)/B_{\fX}$ be the $\mathsf{A}$-model F-bundle associated to $\fX$. At a particular point $b\in B_{\fX}$, the $\Eu$-action has 3 eigenvalues with multiplicities 1, 4 and 7 respectively.
The last generalized eigenspace has Hodge polynomial $5 + t + t^{-1}$ and only $2$ algebraic classes defined over $\kay$.
Then this representation of $\motM^{\kay}$ cannot split further into representations coming from atoms associated to $0$ and $1$-dimensional varieties over $\kay$.
Hence we conclude the non-rationality of $\fX/\kay$.

Many similar examples can be worked out in an analogous manner but we will leave these to the future work \cite{CGKK}.
In addition to the Hodge atoms and motivated atoms, we will consider other enhanced 
versions of atoms, e.g.\ atoms which are furnished with the image of the topological $K$-theory in the \nc-Hodge structure, or atoms enhanced with Mukai pairings or Serre automorphisms.
The theory of enhanced atoms will be developed in the forthcoming work \cite{KKPY-Gamma}, where many more applications are worked out.
Here we concentrate on the foundations of the theory of atoms and their most basic applications as obstructions to rationality.

\medskip

This paper is organized as follows.
In Section~\ref{sec:Bmodel}, we motivate the theory of F-bundles and atoms from a special case of the mirror symmetry $\mathsf{B}$-model, in the  elementary context of  equivariant isolated singularities.
The constructions in this section have no direct relation to the $\mathsf{A}$-model invariants or rationality we discuss in the rest of the paper.
In Section~\ref{sec:definition_of_F-bundles}, we introduce and study F-bundles, which is a formal/non-archimedean version of non-commutative Hodge structure.
In particular in Section~\ref{sssec:AmodelF} we introduce the non-archimedean $\mathsf{A}$-model F-bundle of a complex smooth projective variety, the main object underpinning the construction of atoms. Section~\ref{sec:decompose} discusses the spectral decomposition of F-bundles as well as the decomposition of F-bundles associated to blowups and projective bundles.
In Section~\ref{sec:Gatoms}, we develop the general theory of $G$-atoms for any motivic Galois group $G$.
Finally in Section~\ref{sec:obstructions}, we apply the theory of atoms to obtain new obstructions to rationality in birational geometry.
After recasting Givental's classical quantum cohomology computation in the language of F-bundles in Section~\ref{ssec:Cubics},  we use Hodge atoms to show that the very general cubic fourfold is not rational.
In Section~\ref{ssec:birCY} we use Hodge atoms to give a new proof of the equality of Hodge numbers of birational Calabi-Yau manifolds in any dimension.
Finally in Section~\ref{ssec:enhanced} we discuss briefly enhancements of atoms and illustrate how enhanced atoms lead to further non-rationality results.

\subsection*{Acknowledgements}

We would like to thank Denis Auroux, Hiroshi Iritani and Yuri Tschinkel for their interest and for their invaluable help at various stages of this work.
We are grateful to Leonardo Cavenaghi, Sheel Ganatra, Mark Gross, Thorgal Hinault, Y.P.\ Lee, Ernesto Lupercio, James McKernan, Todor Milanov, Yong-Geun Oh, Yukinobu Toda, Chenyang Xu, Song Yu, Chi Zhang and Shaowu Zhang for inspiring discussions around the subject. 

During the preparation of this paper Ludmil Katzarkov was supported by the National Science Fund of Bulgaria, the National Scientific Program ``VIHREN'' (Project no.\ KP-06-DV-7), the Institute of Mathematics and Informatics of the Bulgarian Academy of Sciences, the Simons Foundation (grant SFI-MPS-T-Institutes-00007697), the Ministry of Education and Science of the Republic of Bulgaria (contract DO1-239/10.12.2024), the Basic Research Program of the National Research University Higher School of Economics, the Simons Investigators Award (no.\ 003136), the Simons Collaboration on Homological Mirror Symmetry (award no.\ 003093), and by the NSF FRG grant DMS-2245099. Tony Pantev was supported by the Simons Collaboration on Homological mirror symmetry (award no.\ 347070) and the NSF FRG grants DMS-2244978 and DMS-2245099.
Tony Yue Yu was partially supported by the NSF grants DMS-2302095 and DMS-2245099.

In addition, Ludmil Katzarkov and Maxim Kontsevich are grateful to Jeffrey Fuqua for his continuous financial support of mathematical research throughout the years, and for his support and leadership in establishing the excellent creative environment at IMSA, University of Miami, where much of this work was done.

\subsection*{Notation and conventions} 

\begin{description}[style=multiline, leftmargin=7.5em, labelwidth=7em, align=left, itemsep=0pt]
\item[$k$] a field of characteristic zero, coefficient field of a Weil cohomology theory.
\item[$\bbk \supset k$] an algebraically closed non-archimedean field whose absolute value is trivial on $k$.
\item[$\kay$] a field of characteristic zero, base field of smooth projective varieties, typically chosen to be a subfield of $\bbC$.
\item[$\bbD$] the germ at $0$ of a $\bbk$-analytic affine line with coordinate $u$.
\item[$(\cH, \nabla)/B$] an F-bundle over a $\bbk$-analytic space $B$.
\item[$\bkappa = \nabla_{u^{2}\partial_{u}}|_{\cH_{u=0}}$] the residual endomorphism on $\cH_{u=0}$.
\item[$\fp$] the Frobenius product on $T_{B}$ associated with a maximal F-bundle.
\item[$\Eu$] the Euler vector field on $B$ associated with a maximal F-bundle.
\item[$\cH_{o}$] the restriction $\cH|_{u = 0}$.
\item[$\braket{\gamma_1\cdots \gamma_n}_{g,n,\beta}$] \GW invariants.
We omit $g$ and $n$ from the notation when $g=0$ and $n$ is evident.
\item[$\qup$] the big quantum product on the cohomology $H^\bullet(X)$ of a smooth projective variety $X$.
\item[$\Deg$] the degree operator on $H^{\bullet}(X)$.
\item[$\Gr$] the grading operator on $H^{\bullet}(X)$.
\item[$\NE(X,\bbZ)$] the monoid of numerical equivalence classes of effective curves.
\item[$\NS(X,\bbZ)$] the N\'{e}ron-Severi group of $X$.
\item[$\mathsf{N}^{1}(X,\bbZ)$] the group of numerical equivalence classes of divisors in $X$, also equal to 
 the maximal torsion-free quotient $\NS(X,\bbZ)_{\op{tf}}$ of the N\'{e}ron-Severi group of $X$. 
\item[$\mathsf{N}_{1}(X,\bbZ)$] the group of numerical equivalence classes of curves in $X$.
\item[$\CA{\kay}$] the $\bbQ$-linear Tannakian category of Andr\'{e} motives of smooth projective $\kay$-varieties.
\item[$\CA{\kay}/\mathsf{Tate}$] the $\bbQ$-linear category of \nc-Andr\'{e} motives of smooth projective $\kay$-varieties.
\item[$\GA{\kay}$] the pro-reductive motivic Galois group for Andr\'{e} motives of smooth projective $\kay$-varieties.
\item[$(G,\epsilon_{G})$] a proreductive algebraic group over $k$ with a marked central element $\epsilon_{G}$ of order $2$.
\item[$(\HHgr,\epsilon_{\HHgr})$]  the symmetry group of Hochschild graded Dolbeault cohomology.
\item[$(\hodge,\epsilon_{\hodge})$]  the Mumford-Tate group of $\bbZ/2$-weighted pure Hodge structures. 
\item[$(\motM,\epsilon_{\motM})$] the Galois group of the $\bbZ/2$-weighted version of Andr\'{e}'s motivated motives.
\item[$\tU_{\fX}$] the unramified part of the reduced spectral cover associated with the operator $\bkappa$ attached to the $\mathsf{A}$-model $\bbk$-analytic F-bundle of $\fX$.
\item[$\Atoms_{G}^{\kay}$]  the set of $G$-atoms of smooth projective $\kay$-varieties.
\item[$\CF_{G}(\mathfrak{X})$] the chemical formula or $G$-atomic composition of $\mathfrak{X}$, i.e.\ the multiset of $G$-atoms of $\fX$ together with their multiplicities.
\item[$\Atoms_{G}^{\kay,\mathsf{F}}$] the set of geometric $G$-atomic F-bundles for smooth projective $\kay$-varieties.
\item[$\CF_{G}^{\mathsf{F}}(\mathfrak{X})$]  the multiset of geometric $G$-atomic F-bundles arising from $\fX$ together with their multiplicities.
\item[$\HAtoms$] the set of Hodge atoms of complex smooth projective varieties.
\end{description}


\addtocontents{toc}{\protect\setcounter{tocdepth}{2}}

\section{Atoms of equivariant singularities} \label{sec:Bmodel}

To motivate the general theory we start with a particular $\mathsf{B}$-model example dealing with  simplified  \nc-Hodge theoretic invariants of equivariant isolated singularities of functions. We  regard these simplified invariants as ``baby atoms''. On its own this example is not related to any birational invariants but we include it since it provides valuable conceptual insights into the general theory we will develop later.

\subsection{Isolated singularities of functions} \label{ssec:fx0}

Recall that if $f \colon (X,x_{0}) \to \bbC$ is an analytic germ of a function on a $d$-dimensional manifold $X$, with an isolated critical point $x_{0}$, then the \strongemph{Milnor number} $\mu_{f}$ of $f$ is a basic numerical invariant of the germ, which is given by 
\[
\mu_{f} \, \coloneqq \, \dim_{\bbC} \, \big( \cO_{X,x_{0}}/J(f)\big),
\]
where $J(f)=\op{image}\Big[\cT_{X, x_0} \xrightarrow{(-)\iprod \, df} \cO_{X,x_0} \Big]$ is the Jacobian ideal of $f$.
We use $\cT_X$ to denote the tangent sheaf, and $T_X$ to denote the tangent bundle.
Furthermore, since the singularity of $f$ at $x_{0}$ is isolated, the Milnor number is finite and the function germ $(X,x_{0},f)$ has a \strongemph{universal unfolding} which is a family of function germs
\[
F \colon \big(X\times \univ_{f},(x_{0},u_{0})\big) \longto \bbC
\]
such that $F|_{(X,x_{0})\times\{u_{0}\}} = f$, the family is parametrized by a germ $(\univ_{f},u_{0})$ of a $\mu_{f}$-dimensional mani\-fold, and the natural map 
$T_{\univ_{f},u_{0}} \to \cO_{X,x_{0}}/J(f)$ given by $v \mapsto [(\Lie_v F)|_{X\times \{u_{0}\}}]$ is an isomorphism of vector spaces.

Explicitly, a model for the universal unfolding is constructed as follows. Let
$\mathsf{Milnor}_{f,x_{0}} \coloneqq \cO_{X,x_{0}}/J(f)$ be the  \strongemph{Milnor ring}  of $f$ at $x_{0}$, and let 
$g_{1}, \ldots, g_{\mu_{f}} \, \in \cO_{X,x_{0}}$ be a collection of function germs whose images form a basis of $\mathsf{Milnor}_{f,x_{0}}$ as a complex vector space. Then we get a family of holomorphic functions 
\begin{equation} \label{eq:miniversal}
F \colon \big(X\times \bbC^{\mu_{f}}, (x_{0},0)\big) \longrightarrow \bbC, \quad (x,z) \longmapsto 
f(x) + \sum_{i=1}^{\mu_{f}} \, z_{i}g_{i}(x).
\end{equation}
We can then take $(\univ_{f},u_{0})$ to be the germ of $(\bbC^{\mu_{f}},0)$, and $F$ to be the germ of the function \eqref{eq:miniversal}.  The universal unfolding is a well-defined family of germs and the analytic automorphisms of the singularity $(X,x_{0},f)$ act naturally on the  model universal unfolding $\big(X\times \univ_{f}, (x_{0},u_{0}),F\big)$ defined above. 

Intrinsically one can construct and work with the universal unfolding as follows. Let $X$ be a $d$-dimensional complex manifold, $x_{0} \in X$, and let $f \colon X \to \bbC$ be the germ at $x_{0}$ of a holomorphic function for which $x_{0}$ is an isolated critical point. Let $x_{0} \in V \subset X$ be a Stein neighborhood on which $f$ is defined, and such that $x_{0}$ is the only critical point of $f$ in $V$. Let $x_{0} \in \ball \, \subset \, V$, be a smaller Stein neighborhood, which is an open Euclidean ball and such that the closure $\overline{\ball}$ of $\ball$ in $X$, is contained in $V$. 

Consider the space
\[
\univop_{\ball} \, \coloneqq \, \left\{ \, g \in \cO(\overline{\ball}) \, \left| \ dg_{x} \neq 0 \, \in \, T_{X,x}^{\vee}, \ \text{for all} \ x \in \partial \overline{\ball} \, \right.\right\}.
\]
Note that the algebra $\cO(\overline{\ball})$ of holomorphic functions on the Stein compact $\overline{\ball}$ is naturally a nuclear Fr\'{e}chet $\bbC$-vector space with respect to the topology of compact convergence \cite[Chapter V.6.1]{GrauertRemmert_Stein}, and so the open subspace 
$\univop_{\ball} \, \subset \, \cO(\overline{\ball})$ is naturally an infinite dimensional complex  
Fr\'{e}chet manifold. This manifold represents the moduli functor which to every Stein set $S$ assigns the set of all holomorphic functions on $S\times \overline{\ball}$ whose relative differential over $S$ does not vanish anywhere on $S\times \partial\overline{\ball}$. 

At each point $g \in \univop$ the tangent space $T_{\univop_{\ball},g}$ is naturally identified with $\cO(\overline{\ball})$, and it contains a natural closed subspace of finite codimension, namely
the image
\[
\folunivop_{g}  \, \coloneqq \, \mathsf{image} \Big[\xymatrix@1@C+1.2pc@M+0.2pc{\cT_{X}(\overline{\ball}) \ar[r]^-{\mathsf{Lie}_{(-)} \, g} & \cO(\overline{\ball})}\Big] 
\]
of the ``evaluation on $g$'' map $\xi \to \mathsf{Lie}_{\xi} \, g$ from holomorphic vector fields to holomorphic functions on $\ball$. 
The codimension of $\folunivop_{g}$ in $T_{\univop_{\ball},g}$ is equal to the
degree of the natural map 
\begin{equation} \label{eq:deg}
\xymatrix@R-1.2pc@C+1.2pc@M+0.2pc{
m_{g} \, \colon \, \hspace{-5pc} & \partial\overline{\ball} \ar@{=}[d] \ar[r] & (\bbC^{d}-\{0\})/\bbR_{>0} \ar@{=}[d]\\
& S^{2d-1} & S^{2d-1}
}, \qquad m_{g}(x) \coloneqq \frac{dg_{x}}{||dg_{x}||},
\end{equation}
and so $\op{codim}(\folunivop_{g}\subset T_{\univop_{\ball},g})$ is equal to the 
sum $\sum_{x \in \mathsf{crit}(g)} \, \mu_{g,x}$ of the Milnor numbers of $g$ at all critical points $\mathsf{crit}(g) = \mathsf{zero}(dg)$ of $g$ in $\ball$. 

In particular $\folunivop \subset T_{\univop_{\ball}}$ is a holomorphic foliation of finite codimension on $\univop_{\ball}$. Given a point $g \in \univop_{\ball}$ we will write  $[g]$ for the leaf of this foliation through $g$. The universal unfolding $\univ_{f}$ of the singularity germ $(X,x_{0},f)$ is canonically the germ of the leaf space $\left[\univop_{\ball}/\folunivop\right]$ of this foliation at the point $[f]$. The explicit model we considered above is a holomorphic slice through $f$ to the leaves of $\folunivop$. Note that this approach allows us to define a canonical universal unfolding $\univ_{g}$ for any $g \in \univop_{\ball}$ by simply taking $\univ_{g}$ to be the germ at $[g]$ of the leaf space $\left[\univop_{\ball}/\folunivop\right]$.

For $[g] \in \univ_{f}$, we still have the identification $T_{\univ_{f},[g]} \cong \mathsf{Milnor}_{g} = \cO(\ball)/J(g)$, where
$J(g) = \op{image}\Big[T(\ball) \xrightarrow{(-)\iprod \, dg} \cO(\ball)\Big]$
is the global Jacobian ideal of $g$ on $\ball$. This identification induces an analytic 
commutative associative product  
\[
\fp \colon \ T_{\univ_{f}}\otimes_{\cO_{\univ_{f}}} T_{\univ_{f}} \longrightarrow T_{\univ_{f}},
\]
which is just K.~Saito's Frobenius manifold structure \cite{KyojiSaito-residue,KyojiSaito-primitive,MorihikoSaito-Brieskorn,Dubrovin_Geometry_of_2D,Sabbah-Isomonodromic} on $\univ_{f}$.  
Being open in a Fr\'{e}chet vector space, the infinite dimensional manifold $\univop$ will have a natural holomorphic Euler vector field corresponding to the scaling action of $\bbC$ on $\cO(\overline{\ball})$. The projection of this vector field in $T_{\univop_{\ball}}/\folunivop$ gives a holomorphic vector field $\Eu$ on the universal unfolding $\univ_{f}$.   
      
The operator $\bkappa \coloneqq \, \Eu\, \fp \, (-) \colon \ T_{\univ_{f}} \ \to \ T_{\univ_{f}}$ 
is an analytic vector bundle endomorphism, and for any point $[g] \in \univ_{f}$ we have that 
$\bkappa_{[g]} \colon \, T_{\univ_{f},[g]} \ \to \ T_{\univ_{f},[g]}$ preserves the natural  decomposition $T_{\univ_{f},[g]} = \mathsf{Milnor}_{g} =  \oplus_{x \in \mathsf{crit}(g)} \, \mathsf{Milnor}_{g,x}$, where $\mathsf{Milnor}_{g,x} = \cO_{X,x}/J(g)$. Furthermore, for each $x \in \mathsf{crit}(g)$, the subspace 
$\mathsf{Milnor}_{g,x} \subset \mathsf{Milnor}_{g}$ is the generalized eigenspace of $\bkappa_{[g]}$ of eigenvalue $g(x)$.

\medskip

Saito's theory of Frobenius structures of singularities \cite{KyojiSaito-residue}, \cite[Chapter~VI]{Sabbah-Isomonodromic} implies that we have the following decomposition

\begin{theorem} \label{theo:decompsing} 
Let $(X,x_{0},f)$ be the germ of an isolated singularity of a holomorphic function on a $d$-dimensional manifold. Let $g \in \univop_{\ball}$ be a point close to $f \in \univop_{\ball}$. Then the germ $\univ_{f}([g])$ of the universal unfolding of $f$ at the point $[g] \in \univ_{f}$ decomposes canonically into a product 
\[
\univ_{f}([g]) \ \cong \ \prod_{x \in \mathsf{crit}(g)} \, \univ_{{\tensor[^x]{g}{}}},
\]
where $(X,x,{\tensor[^x]{g}{}})$ is the germ of $g$ at $x$. Moreover this decomposition is compatible with Saito's Frobenius structures and with Euler vector fields.
\end{theorem}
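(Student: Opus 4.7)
The plan is to build the decomposition from a restriction map at the infinite dimensional level of $\univop_{\ball}$ and descend it to the germ of the leaf space. Choose pairwise disjoint open Stein balls $\ball_{x} \subset \ball$, one for each $x \in \mathsf{crit}(g)$, small enough that $x$ is the unique critical point of $g$ in $\overline{\ball}_{x}$. By continuity of $dg'$ in $g'$, there is an open neighborhood $\cU \subset \univop_{\ball}$ of $g$ such that every $g' \in \cU$ has all of its critical points inside $\bigsqcup_{x}\ball_{x}$ and each restriction $g'|_{\ball_{x}}$ lies in $\univop_{\ball_{x}}$. The restriction of global sections of $\cO(\overline{\ball})$ to each $\overline{\ball}_{x}$ is continuous and $\bbC$-linear, hence defines a holomorphic map
\[
\rho \colon \cU \longrightarrow \prod_{x \in \mathsf{crit}(g)} \univop_{\ball_{x}},
\]
and $\rho$ carries the foliation $\folunivop$ on $\cU$ to the product of the foliations $\folunivop$ on the factors, because the Lie derivative along a holomorphic vector field on $\ball$ restricts to the Lie derivative along its restrictions to each $\ball_{x}$. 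Passing to leaf spaces, $\rho$ descends to the germ morphism
\[
r \colon \univ_{f}([g]) \longrightarrow \prod_{x \in \mathsf{crit}(g)} \univ_{{\tensor[^x]{g}{}}}
\]
that we want to show is an isomorphism.

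Next I would compute the differential of $r$ at $[g]$. Under the identifications $T_{\univ_{f},[g]} = \cO(\overline{\ball})/\folunivop_{g} = \mathsf{Milnor}_{g}$ and $T_{\univ_{{\tensor[^x]{g}{}}},[\tensor[^x]{g}{}]} = \mathsf{Milnor}_{g,x}$, the map $d r_{[g]}$ becomes the canonical $\bbC$-algebra homomorphism
\[
\mathsf{Milnor}_{g} \longrightarrow \prod_{x \in \mathsf{crit}(g)} \mathsf{Milnor}_{g,x}.
\]
Since $g$ has only the points of $\mathsf{crit}(g)$ as critical points in $\ball$, the finite dimensional Artinian algebra $\mathsf{Milnor}_{g}$ splits canonically as a product of its localizations at the maximal ideals supported on those critical points; that is, $d r_{[g]}$ is the Chinese Remainder isomorphism. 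Conservation of the Milnor number under deformation of $g$ (equivalent to the invariance of the degree of the Gauss map \eqref{eq:deg} under continuous variation of $g$) gives $\mu_{f} = \sum_{x}\mu_{g,x}$, so both source and target of $r$ are smooth finite dimensional germs of the same dimension $\mu_{f}$. The holomorphic inverse function theorem then yields that $r$ is a germ biholomorphism.

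For the additional structures, the Frobenius product $\fp$ on $T_{\univ_{f},[g]}$ is by construction the commutative algebra structure on $\mathsf{Milnor}_{g}$, and the Chinese Remainder decomposition is an isomorphism of algebras; hence $r$ intertwines $\fp$ with the product of the Frobenius products on the factors. The Euler vector field $\Eu$ is the projection under $\univop_{\ball} \to \univ_{f}$ of the generator of the $\bbC$-scaling action on $\cO(\overline{\ball})$, which is $\rho$-related to the sum of the corresponding scaling fields on each $\cO(\overline{\ball}_{x})$; consequently $r_{\ast}\Eu$ is the product Euler vector field, and the operator $\bkappa = \Eu\fp(-)$ acquires its stated generalized eigenspace decomposition, with eigenvalue $g(x)$ on the $x$-factor. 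The main technical obstacle I anticipate is upgrading this pointwise/first-order compatibility to full compatibility with Saito's additional data (primitive form, higher residue pairings, Brieskorn lattice) over the whole germ $\univ_{f}([g])$. This is handled by observing that the relative twisted de Rham complex of the universal unfolding is an external tensor product of the relative de Rham complexes of the factor unfoldings along the disjoint clusters of critical points, and then invoking the uniqueness part of Saito's theorem from \cite[Chapter~VI]{Sabbah-Isomonodromic} on each side of $r$.
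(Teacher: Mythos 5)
Your construction is essentially the standard argument, and it is more than the paper itself provides: in the text, Theorem~\ref{theo:decompsing} is not proved at all but is derived directly from Saito's theory via the citations to \cite{KyojiSaito-residue} and \cite[Chapter~VI]{Sabbah-Isomonodromic}, after the preceding discussion has already set up exactly the ingredients you use (the leaf-space description of $\univ_{f}$, the identification $T_{\univ_{f},[g]}\cong \mathsf{Milnor}_{g}=\oplus_{x\in\mathsf{crit}(g)}\mathsf{Milnor}_{g,x}$, and the fact that $\bkappa_{[g]}$ has generalized eigenvalue $g(x)$ on $\mathsf{Milnor}_{g,x}$). Your restriction map $\rho\colon \cU\to\prod_{x}\univop_{\ball_{x}}$, its compatibility with the foliations $\folunivop$, the descent to leaf-space germs, the identification of $dr_{[g]}$ with the Chinese Remainder isomorphism, the inverse function theorem, and the $\rho$-relatedness of the scaling fields (hence of $\Eu$ and of $\fp$ fiberwise, since each $dr_{[g']}$ is an algebra map) are all correct and faithful to the paper's setup, so for the product decomposition, the multiplication, and the Euler fields your route is fine and self-contained.

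Two corrections are needed in the final step, where you address the full Saito structure (metric/primitive form). First, for a function on a fixed ball whose critical set splits into disjoint clusters, the Brieskorn lattice and the (relative, twisted) de Rham cohomology decompose as an \emph{external direct sum} of the pieces pulled back from the factor unfoldings, by localization at the clusters --- not as an external tensor product. The tensor-product (Thom--Sebastiani) formula pertains to sums $f\boxplus g$ on a \emph{product} of spaces, as in Example~\ref{ex:TS}; here the dimensions would not even match, since $\mu_{f}=\sum_{x}\mu_{g,x}$ rather than $\prod_{x}\mu_{g,x}$. Second, appealing to ``the uniqueness part of Saito's theorem'' is delicate, because primitive forms are not unique in general; what one actually uses is that the higher residue pairings and the Grothendieck residue pairing decompose orthogonally along the cluster decomposition of the Brieskorn lattice, so that primitive forms can be chosen compatibly on the two sides of $r$ and the resulting flat structures agree --- this is precisely the decomposition statement in \cite[Chapter~VI]{Sabbah-Isomonodromic} that the paper invokes in lieu of a proof. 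With those two repairs your argument is complete.
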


Let $\mathsf{Aut}(f)$ denote the group of biholomorphic automorphisms of the germ $(X,x_{0},f)$. It is a Fr\'{e}chet analytic Lie group which naturally acts on the germ $\univop_{f}$ of the universal unfolding.  Its connected component $\mathsf{Aut}^{0}(f)$ acts 
trivially and $\mathsf{Aut}(f)$ acts trough its quotient $\mathsf{Aut}(f) \to \pi_{0}\left( 
\mathsf{Aut}(f)\right)$. By the approximation theorem, the group $\pi_{0}\left( 
\mathsf{Aut}(f)\right)$ will act with finite stabilizers and so 
the moduli $\mycal{M}_{f} \coloneqq \left[\univop_{f}/\pi_{0}\left( 
\mathsf{Aut}(f)\right)\right]$ of deformations of $f$ will be a germ of a complex analytic orbifold.

The intrinsic construction of $\univ_{f}$ works equally well to produce a ``global'' universal unfolding parametrizing functions  on $d$-dimensional manifolds with isolated singularities and a given Milnor number $\mu$. More precisely, we can consider the subset  $\univop_{\ball,\mu} \subset \univop_{\ball}$  comprised of all $g \in \univop_{\ball}$ such that $\deg \, m_{g} = \mu$. Then $\univop_{\ball,\mu}$ is a union of connected components of 
$\univop_{\ball}$. By the discussion above the leaf space $\left[\univop_{\ball,\mu}/\folunivop\right]$ is a smooth complex analytic orbifold $\mycal{U}_{d,\mu}$ with finite stabilizers (possibly non-separated), which we will think of as the  universal unfolding of $d$-dimensional germs functions with isolated singularities with Milnor number $\mu$. Again the tangent bundle $T_{\mycal{U}_{d,\mu}}$ is equipped with a Frobenius product, we have an Euler vector field $\Eu$ on $\mycal{U}_{d,\mu}$, and the germ of $\mycal{U}_{d,\mu}$ at any point $[g]$ decomposes into a product of pieces compatible with Frobenius products and Euler fields, so that each piece corresponds to the generalized eigenspace of the 
operator $\Eu_{[g]}\, \fp \, (-)$.

\medskip

The moduli $\mycal{M}_{d,\mu}$ of $d$-dimensional  germs of functions  with a single isolated singularity of Milnor number $\mu$ is then a closed, possibly singular, analytic substack in 
$\mycal{U}_{d,\mu}$. Note that $\mycal{M}_{d,\mu}$ is in fact an algebraic Deligne-Mumford stack defined over $\overline{\bbQ}$. The formal completion $\mycal{U}_{d,\mu}^{\wedge}$  of 
$\mycal{U}_{d,\mu}$ along $\mycal{U}_{d,\mu}$ is a formal stack over $\overline{\bbQ}$ (in general non-algebraizable) which is a smooth thickening of $\mycal{M}_{d,\mu}$.

\begin{remark} \label{rem:witt.move} 
By the theory of Frobenius manifolds \cite{Manin_Frobenius_manifolds} we know that the Frobenius powers $\Eu^{k} \coloneqq \Eu^{\fp k}$ of the Euler vector field satisfy the relations of the Witt Lie algebra of polynomial vector fields on the affine line, i.e.\ 
\begin{equation} \label{eq:witt.relations}
\left[\, \Eu^{k}, \, \Eu^{l}\,\right] \ = \ (l-k) \Eu^{k+l-1}, \quad \text{for all} \ k, l \geq 0,
\end{equation}
where we view $\Eu^{\fp k}$ as a holomorphic vector field on 
$\mycal{U}_{d,\mu}$, and the bracket in \eqref{eq:witt.relations} is the Lie bracket on vector fields on $\mycal{U}_{d,\mu}$. 

In an open dense domain of $\mycal{U}_{d,\mu}$ the function has $\mu$ isolated Morse singularities, and the local coordinates on $\mycal{U}_{d,\mu}$ are given by the critical values $x_1,\dots,x_\mu$. The Euler field and its powers are given by
$$ \Eu=\sum_i x_i\partial_{x_i},\quad \Eu^k=\sum_i x_i^k\partial_{x_i}\,.$$

Suppose $[f] \in \mycal{U}_{d,\mu}$ is a germ of a function with isolated singularities with   total Milnor number $\mu$. One can show that there is a well defined germ of a complex analytic submanifold
$\mycal{W} \subset \mycal{U}_{d,\mu}$ at point $[f]$ such that $\mathsf{span}_{\bbC}\left(\left\{\Eu^{k}_{[g]}\right\}_{k \geq 0}\right) \, = \, T_{\mycal{W},[g]}$ for all $[g]\in \mycal{W}$ close to $[f]$. 
Moreover, the  critical points of $[g]$ are in canonical bijection with the critical points of 
$[f]$ and that bijection preserves the multiplicities. 
\end{remark}

The picture becomes even more interesting when we consider equivariant isolated singularities of functions. 

\subsection{Equivariant isolated singularities of functions} \label{sec:equiv.singf}

For ease of reference let us introduce the following 

\begin{definition}\label{def:G-equivariant isolated singularity} Let $G$ be a finite group. 
A $d$-dimensional complex analytic \strongemph{$G$-equivariant function germ with isolated singularities}  is the data $\left(G \, \actleft \, X,f\right)$, where
\begin{enumerate}[wide]
    \item $Y$ is a possibly disconnected complex $d$-dimensional manifold .
    \item $Y$ is equipped with a holomorphic $G$-action $G\, \actleft \, Y$.
    \item $\bw \colon  Y \to \bbC$ is a $G$-invariant holomorphic function, with isolated critical points, such that the $G$-action on $\mathsf{crit}(\bw)$ is transitive.
    \item $(X,f)$ is the $G$-equivariant germ of $(Y,\bw)$ at the subset $\mathsf{crit}(\bw)$.
\end{enumerate}
\end{definition}

Note that our condition requiring that $\bw$ is invariant and that the $G$-action on $\mathsf{crit}(\bw)$ is transitive implies that a $G$-equivariant germ always has a single critical value.

\begin{example}
Consider the action of $\bbZ/N$ on $\bbC$  given  by  $k \mapsto (z \mapsto \zeta^{k}\cdot z)$ for some primitive $N$-th root of unity $\zeta$. The function
$\mathbb{C} \to \mathbb{C}$, $z \mapsto z^N$
is $\bbZ/N$-invariant, and its derivative $Nz^{N-1}$ vanishes only at $z=0$. 
Therefore if we take 
$X$ to be the germ of  $\bbC$ at $0 \in \bbC$ and $f$ to be the  germ of $z \mapsto z^N$ at $0$, then the data $\left({\bbZ/N}\, \actleft \, X,f \right)$  is a $\bbZ/N$-equivariant function germ with isolated singularities.
\end{example}

\begin{remark} \label{rem:induce}
\begin{enumerate}[wide]
\item If $G = \{\mathbf{e}\}$ is the trivial group, then a $d$-dimensional $G$-equivariant function germ with isolated singularities is just  a germ $(X,f)$ of a holomorphic function on a $d$-dimensional manifold at an isolated critical point of the function.

\item Suppose $\left(G\, \actleft\, X,f \right)$  is a $G$-equivariant function germ with isolated singularities and let $p \in X$ be a critical point of $f$. Let $H \subset G$ be the stabilizer of $p \in X$, let $X(p)$ be the connected component of $X$ containing $p$, and let $\tensor[^p]{f}{} = f|_{X(p)}$. Equivalently $p \in \mathsf{crit}(\bw) \subset Y$,  $X(p)$ is the germ of $Y$ at $p$, and $\tensor[^p]{f}{}$ is the germ of $\bw$ at $p \in Y$. In this situation $H$ acts on $X(p)$ by analytic isomorphisms and 
$\left(H\, \actleft\, X(p),\tensor[^p]{f}{} \right)$ is an $H$-equivariant 
function germ with a single isolated singularity.

\item Conversely, suppose $\left(H\, \actleft\, X,f \right)$ is a $d$-dimensional $H$-equivariant 
function germ with a single isolated singularity. Suppose $H \subset G$ of $H$ is embedded as a subgroup in some finite group $G$. Then we can construct an induced $d$-dimensional $G$-equivariant germ   $\left(G\, \actleft\, \mathsf{ind}_{H}^{G}(X),\mathsf{ind}_{H}^{G}(f)\right)$. Here 
\begin{enumerate}[wide]
\item  $\mathsf{ind}_{H}^{G}(X) \coloneqq G\times_{H} X = (G\times X)/H$, where $h \in H$ acts on $(a,x) \in G\times X$ by $h\cdot (a,x) = (a h^{-1},h(x))$.
\item $\mathsf{ind}_{H}^{G}(X) = G\times_{H} X$ is equipped with the left translation $G$-action, i.e.\ $a \in G$ acts on $[(b,x)]$ by $a\cdot[(b,x)] \coloneqq [(ab,x)]$. 
\item $\mathsf{ind}_{H}^{G}(f) \colon \mathsf{ind}_{H}^{G}(X) \ \longrightarrow \ \bbC$ is the holomorphic function given by  $\mathsf{ind}_{H}^{G}(f)([(a,x)]) = f(x)$. This function is well defined since $f$ is $H$-invariant.
\end{enumerate}

The group $H$ acts freely on $G\times X$ and so $\mathsf{ind}_{H}^{G}(X)$ is a complex manifold germ. It maps holomorphically onto the zero dimensional complex manifold $G/H$ and in fact   $\mathsf{ind}_{H}^{G}(X)$ is (non-canonically) identified with a disjoint union of $\# \, (G/H)$-copies of $X$.
\end{enumerate}
\end{remark}

\begin{definition} Let $(G\, \actleft\, X,f)$ be a $G$-equivariant function germ with isolated singularities.
We define the \strongemph{$G$-equivariant Milnor number} $\mu_f^{G}$ of the germ  to be the dimension of the $G$-invariants in the Milnor ring $\mathsf{Milnor}_{f}(X)$ of $f$.
\end{definition}

By definition $\mathsf{Milnor}_{f}(X)$ is the ring of global analytic function on the closed analytic subgerm in 
$X$ defined by the sheaf of Jacobian ideals $\mathcal{J}(f) \coloneqq \op{image}\Big[\cT_{X} \xrightarrow{(-)\iprod \, df} \cO_X \Big]$
of $f$. Since by assumption $f$ has isolated critical points, we
have that 
\[
\mathsf{Milnor}_{f}(X) = \bigoplus_{x \in \mathsf{crit}(f)} \, \mathsf{Milnor}_{f,x},
\]
and so
\[
\mu_f^G = \dim_{\bbC} \left(\mathsf{Milnor}_{f}(X)\right)^{G} \, = \, 
\dim_\mathbb{C} \left(\bigoplus_{x \in \mathsf{crit}(f)} \, \mathsf{Milnor}_{f,x}\right)^G.
\]
Equivalently, $\mu_{f}^{G}$ is the dimension of the $G$-invariants $T_{\univ_{f},u_{0}}^{G}$ in the 
tangent space of the base of the universal unfolding at the base point $u_{0} \in \univ_{f}$.
Note that by definition we have $0 \, \leq \, \mu_{f}^{G} \, \leq \mu_{f}$.

\begin{example} \ Let as before $((\bbZ/N)\,  \actleft \, X,f)$ denote the 
germ of $(\bbC,0,z^{N})$ viewed as $\bbZ/N$-equivariant germ for the action given by a primitive $N$-th root of unity.   In this case the Milnor ring is $\bbC\{z\}/(z^{N-1}) \cong \bbC^{N-1}$ and
$(\mathbb{C}\{z\}/(z))^{\bbZ/N} \cong \bbC$, 
and so $\mu_{f} = N-1$ and $\mu_{f}^{\bbZ/N} =  1$.
\end{example}
 
In the $G$-equivariant context, if the finite group $G$ acts on the germ $X$ preserving $f$, then, as explained in the previous section, the group $G$ will act on the universal unfolding $(\univ_{f},u_{0})$. In particular, the linear action of $G$ on the Milnor ring  $\mathsf{Milnor}_{f}(X)$ is compatible with the linear action of $G$  on the tangent space $T_{\univ_{f},u_{0}}$. 

With this picture in mind we now have the following

\begin{definition}
We will say that a  $G$-equivariant germ  $(G \, \actleft \, X,f)$ of a function with isolated singularities is  \strongemph{strongly atomic} if, for a general point $g \in \univ_f^{G}$, the $G$-invariant function $g \colon X \to \bbC$ has a unique critical point, and we will say that $(G\, \actleft \, X,f)$ is  \strongemph{atomic} if, for a  general point $g \in \univ_f^{G}$, the function $g$  has a single $G$-orbit of critical points.
\end{definition}

Note that strongly atomic equivariant germs of functions are automatically atomic.
In the other direction, if a  $G$-equivariant germ  $(G\, \actleft\, X,f)$ is atomic, then the single singular fiber of $f$ will be a $G$-stable hypersurface with finitely many isolated singularities which are transitively permuted by the action of $G$. In other words, $f$ will have finitely many isolated critical points which form a single $G$-orbit with stabilizer $H \subset G$ (defined up to conjugation), and such that 
the germ of $(X,f)$ at each critical point $s \in \mathsf{crit}(f) \subset X$ is an $H$-equivariant germ of a function with isolated singularities. In other words an atomic $G$-equivariant germ has a generic $G$-equivariant unfolding which splits into a $G$-orbit of $H$-equivariant strongly atomic singularity germs. This constrained behavior of the $G$-equivariant unfolding of atomic singularities reflects the minimality of the deformation and is tightly related to the structure of the universal deformation.

Thus, unfolding spaces — particularly  the base of the universal deformation — provide the natural geometric framework for understanding the infinitesimal deformations of singularities, while the \(G\)-action further refines this picture by isolating those deformation directions that are compatible with the symmetry. The notion of atomicity for a \(G\)-equivariant isolated hypersurface singularity relies on the universal deformation structure since it ensures that, in a generic \(G\)-symmetric unfolding, the critical locus remains as simple as possible.

\medskip
The first general observation we have is the following.

\begin{proposition}
\begin{enumerate}[wide,label=(\roman*)]
    \item The moduli problem for atomic $G$-equivariant isolated hypersurface singularities of fixed dimension $d$, fixed Milnor number $\mu$, and fixed equivariant Milnor number $\mu^{G}$, is representable by an algebraic Deligne--Mumford stack of finite type 
    $\mycal{M}_{d,\mu,\mu^{G}}$, defined over $\overline{\bbQ}$.
    \item The moduli stack  $\mycal{M}_{d,\mu,\mu^{G}}$ can be thickened canonically $\mycal{M}_{d,\mu,\mu^{G}}\subset \mycal{U}_{d,\mu,\mu^{G}}$ to an analytic smooth orbifold  $\mycal{U}_{d,\mu,\mu^{G}}$ of dimension $\mu$, equipped with a $G$-action, and such that 
    \[
    \mycal{M}_{d,\mu,\mu^{G}} \ = \ \left(\mycal{U}_{d,\mu,\mu^{G}}\right)^{G}.
    \]
    \item The formal completion 
    \[
    \mycal{U}_{d,\mu,\mu^{G}}^{\wedge}
    \]
    of the thickened moduli stack $\mycal{U}_{d,\mu,\mu^{G}}$
    along $\mycal{M}_{d,\mu,\mu^{G}}$ is a smooth generally non-algebraizable formal Deligne--Mumford stack of finite type over $\overline{\bbQ}$.
\end{enumerate}
\end{proposition}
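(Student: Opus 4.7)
The overall strategy is to realize $\mycal{M}_{d,\mu,\mu^G}$ as an algebraic quotient stack by passing from germs to a finite-dimensional jet space via finite determinacy, then to assemble the thickening $\mycal{U}_{d,\mu,\mu^G}$ by gluing the local universal unfoldings $\univ_f$ from Section~\ref{ssec:fx0}, and finally to take a formal completion for part (iii).

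For part (i), I would first establish a $G$-equivariant version of the Mather--Tougeron finite determinacy theorem: any $G$-equivariant isolated hypersurface singularity in $d$ variables with Milnor number $\mu$ is $k$-determined for a uniform bound $k=k(d,\mu)$, obtained by averaging the Tougeron vector-field argument over the finite group $G$. Combined with the $G$-equivariant Cartan linearization of the stabilizer $H=\op{Stab}_G(p)$ at a critical point $p$, and with the induction construction of item~(3) of Remark~\ref{rem:induce}, this reduces the moduli problem to strongly atomic $H$-equivariant germs at the origin of a linear $H$-representation $V$. For each conjugacy class of subgroups $H\subset G$ and each faithful $d$-dimensional $H$-representation $V$ (finitely many, defined over $\overline{\bbQ}$ since $G$ is finite), I would consider, inside the affine $\overline{\bbQ}$-scheme of $H$-invariant $k$-jets of functions on $V$ vanishing to order $\geq 2$ at the origin, the locally closed subscheme where the Milnor number equals $\mu$ and the equivariant Milnor number equals $\mu^G$; these are algebraic conditions because the Milnor ring is a finite-dimensional quotient by an algebraic ideal. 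The linear algebraic group of $H$-equivariant $k$-jets of diffeomorphisms fixing the origin acts on this subscheme with finite stabilizers (given by $\pi_0$ of the automorphism group of the germ, a classical fact for isolated singularities), so the resulting quotient is an algebraic Deligne--Mumford stack of finite type over $\overline{\bbQ}$. Assembling these quotient stacks over the pairs $([H],V)$ yields $\mycal{M}_{d,\mu,\mu^G}$.

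For parts (ii) and (iii), the intrinsic construction of Section~\ref{ssec:fx0} produces, over each chart of $\mycal{M}_{d,\mu,\mu^G}$, an analytic family of universal unfoldings $\univ_f$ of dimension $\mu$ carrying a natural $G$-action. By universality these local families glue to an analytic smooth orbifold $\mycal{U}_{d,\mu,\mu^G}$ of dimension $\mu$ with a $G$-action, containing $\mycal{M}_{d,\mu,\mu^G}$ as a closed substack. The identification $\mycal{M}_{d,\mu,\mu^G}=(\mycal{U}_{d,\mu,\mu^G})^G$ follows from the infinitesimal calculation $T_{\univ_f,u_0}^G\cong (\mathsf{Milnor}_f)^G$ of dimension $\mu^G$ together with the universal property, which ensures that $G$-fixed points of $\univ_f$ correspond precisely to $G$-equivariant unfoldings, i.e.\ to atomic $G$-equivariant germs. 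Taking the formal completion along $\mycal{M}_{d,\mu,\mu^G}$ inside $\mycal{U}_{d,\mu,\mu^G}$ then yields the smooth formal Deligne--Mumford stack $\mycal{U}_{d,\mu,\mu^G}^{\wedge}$ of finite type over $\overline{\bbQ}$, since each finite-order thickening of the algebraic stack $\mycal{M}_{d,\mu,\mu^G}$ descends to $\overline{\bbQ}$; non-algebraizability in general is forced by the transcendental nature of the analytic coordinates coming from K.~Saito's Frobenius structure on the universal unfolding.

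The hard part will be step (i): securing the uniform finite-determinacy bound in the $G$-equivariant setting so that the determinacy order is uniform across the moduli, and then checking that the resulting quotient stack is separated, Deligne--Mumford, and genuinely descends to $\overline{\bbQ}$. Once this algebraic foundation is in place, parts (ii) and (iii) follow by standard $G$-equivariant deformation theory of isolated hypersurface singularities combined with K.~Saito's construction of the universal unfolding.
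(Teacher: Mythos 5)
Your proposal is correct and follows essentially the same route as the paper: construct $\mycal{M}_{d,\mu,\mu^{G}}$ as a quotient of the ($G$-invariant) universal unfolding data by germ automorphisms acting with finite stabilizers, obtain the thickening $\mycal{U}_{d,\mu,\mu^{G}}$ from K.~Saito's universal unfoldings with their $G$-action, and pass to the formal completion for (iii). The only difference is one of explicitness: you spell out the $G$-equivariant finite-determinacy/jet-space reduction and the action of the jet-diffeomorphism group, which the paper compresses into the remark that the ring of $G$-invariant jets is Noetherian, so your argument is a legitimate fleshing-out of the same proof rather than a new approach.
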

\begin{proof}
(i) \ The moduli stack \( \mycal{M}_{d,\mu,\mu^G} \) is constructed as the quotient of the \( G \)-invariant universal  unfolding base space by the automorphism group of the germ. By Definition \ref{def:G-equivariant isolated singularity}, the automorphisms preserving the \( G \)-action form a proper groupoid with finite stabilizers, ensuring the stack is Deligne--Mumford. Finite type over $\overline{\bbQ}$ follows from the fact that the ring of \( G \)-invariant jets is Noetherian.

(ii) \ The thickened moduli $\mycal{U}_{d,\mu,\mu^G}$ is the quotient of the germ of the universal unfolding base space along its $G$-invariant part. The \( G \)-action on the universal unfolding induces the structure of a \( G \)-equivariant analytic germ. 

(iii) \ Smoothness follows from the unobstructedness of \( G \)-equivariant deformations. Non-algebraiza\-bi\-lity arises as the thickening captures formal but non-convergent deformations. The stack structure persists via formal GAGA for Artin stacks.
\end{proof}

Next we look at the special structures one has on these moduli.

\begin{proposition}
\begin{enumerate}[label=(\roman*),wide]
    \item The moduli space $\mycal{U}_{d,\mu,\mu^{G}}$ carries a natural structure of a smooth  F-stack in the sense of Hertling-Manin. In particular 
    its tangent bundle $T$ is equipped with a commutative, associative 
    product $\fp \, \colon \, T\times T  \ \to \ T$ and an Euler vector field 
    $\Eu \in \, \Gamma(\mycal{U}_{d,\mu,\mu^{G}},T)$.
    \item The F-stack structure on $\mycal{U}_{d,\mu,\mu^{G}}$
is $G$-equivariant and hence the tangent bundle of the moduli stack 
\[
\mycal{M}_{d,\mu,\mu^{G}} \ = \ \left(\mycal{U}_{d,\mu,\mu^{G}}\right)^{G}
\]
inherits a  well-defined multiplication and an Euler field $\mathbf{eu} \coloneqq \Eu|_{\mycal{M}_{d,\mu,\mu^{G}}}$, such that the reduced spectrum of the operator $\mathbf{eu}\fp\, (-) \ \colon T_{\mycal{M}_{d,\mu,\mu^{G}}} \ \to \  T_{\mycal{M}_{d,\mu,\mu^{G}}}$ consists of a single point.
\end{enumerate}
\end{proposition}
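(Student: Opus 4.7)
The plan is to install the structures pointwise using the identification $T_{\mycal{U}_{d,\mu,\mu^{G}},[g]} \cong \mathsf{Milnor}_{g}(\ball)$ from Section~\ref{ssec:fx0}, check analytic dependence on $[g]$ to globalize, and then exploit atomicity to control the spectrum of $\Eu\fp(-)$ on the fixed locus.

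For (i), the Frobenius product and Euler field on the ambient unfolding $\mycal{U}_{d,\mu}$ have already been produced in Section~\ref{ssec:fx0} by descending K.~Saito's construction on the leaf space $[\univop_{\ball,\mu}/\folunivop]$: at each $[g]$, the isomorphism $T_{[g]} \cong \mathsf{Milnor}_{g}(\ball)$ imports the ring structure as $\fp$, while $\Eu$ is the image of the Fr\'{e}chet Euler field on $\univop_{\ball}$ under $T\univop_{\ball}/\folunivop \to T\mycal{U}_{d,\mu}$. The moduli $\mycal{U}_{d,\mu,\mu^{G}}$ sits inside $\mycal{U}_{d,\mu}$ as a union of connected components (cut out by fixing the additional discrete invariant $\mu^{G}$), so the Frobenius product and Euler field simply restrict. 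That these assemble into an F-stack in the sense of Hertling-Manin amounts to the identity $[\Eu,\fp] = \fp$ and the Hertling-Manin integrability of $\fp$, both of which are verified fiberwise from the ring structure on the Milnor algebra together with the explicit description of $\Eu_{[g]}$ as the class of $g$ itself in $\mathsf{Milnor}_{g}(\ball)$.

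For (ii), the $G$-equivariance is automatic from functoriality: $G$ acts on $\univop_{\ball}$ by pullback of holomorphic functions through its action on $\ball$, this action preserves the foliation $\folunivop$, commutes with the Fr\'{e}chet scaling, and therefore descends to a holomorphic action on $\mycal{U}_{d,\mu,\mu^{G}}$ for which $\fp$ and $\Eu$ are equivariant. Taking $G$-fixed points yields the multiplication and the Euler vector field $\mathbf{eu} = \Eu|_{\mycal{M}_{d,\mu,\mu^{G}}}$ on the tangent bundle of $\mycal{M}_{d,\mu,\mu^{G}}$.

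The main point, and the one I expect to require the most care, is the claim that the reduced spectrum of $\mathbf{eu}\fp(-)$ is a single point. I would proceed in two steps. First, at a general point $[g] \in \mycal{M}_{d,\mu,\mu^{G}}$, atomicity says that $\mathsf{crit}(g)$ is a single $G$-orbit $\{x_{1},\dots,x_{k}\}$, and $G$-invariance of $g$ forces $g(x_{1}) = \cdots = g(x_{k}) = c$ for some common critical value $c$. By the pointwise description of $\bkappa_{[g]} = \Eu_{[g]}\fp(-)$ recalled in Section~\ref{ssec:fx0}, this operator preserves the Milnor decomposition $\mathsf{Milnor}_{g}(\ball) = \bigoplus_{i} \mathsf{Milnor}_{g,x_{i}}$ and acts on the $i$-th summand with a single generalized eigenvalue $g(x_{i}) = c$. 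Hence on $T_{\mycal{U}_{d,\mu,\mu^{G}},[g]}$ the operator $\Eu\fp(-)$ has $c$ as its unique eigenvalue, and the same is true for the restriction to the $G$-invariants, which is $T_{\mycal{M}_{d,\mu,\mu^{G}},[g]}$. Second, I would propagate this statement from a generic $[g]$ to all of $\mycal{M}_{d,\mu,\mu^{G}}$ using upper semicontinuity: the set of eigenvalues defines a closed analytic subspace of $\mycal{M}_{d,\mu,\mu^{G}} \times \bbA^{1}$, whose set-theoretic projection has generic fiber a single point, and whose degeneration on special strata (coalescence of critical points into a non-Morse germ) cannot introduce new eigenvalues because any limiting critical point carries the common limiting critical value. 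The obstacle in this last step is the usual one for spectral statements in the presence of non-reduced structure: one must argue that passing to the $G$-fixed locus does not accidentally separate eigenvalues, which follows because $G$ acts by a scalar ($c$) on the spectrum and so commutes with every spectral projector.
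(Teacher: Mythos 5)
Your proposal is correct and follows essentially the paper's own route: Saito's Frobenius structure on the unfolding base supplies $\fp$ and $\Eu$ (whose value at $[g]$ is the class of $g$ itself in the Milnor algebra), $G$-equivariance descends everything to the fixed locus, and the single-eigenvalue claim comes from the fact that the eigenvalues of $\Eu\fp(-)$ are the critical values, which all coincide for a $G$-invariant function whose critical set is a single $G$-orbit. The only remark is that your generic-point-plus-semicontinuity step is superfluous: every point of $\mycal{M}_{d,\mu,\mu^{G}}$ parametrizes a $G$-equivariant germ, which by definition has a single critical value, so the spectrum is a single point at every point of the fixed locus with no propagation argument needed (and, relatedly, $\mycal{U}_{d,\mu,\mu^{G}}$ is a $G$-equivariant thickening of the moduli along the invariant locus rather than a union of connected components of $\mycal{U}_{d,\mu}$ cut out by the invariant $\mu^{G}$, which is only defined on that locus).
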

\begin{proof}
\ (i) \ The tangent space at a point $\big(G\, \actleft\, X,f\big)$ of \( \mycal{U}_{d,\mu,\mu^G}\) is the  Milnor algebra $\cO(X)/J(f)$ of $f$ and the value of $\Eu$ at this point is given by 
$f \in \cO(X)/J(f)$.  Saito's Frobenius structure on the  base of the universal unfolding provides the multiplication, and everything is compatible with the \( G \)-action. The Euler field $\Eu$ scales deformations homogeneously and is preserved under \( G \).

(ii) \ On the \( G \)-fixed locus, the multiplication reduces to the tensor product of invariant subspaces. The restriction \( \mathbf{eu}\fp (-) \) of the multiplication by Euler vector field to the subbundle of $G$-invariants acts with a single eigenvalue as deformations are graded by \( G \)-invariant monomials, collapsing the spectrum.
\end{proof}

We will write $\mycal{{}^{strong}M}_{d,\mu,\mu^{G}}^{G}$ for the union of all components of $\mycal{M}_{d,\mu,\mu^{G}}
$ that parametrize strongly atomic $G$-equivariant germs of functions. 

With these structures in place we can now define \strongemph{the set of atoms of $G$-equivariant isolated hypersurface singularities} by setting  
\[
\mathsf{Atoms}_{G}^{\op{sing}} = \pi_{0} \Bigl(\bigsqcup_{H \subset G}\bigsqcup_{d,\mu,\mu^{H}} \mycal{{}^{strong}M}_{d,\mu,\mu^{H}}^{H}\Bigr)\Big/G.
\]
where $G$ acts on the set of its subgroups by conjugation.

The set  $\mathsf{Atoms}_{G}^{\op{sing}}$ is countable and to every $G$-equivariant isolated hypersurface  singularity $(G\, \actleft\, X,f)$ , one can associate its atomic composition which consists of all atoms represented by atomic singularities appearing in the $G$-invariant unfolding of $f$. Thus we get a chemical formula of $(G\, \actleft\, X,f)$ which is a finite combination of atoms with non-negative integer coefficients. 

\medskip

Given two $G$-equivariant germs  $(G\, \actleft\, X,f)$ and $(G\, \actleft\,Y,y_{0},g)$ of isolated hypersurface singularities, we will denote their external sum by
$(X\times Y, (x_{0},y_{0}), f\boxplus g)$, where $(f\boxplus g)(x,y) \coloneqq f(x) + g(y)$ for all $(x,y) \in X\times Y$.  The external sum is again a germ of an isolated hypersurface singularity, which is $G$-equivariant  for the diagonal action of $G$ on $X\times Y$.
The Thom-Sebastiani formula for the external sum of potentials 
implies that we have an isomorphism of complex vector spaces
\[
\mathsf{Milnor}_{f \boxplus g} = \mathsf{Milnor}_{f} \otimes_{\bbC} \mathsf{Milnor}_{g},
\]
which is compatible with $G$-actions. 

\medskip

Splitting the external sum of $G$-equivariant singularity germs into its atomic composition gives a well defined map 
\begin{equation} \label{eq:split}
 \mathsf{Atoms}_{G}^{\op{sing}} \, \times \,  \mathsf{Atoms}_{G}^{\op{sing}} \ \longrightarrow \  \op{Maps}_{\op{fsupp}}\left( \mathsf{Atoms}_{G}^{\op{sing}}, \, \bbZ_{\geq 0} 
\right)
\end{equation}
where $\op{Maps}_{\op{fsupp}}\left( \mathsf{Atoms}_{G}^{\op{sing}}, \, \bbZ_{\geq 0}\right)$
denotes all maps from $\mathsf{Atoms}_{G}^{\op{sing}}$ to $\bbZ_{\geq 0}$ with finite support.

Explicitly the map \eqref{eq:split} is defined as follows. Suppose $H_{1}, H_{2} \subset G$ are two subgroups and let $\balpha_{1} \in \mycal{{}^{strong}M}_{d_{1},\mu_{1},\mu_{1}^{H_{1}}}$ and $\balpha_{2} \in \mycal{{}^{strong}M}_{d_{2},\mu_{2},\mu_{2}^{H_{2}}}$ be two $G$-atoms of singularities, that are represented by equivariant germs 
$(H_{1}\, \actleft\, X_{1},f_{1})$ and  $(H_{2}\, \actleft\, X_{2},f_{2})$ respectively. Then the map \eqref{eq:split} sends the pair $(\balpha_{1},\balpha_{2})$ to the formal sum of atoms with multiplicities appearing in the atomic composition of the Thom-Sebastiani sum
\[
\left( G\, \actleft\, \left(\mathsf{ind}_{H_{1}}^{G}(X_{1})\times \mathsf{ind}_{H_{2}}^{G}(X_{2})\right), \mathsf{ind}_{H_{1}}^{G}(f_{1})\boxplus \mathsf{ind}_{H_{1}}^{G}(f_{1})\right).
\]

\medskip

Extending the map \eqref{eq:split} by multiplicativity then induces a  product structure
on \linebreak 
$\op{Maps}_{\op{fsupp}}\left( \mathsf{Atoms}_{G}^{\op{sing}}, \, \bbZ_{\geq 0} 
\right)$  whose structure constants are integers. This equips the additive semigroup 
$\op{Maps}_{\op{fsupp}}\left( \mathsf{Atoms}_{G}^{\op{sing}}, \, \bbZ_{\geq 0} 
\right)$ with the structure of a unital commutative semiring and the abelian group 
$\op{Maps}_{\op{fsupp}}\left( \mathsf{Atoms}_{G}^{\op{sing}}, \, \bbZ 
\right)$ with the structure of a unital commutative ring.

\begin{example} \label{ex:TS} Fix a positive integer $N > 1$.
Let $X_{1} = \bbC$ with coordinate $x_{1}$, $X_{2} = \bbC$ with coordinate $x_{2}$, and
let $f_{1}(x_{1}) = x_{1}^{N}$ and $f_{2}(x_{2}) = x_{2}^{N}$. The group $G$ of $N$-th roots of unity acts on $X_{1}$ and $X_{2}$ by 
\[
\zeta \mapsto (x_{1}\to \zeta x_{1}) \quad \text{and}  \quad k \mapsto (x_{2}\to \zeta^{-1}x_{2}).
\]
The equivariant germs $(G\, \actleft\, X_{1},f_{1})$ and $(G\, \actleft\, X_{2},f_{2})$ are atomic and their external sum is 
$(G\, \actleft\, X_{1}\times X_{2},x_{1}^{N} + x_{2}^{N})$.
Consider the following $G$-equivariant (nonuniversal) unfolding of $(G\, \actleft\, X_{1}\times X_{2},x_{1}^{N} + x_{2}^{N})$
\[
F \, \colon \, X_{1}\times X_{2} \times \bbC^2 \ \longrightarrow \ \bbC, \qquad F(x,z) = x_{1}^{N} + x_{2}^{N} - z_{1}x_{1}x_{2} + z_{2}.
\]
Fix a general point $z\in \bbC^2$, s.t. $z_{1} \neq 0$. With this fixed $z$ we can view $F(x,z)$ as a function of $x$ whose Jacobian ideal is generated by 
\[
\partial_{x_{1}}F(x,z) = x_{1}^{N-1} - (z_{1}/N) x_{2} \quad  \text{and} 
\quad 
\partial_{x_{2}}F(x,z) = x_{2}^{N-1} - (z_{1}/N) x_{1}.
\]
Thus 
$F(x,z)$ has $(N-1)^2$ Morse critical points which map to $N-1$ critical values which are equal to $z_{2}$, $z_{2} - (N-2)c_{1}$, \ldots, $z_{2} - (N-2)c_{N-2}$ where $c_{1}, \ldots, c_{N-2}$ label the distinct $(N-2)$nd roots of $(z_{1}/N)^{N}$. 
Over the critical value $z_{2}$ there is a single critical point $(x_{1},x_{2}) = (0,0)$, and over each of the other critical values there is a set of $N$ critical points on which $G$ acts simply transitively. 
More precisely, over the critical value $z_{2} - (N-2)c_{i}$ we have $N$ Morse critical points with coordinates 
$(x_{1},x_{2}) = (\beta,(z_{1}/N)\beta^{-1})$, where $\beta$ runs over the $N$th roots of $c_{i}$.
Thus in the atomic composition of $(G\, \actleft\, X_{1}\times X_{2},x_{1}^{N} + x_{2}^{N})$ we have two atoms. One is isomorphic to the $G$-equivariant $2$-dimensional Morse singularity $(\bbC^2,x_{1}x_{2})$ equipped with the $G$-action $\zeta \colon (x_{1},x_{2}) \mapsto (\zeta x_{1},\zeta^{-1} x_{2})$, and the other  is isomorphic to the $2$-dimensional Morse singularity $(\bbC^2,x_{1}x_{2})$ viewed as an $H$-equivariant germ of function with isolated singularities, where $H = \{1\} \subset G$ is the trivial subgroup. In the chemical formula of $(G\, \actleft\, X_{1}\times X_{2},x_{1}^{N} + x_{2}^{N})$, the first atom enters with multiplicity $1$, while the second atom enters with multiplicity $N-2$.
\end{example}

\begin{remark}
\begin{enumerate}[wide]
\item The structure theory of atoms of equivariant singularities  can be upgraded to the setting of F-bundles discussed in the next section.
\item In the case of the trivial group $G$, we recover the discussion from the previous section: we have a unique atom corresponding to the Morse quadratic singularity $x_{1}^2 + \cdots + x_{d}^2 = 0$ since Morsification shows that each isolated singularity $(X,x_{0},f)$ unfolds to a disjoint union of $\mu_{f}$ quadratic Morse singularities.
\item One can stabilize the above definition of atoms of $G$ equivariant isolated singularities by adding Morse singularities with trivial $G$-action.
\item The analogue of Remark~\ref{rem:witt.move} also holds for $G$-equivariant germs of functions with isolated singularities. More precisely, if $[f] \in \mycal{M}_{d,\mu,\mu^{G}} = (\mycal{U}_{d,\mu,\mu^{G}})^{G}$ and we know that the Frobenius powers of $\Eu$ span $T_{\mycal{M}_{d,\mu,\mu^{G}},[f]}$, then the atomic composition of $[f]$ can not split further for germs $[g]$ in a neighborhood of $[f]$ in $\mycal{M}_{d,\mu,\mu^{G}}$.
\end{enumerate}
\end{remark}

\section{Generalities on \texorpdfstring{$F$}{F}-bundles} \label{sec:Fbundles}

In this section, we introduce the notion of an F-bundle, which captures variations of  linearizations of non-commutative geometry, given by the formal part of non-commutative Hodge 
structures (see \cite{Katzarkov_Hodge_theoretic_aspects}).

\subsection{\texorpdfstring{$F$}{F}-bundles} \label{sec:definition_of_F-bundles}

Our main objects of interest will be defined in the non-archimedean analytic setting.  

\subsubsection{Definition of \texorpdfstring{$F$}{F}-bundles}
\label{sssec-rigidF}

We fix a non-archimedean normed field $\bbk$ of characteristic $0$. Let $\bbD$ denote the germ at $0$ in a $\bbk$-analytic unit disk with coordinate $u$. 

\begin{definition} \label{def:F-bundle} 
A \strongemph{non-archimedean $\bbk$-analytic F-bundle}, or simply an  \strongemph{F-bundle} is a triple  $(\cH,\nabla)/B$, in which  
\begin{enumerate}[wide]
\item $B$ is a smooth $\bbk$-analytic super variety or a germ of smooth $\bbk$-analytic super variety along an even closed smooth $\bbk$-analytic subvariety.
\item $\cH$ is a $\bbk$-analytic super vector bundle  over 
  $B \times \bbD$.
\item $\nabla$ is a meromorphic flat connection on $\cH$ with poles at most at $B\times \{0\}$, such that for any locally defined vector field $\xi$ on $B$ the operators 
$\nabla_{u^2\partial_u}$ and $\nabla_{u\xi}$ 
 are regular, i.e.\ have no poles.
\end{enumerate}
\end{definition}

\begin{remark} \label{rem:F-bundle}
\begin{enumerate}[wide]
\item[(i)] The letter $F$ in the name F-bundle comes from \emph{Frobenius}, since F-bundles are variants of the Frobenius manifolds introduced by Dubrovin \cite{Dubrovin_Geometry_of_2D} for the study of WDVV equations. An F-bundle is also a non-archimedean version of the notion of TE-structures introduced by Hertling \cite{hertling_tt*_geometry}. The complex analytic TE-structures are not suitable for our $\mathsf{A}$-model considerations without the convergence assumptions on the Gromov-Witten potential. Furthermore, the spectral decomposition theorem (Theorem~\ref{thm:K-decomposition}) does not hold in the complex analytic setting due to the Stokes phenomenon.
By working in the non-archimedean setting, we avoid these issues, but retain most of the intuition coming  from the complex picture.

\item[(ii)] From the perspective of non-commutative geometry, an F-bundle is a non-archimedean analytic version of the de Rham part of a variation of non-commutative Hodge structures (see \cite[\S2.1.5,\S2.3.1]{Katzarkov_Hodge_theoretic_aspects}).
Examples of F-bundles arise naturally from deformations of
$A_\infty$ categories, from Gromov-Witten invariants, and from deformations of singularities.

\item[(iii)] In examples one sometimes  encounters  F-bundles which have additional tame poles along divisors in $B$. So, in practice, it is sometimes necessary to work with \strongemph{logarithmic F-bundles}, i.e.\  with triples  $(\cH,\nabla)/B$ in which $B$ and $\cH$ are as before but $\nabla$ is a meromorphic flat connection on $\cH$ with poles at  most at $(B\times \{0\})\cup (\mathfrak{D}\times \bbD)$, for some even strict normal crossings divisor $\mathfrak{D} \subset B$. The logarithmic  F-bundle condition then is that for all locally defined vector fields $\xi$ on $B$ which are tangent to $\mathfrak{D}$ the operators  $\nabla_{u^2\partial_u}$ and $\nabla_{u\xi}$ should have no poles. Of course, if $B$ is a non-archimedean super manifold (rather than a germ) and $(\cH,\nabla)/B$ is a logarithmic F-bundle with log poles along $\mathfrak{D}$ we can eliminate the log singularity by simply puncturing $B$ along $\mathfrak{D}$, i.e.\  by restricting  $(\cH, \nabla)$ to $(B - \mathfrak{D})\times \bbD$. This is another advantage of the non-archimedean analytic setting.
\end{enumerate}
\end{remark}

\subsubsection{Variants: formal and complex analytic \texorpdfstring{$F$}{F}-bundles} \label{sssec:formal.and.Canalytic}

While historically prevalent in the mirror symmetry and Gromov-Witten literature, the complex analytic or formal analogues of an F-bundle will not be useful for constructing birational invariants directly.  For disambiguation and ease of reference we will name these variants differently. 
 
\medskip

Suppose $B$ is a complex analytic  super manifold or a germ of a complex analytic super manifold. Let  $\mathbf{D}$ denote a small analytic disk in $\bbC$ centered at zero, or the germ of $\bbC$ at zero, and let $u$ denote the coordinate on $\mathbf{D}$. A  \strongemph{complex analytic F-bundle} consists of $(\cH, \nabla)/B$ where $\cH$ is a complex analytic super vector bundle on $B\times \mathbf{D}$ and $\nabla$ is a meromorphic flat connection on $\cH$ with poles at most at $u=0$, such that for all locally defined vector fields on $B$, the operators $\nabla_{u^2\partial_u}$ and $\nabla_{u\xi}$ have no poles.
 We will say that $(\cH, \nabla)/B$ is a \strongemph{logarithmic complex analytic F-bundle} if $B$ and $\cH$ are as before, and $\nabla$ is a meromorphic flat connection on $\cH$ with poles at most at $(B\times \{0\})\cup(\mathfrak{D}\times \mathbf{D})$ for some even strict normal crossings divisor $\mathfrak{D} \subset B$, such that  for all locally defined vector fields $\xi$ on $B$ which are tangent to $D$, we have that the operators $\nabla_{u^2\partial_u}$ and $\nabla_{u\xi}$ have no poles.

To be consistent with the common mirror symmetry terminology from \cite{Katzarkov_Hodge_theoretic_aspects} or the singularity theory terminology from \cite{hertling_tt*_geometry} we will also call such a triple a \strongemph{(logarithmic) de Rham part of a \nc \ variation of Hodge structures} or  a \strongemph{(logarithmic) TE structure}. 
 
\medskip

Suppose $B$ is a smooth formal super scheme over a field $k$, $\op{char}(k) = 0$. Then a  \strongemph{formal F-bundle defined over $k$} consists of $(\cH, \nabla)/B$, where $\cH$ is a super vector bundle on $B\times \Spf \, k\dbb{u}$ and $\nabla$ is a meromorphic flat connection on $\cH$ with poles at most at $u=0$, such that the operators $\nabla_{u^2\partial_u}$ and $\nabla_{u\xi}$ have no poles. Similarly, a 
\strongemph{formal logarithmic F-bundle defined over $k$} consists of $(\cH, \nabla)/B$ with $B$ and $\cH$ as before, and 
$\nabla$ a meromorphic flat connection on $\cH$ with poles at most 
along $(B\times \{0\})\cup (\mathfrak{D}\times \Spf \, k\dbb{u})$ for an even strict normal crossing divisor $\mathfrak{D} \subset B$, and such that  for all locally defined vector fields $\xi$ on $B$ which are tangent to $D$, we have that the operators $\nabla_{u^2\partial_u}$ and $\nabla_{u\xi}$ have no poles.

\subsection{Operations on \texorpdfstring{$F$}{F}-bundles}

For a given base $B$, F-bundles over $B$ form a category
$\Fbun(B)$ which is a $\bbk$-linear, additive, Karoubi closed tensor category with duals. The unit object in the category $\Fbun(B)$ is the trivialized rank $1$ bundle on $B$ with the trivial connection.
Note that the category $\Fbun(B)$ is not abelian, it is only quasi-abelian (hence an \strongemph{exact category}).
An example of a morphism in $\Fbun(\Sp\bbk)$ without an adequate kernel is the operator of multiplication by $u$:
\[
  \big( \cO, \nabla_{\partial_u}=\partial_u \big) \xrightarrow{u\cdot(-)} \bigg({\cO},\nabla_{\partial_u}=\partial_u-\frac{1}{u}\bigg),\qquad
  u\cdot \partial_u(f)=\bigg(\partial_u-\frac{1}{u}\bigg)(uf)
  \quad \forall f\in {\bbk}\dbb{u}
\]
For any map $\phi\colon B'\to B$, and any F-bundle $(\cH, \nabla)/B$ over $B$ as above, we define the pullback to be the F-bundle over $B'$ given by $(B',\tphi^*\cH,\tphi^*(\nabla))$ where $\widetilde\varphi$ is $\varphi\times{\rm id}_{\bbD}$.
The pullback is an exact tensor functor from $\Fbun(B)$ to $\Fbun(B')$.

\begin{definition}
For two F-bundles $(B_{1},\cH_1,\nabla_1)$ and
$(B_{2},\cH_2,\nabla_2)$, we define their \strongemph{external sum} to be the F-bundle over $B_1\times B_2$ given by 
\[
(B_{1},\cH_1,\nabla_1)\boxplus(B_{2},\cH_2,\nabla_2) \coloneqq \big(B_{1}\times B_{2},\pr_{13}^*\cH_1 \oplus \pr_{23}^*\cH_2,\ pr_{13}^*\nabla_1 \oplus \pr_{23}^*\nabla_2\big),
\]
where $\pr_{13}$ and $\pr_{23}$ denote the projections from $B_1\times B_2\times \bbD$ to $B_1\times \bbD$ and $B_2\times \bbD$ respectively.

Similarly, we define their \strongemph{external product} to be the F-bundle over $B_1 \times B_2$ as
\[
(B_{1},\cH_1,\nabla_1) \boxtimes (B_{2},\cH_2,\nabla_2) \coloneqq \big(B_{1}\times B_{2}, p_{13}^{*} \cH_1\otimes p_{23}^{*}\cH_2,\ p_{13}^{*}\nabla_1\otimes \id +
\id\otimes p_{23}^{*}\nabla_2\big) .\]
\end{definition}

\begin{remark}
One can define external sums and products of any finite collections of F-bundles.
For the external sums and products of finitely many copies of the same F-bundle, one can replace the finite indexing set labeling the copies by a reduced finite scheme (or finite analytic space)  over $\bbk$, i.e.\ by the finite set of its
$\obbk$-points endowed with the action of the Galois group $\Gal(\obbk/\bbk)$.
\end{remark}

\subsection{Natural symmetries of \texorpdfstring{$F$}{F}-bundles}

Write $\Aff(1) = \Gm\ltimes \Ga$ for the group of affine linear automorphisms of the analytic affine line $\mathbb{A}^{1,\op{an}}_{\bbk}$.
The group $\Aff(1)\times\Ga$ acts on the category $\Fbun(B)$ of F-bundles over a base $B$ as follows.
We denote elements in the group as triples
$(\lambda,c_{-2},c_{-1}) \in (\Gm \ltimes \Ga)\times \Ga = \Aff(1)\times \Ga$.
\begin{description}[wide]
  \item[\emph{Dilation}:] The component $\lambda\in \Gm\subset \Aff(1)$ acts on $\Fbun(B)$ by pullback via $\id_B \times d_\lambda \colon B\times \bbD \to B\times \bbD$, where $d_\lambda\colon u \mapsto \lambda u$ denotes the dilation of the coordinate $u$.
  We denote the action by $D_\lambda$.
  \item[\emph{Exponential shift}:] The component $c_{-2} \in \Ga \subset \Aff(1)$ acts on $\Fbun(B)$ by tensoring with the rank-one F-bundle
  \begin{equation} \label{eq:exponential_shifts}
  \left(B;\mathcal{O}_{B\times \bbD},\ \nabla = d + \frac{c_{-2}}{u^2} du\right).
  \end{equation}
  We denote the action by $\Se_{c_{-2}}$.
  \item[\emph{Power shift}:] The component $c_{-1}$ in the second copy of $\Ga$ acts on $\Fbun(B)$ by tensoring with the rank-one F-bundle
  \begin{equation} \label{eq:power_shifts}
    \left(B;\mathcal{O}_{B\times \bbD},\ \nabla = d + \frac{c_{-1}}{u} du\right).
  \end{equation}
  We denote the action by $S_{c_{-1}}$.
\end{description}

\begin{remark} \label{rem:action}
  The exponential shift corresponds to multiplying the flat sections in an F-bundle  with $e^\frac{c_{-2}}{u}$, and the power shift corresponds to multiplying  with $u^{c_{-1}}$, whence their names.
\end{remark}

\subsection{Maximal and overmaximal \texorpdfstring{$F$}{F}-bundles} \label{sec:maximal}

Let $(\cH,\nabla)/B$ be an F-bundle (resp.\ a logarithmic F-bundle with $\nabla$ having log poles along an even strict normal crossings divisor $\mathfrak{D} \subset B$).
For any vector field $\xi$ on $B$ (resp.\ a vector field on $B$ tangent to $\mathfrak{D}$ along $\mathfrak{D}$), the restriction of the operators $\nabla_{u\xi}$ to $\cH_{|u=0}$ gives a  map
\[ 
\mu \colon T_{B} \longto \End(\cH_{|u=0}),
\qquad \text{respectively} \ \mu \colon T_{B}(-\log \mathfrak{D}) \longto \End(\cH_{|u=0}). 
\]
For any geometric point $b$ in $B$, we have fiberwise maps
\[
\mu_b \colon T_{B,b} \to \End(\cH_{(b,0)}), 
\qquad \text{respectively} \ \mu_{b} \colon T_{B,b}(-\log \mathfrak{D}) \longto \End(\cH_{(b,0)}). 
\]
The flatness of $\nabla$ implies that the image of $\mu_b$ consists of commuting operators. 

\begin{definition} \label{def:overmaximal}
An F-bundle $(\mathcal H,\nabla)/B$ over $B$ is \strongemph{maximal} (resp.\ \strongemph{
over-maximal}) at a geometric point $b\in B$ if there exists a cyclic vector for the action $\mu_{b}$, i.e.\ a vector $h \in \cH_{(b,0)}$ such that the map $\ev_h \circ \mu_b\colon T_{B,b} \to \cH_{(b,0)}$ is an isomorphism (resp.\ epimorphism). 

Similarly a logarithmic F-bundle $(\mathcal H,\nabla)/B$ over $B$ is \strongemph{maximal} (resp.\ \strongemph{
over-maximal}) at a geometric point $b\in B$ if there exists a cyclic vector for the action $\mu_{b}$, i.e.\ a vector $h \in \cH_{(b,0)}$ such that the map $\ev_h \circ \mu_b\colon T_{B,b}(-\log \mathfrak{D})  \to \cH_{(b,0)}$ is an isomorphism (resp.\ epimorphism).
  
A (logarithmic)  F-bundle is maximal (resp.\ over-maximal) if it is maximal (resp.\ over-maximal) everywhere.
\end{definition}

Note that being maximal (resp.\ over-maximal) is an open condition. The significance of being over-maximal or maximal is captured by the following

\begin{lemma} \label{lem:algebra}
For a  F-bundle $(\cH,\nabla)/B$ which is over-maximal at $b\in B$, the image of $\mu_b$ is a unital commutative associative super algebra over $\bbk$, and the fiber $\cH_{(b,0)}$ is a free rank one module over 
$\im \mu_b \simeq T_{B,b}/\ker \mu_b$.

Similarly, for a logarithmic F-bundle $(\cH,\nabla)/B$ which is over-maximal at $b\in B$, the image of $\mu_b$ is a unital commutative associative super algebra over $\bbk$, and the fiber $\cH_{(b,0)}$ is a free rank one module over 
$\im \mu_b \simeq T_{B,b}(-\log \mathfrak{D})/\ker \mu_b$.
\end{lemma}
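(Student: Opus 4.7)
The plan is to use the cyclic vector $h \in \cH_{(b,0)}$ provided by over-maximality as a lever to transport the algebra structure from $\End(\cH_{(b,0)})$ onto $\im\mu_b$. Write $V \coloneqq \cH_{(b,0)}$ and $A \coloneqq \im\mu_b \subseteq \End(V)$. First I would observe that the operators in $A$ super-commute: flatness of $\nabla$ combined with $[u\xi_1,u\xi_2] = u^2[\xi_1,\xi_2]$ (since $\xi_i(u)=0$) gives $[\nabla_{u\xi_1},\nabla_{u\xi_2}] = \nabla_{u^{2}[\xi_1,\xi_2]}$, which vanishes at $u=0$ because $\nabla_{u\eta}$ is regular for every vector field $\eta$; restricting to $u=0$ yields super-commutativity of $A$. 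Over-maximality at $b$ is, by definition, the statement that $\ev_h\colon A \to V$ is surjective.

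The crux of the argument, and the step on which I expect the whole proof to hinge, is a faithfulness property: if $T \in A$ satisfies $T(h) = 0$, then $T = 0$. Indeed, for any $v \in V$ pick $S \in A$ with $S(h) = v$; super-commutativity then gives $T(v) = TS(h) = \pm ST(h) = 0$, so $T$ annihilates $V$. This shows $\ev_h\colon A \to V$ is also injective, hence an isomorphism, and it simultaneously identifies $\ker(\ev_h \circ \mu_b)$ with $\ker\mu_b$, so the first isomorphism theorem delivers $A \simeq T_{B,b}/\ker\mu_b$.

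With faithfulness in hand, closure of $A$ under composition follows by a bootstrap that is the main conceptual obstacle to recognize: given $T_1, T_2 \in A$, pick $T \in A$ with $T(h) = T_1T_2(h)$, which is possible by surjectivity. A priori $T - T_1T_2$ lives only in $\End(V)$, but it super-commutes with every element of $A$ (since $A$ does) and kills $h$, so for each $v = S(h)$ we compute $(T - T_1T_2)(v) = \pm S\bigl((T - T_1T_2)(h)\bigr) = 0$, forcing $T_1T_2 = T \in A$. Applying the same device to the preimage $T_0 \in A$ of $h$ under $\ev_h$ gives $T_0(v) = T_0 S(h) = \pm S T_0(h) = v$ for all $v$, so $T_0 = \id_V \in A$. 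Thus $A$ is a unital super-commutative associative subalgebra of $\End(V)$.

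Finally, the map $\ev_h$ intertwines left multiplication on $A$ with the tautological $A$-action on $V$, since $\ev_h(ST) = S(T(h)) = S\cdot\ev_h(T)$; combined with the bijectivity established in the faithfulness step, this exhibits $V$ as a free rank-one $A$-module with generator $h$. The logarithmic variant is verbatim after replacing $T_{B,b}$ by $T_{B,b}(-\log\mathfrak{D})$ and restricting throughout to vector fields on $B$ tangent to $\mathfrak{D}$.
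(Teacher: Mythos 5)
Your proposal is correct and follows essentially the same route as the paper: both arguments use the cyclic vector $h$ together with the commutativity of $\im\mu_b$ (which the paper notes follows from flatness just before the lemma) to show that any operator commuting with $\im\mu_b$ and agreeing with an element of $\im\mu_b$ on $h$ agrees with it everywhere, and then apply this to products, the identity, and annihilators of $h$ to get the subalgebra structure and freeness. Your ``faithfulness plus bootstrap'' packaging is just a reorganization of the paper's observation that the commutant of $\mu_b(T_{B,b})$ in $\End(\cH_{(b,0)})$ coincides with $\mu_b(T_{B,b})$, so there is nothing substantive to add.
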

\begin{proof}
The claim in the lemma boils down to  the following linear algebra statement:  Let $T$ and $H$ be $\bbk$-vector spaces, and $\mu \colon T \to \End(H)$ a linear map satisfying the following conditions:
  \begin{enumerate}
    \item[(1)] The image $\mu(T)$ consists of commuting operators.
    \item[(2)] There is a vector $h \in H$ such that the map $\ev_h \circ \mu \colon T \to H$ is surjective.
  \end{enumerate}
  Then $\mu(T) \subset \End(H)$ is a unital commutative associative subalgebra and $H$ is a free rank one module over $\mu(T)$.

To see that this statement holds we need to check that if  $A \in \End(H)$ is an operator that commutes with all elements in $\mu(T)$, then  $A \in \mu(T)$. By Condition (2), there exists $v \in T$ such that $A h = \mu(v) h$.   Then for any $w \in T$, by Condition (1) we have
  \[ A \mu(w) h = \mu(w) A h = \mu(w) \mu(v) h = \mu(v) \mu(w) h. \]
  By Condition (2) again, we deduce that $A = \mu(v)$, in particular, $A \in \mu(T)$.
  
  Note that for any $v, w \in T$, the product $\mu(v)\mu(w)$ commutes with all elements in $\mu(T)$, so it lies in $\mu(T)$ by the above reasoning. For the same reason, $\id \in \mu(T)$.
  Hence $\mu(T) \subset \End(H)$ is a unital commutative associative subalgebra.

  By Condition (2), $h \in H$ is a generator of $H$ as $\mu(T)$-module.
  For any $A \in \mu(T)$, by Condition (1), if $A h = 0$, then for any $v \in T$, $A \mu(v) h = \mu(v) A h = 0$,
  so $A = 0$ by Condition (2).
  Therefore, $H$ is a free rank one module over $\mu(T)$ generated by $h$.
  This completes the proof of the lemma.
\end{proof}

For an overmaximal F-bundle $(\cH, \nabla)/B$ the subbundle $\ker\mu$ of the tangent bundle $T_{B}$ is an integrable distribution on $B$, i.e.~it defines a foliation $\cR$ on $B$. We will call $\cR = \ker \mu$ the \strongemph{redundancy foliation} of the overmaximal F-bundle $(\cH, \nabla)/B$.

\begin{lemma} \label{lem:foliation}
Suppose $(\cH, \nabla)/B$ is an overmaximal F-bundle. Then for any rigid point $b \in B$, locally near $b$ the F-bundle $(\cH, \nabla)/B$ is pullback of a maximal F-bundle on a non-archimedean analytic super manifold which is a transversal slice of the redundancy foliation $\cR$ through $b$.  Equivalently, globally $(\cH, \nabla)/B$  is a pullback of a maximal formal F-bundle on the formal stack $\cM \, \coloneqq \, [B/\cR]$ of leaves of $\cR$.
\end{lemma}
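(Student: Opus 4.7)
The plan is to verify that $\cR = \ker \mu$ is a genuine subbundle of locally constant rank, check its integrability using flatness, then use parallel transport along its leaves to exhibit $(\cH,\nabla)/B$ as the pullback of its restriction to any transversal slice.

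First I would settle the subbundle and integrability claims. Overmaximality at $b$ together with Lemma~\ref{lem:algebra} gives $\dim_{\bbk}\mu_b(T_{B,b}) = \dim_{\bbk}\cH_{(b,0)}$, which is the maximal rank an $\cO_B$-linear map $T_B \to \End(\cH|_{u=0})$ could have once the image is constrained to act with a cyclic vector. Rank being lower semi-continuous, it is locally constant near $b$, so $\cR$ is locally free of rank $\dim T_B - \dim \cH|_{u=0}$. For integrability, given $\xi, \eta$ that are local sections of $\cR$, the conditions $\mu(\xi) = \mu(\eta) = 0$ say that $\nabla_{u\xi} = u A_\xi$ and $\nabla_{u\eta} = u A_\eta$ for operators $A_\xi, A_\eta$ that are regular at $u=0$. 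Since vector fields lifted from $B$ annihilate $u$, one has $[u\xi, u\eta] = u^{2}[\xi,\eta]$, and flatness of $\nabla$ gives
\[
[\nabla_{u\xi},\nabla_{u\eta}] \;=\; \nabla_{u^{2}[\xi,\eta]} \;=\; u\,\nabla_{u[\xi,\eta]}.
\]
The left-hand side equals $u^{2}[A_\xi, A_\eta]$, so $\nabla_{u[\xi,\eta]} = u [A_\xi, A_\eta]$ vanishes at $u=0$, i.e.\ $\mu([\xi,\eta]) = 0$, establishing integrability.

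Next I would produce the transversal slice and the pullback description. By the non-archimedean Frobenius theorem, shrinking $B$ around the rigid point $b$ I can choose a splitting $B \simeq S \times L$ in which $\cR$ is tangent to the $L$-factor and $S$ is a transversal slice through $b$. The key observation is that for any $\xi \in \cR$ the operator $\nabla_\xi$ itself is regular on all of $B \times \bbD$, because $\nabla_\xi = u^{-1}\nabla_{u\xi}$ and the would-be pole at $u = 0$ is cancelled by $\nabla_{u\xi}|_{u=0} = \mu(\xi) = 0$. Hence parallel transport along the leaves of $\cR$ makes sense on $\cH$ over $B \times \bbD$, and the leafwise flat frames identify $(\cH,\nabla)$ with the pullback of its restriction $(\cH',\nabla') \coloneqq (\cH,\nabla)|_{S\times \bbD}$ along the projection $B \simeq S \times L \to S$. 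That $(\cH',\nabla')/S$ is still an F-bundle is immediate: regularity of $\nabla_{u^{2}\partial_u}$ and of $\nabla_{u\eta}$ for $\eta \in T_S$ is inherited from the ambient connection.

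To conclude, I would check that the slice F-bundle is maximal. At $b$, the tangent space $T_{S,b}$ is by construction a complement of $\cR_b = \ker\mu_b$ in $T_{B,b}$, so $\mu_b|_{T_{S,b}}$ is injective onto $\mu_b(T_{B,b})$; since by Lemma~\ref{lem:algebra} the cyclic vector $h$ realises an isomorphism $\mu_b(T_{B,b}) \xrightarrow{\sim} \cH_{(b,0)}$ via $\ev_h$, the composition $\ev_h\circ \mu'_b$ is an isomorphism, and openness of the maximality condition propagates this to a neighborhood of $b$ in $S$. For the global version, the redundancy foliation $\cR$ integrates to a smooth formal super stack $\cM = [B/\cR]$ on which the locally defined slice F-bundles glue coherently via the parallel transport, giving a maximal formal F-bundle whose pullback along $B \to \cM$ recovers $(\cH,\nabla)/B$. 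The main subtlety here is the transition from local slices to a global formal leaf stack: one needs to verify that parallel transports along $\cR$ on overlapping charts agree so that the descended bundle and connection are well defined, which in turn reduces to the flatness of $\nabla$ combined with the rigidity of flat transport in the formal $L$-directions.
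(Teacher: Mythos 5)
Your proposal is correct and follows essentially the same route as the paper: characterize $\cR=\ker\mu$ by regularity of $\nabla_\xi$ at $u=0$, deduce integrability from flatness, and use the resulting regularity of $\nabla$ along the leaves to trivialize $\cH$ by parallel transport, exhibiting the bundle as a pullback from a transversal slice (resp.\ the leaf stack $[B/\cR]$). The extra details you supply — local constancy of the rank of $\mu$, maximality of the slice F-bundle via $\ev_h\circ\mu_b$, and the gluing of leafwise transports — are correct refinements of points the paper leaves implicit.
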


\begin{proof}
  A vector field $\xi$ on $B$ lies in the sub bundle $\ker\mu$ if and only if the operator $\nabla_\xi$ is regular (i.e.\ has no poles) along $u=0$.
  If $\xi_1$, $\xi_2$ are two such vector fields, then so is $[\xi_1,\xi_2]$, because
  \[
  \nabla_{[\xi_1,\xi_2]}=[\nabla_{\xi_1},\nabla_{\xi_2}]
  \]
  by the flatness of $\nabla$.

  By construction, along the leaves of the foliation $\cR$, the connection $\nabla$ is regular at $u=0$.
  So it trivializes the bundle $\cH$ along the leaves.
  This implies the second statement.
\end{proof}

Let $(\cH,\nabla)/B$ be a maximal F-bundle (resp.\ maximal logarithmic F-bundle) over $B$.
It follows from \cref{lem:algebra} that the tangent bundle $T_{B}$ (resp.\ the logarithmic tangent bundle $T_{B}(-\log \mathfrak{D})$) inherits 
the structure of a unital commutative associative superalgebra from its embedding 
in $\op{End}\left(\cH_{u=0}\right)$. 

\begin{definition} \label{def:maxFfp}
We denote the induced product by~$\fp$ and call it \strongemph{the Frobenius product associated with the maximal F-bundle}.
\end{definition}

\begin{definition} \label{def-euler} 
Let $(\cH, \nabla)/B$ be a maximal F-bundle (resp.\ maximal logarithmic F-bundle) on $B$. The
\strongemph{Euler vector field} of $(\cH, \nabla)/B$ is the unique
even vector field $\Eu$ on $B$ which under the action $\mu$ maps to
the residual endomorphism $\bkappa \coloneqq \nabla_{u^2\partial_{u}}|_{\cH_{|u=0}}$ of $\cH_{|u=0}$ associated with $(\cH, \nabla)/B$, that is $\mu(\Eu) = \bkappa$. In the logarithmic case  $\Eu$ is automatically a section in $T_{B}(-\log \mathfrak{D}) \subset T_{B}$. 
\end{definition}

The definition makes sense because operator $\bkappa$ commutes with the all operators from $\mu(T_B)$, as follows from the flatness of $\nabla$.

\begin{remark} \label{rem-overmaxFqupEu} 
{\bfseries  (a)} \ If  $(\cH,\nabla)/B$ is not maximal but just overmaximal, then we still get a sheaf $\mu(T_{B}) = T_{B}/\cR$ of unital, associative, supercommutative algebras on $B$, where again the product structure on $\mu(T_{B})$ is inherited from the product of endomorphisms, through the embedding 
$\mu(T_{B}) \, \subset \op{End}\left(\cH_{u=0}\right)$. Furthermore,
$\cH|{u=0}$ is a locally free $\mu(T_{B})$-module of rank one.

\smallskip

\noindent
{\bfseries (b)} \ Suppose now  $(\cH,\nabla)/B$ is an overmaximal F-bundle which has a global cyclic vector\footnote{For instance,  the $\mathsf{A}$-model F-bundle we consider in the next section is of this type.} $h \in \Gamma(B,\cH|_{u=0})$, then $\mathsf{ev}_{h}\colon \mu(T_{B}) \cong \cH|_{u = 0}$  is an isomorphism of coherent sheaves,  which is compatible with the $\mu(T_{B})$-actions on source and target. 

\quad Transporting the product on $\mu(T_{B})$ via $\mathsf{ev}_{h}$ gives an associative supercommutative product on $\cH|_{u=0}$ which we will again denote by $\fp$.
The image of the operator $\bkappa \in \Gamma(B,\mu(T_{B}))$ via this isomorphism is an element
in $\mathsf{ev}_{h}(\bkappa)  \in \Gamma(B,\cH|_{u=0})$, which we will  again denote by $\Eu$ and will call the Euler element (or section) associated with
$(\cH,\nabla)/B$. By construction we have $\bkappa = \Eu\fp(-)$, and when
$(\cH,\nabla)/B$, this is precisely the Euler vector field defined above.
\end{remark}

The Euler vector field interacts in an interesting way with the
Frobenius product on $T_{B}$. Indeed, using $\fp$ we can define an
infinite sequence of vector fields on $B$ by taking powers of $\Eu$
with respect to $\fp$:
\[
\begin{aligned}
  \Eu^0 & = \mathbf{1}_{B} \coloneqq \text{ unit for } \fp, \\
  \Eu^1 & = \Eu \\
  \Eu^{k} & = \underbrace{\Eu\fp \Eu\fp \cdots \fp\Eu}_{k \ \text{times}}
\end{aligned}
\]
The reasoning of \cite[Proposition~3.6.2,Theorem~5.6]{Manin_Frobenius_manifolds}
shows that the vector fields $\{ \Eu^{k} \}_{k\geq 0}$ give an action of the Witt
Lie algebra $\bbk[t]\partial_t$ 
on $B$:
\[
\forall k,l \gs 0 \colon \  [\Eu^k, \Eu^l]= (l-k)\Eu^{k+l-1} \qquad
(\text{cf.\ }[t^k\partial_t,t^l\partial_t]=(l-k) t^{k+l-1}\partial_t ).
\]

\noindent
The analogue of Remark~\ref{rem:witt.move} holds in this setting as well. 

\begin{remark} \label{rem:Fbundle.witt.move}
let $(\cH,\nabla)/B$ be a maximal F-bundle, and let $b \in B^{\even} \subset B$ be a rigid even point  of $B$. Then there exists a unique germ $W \subset B^{\even}$ at $b$ of a purely even closed analytic submanifold in $B$ s.t. 
$\mathsf{span}_{\bbk}\left(\{\Eu^{k}_{w}\}_{k\geq 0}\right) = T_{W,w}$ for all $w\in W$ close to $b$. 
The number of distinct eigenvalues of $\bkappa|_{W} \, \colon \, T_{W} \ \to \ T_{W}$ and the list of their multiplicities are constant on a neighborhood of $b \in W$. Moreover, more fine results can be proved, like, e.g., if $(\cH,\nabla)/B$ is of exponential type (see \cite{Katzarkov_Hodge_theoretic_aspects} for the definition of exponential type) at some point of $W$, then it is of exponential type at \textit{all}  points of $W$. This implies that the isomorphism classes of formal meromorphic connections with regular singularities over $\op{Spf}\bbk\dbb{u}$ corresponding to the eigenvalues of $\bkappa|_{W}$ via Hukuhara-Levelt-Turrittin decomposition are constant along $W$.
\end{remark}

\subsection{The \texorpdfstring{$\mathsf{A}$}{A}-model \texorpdfstring{$F$}{F}-bundle} \label{subsec:Amodelexample}

In this section we will describe the main example of the paper: the F-bundle underlying the $\mathsf{A}$-model variation of \nc-Hodge structures.  We start with the formal picture, which we subsequently modify to get the non-archimedean analytic version which we will use in the applications.

\subsubsection{Formal logarithmic  \texorpdfstring{$\mathsf{A}$}{A}-model 
\texorpdfstring{$F$}{F}-bundles} \label{sssec:Amodelformal}

Fix an algebraically closed  field $\kay = \kaybar$ of characteristic zero and let $X$ be a smooth projective variety over $\kay$. Gromov-Witten classes are cohomology classes of algebraic cycles, so to talk about them we have to fix a cohomology theory. For convenience, in this section we will work with de Rham cohomology since it does not depend on any external choices. Later on we will study the dependence on the choice of cohomology theory more carefully.

As in the complex analytic setting, for every $n \in \bbZ_{\geq 0}$, and every 
\[
\beta \in \CHhom_{1}(X) = \CH_{1}(X)/\homeq = 
\op{im}\left[\CH_{1}(X) \to H^{2\dim X - 2}_{dR}(X/\kay)\right]
\]
we will write
\[
\bdelta(n,\beta) = n + (\dim X - 3) + \int_{\beta}c_{1}(T_{X}) 
\]
for the expected dimension of the moduli stack $\Mbar_{0,n}(X,\beta)$ parametrizing stable genus zero $n$-marked  maps to $X$. Concretely  $\Mbar_{0,n}(X,\beta)$ is the moduli of maps
$\varphi \colon  (C,p_{1},\ldots, p_{n}) \to X$, where:
\begin{enumerate}[wide]
\item $C$ is a connected  nodal, genus $0$ curve, defined over $\kay$.
\item $p_{1},\ldots,p_{n}$ are distinct smooth $\kay$-points of $C$.
\item $\varphi$ is a morphism satisfying $\varphi_{*}[C] = \beta$
and the stability condition that if $\varphi$ contracts a component of $C$ to a point, 
then the number of marked points and nodes on this component is $\geq 3$.
\end{enumerate}
The stack $\Mbar_{0,n}(X,\beta)$ is a proper Deligne-Mumford stack/$\kay$. 
It is equipped (see \cite{Kontsevich_Gromov-Witten_classes,
Behrend_Gromov-Witten_invariants,Behrend_Intrinsic_normal_cone}) with a 
\strongemph{virtual fundamental cycle class}
\[
\left[\Mbar_{0,n}(X,\beta)\right]_{\op{vir}}  \in \CH_{\bdelta(n,\beta)}\left(\Mbar_{0,n}(X,\beta)\right)
\]
of dimension $\bdelta(n,\beta)$, which further maps to 
a \strongemph{virtual fundamental class} modulo homological equivalence 
\[
\left[\Mbar_{0,n}(X,\beta)\right]_{\op{vir}}^{\mathsf{hom}}  \in \CHhom_{\bdelta(n,\beta)}\left(\Mbar_{0,n}(X,\beta)\right)
\]
We consider the \strongemph{Gromov-Witten cycle class}
$I_{n,\beta}(X)$ defined as the image
\[
\begin{aligned}
I_{n,\beta}(X)  = \op{ev}_{*}\left[\Mbar_{0,n}(X,\beta)\right]_{\op{vir}}^{\mathsf{hom}} 
& \in \CHhom_{\bdelta(n,\beta)}\left(X^{\times n} \right)\otimes \bbQ\\
& \quad \subset \ H^{2n\dim X - 2\bdelta(n,\beta)}_{dR}(X^{\times n}/\kay)
\end{aligned}
\]
under the natural evaluation map 
\[
\op{ev} \colon \Mbar_{0,n}(X,\beta) \to X^{\times n}, \qquad
(C,p_{1},\ldots,p_{n},\varphi) \mapsto (\varphi(p_{1}), \ldots, \varphi(p_{n})).
\]
Similarly, if $\mathsf{J}$ is any finite set of cardinality $n$, we can consider the moduli stack of genus zero stable maps with marked points labeled by the elements in $\mathsf{J}$, and define in the same way Gromov-Witten classes 
$I_{\mathsf{J},\beta}(X) \in 
\CHhom_{\bdelta(n,\beta)}\left(X^{\mathsf{J}}\right)\otimes \bbQ$.

\medskip
  
\noindent{\bfseries Properties of the Gromov-Witten classes \cite{Kontsevich_Gromov-Witten_classes}:} \

\medskip

\noindent
{\bfseries (i)} \ $I_{n,\beta}(X) \neq 0$ implies that $\beta$ is the class of an effective one cycle.
For future reference we will write c.

\medskip

\noindent
{\bfseries (ii)} \ When $\beta = 0$, then $I_{n,0}(X) \neq 0$ implies that $n = 3$, and  $I_{3,0}(X) = 
(X \hookrightarrow X^{\times 3})_{*}[X]$, where $X \hookrightarrow X^{\times 3}$ is the diagonal embedding.

\medskip

\noindent
{\bfseries (iii)} \ $I_{n,\beta}(X)$ is invariant under the permutation action of the symmetric group $\mathsf{S}_{n}$ on $X^{\times n}$. 
Thus, for a finite set $\mathsf{J}$ of cardinality $n$,  the class $I_{\mathsf{J},\beta}(X)$ can be identified unambiguously  with  $I_{n,\beta}(X)$. Indeed,  any ordering of the elements in $\mathsf{J}$ identifies $I_{\mathsf{J},\beta}(X)$ with $I_{n,\beta}(X)$ and the $\mathsf{S}_{n}$-invariance guarantees that this identification is independent of the choice of order.

\medskip

\noindent
{\bfseries (iv)} \ $I_{n,\beta}(X)$ satisfy the \strongemph{unit and divisor axioms}. To state these tersely we will introduce some notation. Suppose $n, m \geq 0$ are integers and let $\xi \in \CHhom_{\bullet}(X^{\times(n+m)})$ and $\eta \in \CH^{\mathsf{hom}, \, \bullet}(X^{\times m})$. We define a new class $\int_{\xi} \eta \in \CHhom_{\bullet}(X^{\times n})$ by setting
\[
\int_{\xi} \eta \  = \ \left(X^{\times (n+m)} \to X^{\times n}\right)_{*} \left(\xi \frown \left(X^{\times (n+m)} \to X^{\times m}\right)^{*} \eta\right) 
\]
where $X^{\times (n+m)} \to X^{\times n}$ is the projection on the first $n$ factors and 
$X^{\times (n+m)} \to X^{\times m}$ is the projection the last $m$ factors. With this notation we can now state that $I_{\beta,n}(X)$ satisfy the following two axioms:
\begin{description}
\item[(Unit axiom)] For all $n \geq 0$, and $\beta \neq 0$ we have $\int_{I_{n+1,\beta}(X)} \mathbf{1}_{X} = 0 \in \CHhom_{\bullet}(X^{\times n})$, where $\mathbf{1}_{X} \in \CH^{\mathsf{hom}, \, 
\bullet}(X)$ denotes  the unit. 
\item[(Divisor axiom)] For any line bundle $\mathcal{L}$ on $X$ we have
\[
\int_{I_{n+1,\beta}(X)} c_{1}(\mathcal{L}) = \left( \int_{\beta} c_{1}(\mathcal{L})\right)\cdot I_{n,\beta}(X) \in \CHhom_{\bullet}(X^{\times n}).
\]
\end{description}

\medskip

\noindent
{\bfseries (v)} \ For all $n \geq 4$ and all $\beta$ the Gromov-Witten classes obey the \strongemph{Witten-Dijkgraaf-Verlinde-Verlinde (WDVV) equation}:
\[
\begin{aligned}
\sum_{\substack{\beta = \beta_{1}+ \beta_{2} \\ \{5,6,\ldots, n\} = \mathsf{J_{1}}\sqcup \mathsf{J}_{2}}} &
\int_{I_{\{1,2\}\sqcup \mathsf{J}_{1}\sqcup \{\mathbf{left}\},\beta_{1}}(X)\times I_{\{3,4\}\sqcup \mathsf{J}_{2}\sqcup \{\mathbf{right}\},\beta_{2}}(X)}\displaylimits \left[X \stackrel{\mathsf{diag}}{\longrightarrow} X\times X\right] \quad = \\ & \
- \sum_{\substack{\beta = \beta_{1}+ \beta_{2} \\ \{5,6,\ldots, n\} = \mathsf{J_{1}}\sqcup \mathsf{J}_{2}}} 
\int_{I_{\{1,3\}\sqcup \mathsf{J}_{1}\sqcup \{\mathbf{left}\},\beta_{1}}(X)\times I_{\{2,4\}\sqcup \mathsf{J}_{2}\sqcup \{\mathbf{right}\},\beta_{2}}(X)}\displaylimits \left[X \stackrel{\mathsf{diag}}{\longrightarrow} X\times X\right]
\end{aligned}
\]
Here, we are capping with the class $[X \stackrel{\mathsf{diag}}{\to} X\times X] \in 
\CH^{\dim X}(X\times X)$ of the diagonal in the Cartesian square $X\times X$ which parametrizes the pairs of points with labels 
$(\mathbf{left},\mathbf{right})$.

\begin{remark} \label{rem:nonclosedL} If $\mathfrak{X}$ is a smooth projective variety  defined over an arbitrary field $\kay$ of characteristic zero, then by the above procedure, we can construct Gromov-Witten classes 
\[
I_{n,\beta}(\mathfrak{X}\otimes \kaybar) \in \CHhom_{\bdelta(n,\beta)}(\mathfrak{X}^{\times n}\otimes\kaybar)\otimes \bbQ.
\]
The construction  is manifestly invariant under the action of the Galois group $\Gal(\kaybar/\kay)$, i.e.\ for every $g \in \Gal(\kaybar/\kay)$
we have $g_{*}I_{n,\beta}(\mathfrak{X}\otimes \kaybar) = I_{n,g_{*}\beta}(\mathfrak{X}\otimes \kaybar)$. 
\end{remark}

\medskip

One largely unexplored aspect of Gromov-Witten theory is its algebraic flexibility.  The algebraic nature of Gromov-Witten invariants makes them a powerful tool for encoding and analyzing cycle-theoretic and motivic information. Above we summarized the constructions and properties of Gromov-Witten invariants in the classical setting where we were dealing with a variety over a field of characteristic zero, and the homological equivalence was defined with respect to the algebraic de Rham cohomology. For the applications to birational geometry, we would like to  take full advantage of this algebraic flexibility,  and so we will need the  following variant of Gromov-Witten invariants which makes sense for arbitrary fields of definition and for homological equivalence extracted from a cycle map to a reasonably general Weil cohomology theory.

\begin{variant} \label{var-generalWeil} 
Fix two fields $\kay$ and $k$, both of characteristic zero. Let $H^{\bullet}$ be a classical Weil cohomology theory  defined on non-empty smooth projective  $\kay$-varieties, with values in $k$-vector spaces  (see e.g.\ \cite[Definition~1.2.13]{murre} and \cite[Section~45.7,tag~0FGS]{stacksproject} for the definition).

\begin{example} \label{ex:standardLkH}
For us standard choices will be:

\smallskip 

\noindent
{\bfseries (a) (de Rham setting):} \ $\kay$ any field, $\mathsf{char} \, \kay = 0$, $k = \kay$, and $H^{\bullet}$ is de Rham cohomology over $k = \kay$. That is for any non-empty smooth projective variety $\mathfrak{X}$ over $\kay$ we have $H^{\bullet}(\mathfrak{X}) \coloneqq H^{\bullet}_{dR}(\mathfrak{X}/k)$.

\smallskip 

\noindent
{\bfseries (b) (Dolbeault setting):} \ $\kay$ any field, $\mathsf{char} \, \kay = 0$, $k = \kay$, and $H^{\bullet}(-)$ is the Dolbeault  cohomology over $k = \kay$. That is, for any $\kay$-variety $\mathfrak{X}$ we have $H^{\bullet}(\mathfrak{X}) \coloneqq H^{\bullet}_{Dol}(\mathfrak{X}/k) = \oplus_{p,q} \, H^{q}(\mathfrak{X},\Omega^{p}_{\mathfrak{X}/k})$.

\smallskip

\noindent
{\bfseries (c) (Betti setting):} \ $\kay\subset \bbC$, $k = \bbQ$, and $H^{\bullet}$ is the Betti cohomology of the associated complex manifold. Indeed, for every $\kay$-variety $\mathfrak{X}$ we set   
$H^{\bullet}(\mathfrak{X}) = H^{\bullet}_{B}(\mathfrak{X}(\bbC)^{\op{an}},\bbQ)$, where $\mathfrak{X}(\bbC)^{\op{an}}$ is the complex analytic space of $\bbC$-valued points of the base change $\mathfrak{X}\times_{\kaybar} \bbC$, taken with the complex topology. 

\smallskip

\noindent
{\bfseries (d) ($\ell$-adic setting):} \ $\kay$ any field of $\mathsf{char}\, \kay = 0$, $k = \bbQ_{\ell}$, and $H^{\bullet}(-)$ is the $\ell$-adic cohomology. That is, we fix an algebraic closure $\kay \subset \kaybar$, and for any $\kay$-variety $\mathfrak{X}$ we set $H^{\bullet}(\mathfrak{X}) = H^{\bullet}_{\text{\'{e}t}}(\mathfrak{X}\times_{\kay} \kaybar,\bbQ_{\ell}) = {\displaystyle \lim_{\substack{\longleftarrow}} \, H^{\bullet}_{\text{\'{e}t}}(\mathfrak{X}\times_{\kay} \kaybar,\, \bbZ/\ell^{k}\bbZ)\otimes_{\bbZ_{\ell}} \bbQ_{\ell}}$.
\end{example}

\medskip

To be specific, from now on we will focus on the Betti setting. 
Let $\mathfrak{X}/\kay$ be a smooth projective variety. Let, as above, $\kay \subset \kaybar \subset \bbC$ be the natural algebraic closure of $\kay$ in $\bbC$ and let $X = \mathfrak{X}\times_{\kay} \kaybar \coloneqq \mathfrak{X}\times_{\Spec\, \kay} \, \Spec\, \kaybar$ denote the base change of $\mathfrak{X}$ to $\kaybar$.
The Chow group $GH_{1}(X)$ of $1$-dimensional algebraic cycles on $X$ maps by the cycle class map to the second singular homology $H_{2}(X(\bbC)^{an},\bbQ)$ of the associated complex manifold, and as above we set 
\[
\CHhom_{1}(X)  \, \coloneqq  \, \CH_{1}(X)/\homeq \, = \, \op{im}\left[ \CH_{1}(X) \to H_{2}(X(\bbC)^{\op{an}},\bbQ)\right].
\]
The group $\CHhom_{1}(X)$ is a subgroup of finite index in the group of all degree two rational homology classes on $X$ that pair integrally with  the N\'{e}ron-Severi lattice $\NS(X,\bbZ) \subset H^{2}_{B}(X(\bbC)^{\op{an}},\bbQ)$. In particular $\CHhom_{1}(X) \cong \bbZ^{\rho}$ is a free abelian group of rank $\rho$ equal to the N\'{e}ron-Severi rank of $X$. 

Now, for any $\beta \in \CHhom_{1}(X)$ we can follow the above procedure and define Gromov-Witten cohomology classes 
\[
I_{n,\beta}(X) \in \CHhom_{\delta(n,\beta)}(X^{\times n})\otimes \bbQ \ \subset \ 
H^{2n - 2\delta(n,\beta)}_{B}(X^{\times n},\bbQ).
\]
The reasoning of \cite{Kontsevich_Gromov-Witten_classes} applies here and implies that these classes satisfy the properties {\bfseries (i)}-{\bfseries (v)}.  

\medskip

Note also that since Betti cohomology is a Weil cohomology theory, the Chow group integration defined in the description of property {\bfseries (iv)} transfers verbatim to the cup product 
of the associated cycle classes. Concretely, suppose $Y$ is a smooth projective $\kaybar$-variety, and let \linebreak $\mathsf{cl} \colon \CH_{\bullet}(Y) = \CH^{\dim Y-\bullet}(Y) \to H^{2\dim Y - 2\bullet}_{B}(Y,\bbQ)$ denote the cycle class map to Betti cohomology. Now if 
$\xi \in \CH_{\bullet}(X^{\times (n+m)})$ and $\eta \in \CH^{\bullet}(X^{\times n})$ we will have 
\[
\mathsf{cl}\left(\int_{\xi} \eta\right) = \int_{\mathsf{cl}(\xi)} \mathsf{cl}(\eta) \ \coloneqq \
\  (\op{pr}^{n+m}_{n})_{*}\left(\mathsf{cl}(\xi) \smile 
(\op{pr}^{n+m}_{m})^{*}\mathsf{cl}(\eta)\right),
\]
where  $\op{pr}^{n+m}_{n} \colon X^{\times (n+m)} \to X^{\times n}$ and 
$\op{pr}^{n+m}_{m} \colon X^{\times (n+m)} \to X^{\times m}$ are the projections on the first $n$ and the last $m$ factors respectively, $(\op{pr}^{n+m}_{m})^{*}$ is the pullback map on $H^{\bullet}_{B}(-,\bbQ)$, and $(\op{pr}^{n+m}_{n})_{*}$ is the Gysin map on $H^{\bullet}_{B}(-,\bbQ)$ which is the adjoint  to 
$(\op{pr}^{n+m}_{n})^{*}$ with respect to the Poincar\'{e} pairing on 
$H^{\bullet}_{B}(-,\bbQ)$.
\end{variant}

With all of this in place we can now describe the requisite formal logarithmic F-bundles.  To streamline the exposition we will focus on the case $\kay = \kaybar \subset \bbC$, $k$ of characteristic zero, and $H^{\bullet} = H^{\bullet}_{B}(-,k)$. The case of varieties over a subfield $\kay \subset \bbC$ which is not algebraically closed is also interesting and we will discuss it in the next section.  For now, assume $\kay = \kaybar \subset \bbC$ and consider a smooth projective $\kay$-variety $X$. Fix a non-empty collection  of ample classes $\omega_{1}, \ldots, \omega_{m}$ in $\CH^{1,\mathsf{hom}}(X) \subset H^2(X)$ on $X$ which are linearly independent over $\bbQ$, and a collection 
$\{\Delta_{j}\} \subset H^{\bullet}(X)$ of pure degree cohomology classes, which completes 
the collection $\omega_{1}, \ldots, \omega_{m}$ to a graded basis of the $k$-vector space $H^{\bullet}(X)$.  Let $\{q_{i}\}_{i = 1}^{m}\cup \{t_{j}\}$ be the dual linear coordinates on $H^{\bullet}(X)$. Focusing on the $\bbZ/2$-grading of $H^{\bullet}(X)$, i.e.\ remembering only the parity of the cohomological degree,  we can consider 
the associated super affine space $\affa{X}  \coloneqq \Spec \left(k[ \op{Sym}(H^{\bullet}(X)^{\vee}\right)])$. Next we consider  the formal neighborhood of $0 \in \affa{X}$. This is the smooth formal super scheme given by
\[
\forB{X} \coloneqq \Spf k\left[\left[ (q_{i}), (t_{j})\right]\right],
\]
where all $q_{i}$ are even, and the $t_{j}$'s are even or odd depending on their cohomological degree.  
\medskip

This geometric setting gives rise to an 
$\mathcal{O}_{\forB{X}}$-linear \strongemph{quantum cup product} 
\[
\qup \colon \left(\forcH_{u=0}\right)^{\otimes 2} \to \forcH_{u=0}
\]
on the trivial vector bundle 
 $\forcH_{u=0} = \mathcal{O}_{\forB{X}}\otimes_{k} H^{\bullet}(X)$, which in the current context is defined by
the formula
\begin{equation} \label{eq:*omegas}
\gamma_{1}\qup \gamma_{2} \coloneqq \gamma_{1}\smile \gamma_{2} + \sum_{\substack{\beta \neq 0}} \prod_{i=1}^{m} q_{i}^{\int_{\beta} \omega_{i}} \cdot \sum_{\substack{n \geq 0}} \frac{1}{n!} \int_{I_{n+3,\beta}(X)} \gamma_{1}\boxtimes \gamma_{2} \boxtimes 
\left( \sum_{j}\displaylimits  t_{j}\Delta_{j} \right)^{\boxtimes n},
\end{equation}
where $\gamma_{1}\smile \gamma_{2}$ is the classical cup product on $H^{\bullet}(X)$, and the integrand is viewed as a class in $X\times X \times X^{\times n} = X^{\times (n+2)}$. 
The properties of the Gromov-Witten classes listed above imply that 
$\qup$ is an associative unital product which can be viewed  as a deformation along $\forB{X}$ of the classical cup product $\smile$  on the closed fiber $\forcH_{u=0}|_{q_{j}=0,t_{i}=0} = H^{\bullet}(X)$. 

We will also consider the trivial  bundle $\forcH \coloneqq H^{\bullet}(X)\otimes \cO_{\forB{X}\times \Spf \, k\dbb{u}}$. By definition we have $\forcH|_{u = 0} = \forcH_{u=0}$
and also $\forcH = \mathsf{pr}_{\forB{X}}^{*} \, \forcH_{u=0}$. In particular the quantum product pulls back via $\mathsf{pr}_{\forB{X}}$ to an $\cO_{\forB{X}\times \Spf \, k\dbb{u}}$-linear product on $\forcH$ which by abuse of notation we will also denote by $\qup \colon (\forcH)^{\otimes 2} \to \forcH$.

Note next that our choice of coordinates on 
$\affa{X}$ gives an even, strict normal crossing divisor 
$\mathfrak{d} = \sum_{i =1}^{m} \mathfrak{d}_{i}$ where $\mathfrak{d}_{i} \subset \affa{X}$ is given by the equation $q_{i} = 0$. This defines a natural log structure on $\affa{X}$ and hence on $\forB{X}$ where the corresponding logarithmic tangent bundles $T_{\affa{X}}(-\log \mathfrak{d})$ and $T_{\forB{X}}(-\log \mathfrak{d})$ are spanned by the vector fields $q_{i}\partial_{q_{i}}$, and $\partial_{t_{j}}$.

\medskip

The quantum product now gives rise to a meromorphic connection on
$\forcH \coloneqq H^{\bullet}(X)\otimes \cO_{\forB{X}\times \Spf \, k\dbb{u}}$ defined by
the formulas:
\begin{equation} \label{eq:formalbignabla}
\left| \
\begin{array}{lcl}
\fornabla_{\partial_{u}} & = & \partial_{u} - u^{-2} \left(\Eu \, \qup\, (-)\right) + u^{-1} \frac{\Deg - \dim X\cdot \op{id}}{2}, \\[+0.5pc]
\fornabla_{q_{i}\partial_{q_{i}}} & = & q_{i}\partial_{q_{i}} + u^{-1} \left( \omega_{i} \, \qup \, (-)\right), \\[+0.5pc]
\fornabla_{\partial_{t_{j}}} & = & \partial_{t_{j}} + u^{-1} \left( \Delta_{j} \, \qup \, (-)\right),
\end{array}
\right.
\end{equation}
where $\Deg \colon \forcH \to \forcH$ is the degree operator $\Deg = \oplus_{a = 0}^{2\dim X} a\cdot \op{id}_{H^{a}(X)}$ and $\Eu$ denotes the Euler vector field on $\forB{X}$, defined by 
\begin{equation} \label{eq:Eulog}
\Eu_{\gamma} = c_{1}(T_{X}) + \frac{\Deg - 2\op{id}}{2}(\gamma)
\in \ (\forcH_{u=0})_{\gamma} = \forcH_{\gamma\times 0} \cong T_{\forB{X},\gamma}(-\log \mathfrak{d}) \subset T_{\forB{X},\gamma}.
\end{equation}

\begin{remark} \label{rem:compare.with.KKP}
 Here we have adopted a different convention than the one used
 in \cite{Katzarkov_Hodge_theoretic_aspects} to describe the quantum
 connection. Specifically, the parameter $u$ used here and the
 parameter (also called $u$) used
 in \cite[Section~3.1]{Katzarkov_Hodge_theoretic_aspects} differ by a
 minus sign. If we set $q = q_{1}$ and if in the
 formula \eqref{eq:formalbignabla} we make the change of variable
 $u \mapsto -u$, then the restriction of $\fornabla$ to the purely even
 formal submanifold $\mathsf{Spf}\dbb{q} \subset \forB{X}$ reproduces
 the formula used to define the de Rham part of the one parameter
 $\mathsf{A}$-model variation of \nc-Hodge structures
 in \cite[Section~3.1]{Katzarkov_Hodge_theoretic_aspects}. In
 addition, note that there is a typo in the description of the
 connection in \cite[Section~3.1]{Katzarkov_Hodge_theoretic_aspects}:
 the class $\bkappa_{X}$ (used there to denote the Euler vector field) is the first Chern class of the tangent bundle of $X$, and not the first Chern class of the cotangent bundle, as  written in \cite[Section~3.1]{Katzarkov_Hodge_theoretic_aspects}. The
 formula \eqref{eq:formalbignabla} corrects this misprint  and agrees with the common usage. In particular  it agrees with
 the formulas used to compute the $\Gamma$-class correction in \cite[Section~3.1]{Katzarkov_Hodge_theoretic_aspects}, with the  classical Dubrovin definition \cite{Dubrovin_Geometry_of_2D} of the quantum connection, and with the standard 
 description of the quantum connection used recently in \cite{Iritani_blowup,PS-ET,Chen-ET,HYZZ_Decomposition}.
\end{remark}

\medskip

By definition now  
$(\forcH,\fornabla)/\forB{X}$ is a formal logarithmic F-bundle for the given logarithmic structure on $\forB{X}$. Furthermore, the properties of the Gromov-Witten classes listed above, imply that $(\forcH,\fornabla)/\forB{X}$ is a 
maximal formal logarithmic F-bundle, i.e.\ $\fornabla$ induces a unital associative Frobenius product $\fp$ on $T_{\forB{X}}(-\log \mathfrak{d})$
together with an Euler vector field $\Eu \in \Gamma(\forB{X}, T_{\forB{X}}(-\log \mathfrak{d}))$ so that under the 
identification $\forcH \cong 
T_{\forB{X}}(-\log \mathfrak{d})$ given by the frame $(q_{i}\partial_{q_{i}}, \partial_{t_{j}})$ the quantum product $\qup$ is identified with the Frobenius product $\fp$, and the Euler vector field $\Eu$ is given by the formula \eqref{eq:Eulog}.

For ease of reference we give the following

\begin{definition} \label{def:formalFomegas} Fix 
$\kay  = \kaybar \subset \bbC$, $k$ - a field of $\mathsf{char} \, k = 0$, $H^{\bullet} = H^{\bullet}_{B}(-,k)$. Let  $X$ be a smooth projective variety, defined over $\kay$, and let $\{\omega_{i}\}_{i=1}^{m}$, and $\{\Delta_{j}\}$ as above. Then
the \strongemph{formal $\mathsf{A}$-model F-bundle} associated
with $(X,\omega_{1},\ldots,\omega_{m})$ is the formal logarithmic F-bundle
$(\forcH,\fornabla)/\forB{X}$, where $\forB{X}$ is the formal
neighborhood of $0$ in the superscheme $\affa{X} = \Spec \,
k\left[\op{Sym}\left(H^{\bullet}(X)^{\vee}\right)\right]$,
$\forcH = \mathcal{O}_{\forB{X}}\otimes H^{\bullet}(X)$, and $\nabla$
is given by
\eqref{eq:formalbignabla}.
\end{definition}

\begin{remark} \label{rem:ample.choices} Traditionally, the quantum product is defined by the  series \eqref{eq:*omegas}  in which the sum is taken  over \emph{all} monomials  $q^{\beta}$ in the monoid algebra $k[\NE(X,\bbZ)]$. In the above construction we restricted our attention only to  monomials corresponding to a \emph{finite simplicial cone} of ample classes, so that we can work with formal power series rather than  general elements in a Novikov ring, and so that we can control the logarithmic structures explicitly. We will come back to the issue of general versus restricted quantum products when we discuss the $\mathsf{A}$-model analytic F-bundles in Section~\ref{sssec:AmodelF}  and the analytic F-bundle version of the blowup formula in Section~\ref{sec:blowup}. 
\end{remark}

\begin{remark} \label{rem:ambiguities}
At a first glance this definition will not provide a very good invariant of a  projective $\kaybar$-variety $X$,  as it seem to depend on several additional choices. However, this dependency on choices is very mild and can be disregarded. Indeed
\begin{enumerate}[wide]
\item[(a)] The first issue is that even when we fix the ample classes $\omega_{i}$  the definition of $\fornabla$ seem to depend on the choice of the $\Delta_{j}$'s that complete the ample $\omega_{i}$'s to a graded basis of $H^{\bullet}(X)$.  This is a mild problem since a different choice $\widetilde{\Delta}_{j}$ of $\Delta_{j}$'s will be related to the original choice by a graded linear automorphism of $H^{\bullet}$ and this linear automorphism will induce an automorphism of $\affa{X}$ and a posteriori of the pair $(\forB{X},\forcH)$. The induced  automorphism of $\affa{X}$ acts trivially on the divisor $\mathfrak{d} \subset \affa{X}$ and thus preserves the logarithmic structure of $\forB{X}$. Also, by construction, the automorphism of $(\forB{X},\forcH)$  pulls back the logarithmic connection $\nabla$ defined in terms of the $\Delta_{j}$ to the logarithmic connection $\widetilde{\nabla}^{\mathbf{f}}$ defined via the  $\widetilde{\Delta}_{j}$. Thus the two $\mathsf{A}$-model F-bundles for 
$(X,\omega_{1}, \ldots, \omega_{m})$ that arise from the choices of $\{\Delta_{j}\}$ and 
$\{\widetilde{\Delta}_{j}\}$ are naturally isomorphic. 
\item[(b)] The second issue with Definition~\ref{def:formalFomegas} is the explicit dependence on the choice of ample classes 
$\omega_{1}, \ldots, \omega_{m}$. One way to get around this is to work with a maximal collection of such $\omega_{i}$, i.e.\ choose a basis of the rational N\'{e}ron-Severi group $\CH^{1,\mathsf{hom}}(X)\otimes \bbQ$ \ that consists of ample classes. Indeed, any two  maximal collections of ample classes will be related by a change of basis automorphism of $H^{\bullet}(X)$. Therefore, the contribution of a monomial in $q$-coordinates for one picture coincides with the contribution of the corresponding monomial in another picture. Therefore, there is no loss of information for going between one basis to another.  
\item[(c)] The independence of the choice of ample  classes can be refined further by noticing that 
the formal $\mathsf{A}$-model F-bundle corresponding to $(X,\omega_{1},\ldots,\omega_{m})$ and the formal $\mathsf{A}$-model F-bundle corresponding to $(X,\omega_{1})$ contain the same information, i.e.\ can be reconstructed from one another, see \cite[Lemma~2.24]{HYZZ_Decomposition}.
\end{enumerate}
\end{remark}

The formal logarithmic $\mathsf{A}$-model F-bundle $(\forcH,\fornabla)/\forB{X}$ we just described, captures the motivic and Gromov-Witten information  about $X$ that we need. However the logarithmic structures and the formal variables make these F-bundles awkward to manipulate. We will circumvent this by repackaging the data  encoded in $(\forcH,\fornabla)/\forB{X}$ into convergent non-archimedean analytic data, i.e.\ into a non-archimedean, analytic F-bundle.

\subsubsection{Non-archimedean \texorpdfstring{$\mathsf{A}$}{A}-model \texorpdfstring{$F$}{F}-bundles} \label{sssec:AmodelF}

We will use the Betti setup from the previous subsection. 
Fix two fields $\kay \subset \bbC$ and $k$, with $\op{char} \, k =0$, and the
Weil cohomology theory $H^{\bullet} = H^{\bullet}_{B}(-,k)$. We will also fix an algebraically closed non-archimedean field $\bbk \supset k$, such that the valuation $\mathfrak{v}  \colon \bbk \to \bbR\cup \{\infty\}$ is trivial on $k$. A convenient choice is $\bbk=\kbar\dbp{\fy^{\bbQ}}$.

\paragraph{The case of varieties over $\kaybar$.}  \label{par:casekaybar}

Let 
$X/\kaybar$ be a smooth projective variety, and let 
$H^{\bullet}(X) = H^{\bullet}_{B}(X(\bbC)^{\op{an}},k)$. Let 
 $\mathsf{N}_{1}(X,\bbZ) = 
\CHhom_{1}(X)$ denote the group of classes of curves in $X$ modulo Betti homological equivalence, and let 
$\NE(X,\bbZ) \subset \mathsf{N}_{1}(X,\bbZ)$ be the \strongemph{the monoid of effective curve classes in $X$}.  

Consider the monoid algebra $k[\NE(X,\bbZ)]$, i.e.\ the commutative unital $k$-algebra, whose underlying 
$k$-vector space has a basis labeled by the elements in 
$\NE(X,\bbZ)$, with a product given by the additive semigroup operation. Concretely, we will write $q^{\beta}$ for the basis vector labeled by $\beta \in k[\NE(X,\bbZ)]$ and will call $q^{\beta}$  the \strongemph{standard monomial} corresponding to $\beta$. Thus we have
\[
k[\NE(X,\bbZ)] \  =  \ \bigoplus_{\beta \in \NE(X,\bbZ)} \ k\cdot \, q^{\beta},
\]
as a $k$-vector space, with a unit $\mathbbold{1} = q^0$, and multiplication given by $q^{\beta_{1}}q^{\beta_{2}} \coloneqq  q^{\beta_{1} + \beta_{2}}$. Write $k\dbb{q} = k\dbb{\NE(X,\bbZ)}$ for  the completion of $k[\NE(X,\bbZ)]$ along its maximal monomial ideal, i.e.\ the ideal generated by $q^\beta$, with $\beta \neq 0 \in\NE(X,\bbZ)$.

\medskip

Let $\{T_i\}_{i=0,\dots, r}$ be a homogeneous basis of the $k$-vector space $H^{\bullet}(X)$, such that  $T_0=\mathbf{1} \in H^0(X)$ is the unit in the cohomology algebra $H^{\bullet}(X)$, and such that the degree two basis vectors spanning $H^2(X)$ are chosen by first chosing separately bases of the subspace $H^2(X)_{\op{alg}} \coloneqq \CH^{1,\mathsf{hom}}(X) \otimes k \ \subset \ H^2(X)$ and of the subspace $H^2(X)_{\op{trans}}$ of all transcendental classes in $H^2(X)$, i.e.\ all  classes orthogonal with respect to the intersection pairing to cycle classes of algebraic curves.  Let $(t_{i})$ be the corresponding system of $k$-linear coordinates on $H^{\bullet}(X)$. 

\medskip
Given $n \geq 0$, $\beta \in \NE(X,\bbZ)$, and classes $\gamma_{1}, \ldots, \gamma_{n} \in H^{\bullet}(X)$, we will write
\[
\left\langle \gamma_{1} \cdots \gamma_{n} \right\rangle_{\beta} \ \coloneqq 
\ \int_{I_{n,\beta}(X)} \, \gamma_{1}\boxtimes \cdots \boxtimes \gamma_{n} \quad \in \ k \, = \,  H^0(X^{\times n}),
\]
for the corresponding correlator. Equivalently, by the K\"{u}nneth and Poincar\'{e} duality properties of $H^{\bullet}$ we have  
\[
\left\langle \gamma_{1} \cdots \gamma_{n} \right\rangle_{\beta} 
\ \coloneqq  
\ \left( I_{n,\beta}(X) \, , \, 
\op{pr}_{1}^{*}\gamma_{1}\otimes \cdots  \otimes \op{pr}_{n}^{*}\gamma_{n} \right),
\]
where we still write $(-,-)$ for the Poincar\'{e} pairing on 
$X^{\times n}$.

\begin{definition} \label{defi-GWPhi}
The $H^{\bullet}$-valued \strongemph{genus $0$ Gromov-Witten potential} of $X$ is the formal power series with $k\dbb{q}$ coefficients in the set of super variables $t_{0}, \ldots, t_{r}$:
\begin{equation} \label{eq:GW_potential}
  \Phi(q;t)  \coloneqq \sum_{n\ge 0,\beta  \in \NE(X,\bbZ)} \frac{q^\beta}{n!} \sum_{i_1,\dots,i_n} \braket{T_{i_1}\cdots T_{i_n}}_\beta t_{i_1}\cdots t_{i_n} \ \in \ k\dbb{q}\dbb{t_{0},\ldots,t_{r}}.
\end{equation}
\end{definition}

\begin{notation} \label{not:novikovX} Here 
$k\dbb{q}\dbb{t_{0},\ldots,t_{r}} \cong k\dbb{\NE(X,\bbZ)}\, \widehat{\otimes}\, \widehat{\mathsf{Sym}}\, H^{\bullet}(X)^{\vee}$ is a coordinated version of the \strongemph{big Novikov ring for $X$} and we will sometimes denote it by $\Nov_{X}$.
\end{notation}

Note that by the unit axiom, the variable $t_0$ appears only in the terms with $\beta=0$ and $n \le 3$. In particular $\Phi$ is polynomial in $t_{0}$. 

Recall that a formal power series in a set of super variables is a usual formal series in even variables with coefficients in the exterior algebra of the odd variables. For the moment we  view $\Phi$ as an $R$-valued function on the formal super scheme/$k$ which is the formal neighborhood of some (closed, even) base point  of the algebraic super scheme 
$\op{Spec} \, k[\{t_{i}\}]$ whose $k$-points give the super vector space $H^{\bullet}(X)$.  
The third derivative $\partial^3\Phi/\partial t_{a}\partial t_{b}\partial t_{c}$ of $\Phi$ in the coordinates $t_{0}, \ldots, t_{r}$
defines a linear map $H^{\bullet}(X)^{\otimes 3} \to \Nov_{X}$ and, by the standard Gromov-Witten lore \cite{Kontsevich_Gromov-Witten_classes},
after contraction with the Poincar\'{e} pairing gives a quantum product 
\[
\qup \colon \ H^{\bullet}(X)^{\otimes 2} \to 
H^{\bullet}(X)\otimes_{k} \Nov_{X},
\]
which is the general (unrestricted by a choice of $\{\omega_{i}\}_{i = 1}^{m}$) version of the  product \eqref{eq:*omegas}. 

\medskip

Consider next the torsion free part $\NS(X,\bbZ)_{\op{tf}} \coloneqq 
\op{im}\left[\CH^{1}(X) \to H^2(X)\right]$ of the N\'{e}ron-Severi group of $X$. It is a free abelian group of finite rank $\rho$. Let
\[
\mathcal{T}_{\bbk}(X) \coloneqq \NS(X,\bbZ)_{\op{tf}}\otimes_{\bbZ} \mathbb{G}_{m,\bbk}
\]
denote the
split affine torus over $\bbk$ with cocharacter lattice $\NS(X,\bbZ)_{\op{tf}}$ and let 
$\mathcal{T}_{\bbk}(X)^{\op{an}}$ denote the associated non-archimedean $\bbk$-analytic torus. We can also consider the $\bbk$-vector space 
$\NS(X,\bbk) \coloneqq \NS(X,\bbZ)_{\op{tf}}\otimes_{\bbZ} \bbk$. By definition, the tangent space to $\mathcal{T}_{\bbk}(X)^{\op{an}}$ at each rigid point can be identified canonically with $\NS(X,\bbk)$.

\medskip

With this notation we now define the following analytic $\bbk$-spaces:

\begin{description}
\item $B_{X,q} \subset \mathcal{T}_{\bbk}(X)^{\op{an}}$ is the preimage of the ample cone in $\NS(X,\bbR)$ under the valuation map  \linebreak $\mathcal{T}_{\bbk}(X)^{\op{an}} \ \to \ \NS(X,\bbR)$.   Note that by construction $B_{X,q}$ is an admissible  open in  $\mathcal{T}_{\bbk}(X)^{\op{an}}$.
\item $\omB^{\even}_{X,t}$  is the product of an analytic affine line with an open  unit polydisk, where the affine line has coordinate $t_0$, and the unit polydisk has coordinates $t_i$ corresponding to the basis vectors $T_{i}$ with $\deg T_{i} \in \{2, 4, 6, \ldots\}$. 
\item $B^{\even}_{X,t}$ is the product of the analytic affine line with coordinate $t_{0}$ and a unit polydisk with coordinates $t_{i}$ corresponding to basis vectors $T_{i}$ with  $\deg T_i \in \{4, 6, \ldots\}$, and to basis vectors $T_{i} \in H^2(X)_{\op{transc}}$ whenever $\deg T_{i} = 2$. 
\item $B^{\odd}_{X}$ is the $\bbk$-super analytic variety corresponding to the purely odd $\bbk$-vector space $H^{\odd}(X)\otimes_{k} \bbk$, i.e.\ the super analytic variety whose underlying even analytic variety is a point and whose algebra of functions is the  exterior algebra in the coordinates $t_i$ for $\deg T_i \in \{1, 3, 5, \dots\}$.
\end{description}

Set 
\begin{equation} \label{eq:non-archimedean_base}
\omB_{X} \coloneqq B_{X,q} \times \omB^{\even}_{X,t}\times B^{\odd}_{X} \quad \text{and} \quad 
B_{X} \coloneqq B_{X,q} \times B^{\even}_{X,t}\times B^{\odd}_{X}.
\end{equation}
By definition $\omB_{X}$ is open in the $\bbk$-super analytic variety $\mathcal{T}_{\bbk}(X)^{\op{an}}\times (H^{\bullet}(X)\otimes_{k} \bbk)^{\op{an}}$ and $B_{X}$ is open in the $\bbk$-super analytic variety $\mathcal{T}_{\bbk}(X)^{\op{an}}\times ((H^2(X)_{\op{transc}}\oplus H^{*\neq 2}(X))\otimes_{k} \bbk)^{\op{an}}$. Note that the vector space inclusion map 
$H^2(X)_{\op{transc}}\otimes_{k} \bbk \ \hookrightarrow  \ H^2(X)\otimes_{k} \bbk$ induces a closed analytic embedding  $B_{X} \to \omB_{X}$ of non-archimedean super analytic varieties.

With this notation, we now have the following important observation

\begin{lemma} \label{lem:GW-potential_analytic}
The genus zero Gromov-Witten potential $\Phi(q,t)$ defines a $\bbk$-valued  analytic function on $\omB_{X}$.
\end{lemma}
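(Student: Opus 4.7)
The plan is to reorganize the formal series $\Phi(q;t)$ using the unit, divisor, and dimension axioms of Gromov--Witten invariants, so that the reshuffled series becomes manifestly convergent on the non-archimedean super-polydisk $\omB_{X}$. The odd coordinates cause no difficulty: since $H^{\odd}(X)$ is finite-dimensional and the corresponding $t_{i}$ are odd super-variables, $\Phi$ is automatically polynomial in them, so it suffices to establish analyticity in the even coordinates.

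Split the insertion class $\gamma \coloneqq \sum_{i} t_{i} T_{i}$ as $\gamma = t_{0} T_{0} + \tau + \sigma$, where $\tau \coloneqq \sum_{\deg T_{i} = 2} t_{i} T_{i}$ is the degree two part and $\sigma \coloneqq \sum_{\deg T_{i} \geq 3} t_{i} T_{i}$ the higher degree part. For $\beta = 0$ only the classical triple intersection contributes (property (ii) of $I_{n,\beta}(X)$), producing the cubic polynomial $\tfrac{1}{6}\sum_{i,j,k} t_{i}t_{j}t_{k}\int_{X} T_{i}\smile T_{j}\smile T_{k}$, which is clearly analytic. For $\beta \neq 0$ the unit axiom eliminates every insertion of $T_{0}$, and iterating the divisor axiom (in its standard formulation for arbitrary $H^{2}$ classes) yields
\[
\sum_{n \geq 0} \frac{1}{n!}\braket{\gamma^{n}}_{\beta} \, = \, e^{\beta \cdot \tau}\cdot F_{\beta}(\sigma), \qquad F_{\beta}(\sigma) \coloneqq \sum_{m \geq 0} \frac{1}{m!}\braket{\sigma^{m}}_{\beta},
\]
with $\beta \cdot \tau \coloneqq \sum_{\deg T_{i}=2} (\beta \cdot T_{i})\, t_{i}$; here transcendental $T_{i}$ just contribute zero to the exponent because $\beta \cdot T_{i} = 0$. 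The dimension axiom forces $\braket{T_{i_{1}}\cdots T_{i_{m}}}_{\beta}=0$ unless $\sum_{j}\deg T_{i_{j}} = 2\bdelta(m,\beta)$, and since every insertion entering $F_{\beta}$ has degree at least three this gives $m \leq 2\,(\dim X - 3 + \int_{\beta}c_{1}(T_{X}))$; thus $F_{\beta}(\sigma)$ is a \emph{polynomial} in $\sigma$ of degree bounded in terms of $\beta$ alone, with coefficients in $k$.

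It remains to verify convergence of the factored series
\[
\Phi(q,t) \, = \, \tfrac{1}{6}\sum_{i,j,k} t_{i} t_{j} t_{k} \int_{X} T_{i}\smile T_{j}\smile T_{k} \, + \, \sum_{\substack{\beta \in \NE(X,\bbZ) \\ \beta \neq 0}} q^{\beta}\, e^{\beta \cdot \tau}\, F_{\beta}(\sigma)
\]
on the analytic space $\omB_{X}$. By construction $\mathfrak{v}(t_{i})>0$ for every $t_{i}$ with $\deg T_{i}\neq 0$, so $\mathfrak{v}(\beta\cdot\tau)>0$ and the non-archimedean exponential $e^{\beta\cdot\tau}$ converges (the residue field of $\bbk$ has characteristic zero) to a unit of valuation zero; the polynomial $F_{\beta}(\sigma)$ has $k$-coefficients evaluated at arguments of positive valuation, hence $\mathfrak{v}(F_{\beta}(\sigma)) \geq 0$. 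Finally, for $q \in B_{X,q}$ the valuation $\mathfrak{v}(q)$ lies in $\op{Amp}(X)\subset \NS(X,\bbR)$, so on a suitable affinoid neighbourhood of $q$ one can choose an ample class $\omega$ and a constant $c > 0$ with $\mathfrak{v}(q^{\beta}) \geq c\,(\omega\cdot\beta)$ uniformly in $\beta \in \NE(X,\bbZ)$. Because $\overline{\NE}(X)$ is strongly convex and $\omega$ is strictly positive on $\overline{\NE}(X)\setminus \{0\}$, the slab $\{\beta\in\NE(X,\bbZ):\omega\cdot\beta \leq N\}$ is finite for every $N$; only finitely many $\beta$ contribute terms of valuation at most $N$, yielding local uniform convergence and hence analyticity of $\Phi$ on $\omB_{X}$.

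The only genuinely delicate step is the reorganization in the second paragraph: one must trade the a priori divergent sum over degree two insertions for the manifestly convergent non-archimedean exponential via the divisor axiom, while simultaneously invoking the dimension axiom to truncate the remaining $\sigma$-polynomial. Once this reorganization is in place, convergence reduces to the bookkeeping of the previous paragraph, a formal consequence of the ample cone condition built into the definition of $B_{X,q}$ together with the strong convexity of $\overline{\NE}(X)$.
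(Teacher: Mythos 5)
Your proof is correct, but it takes a genuinely different route from the paper's. The paper's own argument is much shorter: it picks a finite collection of ample classes $\omega_{1},\dots,\omega_{m}$ forming a basis of $\NS(X,\bbQ)$, restricts to the open sub-tube $\omB_{\sigma}\subset\omB_{X}$ lying over the simplicial cone they span, and observes that in the dual coordinates $q_{j}$ the potential becomes a \emph{formal power series with coefficients in $k$} (polynomial in $t_{0}$ by the unit axiom); since the valuation of $\bbk$ is trivial on $k$, any such series converges automatically on the open unit polydisk, so no growth control is needed at all, and one glues over the subcones $\sigma$ covering the ample cone. You instead work directly over $B_{X,q}$ and control the sum over $\beta$ by hand: the unit/divisor/dimension axioms reorganize the series as $\sum_{\beta\neq 0}q^{\beta}e^{\beta\cdot\tau}F_{\beta}(\sigma)$ with $F_{\beta}$ polynomial, and convergence follows from $\mathfrak{v}(q^{\beta})\geq c\,(\omega\cdot\beta)$ on affinoids together with finiteness of lattice points in slabs of the strongly convex cone $\overline{\NE}(X)$. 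Your route buys extra structure — it makes explicit the divisor-equation factorization (precisely the redundancy in the degree-two directions that the paper exploits later via the redundancy foliation) and avoids choosing simplicial subcones and monomial coordinates — but it is longer and uses the divisor axiom for \emph{arbitrary} degree-two classes, including transcendental ones, whereas property (iv) in the paper is stated only for $c_{1}(\mathcal{L})$ of line bundles; this extension is standard (Kontsevich--Manin divisor axiom, proved via the forgetful map and compatibility of virtual classes, with $\int_{\beta}T_{i}=0$ for transcendental $T_{i}$), but you should say so explicitly since it is not among the listed properties. Note also that the dimension-axiom truncation of $F_{\beta}$, while correct, is not actually needed for convergence: an infinite series in the degree-$\geq 3$ variables with trivially valued coefficients would converge on the open polydisk anyway — both arguments ultimately rest on the triviality of the valuation on $k$.
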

\begin{proof} Recall, that for any smooth projective $\kaybar$-variety $X$  we can find a finite collection of ample line bundles on $X$, so that their first Chern classes form a basis of the  $\bbQ$-N\'{e}ron-Severi space $\NS(X,\bbQ) \coloneqq \CH^{1,\mathsf{hom}}(X)\otimes \bbQ \subset H^2(X)$.  
Let $L_{1}, \ldots, L_{m} \in \op{Pic}(X)$ be such ample line bundles, and write $\omega_{i} \coloneqq \mathsf{cl}(c_{1}(L_{i}))$. 

Consider the open simplicial cone 
\[
\sigma \coloneqq \left\{\left. \, \sum_{i = 1}^{m} a_{i}\omega_{i} \ \right| \ a_{i} \in \bbR_{> 0}\, \right\} \quad \subset \ \bbR\omega_{1}\oplus \cdots \oplus \bbR\omega_{m}.
\]
Let $B_{\sigma,q} \subset \NS(X,\bbk)$ be the preimage of $\sigma$ by the valuation map, and let  
$\omB_{\sigma} \coloneqq B_{\sigma,q} \times \omB_{X,t}^{\even} \times B^{\odd}_{X}$. Note that by definition $\omB_{\sigma} \subset \omB_{X}$
is an open analytic subvariety.

Now note, that the formula for  $\Phi$ defines a $\bbk$-valued analytic function on $\omB_{\sigma}$. Indeed, let  
$q_{1}, \ldots, q_{m}$ be the coordinates on $\mathcal{T}_{\bbk}(X)$  that are dual to the basis 
$\omega_{1}, \ldots, \omega_{m}$. Using $q_{j}$ and $t_{i}$ as coordinates on the super analytic space $\mathcal{T}_{\bbk}(X)^{\op{an}}\times (H^{\bullet}(X)\otimes_{k} \bbk)^{\op{an}}$ we see that the restriction of the formal germ 
\eqref{eq:GW_potential} to the open $\omB_{\sigma}$ is given by the power series
  \[ 
\begin{aligned}
\Phi|_{\omB_{\sigma}} = \sum_{n\ge 0,\beta} \frac{1}{n!} q_1^{(\beta,\omega_1)}\cdots q_m^{(\beta,\omega_m)} \sum_{i_1,\dots,i_n} & \braket{T_{i_1}\cdots T_{i_n}}_\beta t_{i_1}\cdots t_{i_n} \\ 
& \qquad \in k\dbb{\{q_j\},\{t_i\}}  \, \subset 
\bbk\dbb{\{q_j\},\{t_i\}}.
\end{aligned}
\]
This series is polynomial in $t_0$, and hence an analytic $\bbk$-valued function on  the non-archimedean analytic super domain $\omB_{\sigma}$. Since the union of all such $\sigma$ covers the ample cone of $X$, this implies that $\Phi$ can be viewed as an analytic function on $\omB_{X}$  as claimed.
\end{proof}

Let as before $\bbD$ denote the germ at $0$ of an analytic disk with coordinate $u$. Write $\cO_{\omB_{X}\times \bbD}$ for the sheaf of analytic $\bbk$-valued functions on the non-archimedean  analytic supermanifold $\omB_{X}\times \bbD$ and let $\omH = H^{\bullet}(X)\otimes_{k} \cO_{\omB_{X}\times \bbD}$ be the trivial analytic vector bundle on $\omB_{X}\times \bbD$ with fiber $H^{\bullet}(X)\otimes_{k} \bbk$. Note that our chosen basis of $H^{\bullet}(X)$ gives a global frame of $\cH$, i.e.\ we have a canonical identification 
$\omH = \oplus_{i = 1}^{r} \cO_{\omB_{X}\times\bbD}\cdot T_{i}$.
The inverse $(g^{ij})$ of the Gram matrix of the Poincar\'{e} pairing gives a section $\mathbf{PD}^{-1} \in H^0(\omB_{X}\times \bbD,\omH\otimes \omH)$. The section is constant in the frame $\{ T_{i}\}$ and therefore analytic.

\medskip

The third derivatives of $\Phi$ with respect to the coordinates 
$\{t_{i}\}$ define  a map of analytic super vector bundles
\[
\omH^{\otimes 3} \to \cO, \qquad T_{a}\otimes T_{b}\otimes T_{c} \mapsto \frac{\partial^3\Phi}{\partial t_{a} \partial t_{b} \partial t_{c}}
\]
and the contraction of this map with $\mathbf{PD}^{-1} \in \omH^{\otimes 2}$ gives an analytic quantum product
\[
\xymatrix@1{\qup \colon \hspace{-1.5pc} &  \omH\otimes \omH \ar[r] & \omH},
\]
which is independent of $u$ and tautologically satisfies the properties {\bfseries (i)-(v)}.

\medskip

Furthermore, the quantum product  $\qup$ gives rise to a \strongemph{non-archimedean analytic quantum connection}   $\bnabla \colon \omH \to \omH \otimes \Omega^1_{\omB_{X}\times \bbD}[u,u^{-1}]$
which over an open chart $(\omB_{\sigma}, (\{q_{j}\},\{t_{i}\})$ 
is defined by
\begin{equation} \label{eq:anbignabla}
\left| \ 
\begin{aligned}
\bnabla_{\partial_{u}} & = \partial_{u} - u^{-2} \left(\Eu \, \qup\, (-)\right) + u^{-1} \frac{\Deg - \dim X\cdot \op{id}}{2}, \\
\bnabla_{\partial_{q_{j}}} & = \partial_{q_{i}} + u^{-1}q_{j}^{-1} \left( \omega_{j} \, \qup \, (-)\right), \\
\bnabla_{\partial_{t_{i}}} & = \partial_{t_{i}} + u^{-1} \left( T_{i} \, \qup \, (-)\right),
\end{aligned}
\right.
\end{equation}
Here again  $\Deg \colon \omH \to \omH$ is the degree operator $\Deg = \oplus_{a = 0}^{2\dim X} a\cdot \op{id}_{H^{a}(X)}$ and $\Eu$ denotes the analytic Euler vector field, defined for every point 
$\gamma \in \omB_{\sigma} \subset (H^{\bullet}(X)\otimes_{k}  \bbk)^{\op{an}}$ by 
\begin{equation} \label{eq:Euan}
\Eu_{\gamma} = c_{1}(T_{X}) + \frac{\Deg - 2\cdot \op{id}}{2}(\gamma)
\in  \ T_{\omB_{\sigma},\gamma} \ \subset \ \omH_{\gamma}.
\end{equation}

By construction $(\omH,\bnabla)/\omB_{X}$ is an over maximal F-bundle over $\omB_{X}$ and 
$B_{X} \subset \omB_{X}$ is a closed analytic subvariety which meets transversally every leaf through a rigid point of the redundancy foliation of $(\omH,\bnabla)/\omB_{X}$. Thus  the restriction of $(\omH,\bnabla)|_{B_{X}\times \bbD}$ is the maximal F-bundle $(\cH,\nabla)/B_{X}$ over $B_{X}$.

\begin{definition} \label{def:nonarchF}
The F-bundle $(\omH,\nabla)/\omB_{X}$ will be called the \emph{\bfseries
non-archimedean overmaximal $\mathsf{A}$-model F-bundle associated
to $X$}, $\kaybar \subset \bbC$, $\bbk \supset k$, and $H^{\bullet}$.  The F-bundle $(\cH,\nabla)/B_{X}$
will be called the \strongemph{non-archimedean maximal
$\mathsf{A}$-model F-bundle associated to $X$}, $\kaybar \subset \bbC$, 
$\bbk \supset k$, and $H^{\bullet}$.
\end{definition}

\begin{remark} \label{rem:formal_in_u}
The analyticity of $\Phi$ implies that F-bundles $(\omH,\bnabla)/\omB_{X}$ and $(\cH,\nabla)/B_{X}$ are non-archimedean analytic in the sense of Definition~\ref{def:F-bundle}.
\end{remark}

\paragraph{The case of varieties over $\kay$.}  \label{par:casekay}
Fix two fields $k$ and $\kay$, both of characteristic zero.

\subparagraph{Preliminaries on motives.} \label{subpar:motivess}
Let $\CA{\kay}$ be the category of pure Andr\'{e} motives (aka
motivated motives)
\cite{Andre_Pour_une_theorie_inconditionnelle_des_motifs} of smooth
projective $\kay$-varieties. This is a $\bbQ$-linear, rigid,
semisimple tensor category which is defined unconditionally.  For a
$\kay$-variety $\mathfrak{X}$ we will write 
$h(\mathfrak{X})$ for the associated Andr\'{e} motive.

Fix an embedding $\kay \subset  \bbC$. Then we
have a natural algebraic closure $\kay \subset \kaybar \subset \bbC$
of $\kay$ inside $\bbC$, and a base change functor
\[
(-)\times_{\kay} \kaybar \, \colon \, \CA{\kay} \, \longrightarrow \, \CA{\kaybar},
\quad h(\mathfrak{X}) \, \mapsto \, h(\mathfrak{X}\times_{\kay} \kaybar),
\]
which by definition is a monoidal functor of $\bbQ$-linear tensor
categories.

Taking Betti cohomology of the associated complex analytic space gives
a Weil cohomology theory on $\kaybar$-varieties \cite{stacksproject}
\begin{equation} \label{eq:kaybarB}
\xymatrix@1@M+0.5pc{
  H^{\bullet}_{B}(-) \,\colon \hspace{-2.4pc} &
  \CA{\kaybar}  \ar[r] &  \mathsf{Vect}_{\bbQ}}, \quad
\xymatrix@1@M+0.5pc{h(X) \ar@{|->}[r] & 
H^{\bullet}_{B}((X\times_{\kaybar}\bbC)^{\op{an}},\bbQ)},
\end{equation}
and by precomposing with base change gives also a compatible Weil
cohomology theory on $\kay$-varieties
\begin{equation} \label{eq:kayB}
  \xymatrix@1@M+0.5pc{
    H^{\bullet}_{B}(-) \,\colon \hspace{-2.4pc} &
    \CA{\kay}  \ar[r] & \mathsf{Vect}_{\bbQ}}, \quad
  \xymatrix@R-3pc@M+0.5pc@C+0.2pc{
    h(\mathfrak{X}) \ar@{|->}[r] &  h(X) \ar@{|->}[r] &
    H^{\bullet}_{B}((X\times_{\kaybar}\bbC)^{\op{an}},\bbQ) \\
    & || & || \\
    & h(\mathfrak{X}\times_{\kay} \kaybar) &
    H^{\bullet}_{B}((\mathfrak{X}\times_{\kay}\bbC)^{\op{an}},\bbQ).
    }
\end{equation}
Abusing notation we denote both of these tensor functors by
$H^{\bullet}_{B}(-)$ since it will be clear from the context which
type of varieties serve as input of Betti cohomology. 

These data are conveniently encoded in  certain proalgebraic groups.  

\medskip

\noindent
{\bfseries (i)} \ The automorphism groups $\GA{\kaybar}$ and
$\GA{\kay}$ of the tensor functors \eqref{eq:kaybarB} and
\eqref{eq:kayB} are the \emph{\bfseries motivic Galois groups of
  Andr\'{e} motives} of smooth projective $\kaybar$-varieties and
$\kay$-varieties respectively.  $\GA{\kaybar}$ and $\GA{\kay}$ are
proreductive algebraic group schemes over $\bbQ$. Tannaka duality
identifies $\CA{\kaybar}$ and $\CA{\kay}$ with the categories
$\mathsf{Rep}_{\bbQ}(\GA{\kaybar})$ and
$\mathsf{Rep}_{\bbQ}(\GA{\kay})$ of representations of $\GA{\kaybar}$
and $\GA{\kay}$ on finite dimensional $\bbQ$-vector spaces.

\medskip

\noindent
{\bfseries (ii)} \ The base change functor $(-)\times_{\kay} \kaybar \colon
\CA{\kay} \to \CA{\kaybar}$ induces a normal embedding
$\GA{\kaybar} \to \GA{\kay}$.

\medskip

\noindent
{\bfseries (iii)} \ The quotient group scheme $\GA{\kay}/\GA{\kaybar}$ is the automorphism group of the restriction
of the functor $H^{\bullet}_{B}(-)$ to the full tensor subcategory
$\mathsf{Art}^{\kay} \subset \CA{\kay}$ of Artin motives, i.e.\ motives
of zero dimensional smooth projective varieties over $\kay$. In other
words, $\GA{\kay}/\GA{\kaybar}$ is  the
automorphism group of the induced fiber functor
\[
H^{\bullet}_{B}(-)|_{\mathsf{Art}^{\kay}}  = H^{0}_{B}(-) \, \colon \,
\mathsf{Art}^{\kay} \, \longrightarrow \, \mathsf{Vect}_{\bbQ}.
\]
This group is naturally identified with the Galois group
$\Gal(\kaybar/\kay)$, where $\Gal(\kaybar/\kay)$ is viewed as a
profinite group scheme over $\bbQ$ for the trivial scheme structure.

In summary, we have a short exact sequence
\[
\xymatrix@1@M+0.5pc@C+1pc{ 1 \ar[r] & \GA{\kaybar} \ar[r] & \GA{\kay}
  \ar[r] & \Gal(\kaybar/\kay) \ar[r] & 1 }
\]
of proreductive group schemes\footnote{It is expected that $\GA{\kaybar}$ is the connected component of the identity of $\GA{\kay}$ or equivalently that $\Gal(\kaybar/\kay) = \pi_{0}(\GA{\kay})$ is the component group of $\GA{\kay}$ but we will not need this fact.} over $\bbQ$.

\medskip

\noindent
{\bfseries (iv)} \ 
The group $\GA{\kay}$ has a natural Lefschetz character $\GA{\kay} \to
\mathbb{G}_{m}$ given by its action on
$H^{2}_{B}(\bbP^{1}_{\kay},\bbQ)$, and we denote the kernel of this
character by $\motM^{\kay}$. The restriction of the Lefschetz
character to $\GA{\kaybar}$ is given by its action on
$H^{2}_{B}(\bbP^{1}_{\kaybar},\bbQ)$ and we denote its kernel by
$\motM^{\kaybar}$. 

\medskip

Again $\motM^{\kay}$ and $\motM^{\kaybar}$ are proreductive algebraic
groups over $\bbQ$ which we view as the \strongemph{motivic Galois
groups of the category of {\sf nc} \ Andr\'{e} motives}.  They can be
interpreted as the Galois groups of the semisimple Tannakian
$\bbQ$-linear categories $\CA{\kay}/\mathsf{Tate}$ and
$\CA{\kaybar}/\mathsf{Tate}$ obtained as the orbit categories of
$\CA{\kay}$ and $\CA{\kaybar}$ for the action of the Tate twist.

Thus
$\CA{\kay}/\mathsf{Tate} = \mathsf{Rep}_{\bbQ}(\motM^{\kay})$ and
$\CA{\kaybar}/\mathsf{Tate} = \mathsf{Rep}_{\bbQ}(\motM^{\kaybar})$
and the quotient functors
\[
\xymatrix@R-2.5pc@M+0.2pc@C+0.5pc{
\CA{\kay} \ar[r] & 
\CA{\kay}/\mathsf{Tate} \\
|| & || \\
\mathsf{Rep}_{\bbQ}(\GA{\kay}) & \mathsf{Rep}_{\bbQ}(\motM^{\kay})
}
\quad \text{and} \quad
\xymatrix@R-2.5pc@M+0.2pc@C+0.5pc{
\CA{\kaybar} \ar[r] & 
\CA{\kaybar}/\mathsf{Tate} \\
|| & || \\
\mathsf{Rep}_{\bbQ}(\GA{\kaybar}) & \mathsf{Rep}_{\bbQ}(\motM^{\kaybar})
}
\]
correspond to the pullback of representations via the inclusions
$\motM^{\kay} \subset \GA{\kay}$ and $\motM^{\kaybar} \subset
\GA{\kaybar}$.

\medskip

\noindent
{\bfseries (v)} \ The induced functor $\mathsf{Art}^{\kay}  \subset \CA{\kay}
\to \CA{\kay}/\mathsf{Tate}$ from (Artin motives) to (Andr\'{e}
motives modulo Tate twists), is still fully faithful and so the
composed homomorphism of group schemes over $\bbQ$
\[
\motM^{\kay} \subset \GA{\kay} \to \Gal(\kaybar/\kay)
\]
is surjective. Thus $\Gal(\kaybar/\kay)$ can also be
identified with the quotient group scheme
$\motM^{\kay}/\motM^{\kaybar}$,  i.e.\ we also  
have a short exact sequence
\[
\xymatrix@1@M+0.5pc@C+1pc{ 1 \ar[r] & \motM^{\kaybar} \ar[r] & \motM^{\kay}
  \ar[r] & \Gal(\kaybar/\kay) \ar[r] & 1}.
\]

Suppose that $\mathfrak{X}$ is a smooth projective $\kay$-variety and
let $X = \mathfrak{X}\times_{\kay} \kaybar$ be its base change to
$\kaybar$. By definition the Betti cohomologies of $\mathfrak{X}$ and
$X$ have the same underlying rational vector space. To distinguish
notationally between the two Betti cohomology theories, our convention
will be to writ e $H^{\bullet}_{B}(X)$ for the Betti cohomology of
$X$, viewed as a $\motM^{\kaybar}$-module, and will write
$H^{\bullet}_{B}(\mathfrak{X})$ for the Betti cohomology of
$\mathfrak{X}$, viewed as a $\motM^{\kay}$-module. 

\medskip

The Galois group $\Gal(\kaybar/\kay)$ acts on
$X$ by automorphisms, and so by pullback acts on the maximal torsion
free quotient  
\[
\mathsf{NS}(X,\bbZ)_{\op{tf}} \, \coloneqq \, \mathsf{NS}(X,\bbZ)/(\text{torsion}) =
\mathsf{im}\left[\mathsf{CH}_{1}(X) \to
  H^{2}_{B}((X\times_{\kaybar}\bbC)^{\op{an}},\bbQ)\right]
\]
of the N\'{e}ron-Severi group of $X$. But
$\mathsf{NS}(X,\bbZ)_{\op{tf}}$ is a free abelian group of
finite rank $\rho$ and hence we get a homomorphism
\[
\Gal(\kaybar/\kay) \to
\GL\left(\mathsf{NS}(X,\bbZ)_{\op{tf}}\right) \, \cong
\, \GL_{\rho}(\bbZ).
\]
The image of this homomorphism is a finite subgroup of
$\GL\left(\mathsf{NS}(X,\bbZ)_{\op{tf}}\right) \cong
\GL_{\rho}(\bbZ)$ which we will denote by $\Gamma_{\mathfrak{X}}$.

\medskip

Consider next the rational vector space 
\[
H^{2}_{B}(X) =
H^{2}_{B}((X\times_{\kaybar} \bbC)^{\op{an}},\bbQ).
\]
It decomposes into a direct sum  of natural subspaces 
\begin{equation} \label{eq:decompH2X}
H^{2}_{B}(X) =  H^{2}_{B}(X)_{\op{alg}}\oplus H^{2}_{B}(X)_{\op{transc}},
\end{equation}
where
\begin{itemize}
\item $H^{2}_{B}(X)_{\op{alg}} = \mathsf{NS}(X,\bbQ)$ is the N\'{e}ron-Severi
  subspace  spanned by
  $\mathsf{NS}(X,\bbZ)_{\op{tf}}$,
\item $H^{2}_{B}(X)_{\op{trasc}} \subset H^{2}_{B}(X)$ is the subspace
  of classes orthogonal to algebraic curves in $X$.
\end{itemize} 
If we view $H^{2}_{B}(X)$ as a representation of $\motM^{\kaybar}$,
then $H^{2}_{B}(X)_{\op{alg}} = H^{2}_{B}(X)^{\motM^{\kaybar}}$ is
just the space of invariants, and $H^{2}_{B}(X)_{\op{transc}}$ is just
the complementary $\motM^{\kaybar}$ subrepresentation.

\medskip

More importantly, the $\motM^{\kay}$-action on
$H^{2}_{B}(\mathfrak{X}) = H^{2}_{B}(X)$ also respects the
decomposition \eqref {eq:decompH2X}.

\begin{proposition}
  {\bfseries (a)} \ 
  The natural action of $\motM^{\kay}$ on $H^{2}_{B}(\mathfrak{X})$
respects the
decomposition
\begin{equation} \label{eq:Mot-decompose}
H^{2}_{B}(\mathfrak{X}) = H^{2}_{B}(X)_{\op{alg}} \oplus
H^{2}_{B}(X)_{\op{transc}}
\end{equation}

\smallskip

\noindent
{\bfseries (b)} \
The induced action of $\motM^{\kay}$ on $H^{2}_{B}(X)_{\op{alg}}$
factors through the homomorphism
\[
\motM^{\kay} \to \Gal(\kaybar/\kay) \to \Gamma_{\mathfrak{X}}
\subset \GL(\mathsf{NS}(X,\bbZ)_{\op{tf}} \subset \GL(H^{2}_{B}(X)_{\op{alg}}).
\]
\end{proposition}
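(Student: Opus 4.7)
The plan is to deduce both (a) and (b) from a single Tannakian identification, namely that $H^{2}_{B}(X)_{\op{alg}}$ is precisely the subspace of $\motM^{\kaybar}$-invariants in $H^{2}_{B}(\mathfrak{X})$. Granted this identification, part (a) becomes the observation that invariants of a normal subgroup are preserved by the ambient group, while part (b) becomes the tautology that a representation trivial on a normal subgroup descends to the quotient. The one substantive step is the identification itself, which is where the Lefschetz $(1,1)$ theorem and Andr\'e's motivic formalism enter.

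First I would prove the identification $H^{2}_{B}(X)_{\op{alg}} = \bigl(H^{2}_{B}(X)\bigr)^{\motM^{\kaybar}}$. Under the Tannakian equivalence $\CA{\kaybar}/\mathsf{Tate} \simeq \mathsf{Rep}_{\bbQ}(\motM^{\kaybar})$ from item (iv), the space of $\motM^{\kaybar}$-invariants in $H^{2}_{B}(X)$ corresponds to the maximal sub-motive of $h^{2}(X)$ in $\CA{\kaybar}/\mathsf{Tate}$ which is a direct sum of trivial objects. Lifting back to $\CA{\kaybar}$, and using that $H^{2}_{B}(X)$ is pure of weight two, such a sub-motive must be a direct sum of copies of the Lefschetz motive $\bbQ(-1)$. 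The $\bbQ$-vector space $\Hom_{\CA{\kaybar}}(\bbQ(-1), h^{2}(X))$ consists of motivated classes of type $(1,1)$ in $H^{2}_{B}(X)$; by the Lefschetz $(1,1)$ theorem such classes coincide with the algebraic ones, i.e.\ with $\NS(X,\bbQ) = H^{2}_{B}(X)_{\op{alg}}$. This forces the maximal sub-motive to be exactly the N\'eron--Severi subspace.

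With this in hand, the rest is formal. Since $\motM^{\kaybar}$ is normal in $\motM^{\kay}$, the invariants of $\motM^{\kaybar}$ in any $\motM^{\kay}$-representation are automatically $\motM^{\kay}$-stable, so $H^{2}_{B}(X)_{\op{alg}}$ is a $\motM^{\kay}$-subrepresentation of $H^{2}_{B}(\mathfrak{X})$. Proreductivity of $\motM^{\kay}$ then supplies a unique complementary $\motM^{\kay}$-subrepresentation, and this must equal $H^{2}_{B}(X)_{\op{transc}}$ since the latter is precisely the sum of the non-trivial $\motM^{\kaybar}$-isotypic components in \eqref{eq:decompH2X}; this proves (a). For (b), the trivial action of $\motM^{\kaybar}$ on $H^{2}_{B}(X)_{\op{alg}}$ forces the $\motM^{\kay}$-action to factor through $\motM^{\kay}/\motM^{\kaybar} \cong \Gal(\kaybar/\kay)$ by item (v); unwinding the Tannakian identifications, the induced Galois action on $\NS(X,\bbZ)_{\op{tf}} \otimes \bbQ$ is simply the standard action coming from $\Gal(\kaybar/\kay)$ acting on $X = \mathfrak{X}\times_{\kay}\kaybar$ and permuting line bundles defined over $\kaybar$, and hence factors through $\Gamma_{\mathfrak{X}}$ by the very definition of the latter.
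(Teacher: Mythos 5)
Your proof is correct, but for part (a) it follows a genuinely different route than the paper. You reduce everything to the identification $H^{2}_{B}(X)_{\op{alg}}=\bigl(H^{2}_{B}(X)\bigr)^{\motM^{\kaybar}}$ (proved via weight purity plus the Lefschetz $(1,1)$ theorem, so that morphisms $\bbQ(-1)\to h^{2}(X)$ are exactly N\'eron--Severi classes) and then invoke normality of $\motM^{\kaybar}$ in $\motM^{\kay}$: the trivial $\motM^{\kaybar}$-isotypic component and its canonical complement (the sum of the non-trivial isotypic components, which is $H^{2}_{B}(X)_{\op{transc}}$) are automatically $\motM^{\kay}$-stable. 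The paper instead never uses the full ``algebraic $=$ invariant'' identification: it only needs that divisor classes are motivated, and it proves stability of $H^{2}_{B}(X)_{\op{transc}}$ geometrically, by introducing the $\motM^{\kay}$-invariant pairing $\langle\alpha,\beta\rangle=\int_{X}\alpha\smile\beta\smile\hpl^{\,d-2}$ attached to an ample class defined over $\kay$, showing via the Hodge index theorem that it is non-degenerate on $H^{2}_{B}(X)_{\op{alg}}$, and identifying $H^{2}_{B}(X)_{\op{transc}}$ with the orthogonal complement. Your argument is more formal: it needs no choice of polarization and no Hodge index theorem, but it leans on Lefschetz $(1,1)$ and on the facts stated just before the proposition (the decomposition \eqref{eq:decompH2X} and its $\motM^{\kaybar}$-equivariance), whereas the paper's form-theoretic argument essentially re-derives the non-degeneracy underlying that decomposition and so is more self-contained. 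One small imprecision on your side: proreductivity alone does not give a \emph{unique} $\motM^{\kay}$-complement; what saves you is that $H^{2}_{B}(X)_{\op{alg}}$ is the \emph{entire} trivial $\motM^{\kaybar}$-isotypic component, so any invariant complement has no $\motM^{\kaybar}$-fixed vectors and is forced to coincide with $H^{2}_{B}(X)_{\op{transc}}$ -- worth saying explicitly. Part (b) is essentially the paper's argument: algebraic classes are motivated, hence $\motM^{\kaybar}$ acts trivially and the action descends to $\Gal(\kaybar/\kay)$, whose action on $\NS(X,\bbZ)_{\op{tf}}$ is the geometric one and factors through $\Gamma_{\mathfrak{X}}$ by definition; your ``unwinding the Tannakian identifications'' is at the same level of detail as the paper's corresponding assertion.
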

\begin{proof}  The Galois group $\Gal(\kaybar/\kay)$ acts on the $\kaybar$-points of $X$ and thus acts on the 
cohomology space $H^{2}_{B}(X)$. 
If $L$ is a line bundle on $X$, then the image  $\ell$ of $c_{1}(L) \in \CH^{1}(X)$ in $H^{2}_{B}(X)$ is a motivated cycle, and so is fixed by $\motM^{\kaybar}$. Therefore an element $g \in \motM^{\kay}$ acts on $\ell  \in H^{2}_{B}(X)$ through its image $\underline{g} \in \Gal(\kaybar/\kay)$, i.e.\ $g(\ell) = \underline{g}(\ell)$, where on the right hand side we use the natural action of  $\Gal(\kaybar/\kay)$ on $H^{2}_{B}(X)$. 
Since the action of any element of $\Gal(\kaybar/\kay)$  on $X$ sends a divisor to a divisor, it follows that 
the action of $\Gal(\kaybar/\kay)$  on $H^{2}_{B}(X)$ will preserve the subspace $\NS(X,\bbQ) = H^{2}_{B}(X)_{\op{alg}}$. Hence the action of $\motM^{\kay}$ on $H^{2}_{B}(X)$ will preserve $H^{2}_{B}(X)_{\op{alg}}$. This proves that $H^{2}_{B}(X)_{\op{alg}}$ is stable under $\motM^{\kay}$ and, since $\motM^{\kaybar}$ acts trivially on $H^{2}_{B}(X)_{\op{alg}}$, this proves part  {\bfseries (b)} of the Proposition. 

To finish the proof of part {\bfseries (a)} we need to check that $H^{2}_{B}(X)_{\op{transc}}$ is also stable under $\motM^{\kay}$.  Let $\cO_{\mathfrak{X}}(1)$ be a very ample line bundle on the
$\kay$-variety $\mathfrak{X}$ and let 
$\hpl \in
H^{2}_{B}(\mathfrak{X}) = H^{2}_{B}(X)$ be the image of $c_{1}(\cO_{\mathfrak{X}}(1))$ in Betti
cohomology.
The choice of $\cO_{\mathfrak{X}}(1)$  gives rise to a natural $\bbQ$-valued symmetric bilinear
form
\[
\langle \, - \, ,\,  - \, \rangle \colon H^{2}_{B}(X)\otimes H^{2}_{B}(X) \
\longrightarrow \ \bbQ, \qquad 
\langle \alpha,\beta \rangle = \int_{X} \alpha \smile \beta \smile \hpl^{d-2},
\]
where $d$ is the dimension of $X$. 
Since $\hpl$ is the class of line bundle defined over $\kay$, this form is
invariant under the action of $\motM^{\kay}$. But for any $\ell \in H^{2}_{B}(X)_{\op{alg}}$
$\ell\cdot \hpl^{d-2}$ is the class of an algebraic curve in $X$, and so $H^{2}_{B}(X)_{\op{transc}} \subset H^{2}_{B}(X)$ is contained in the $\langle \, - \, ,\,  - \, \rangle$-orthogonal complement of $H^{2}_{B}(X)_{\op{alg}}^{\perp}$ of  $H^{2}_{B}(X)_{\op{alg}} \subset H^{2}_{B}(X)$.

On the other hand, if we
write $\mathsf{Prim}^{2}_{\op{alg}}$ for the $\langle\,-\,,\,
-\,\rangle$-orthogonal complement of $\hpl$ inside
$H^{2}_{B}(X)_{\op{alg}}$, then we know that $\langle \hpl, \hpl \rangle > 0$ by
the ampleness of $\cO_{\mathfrak{X}}(1)$, and we know that the restriction of
$\langle\,-\,,\, -\,\rangle$ to $\mathsf{Prim}^{2}_{\op{alg}}$ is
negative definite by the Hodge index theorem for $X\times_{\kaybar}\bbC$.
In particular, the $\bbQ$-valued form $\langle\,-\,,\, -\,\rangle$  is non-degenerate on the subspace $H^{2}_{B}(X)_{\op{alg}} \subset H^{2}_{B}(X)$, and so 
$H^{2}_{B}(X) = H^{2}_{B}(X)_{\op{alg}}\oplus H^{2}_{B}(X)_{\op{alg}}^{\perp}$. Thus $H^{2}_{B}(X)_{\op{transc}} = H^{2}_{B}(X)_{\op{alg}}^{\perp}$, and since the form  $\langle \, - \, ,\,  - \, \rangle$ is $\motM^{\kay}$-invariant, we conclude that $H^{2}_{B}(X)_{\op{transc}}$ is $\motM^{\kay}$-stable. This concludes the proof of the Proposition. 
\end{proof}

\medskip

\begin{notation}
    For disambiguation we will write $H^{2}_{B}(\mathfrak{X})_{\kaybar-\op{alg}}$ and
$H^{2}_{B}(\mathfrak{X})_{\kaybar-\op{transc}}$ for  the rational vector spaces
$H^{2}_{B}(X)_{\op{alg}}$ and $H^{2}_{B}(X)_{\op{transc}}$ together 
    with the $\motM^{\kay}$-actions given by part {\bfseries (a)} of
    the previous proposition. Note that these
$\motM^{\kay}$-representations are different from the representations
$H^{2}_{B}(\mathfrak{X})_{\op{alg}}$ and
$H^{2}_{B}(\mathfrak{X})_{\op{transc}}$ consisting of $\kay$-algebraic
and $\kay$-transcendental classes respectively.
\end{notation}

\subparagraph{The quantum cohomology algebra for varieties over $\kay$} \label{subpar:qcoh}

Let again $\mathfrak{X}$ be a smooth projective variety over $\kay$,
and let $X = \mathfrak{X}\times_{\kay} \kaybar$ be its base change to
$\mathsf{Spec} \, \kaybar$. Consider the rational vector space
$H^{\bullet}_{B}(X)$ together with its structure of a
$\motM^{\kaybar}$-module.

Let
\[
\mathsf{NE}(X,\bbZ) \subset \mathsf{CH}^{\mathsf{hom}}_{1}(X)  =
\op{im}\left[ \mathsf{CH}_{1}(X) \to
  H_{2}((X\times_{\kaybar} \bbC)^{\op{an}},\bbQ) \right]
\]
denote the monoid of numerically effective curve classes in $X$, and 
let, as before,  $\mathsf{Nov}_{X}$ denote the big
Novikov ring for $X$, i.e.\ the ring
\[
\mathsf{Nov}_{X} \coloneqq \bbQ\dbb{\mathsf{NE}(X,\bbZ)}\ \widehat{\otimes}_{\bbQ} \
\widehat{\mathsf{Sym}}\, H^{\bullet}_{B}(X)^{\vee},
\]
where $\bbQ\dbb{\mathsf{NE}(X,\bbZ)}$ is the completion of the monoid
algebra $\bbQ[\mathsf{NE}(X,\bbZ)]$ in its maximal ideal $\langle
q^{\beta} | \beta \in \mathsf{NE}(X,\bbZ), \beta \neq 0 \rangle$, and
similarly $\widehat{\mathsf{Sym}}\, H^{\bullet}_{B}(X)^{\vee}$ is the
completion of the superalgebra of polynomials on $H^{\bullet}_{B}(X)$
along the maximal ideal of the origin.

The $\motM^{\kaybar}$-action on $H^{\bullet}_{B}(X)$
induces a canonical action on $\mathsf{Nov}_{X}$, preserving the
$\bbQ$-algebra structure of $\mathsf{Nov}_{X}$. Here $\motM^{\kaybar}$
acts trivially on the factor $\bbQ\dbb{\mathsf{NE}(X,\bbZ)}$ (since it
acts trivially on the classes of effective curves defined over
$\kaybar$) and tautologically (i.e.\ by pullback) on the power series
algebra $\widehat{\mathsf{Sym}}\, H^{\bullet}_{B}(X)^{\vee}$.

As explained before, the $H^{\bullet}_{B}(-)$-valued
Gromov-Witten invariants of $X$
equip the rational vector space $H^{\bullet}_{B}(X)$ with an associative,
super commutative quantum
product
\[
\qup \colon H^{\bullet}_{B}(X)\otimes H^{\bullet}_{B}(X)
\ \longrightarrow
\ 
H^{\bullet}_{B}(X)\otimes \mathsf{Nov}_{X}.
\]
Furthermore, since the Gromov-Witten classes are cycle classes of
$\kaybar$-rational
algebraic cycles in Cartesian powers of $X$, it follows that the
quantum product $\qup$ is $\motM^{\kaybar}$-equivariant for the
natural actions of $\motM^{\kaybar}$ on $H^{\bullet}_{B}(X)$ and
$\mathsf{Nov}_{X}$. 

Since $X$ arises as a base change of a variety $\mathfrak{X}$ over
$\kay$, the rational vector space underlying $H^{\bullet}_{B}(X)$ is
the same as $H^{\bullet}_{B}(\mathfrak{X})$ and is therefore also
equipped with a natural $\motM^{\kay}$-action. Again this induces a
tautological action (pulling back formal germs of functions at the
origin) of $\motM^{\kay}$ on $\widehat{\mathsf{Sym}}\,
H^{\bullet}_{B}(X)^{\vee}$. As a $\motM^{\kay}$-module this completed
algebra is simply the completion $\widehat{\mathsf{Sym}}\,
H^{\bullet}_{B}(\mathfrak{X})^{\vee}$. Also $\motM^{\kay}$ acts
naturally on $\mathsf{NE}(X,\bbZ)$ and the action factors through the
homomorphism $\motM^{\kay} \twoheadrightarrow \Gamma_{\mathfrak{X}}$,
where the action of $\Gamma_{\mathfrak{X}}$ on $\mathsf{NE}(X,\bbZ)$
is dual to the action of $\Gamma_{\mathfrak{X}}$ on
$\mathsf{NS}(X,\bbZ)_{\op{tf}}$. This, in turn, induces an action on
the completed monoid algebra $\bbQ\dbb{\mathsf{NE}(X,\bbZ)}$ and hence
an action on the Novikov ring. We will write
$\mathsf{Nov}_{\mathfrak{X}}$ for this $\motM^{\kay}$-equivariant
algebra. Note that again $\mathsf{Nov}_{\mathfrak{X}}$ and
$\mathsf{Nov}_{X}$ are the same as $\bbQ$-algebras (or seen as
$\bbQ$-algebras equipped with a $\motM^{\kaybar}$-action), but that the
action of $\motM^{\kay}$ on $\mathsf{Nov}_{\mathfrak{X}}$ depends on
the $\kay$-model $\mathfrak{X}$.

\begin{proposition}The quantum product viewed as map
\begin{equation} \label{eq:qupkay}
\qup \colon H^{\bullet}_{B}(\mathfrak{X})\otimes
H^{\bullet}_{B}(\mathfrak{X}) \ \longrightarrow
\ H^{\bullet}_{B}(\mathfrak{X})\otimes \mathsf{Nov}_{\mathfrak{X}}
\end{equation}
is $\motM^{\kay}$-equivariant.
\end{proposition}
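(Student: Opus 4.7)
The plan is to leverage the short exact sequence
\[
1 \to \motM^{\kaybar} \to \motM^{\kay} \to \Gal(\kaybar/\kay) \to 1,
\]
observing that the $\motM^{\kaybar}$-equivariance of $\qup$ is already in hand from the immediately preceding discussion: the Gromov--Witten cycles $I_{n,\beta}(X)$ are motivated cycles on Cartesian powers of $X$ and hence $\motM^{\kaybar}$-invariant, while $\motM^{\kaybar}$ acts trivially on every $q^{\beta}$. What must be added is equivariance with respect to the Galois quotient, and for that one uses the Galois transformation law of Remark~\ref{rem:nonclosedL} together with the definition of the $\motM^{\kay}$-action on $\Nov_{\mathfrak{X}}$.

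Because the Poincar\'e pairing on $H^{\bullet}_{B}(\mathfrak{X})$ is induced by the diagonal cycle on $\mathfrak{X}\times_{\kay}\mathfrak{X}$, it is $\motM^{\kay}$-invariant. Consequently, $\motM^{\kay}$-equivariance of $\qup$ as a map is equivalent to $\motM^{\kay}$-invariance of the three-point correlator tensor
\[
\boldsymbol{c} \,\coloneqq\, \sum_{\beta \in \NE(X,\bbZ)} q^{\beta} \otimes I_{3,\beta}(X) \ \in\ \Nov_{\mathfrak{X}}\,\widehat{\otimes}\, H^{\bullet}_{B}(\mathfrak{X}^{3}),
\]
and the analogous invariance for general $n$ (which controls dependence on the $t$-variables) follows by the identical argument.

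For the Galois step, pick $g \in \motM^{\kay}$ with image $\bar g \in \Gal(\kaybar/\kay)$. Since $I_{3,\beta}(X)$ is motivated, hence $\motM^{\kaybar}$-invariant, the $g$-action on it depends only on $\bar g$; Remark~\ref{rem:nonclosedL} then identifies this action as $g \cdot I_{3,\beta}(X) = I_{3,\bar g_{*}\beta}(X)$. By the construction of the $\motM^{\kay}$-action on $\Nov_{\mathfrak{X}}$, the monomial $q^{\beta}$ is sent to $q^{\bar g_{*}\beta}$. Using that $\bar g_{*}$ is a bijection on $\NE(X,\bbZ)$ preserving the maximal monomial ideal, reindexing by $\beta' = \bar g_{*}\beta$ gives
\[
g \cdot \boldsymbol{c} \,=\, \sum_{\beta} q^{\bar g_{*}\beta} \otimes I_{3,\bar g_{*}\beta}(X) \,=\, \boldsymbol{c},
\]
which is the sought equivariance.

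The only conceptual point to verify is that the prescription above is consistent as $g$ ranges over $\motM^{\kay}$, given that the sequence need not split: any two lifts of a fixed $\bar g$ differ by an element of $\motM^{\kaybar}$, and such an element fixes each summand $q^{\beta}\otimes I_{3,\beta}(X)$, so the computation is independent of the lift. I do not expect any serious technical obstacle; the main thing to be careful about is the convergence and reindexing inside the completed Novikov ring, which is automatic once one notes that $\Gal(\kaybar/\kay)$ preserves the partial order on $\NE(X,\bbZ)$ defining the topology on $\Nov_{\mathfrak{X}}$.
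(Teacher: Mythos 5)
Your proof is correct, but it takes a genuinely different route from the paper's. The paper does not argue by splitting $\motM^{\kay}$ along the extension $1 \to \motM^{\kaybar} \to \motM^{\kay} \to \Gal(\kaybar/\kay) \to 1$; instead it lifts the quantum product to a morphism in the ind-completion of the category $\CA{\kay}$ of Andr\'{e} motives over $\kay$: the Galois orbits $\mathfrak{o} \subset \NE(X,\bbZ)$ are assembled into an ind-finite ind-affine semigroup $\kay$-scheme $\bbNE(X,\bbZ)$, the orbit-sums $\left(I_{n,\beta}(X)\right)_{\beta\in\mathfrak{o}}$ are interpreted as motivated cycles on $\bbNE(X,\bbZ)\times\mathfrak{X}^{\times n}$, i.e.\ as morphisms $\triv \to h(\bbNE(X,\bbZ))\,\widehat{\otimes}\,h(\mathfrak{X})^{\otimes n}$ in $\mathsf{Ind}\,\CA{\kay}$, and after contracting with the Poincar\'{e} pairing and summing these give $\qup$ as a morphism of ind-motives over $\kay$; equivariance is then automatic upon applying the Betti fiber functor and passing to the Tate quotient. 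What the paper's construction buys, beyond the stated proposition, is the motivic enhancement itself, which is used immediately afterwards to produce the $G$-equivariant product \eqref{eq:qupGk} for an arbitrary $(G,\epsilon_{G})$-symmetric Weil cohomology via the functor $\bfh$, and to define the twisted Novikov algebra $\mathsf{Nov}_{\bfH^{\bullet}(\mathfrak{X})}$. Your argument, by contrast, is a direct verification in the Betti realization: $\motM^{\kaybar}$-equivariance from the motivated nature of the $I_{n,\beta}(X)$, plus a reindexing of the correlator tensor under the Galois quotient using Remark~\ref{rem:nonclosedL} and the definition of the $\motM^{\kay}$-action on $\Nov_{\mathfrak{X}}$; it is more elementary and perfectly adequate for the proposition as stated, but it does not yield the motivic lift the paper exploits downstream. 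One input you use implicitly and should flag: the identification of the quotient $\Gal(\kaybar/\kay)$-action on $\motM^{\kaybar}$-invariant classes (coming from the group extension) with the geometric Galois action on $H^{\bullet}_{B}(X)$ appearing in Remark~\ref{rem:nonclosedL} --- this is exactly the mechanism the paper invokes in the earlier proposition on the decomposition of $H^{2}_{B}(\mathfrak{X})$, so it is available, but it is a genuine input rather than a tautology; with that noted, your lift-independence check and the reindexing inside the completed Novikov ring are both sound.
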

\begin{proof} \ The Galois group $\Gal(\kaybar/\kay)$ acts on
$\mathsf{NE}(X,\bbZ)$ through its finite quotient $\Gal(\kaybar/\kay)
\twoheadrightarrow \Gamma_{\mathfrak{X}}$. Therefore the colimit of
finite unions of $\Gamma_{\mathfrak{X}}$-orbits in
$\mathsf{NE}(X,\bbZ)$ will be an ind-finite ind-affine smooth semigroup
$\kay$-scheme $\bbNE(X,\bbZ)$ whose set of $\kaybar$-points is the
semigroup of effective curve classes in $X$.

For a $\kay$-variety $\mathfrak{X}$ let us write $h(\mathfrak{X}) \,
\in \, \mathsf{ob}\, \CA{\kay}$ for the corresponding Andr\'{e}
motive and similarly $h(X) \in \CA{\kaybar}$ for the Andr\'{e} motive of the base changed variety. By taking filtered colimits, we can then define
$h(\bbNE(X,\bbZ))$ as a commutative algebra in the ind completed
tensor category $\mathsf{Ind}\, \CA{\kay}$ of ind objects in
$\CA{\kay}$.

For any $\Gamma_{\mathfrak{X}}$-orbit $\mathfrak{o} \subset
\mathsf{NE}(X,\bbZ)$ of effective curve classes on $X$, and any $n$, 
the collection of Gromov-Witten classes
$\left(I_{n,\beta}(X)\right)_{\beta \in \mathfrak{o}}$ can be viewed (see Remark~\ref{rem:nonclosedL}) as
a motivated cycle on the $\kay$-scheme
$\bbNE(X,\bbZ)\times \mathfrak{X}^{\times n}$, i.e.\ as a map from the trivial motive
\[
I_{n,\mathfrak{o}} \ \colon \ \triv \
\longrightarrow \ h\left(\bbNE(X,\bbZ)\times \mathfrak{X}^{\times n}\right) \, = \,
h\left(\bbNE(X,\bbZ)\right)\, \widehat{\otimes}  \, h(\mathfrak{X})^{\otimes n}
\]
in the category $\mathsf{Ind}\,\CA{\kay}$. In particular, for every
$n = m+3$, after contracting with the
Poincar\'{e} pairing on $h(\mathfrak{X})$, we get
a map in $\mathsf{Ind}\, \CA{\kay}$:
\[
\qup_{m,\mathfrak{o}} \ \colon \ \triv
\longrightarrow \  h\left(\bbNE(X,\bbZ)\right) \, \widehat{\otimes} \,
h(\mathfrak{X})^{\vee \otimes m} \, \otimes \, h(\mathfrak{X})^{\vee \otimes 2}\, \otimes\,
h(X)
\]
which by construction is invariant under the action of
$\mathsf{S}_{m} \times \mathsf{S}_{2}$. This is the $\mathfrak{o}$-contribution
to the $m$-th homogeneous piece of
the quantum product, realized as a map of Andr\'{e} motives. Summing up
over all $m$ and $\mathfrak{o}$ we get the quantum product map in
$\mathsf{Ind}\, \CA{\kay}$:
  \[
  \qup \coloneqq \sum_{m\geq 0}\, \sum_{\mathfrak{o} \in
    \mathsf{NE}(X,\bbZ)/\Gamma_{\mathfrak{X}}} \ \frac{1}{m!} \,
  \qup_{m,\mathfrak{o}} \ \colon
  \ h(\mathfrak{X})^{\otimes 2} \ \longrightarrow \ h(\mathfrak{X})\otimes
  h\left(\bbNE(X,\bbZ)\right) \, \widehat{\otimes} \,
  \widehat{\mathsf{Sym}} \, h(\mathfrak{X})^{\vee}. 
  \]
Now applying the fiber functor $H^{\bullet}_{B}(-) \colon \CA{\kay} \ \to
\ \mathsf{Vect}_{\bbQ}$ and passing to the quotient by Tate twists yields
the statement of the Proposition.  
\end{proof}

Next suppose $G$ is a proreductive group over $k$ and $\epsilon_{G}
\in G$ is a fixed central element of order two.  The category of
representations $\mathsf{Rep}_{k}(G)$ of $G$ on finite dimensional
$G$-vector spaces is a semisimple $k$-linear tensor category which is
naturally $\bbZ/2$-graded by the eigenvalues of the action of
$\epsilon_{G}$.

Suppose we are given a
    {\bfseries $(G,\epsilon_{G})$-symmetric Weil cohomology theory on
      $\kay$-varieties}, i.e.\ a tensor functor
\begin{equation} \label{eq:GWeil}
  \xymatrix@R-2.5pc@M+0.2pc@C+0.5pc{
 \CA{\kay}/\mathsf{Tate} \ar[r]^-{\bfH^{\bullet}}  &
  \mathsf{Rep}_{k}(G),
}
\end{equation}
of $\bbZ/2$-graded categories, which commutes with duality.

Through the identification $\CA{\kay}/\mathsf{Tate} \cong
\mathsf{Rep}_{\bbQ}(\motM^{\kay})$ given by the fiber functor \linebreak 
$[\mathfrak{X}] \mapsto H^{\bullet}_{B}(\mathfrak{X})$ we can view
$\bfH^{\bullet}$ as a functor $\bfh\colon\mathsf{Rep}_{\bbQ}(\motM^{\kay}) \to \mathsf{Rep}_{k}(G)$ which
respects $\bbZ/2$-gradings commutes with duality. With this definition
we have a strictly commutative diagram of functors
\[
 \xymatrix@R-0.5pc@M+0.2pc@C+0.5pc{
 \CA{\kay}/\mathsf{Tate} \ar[r]^-{\bfH^{\bullet}}  \ar@{=}[d]_-{H^{\bullet}_{B}(-)} &
 \mathsf{Rep}_{k}(G).
 \\
 \mathsf{Rep}_{\bbQ}(\motM^{\kay}) \ar[ru]_-{\bfh} &}
\]
Applying $\bfH^{\bullet}$ to the Andr\'{e} motive version of the quantum
cohomology algebra we get a super commutative,  associative quantum product
\begin{equation} \label{eq:qupGk}
 \qup \, \colon \, \bfH^{\bullet}(\mathfrak{X})\otimes \bfH^{\bullet}(\mathfrak{X})
 \ \longrightarrow \ \bfH^{\bullet}(\mathfrak{X}) \otimes_{k}
 \mathsf{Nov}_{\bfH^{\bullet}(\mathfrak{X})}
\end{equation}
where to simplify notation we write $\bfH^{\bullet}(\mathfrak{X})$ instead of
$\bfH^{\bullet}([\mathfrak{X}])$. By definition we have
\begin{itemize}
\item $\bfH^{\bullet}(\mathfrak{X}) =
  \bfh(H^{\bullet}_{B}(\mathfrak{X}))$ is a $\bbZ/2$-graded
  finite dimensional $k$-vector
space equipped with an action of $G$;
\item $\mathsf{Nov}_{\bfH^{\bullet}(\mathfrak{X})}$ is
  a complete super commutative $k$-algebra
  equipped with an action of $G$;
\item $\qup$ is the image of the quantum product \eqref{eq:qupkay} under the
  functor $\bfh$.
\end{itemize}

\begin{remark}
The $G$-equivariant variant
      $\mathsf{Nov}_{\bfH^{\bullet}(\mathfrak{X})}$ of the big Novikov
      algebra is defined in the obvious way. If we write $\mathbb{M} =
      \langle q^{\beta} \, | \, \beta \in \mathsf{NE}(X,\bbZ), \,
      \beta \neq 0 \rangle$ for the maximal ideal of the monoid
      algebra $\bbQ[\mathsf{NE}(X,\bbZ)]$ and $\mathfrak{m} \subset
      \mathsf{Sym} \, H^{\bullet}_{B}(\mathfrak{X})^{\vee}$ for the
      maximal ideal of the origin in the $\bbQ$ vector space
      $H^{\bullet}_{B}(\mathfrak{X})$, then we have
      \[
      \mathsf{Nov}_{\mathfrak{X}} = \lim_{\substack{\longrightarrow \\ a,b}} \,
      \left(\bbQ[\mathsf{NE}(X,\bbZ)]/\mathbb{M}^{a}\right) \,
      \bigotimes_{\bbQ} \, \left( \left(\mathsf{Sym} \,
      H^{\bullet}_{B}(\mathfrak{X})^{\vee}\right)/\mathfrak{m}^{b}\right),
      \]
 as a limit of finite dimensional $\motM^{\kay}$-equivariant
 $\bbQ$-algebras.
 Applying the functor $\bfh$ yields
 \[
 \bfh\left( \left(\mathsf{Sym} \,
 H^{\bullet}_{B}(\mathfrak{X})^{\vee}\right)/\mathfrak{m}^{b}\right) =
 \left(\mathsf{Sym}_{k} \,
 \bfH^{\bullet}(\mathfrak{X})^{\vee}\right)/\mathfrak{m}_{k}^{b},
 \]
 where $\mathfrak{m}_{k}$ is the maximal ideal in the polynomial
 algebra $\mathsf{Sym} \, \bfH^{\bullet}(\mathfrak{X})^{\vee}$ and the
 right hand side is equipped with the action of $G$ induced from the
 $G$ action on $\bfH^{\bullet}(\mathfrak{X})$.

 Thus we set
 \[
 \begin{aligned}
 \mathsf{Nov}_{\bfH^{\bullet}(\mathfrak{X})} & \coloneqq\lim_{\substack{\longrightarrow \\ a,b}}
 \bfh \left(\bbQ[\mathsf{NE}(X,\bbZ)]/\mathbb{M}^{a}\right) \,
 \bigotimes_{k} \,
 \left(\left(\mathsf{Sym}_{k} \,
 \bfH^{\bullet}(\mathfrak{X})^{\vee}\right)/\mathfrak{m}_{k}^{b}\right)  \\
 & = \, \left( \lim_{\substack{\longrightarrow \\ a  }}
 \bfh \left(\bbQ[\mathsf{NE}(X,\bbZ)]/\mathbb{M}^{a}\right)\right)
 \widehat{\bigotimes}_{k} \, \left(\widehat{\mathsf{Sym}}_{k} \,
 \bfH^{\bullet}(\mathfrak{X})^{\vee}\right)
 \end{aligned}
 \]

To understand the $G$-equivariant $k$-algebra
${\displaystyle \lim_{\substack{\longrightarrow \\ a}} \bfh
  \left(\bbQ[\mathsf{NE}(X,\bbZ)]/\mathbb{M}^{a}\right)}$ better, we
will we will need to look at the way $\bfh$ transports the rational
$\motM^{\kay}$-representations
$\bbQ[\mathsf{NE}(X,\bbZ)]/\mathbb{M}^{a}$ to $k$-linear
$G$-representations. The transport mechanism depends on a relationship
between the two Galois groups $\Gal(\kaybar/\kay)$ and $\Gal(\kbar,k)$
which we describe next.
\end{remark}

\subparagraph{Galois twists} \label{subpar:galois}

Composing the functor $\bfh \colon \mathsf{Rep}_{\bbQ}(\motM^{\kay}) \to
 \mathsf{Rep}_{k}(G)$ with the inclusion
\[
\xymatrix@R-3.5pc@M+0.7pc@C+1.5pc{
  \mathsf{Rep}_{\bbQ}(\Gal(\kaybar/\kay))  \ar@{^{(}->}[r]
  & \mathsf{Rep}_{\bbQ}(\motM^{\kay})  \\
|| & ||  \\
\mathsf{Art}^{\kay} \ar@{^{(}->}[r] & \CA{\kay}/\mathsf{Tate}  
}
\]
we get a tensor functor
\begin{equation} \label{eq:GtoGal}
  \mathsf{Rep}_{\bbQ}(\Gal(\kaybar/\kay)) \longrightarrow \mathsf{Rep}_{k}(G).
\end{equation}
Hence, for any smooth projective $\kay$-variety $\mathfrak{X}$ we can
compose the pullback functor 
\[
\mathsf{Rep}_{\bbQ}(\Gamma_{\mathfrak{X}}) \longrightarrow
\mathsf{Rep}_{\bbQ}(\Gal(\kaybar/\kay))
\]
with the functor \eqref{eq:GtoGal}
to get a tensor functor 
 \begin{equation} \label{eq:GtoGamma}
   \bfh_{\mathfrak{X}} \colon
   \mathsf{Rep}_{\bbQ}(\Gamma_{\mathfrak{X}}) \longrightarrow \mathsf{Rep}_{k}(G).
 \end{equation}
Such a tensor functor can be viewed as homomorphism from $G$ to a
$\Gal(\kbar/k)$-twisted form of $\Gamma_{\mathfrak{X}}$. Indeed, let
us fix an algebraic closure $k \subset \kbar$.  Suppose we are given a
homomorphism of profinite groups
\[
\varphi \colon \Gal(\kbar/k) \to \Gamma.
\]
Such a $\varphi$ gives rise to
\begin{itemize}[wide]
\item[(a)] a tensor functor
\[
F_{\varphi} \colon \mathsf{Rep}_{\bbQ}(\Gamma) \longrightarrow \mathsf{Vect}_{k},
\]
which to every $\bbQ$-representation $\lambda \colon \Gamma \to \GL(V)$ assigns
the $k$-vector space 
\[
\left(V\otimes_{\bbQ} \kbar\right)^{\Gal(\kbar/k)},
\]
where $\Gal(\kbar/k)$ acts on $V$ via the homomorphism $\lambda\circ
\varphi \colon \Gal(\kbar/k) \to \GL(V)$, and acts on $\kbar$
  tautologically.
\item[(b)] a $\varphi$-twisted (inner) form of $\Gamma$, i.e.\ a
  finite group scheme $\Gamma^{\varphi}$ over $\op{Spec}\, k$,  defined by
\[
\Gamma^{\varphi} \coloneqq \op{Spec}\, \kbar \times_{\op{Ad}\circ \varphi} \Gamma. 
\]
\item[(c)] A canonical functorial action
  \[
  \mathsf{can}_{\varphi}(\lambda) \colon \Gamma^{\varphi} \to \GL(F_{\varphi}(\lambda))
  \]
  of $\Gamma^{\varphi}$ on
  $F_{\varphi}(\lambda)$ for all $\bbQ$-representations $\lambda \colon
  \Gamma \to \GL(V)$ of $\Gamma$. The action $\mathsf{can}_{\varphi}(\lambda)$
  is the
  $\Gal(\kbar/k)$-descent of the extension $\lambda\otimes {\kbar} \colon \Gamma
  \to \GL(V\otimes \kbar)$ of $\lambda$ by $\kbar$-linearity.
\end{itemize}

\medskip
With this notation we have the following

\begin{lemma} \label{lem:nuVect}
The assignment $\varphi \mapsto
F_{\varphi}$ gives an equivalence of groupoids:
\[
\left(\text{\begin{minipage}[c]{1.6in}
group homomorphisms $\Gal(\kbar/k) \to \Gamma$, with maps given 
by conjugation by elements in $\Gamma$
\end{minipage}}\right)
\
\stackrel{\cong}{\longrightarrow}
\left(\text{\begin{minipage}[c]{2in}tensor
  functors $\mathsf{Rep}_{\bbQ}(\Gamma) \to \mathsf{Vect}_{k}$
  with maps given by isomorphisms of functors\end{minipage}}\right).
\]
\end{lemma}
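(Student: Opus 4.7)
The plan is to recognize this equivalence as the composition of two classical dualities: the Tannakian classification of fiber functors on $\mathsf{Rep}_{\bbQ}(\Gamma)$, and Galois descent for finite \'etale $\Gamma$-torsors. In the intended application $\Gamma$ is finite, so any homomorphism $\Gal(\kbar/k)\to \Gamma$ is continuous for the discrete topology on $\Gamma$.

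First I would verify that $\varphi \mapsto F_{\varphi}$ is a well-defined functor of groupoids: given $\gamma \in \Gamma$ and $\varphi' = \gamma\varphi(-)\gamma^{-1}$, the operator $\lambda(\gamma)\otimes \id_{\kbar}$ sends $\Gal(\kbar/k)$-invariants for $\lambda\circ\varphi$ to invariants for $\lambda\circ\varphi'$, producing a tensor natural isomorphism $F_{\varphi} \Rightarrow F_{\varphi'}$ which is natural in $\lambda$. This unwinding of definitions gives the action of the assignment on morphisms and shows that it is compatible with composition.

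For essential surjectivity I would invoke the fundamental theorem of neutral Tannakian categories. The category $\mathsf{Rep}_{\bbQ}(\Gamma)$ is neutral Tannakian over $\bbQ$, with forgetful fiber functor $\omega_{0}$ whose Tannaka group is $\Gamma$ viewed as a constant finite \'etale group scheme. Any tensor functor $F\colon \mathsf{Rep}_{\bbQ}(\Gamma) \to \mathsf{Vect}_{k}$ extends by $k$-linearity to a fiber functor on $\mathsf{Rep}_{\bbQ}(\Gamma)\otimes_{\bbQ} k \cong \mathsf{Rep}_{k}(\Gamma_{k})$, and Tannakian duality identifies the groupoid of such fiber functors with the groupoid of $\Gamma_{k}$-torsors over $\Spec k$. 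Since $\Gamma_{k}$ is a finite \'etale group scheme, these torsors are classified by the pointed set $H^{1}_{\text{\'et}}(\Spec k,\Gamma_{k})$, which by classical Galois cohomology is canonically in bijection with $\Hom(\Gal(\kbar/k),\Gamma)$ modulo conjugation. A direct comparison shows that the torsor attached to $F_{\varphi}$ is exactly the $\varphi$-twisted form $\Gamma^{\varphi}$ from (b), and that its tautological action on $F_{\varphi}(\lambda)$ reproduces the canonical action $\mathsf{can}_{\varphi}(\lambda)$ from (c).

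For fullness, a tensor natural isomorphism $F_{\varphi}\Rightarrow F_{\varphi'}$ corresponds under Tannakian duality to a $\Gamma_{k}$-equivariant isomorphism $\Gamma^{\varphi} \stackrel{\sim}{\to} \Gamma^{\varphi'}$ of torsors; by left translation any such isomorphism is determined by a single element of $\Gamma$ intertwining $\varphi$ and $\varphi'$ by conjugation, which yields the equivalence on morphism sets. The main obstacle is keeping the Galois descent bookkeeping straight, in particular verifying that the concrete formula $F_{\varphi}(V) = (V\otimes_{\bbQ}\kbar)^{\Gal(\kbar/k)}$ coincides with the global sections of the representation bundle twisted by the torsor $\Gamma^{\varphi}$; once the left/right action conventions are fixed this is a routine unpacking of definitions.
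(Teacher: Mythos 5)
Your proof is correct and follows essentially the same route as the paper: Tannakian duality identifies tensor functors $\mathsf{Rep}_{\bbQ}(\Gamma)\to\mathsf{Vect}_{k}$ with $\Gamma$-torsors over $\Spec k$ (the paper uses the Isom-scheme $\mathbf{I}(\omega\otimes k,\nu)$), and Galois descent classifies these by $H^{1}(\Gal(\kbar/k),\Gamma)$, whose cocycle groupoid for the constant group $\Gamma$ has homomorphisms as objects and conjugation as morphisms. One minor caveat: the torsor attached to $F_{\varphi}$ is the translation-twisted form of $\Gamma_{\kbar}$, not the conjugation-twisted group scheme $\Gamma^{\varphi}$, which is its automorphism group scheme (cf.\ the paper's remark on left actions); this conflation does not affect the argument.
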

\begin{proof} We can construct the inverse functor as follows.
 Let $\nu \colon \mathsf{Rep}_{\bbQ}(\Gamma) \to
\mathsf{Vect}_{k}$ be a fiber functor and let $\omega \colon
\mathsf{Rep}_{\bbQ}(\Gamma) \to \mathsf{Vect}_{\bbQ}$ denote the
forgetful functor.  By the classification
\cite[Theorem~II.8.1]{milne-tannakian} of fiber functors in Tannaka
duality, we know that the functor $A \mapsto
\op{Isom}^{\otimes}(\omega\otimes A,\nu)$ from commutative
$k$-algebras to sets is represented by an affine scheme
$\mathbf{I}(\omega\otimes k,\nu)$ which is a right fppf torsor over
$\Gamma$ viewed as a group scheme over $k$. Such torsors are
classified by $H^{1}(\Gal(\kbar/k),\Gamma)$, i.e.\ the groupoid of such
torsors is the groupoid with objects given by $\Gamma$ valued
$1$-cocycles of $\Gal(\kbar/k)$ and morphisms given by
coboundaries. But the $1$-cocyles are simply homomorphisms $\varphi \colon
\Gal(\kbar/k) \to \Gamma$ and the maps between cocycles are given by
conjugations by elements in $\Gamma$. This proves the lemma.  
\end{proof}

\begin{remark} \label{rem:leftact} 
Let $\nu \colon
\mathsf{Rep}_{\bbQ}(\Gamma) \to \mathsf{Vect}_{k}$. The group scheme
$\mathbf{I}(\omega\otimes k,\nu)\times_{\mathsf{Ad}} \Gamma$ of
automorphisms of the $\Gamma$-torsor $\mathbf{I}(\omega\otimes k,\nu)$
is an inner twisted form of $\Gamma$ as a group scheme over
$\mathsf{Spec}\, k$ and $\mathbf{I}(\omega\otimes k,\nu)$ is a left
fppf torsor over this group scheme. If $\nu$ corresponds to a
homomorphism $\varphi \colon \Gal(\kbar/k) \to \Gamma$, then this group
scheme is canonically identified with $\Gamma^{\varphi}$.
\end{remark}

The previous lemma has an obvious refinement. Indeed, suppose that in
addition to the  homomorphism $\varphi$
we are given also a  homomorphism $\psi \colon G \to
\Gamma^{\varphi}$ of group schemes over $k$.
Proposition with $\psi$ gives a canonical action
\[
\mathsf{can}_{\varphi}(\lambda)\circ \psi \colon G \to \GL(F_{\varphi}(\lambda))
\]
of $G$ on the $k$-vector space $F_{\varphi}(\lambda)$.
This gives rise to a tensor functor
\[
F_{\varphi,\psi} \colon \mathsf{Rep}_{\bbQ}(\Gamma) \longrightarrow
\mathsf{Rep}_{k}(G), \qquad \lambda \mapsto
\mathsf{can}_{\varphi}(\lambda)\circ \psi. 
\]
Also, if $\gamma \in \Gamma$, and we consider the $\gamma$-conjugate homomorphism
\[
\varphi^{\gamma} \coloneqq \mathsf{Ad}_{\gamma}\circ \varphi \, \colon \, \Gal(\kbar/k) \ \to \ \Gamma,
\]
then the inner automorphism 
\[
\mathsf{Ad}_{\gamma} \colon \Gamma \to \Gamma
\]
intertwines the $\Gal(\kbar/k)$ action $\mathsf{Ad}\circ \varphi$ on the source copy of $\Gamma$ with the $\Gal(\kbar/k)$ action $\mathsf{Ad}\circ \varphi^{\gamma}$ on the source copy of $\Gamma$.

In particular $\mathsf{Ad}_{\gamma} \, \colon \, \Gamma \ \to \ \Gamma$, descends to a homomorphism
\[
\mathbf{a}_{\gamma} \, \colon \,  \Gamma^{\varphi} \ \to \ \Gamma^{\varphi^{\gamma}}
\]
of finite group schemes over $\Spec\, k$, and so we get an action of the group $\Gamma$ on the set of pairs of homomorphisms $(\varphi,\psi)$ given by 
\begin{equation} \label{eq:Gammaacts}
(\varphi,\psi) \ \to \ \left(\varphi^{\gamma},\mathbf{a}_{\gamma}\circ\psi\right).
\end{equation}

With this notations we now have the following

\begin{lemma} \label{lem:nuRep}
the assignment $(\varphi,\psi) \mapsto
F_{\varphi,\psi}$ gives an equivalence of groupoids:
\[
\left(\text{\begin{minipage}[c]{2in} pairs
  of group homomorphisms $(\varphi,\psi)$
  with maps given by \eqref{eq:Gammaacts}\end{minipage}}\right) \
\stackrel{\cong}{\longrightarrow} \  \left(\text{\begin{minipage}[c]{2.3in} tensor
functors $\mathsf{Rep}_{\bbQ}(\Gamma) \longrightarrow
\mathsf{Rep}_{k}(G)$, with maps given by isomorphisms of functors.
\end{minipage}}\right).
\]
\end{lemma}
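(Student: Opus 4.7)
The plan is to refine the proof of the preceding Lemma~\ref{lem:nuVect} by repackaging the $G$-equivariance as Tannaka-dual data for $\Gamma^{\varphi}$. Given a tensor functor $F \colon \mathsf{Rep}_{\bbQ}(\Gamma) \to \mathsf{Rep}_{k}(G)$, I would first compose with the forgetful functor $U \colon \mathsf{Rep}_{k}(G) \to \mathsf{Vect}_{k}$ to obtain a fiber functor $\nu \coloneqq U\circ F$. By Lemma~\ref{lem:nuVect}, $\nu$ determines a homomorphism $\varphi \colon \Gal(\kbar/k) \to \Gamma$, unique up to $\Gamma$-conjugation, together with a canonical isomorphism $\nu \cong F_{\varphi}$ of fiber functors. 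By Remark~\ref{rem:leftact}, the torsor $\mathbf{I}(\omega\otimes k,\nu)$ is a left $\Gamma^{\varphi}$-torsor, and the canonical action $\mathsf{can}_{\varphi}$ makes each $F_{\varphi}(\lambda)$ a $\Gamma^{\varphi}$-representation.

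Next I would recover $\psi$. The $G$-action on $F(\lambda) = F_{\varphi}(\lambda)$ is, by the tensor-functor hypothesis, natural in $\lambda$ and compatible with tensor products and duals. In other words, each $g \in G(A)$ (for a test $k$-algebra $A$) defines a tensor automorphism of the composite
\[
\mathsf{Rep}_{\bbQ}(\Gamma) \xrightarrow{F_{\varphi}} \mathsf{Rep}_{k}(\Gamma^{\varphi}) \xrightarrow{\text{forget}} \mathsf{Vect}_{k},
\]
and by Tannaka duality applied to the proreductive $k$-group $\Gamma^{\varphi}$ this family of tensor automorphisms is represented by a morphism of $k$-group schemes $\psi \colon G \to \Gamma^{\varphi}$, such that the given $G$-action on $F(\lambda)$ equals $\mathsf{can}_{\varphi}(\lambda)\circ \psi$. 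This produces the inverse assignment $F \mapsto (\varphi,\psi)$, well-defined up to the choice of the isomorphism $\nu \cong F_{\varphi}$.

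Finally I would check the equivariance under $\Gamma$. Two choices of trivialization of $\mathbf{I}(\omega\otimes k,\nu)$ differ by an element $\gamma \in \Gamma$, which replaces $\varphi$ by $\varphi^{\gamma} = \mathsf{Ad}_{\gamma}\circ\varphi$, and induces the isomorphism $\mathbf{a}_{\gamma} \colon \Gamma^{\varphi} \to \Gamma^{\varphi^{\gamma}}$ of inner forms. Since the $G$-action must be transported by the same twist, $\psi$ is replaced by $\mathbf{a}_{\gamma}\circ \psi$. This matches exactly the $\Gamma$-action \eqref{eq:Gammaacts} on the left-hand side, so the construction descends to an equivalence of groupoids. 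Conversely, tensor isomorphisms between $F_{\varphi,\psi}$ and $F_{\varphi',\psi'}$ restrict along $U$ to isomorphisms of the underlying fiber functors, which by Lemma~\ref{lem:nuVect} arise from some $\gamma \in \Gamma$; tracking the $G$-equivariance forces $(\varphi',\psi') = (\varphi^{\gamma}, \mathbf{a}_{\gamma}\circ\psi)$.

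The essentially routine part is the Tannakian extraction of $\psi$; the delicate point is the bookkeeping of morphisms, in particular verifying that the twist $\mathbf{a}_{\gamma}$ intertwines the $G$-actions consistently with all tensor structure. I would handle this by noting that $\mathsf{Ad}_{\gamma}$ is the unique map of $\Gamma$ into itself intertwining $\mathsf{Ad}\circ\varphi$ with $\mathsf{Ad}\circ\varphi^{\gamma}$ as $\Gal(\kbar/k)$-actions, so its descent $\mathbf{a}_{\gamma}$ is canonical, and the entire diagram of natural transformations commutes by construction.
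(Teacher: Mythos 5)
Your proposal is correct and follows essentially the same route as the paper: compose with the forgetful functor to invoke Lemma~\ref{lem:nuVect}, identify the automorphism group scheme of the resulting fiber functor (equivalently, of the torsor $\mathbf{I}(\omega\otimes k,\mathsf{for}\circ\nu)$, via Remark~\ref{rem:leftact}) with the inner twist $\Gamma^{\varphi}$, and read off $\psi\colon G\to\Gamma^{\varphi}$ from the $G$-action. Your extra bookkeeping of how $\Gamma$-conjugation acts on $(\varphi,\psi)$ at the level of morphisms is consistent with, and slightly more explicit than, the paper's proof.
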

\begin{proof}
As in the proof of Lemma~\ref{lem:nuVect} we can construct the
 inverse functor explicitly. If \linebreak 
 $\nu \colon \mathsf{Rep}_{\bbQ}(\Gamma) \to
 \mathsf{Rep}_{k}(G)$ is a fiber functor, we can compose it with the
 forgetful functor \linebreak $\mathsf{for} \colon \mathsf{Rep}_{k}(G) \to
 \mathsf{Vect}_{k}$ to get a fiber functor $\mathsf{for}\circ\nu \colon
 \mathsf{Rep}_{\bbQ}(\Gamma) \to \mathsf{Vect}_{k}$. As in the proof
 of Lemma~2.1 we consider the $k$-scheme $\mathbf{I}(\omega\otimes
 k,\mathsf{for}\circ \nu)$. It is a right torsor over $\Gamma$ viewed
 as a group scheme over $\mathsf{Spec}\, k$.  By construction, the
 group scheme $G$ acts on the left on $\mathbf{I}(\omega\otimes
 k,\mathsf{for}\circ \nu)$ by torsor automorphisms. In other words,
 we have a homomorphism from $G$ to the group scheme of torsor
 automorphisms of $\mathbf{I}(\omega\otimes k,\mathsf{for}\circ
 \nu)$. As explained in Remark~\ref{rem:leftact} this group scheme
 $\mathbf{I}(\omega\otimes k,\mathsf{for}\circ \nu)\times_{\op{Ad}}
 \Gamma$ is an inner form of $\Gamma$ as a group scheme over
 $\mathsf{Spec}\, k$, and when $\mathsf{for}\circ \nu = F_{\varphi}$,
 it is equal to the twist $\Gamma^{\varphi}$. Thus we get a
 homomorphism $\psi \colon G \to \Gamma^{\varphi}$ and an identification
 $\nu = F_{\varphi,\psi}$. This proves the lemma.
\end{proof}

Now observe that the $\motM^{\kay}$ action on
$\bbQ[\mathsf{NE}(X,\bbZ)]/\mathbb{M}^{a}$ is induced from the
$\motM^{\kay}$-action on $\mathsf{NE}(X,\bbZ)$ which, as we explained
above, factors through the homormorphism $\motM^{\kay} \to
\Gal(\kaybar/\kay) \to \Gamma_{\mathfrak{X}}$ and the natural action
of $\Gamma_{\mathfrak{X}}$ on the lattice $\mathsf{NE}(X,\bbZ) \cong
\bbZ^{\rho}$. By the previous discussion, the tensor functor
\[
\bfh_{\mathfrak{X}} \colon \mathsf{Rep}_{\bbQ}(\Gamma_{\mathfrak{X}}) \to \mathsf{Rep}_{k}(G)
\]
corresponds to a pair of homomorphisms $\varphi \colon \Gal(\kbar/k) \to
\Gamma_{\mathfrak{X}}$, and $\psi \colon G \to
  \Gamma_{\mathfrak{X}}^{\varphi}$. In particular we see that as a
  $k$-vector space
we have
  \[
  \lim_{\substack{\longrightarrow \\ a}} \,
  \bfh\left(\left(\bbQ[\mathsf{NE}(X,\bbZ)\right)/\mathbb{M}^{a}\right)
    = \kbar\dbb{\mathsf{NE}(X,\bbZ)}^{\Gal(\kbar/k)},
    \]
    where the $\Gal(\kbar/k)$ acts tautologically on $\kbar$ and acts
    on $\mathsf{NE}(X,\bbZ)$ via $\varphi \colon \Gal(\kbar/k) \to
    \Gamma_{\mathfrak{X}}$. The action of $G$ on
    $\kbar\dbb{\mathsf{NE}(X,\bbZ)}^{\Gal(\kbar/k)}$ is given by the
    composition of $\psi \colon G \to \Gamma_{\mathfrak{X}}^{\varphi}$ and
    the canonical action of $\Gamma_{\mathfrak{X}}^{\varphi}$ on
    $\kbar\dbb{\mathsf{NE}(X,\bbZ)}^{\Gal(\kbar/k)}$.

\subparagraph{Weil cohomology realization of the \texorpdfstring{$\mathsf{A}$}{A}-model F-bundle} \label{subpar:twistF}

Suppose we are in the above setup. That is we have two fields $\kay$
and $k$, both of characteristic zero, and we have fixed an inclusion
$\kay \subset \bbC$ and an algebraic closure $k \subset \kbar$.  Let
$G$ be a given proreductive group over $k$ with a fixed central
involution $\epsilon_{G}$ and suppose we are given a
$(G,\epsilon_{G})$-symmetric Weil cohomology theory on
$\kay$-varieties, i.e.\ a tensor functor
\[
\bfh \colon \mathsf{Rep}_{\bbQ}(\motM^{\kay}) \,  \longrightarrow \,
\mathsf{Rep}_{k}(G).
\]
With these data we can associate a natural $\bbZ/2$-graded $k$-vector
space equipped with a $G$-action, namely $\bfH^{\bullet}(\mathfrak{X})
\coloneqq \bfh(H^{\bullet}_{B}(\mathfrak{X}))$. This vector space will serve
as a fiber of our F-bundle. As explained in section~2,
$\bfH^{\bullet}(\mathfrak{X})$ comes equipped with a super
commutative, associative, $G$-equivariant quantum product
\eqref{eq:qupGk}.

Recall that the Betti cohomology 
$H^{\bullet}_{B}(\mathfrak{X})$  is a $\bbQ$-representation
of $\motM^{\kay}$ which decomposes naturally into a direct sum of
rational $\motM^{\kay}$-subrepresentations
\[
H^{\bullet}_{B}(\mathfrak{X}) = H^{2}_{B}(\mathfrak{X})_{\kaybar-\op{alg}} \oplus
H^{2}_{B}(\mathfrak{X})_{\kaybar-\op{transc}} \oplus H^{* \neq
  2}_{B}(\mathfrak{X}).
\]
so that the action of $\motM^{\kay}$ on
$H^{2}_{B}(\mathfrak{X})_{\kaybar-\op{alg}}$ factors through the
finite quotient $\motM^{\kay} \twoheadrightarrow
\Gamma_{\mathfrak{X}}$.

As explained in Section~2, the choice of the fiber functor $\bfh$ gives
rise to a tensor functor
\[
\bfh_{\mathfrak{X}} \colon \mathsf{Rep}_{\bbQ}(\Gamma_{\mathfrak{X}})
\longrightarrow \mathsf{Rep}_{k}(G),
\]
which is equivalent to specifying a pair of homomorphisms $\varphi \colon
\Gal(\kbar/k) \to \Gamma_{\mathfrak{X}}$ and \linebreak $\psi \colon G
\to \Gamma_{\mathfrak{X}}^{\varphi}$. 
Using this we define the following
$k$-representations of $G$:
\begin{itemize}
\item $\bfH^{2}_{\op{alg}}(\mathfrak{X}) \coloneqq
  \bfh_{\mathfrak{X}}(H^{2}_{B}(\mathfrak{X})_{\kaybar-\op{alg}})$.
  Explicitly, this $k$-linear
  $G$-representation is given by
  the $k$-vector space $(\mathsf{NS}(X,\bbQ)\otimes
  \kbar)^{\Gal(\kbar/k)}$, where the $\Gal(\kbar/k)$-action on
  $\mathsf{NS}(X,\bbQ)$ is given by $\varphi$ while the
  $\Gal(\kbar/k)$-action on $\kbar$ is tautological. The $G$-action on
  this vector space is given by the homomorphism $\psi \colon G \to
  \Gamma_{\mathfrak{X}}^{\varphi}$ and the natural action of
  $\Gamma_{\mathfrak{X}}^{\varphi}$ on the vector space
  $(\mathsf{NS}(X,\bbQ)\otimes \kbar)^{\Gal(\kbar/k)}$.
\item $\bfH^{2}_{\op{transc}}(\mathfrak{X}) \coloneqq
  \bfh\left(H^{2}_{B}(\mathfrak{X})_{\op{transc}}\right) \subset \bfH^{2}$
\item 
  $\bfH^{* \neq 2}(\mathfrak{X}) \coloneqq \bfh\left(H^{* \neq
  2}_{B}(\mathfrak{X})\right)$.
\end{itemize}

Altogether we get a decomposition of $\bfH^{\bullet}(\mathfrak{X})$ as a
direct sum of $G$-representations:
\begin{equation} \label{eq:WeilGdecomp}
\bfH^{\bullet}(\mathfrak{X}) \coloneqq
\bfH^{2}_{\op{alg}}(\mathfrak{X})\oplus
\bfH^{2}_{\op{transc}}(\mathfrak{X})\oplus \bfH^{* \neq
  2}(\mathfrak{X}).
\end{equation}
Passing to analytifications and choosing suitable admissible domains in
the pieces of this decomposition will allow us to describe the base of
the $\mathsf{A}$-model F-bundle for $\mathfrak{X}$.

\medskip

Concretely, let
\[
\mathsf{N}_{1}(X,\bbZ) = \mathsf{CH}^{\mathsf{hom}}_{1}(X) =
\mathsf{im} \left[ \mathsf{CH}_{1} \to
  H_{2}((X\times_{\kaybar} \bbC)^{\op{an}},\bbQ)
  \right]
\]
denote the group of cycle classes of algebraic curves in $X$. This is
again a free abelian group of rank $\rho$ equipped with a
$\Gamma_{\mathfrak{X}}$-action.  The cap product between singular
homology and cohomology on $(X\times_{\kaybar}\bbC)^{\op{an}}$ identifies the
group $\mathsf{N}_{1}(X,\bbZ)$ with the dual of the N\'{e}ron-Severi lattice
$\mathsf{NS}(X,\bbZ)_{\op{tf}}$ and this identification is compatible
  with the natural $\Gamma_{\mathfrak{X}}$-actions.

Additionally, $\mathsf{N}_{1}(X,\bbZ)$ contains the
$\Gamma_{\mathfrak{X}}$-stable sub semigroup
\[
\mathsf{NE}(X,\bbZ) \subset \mathsf{N}_{1}(X,\bbZ) 
\]
of numerically effective curve classes in $X$.

The homomorphisms $\varphi$ and $\psi$ define a non-split affine torus
$\mathcal{T}(\mathfrak{X})$ over $k$ on which $G$ acts by automorphisms.  Here
$\mathcal{T}(\mathfrak{X}) \coloneqq (\NS(X,\bbZ)_{\op{tf}}\otimes
\mathbb{G}_{m,\kbar})^{\Gal(\kbar/k)}$, where $\mathbb{G}_{m,\kbar}$
denotes the pullback of the multiplicative group scheme to $\kbar$,
$\Gal(\kbar/k)$ acts tautologically on $\mathbb{G}_{m,\kbar}$, and
acts on $\NS(X,\bbZ)_{\op{tf}}$ via the homomorphism $\varphi \colon \Gal(\kbar/k)
\to \Gamma_{\mathfrak{X}}$. As a group scheme over $k$ this torus is
equipped with an action of the finite group scheme
$\Gamma^{\varphi}_{\mathfrak{X}}$, and the $G$ action on $\mathcal{T}(\mathfrak{X})$
is given by $\psi \colon G \to \Gamma^{\varphi}_{\mathfrak{X}}$. 

\bigskip

Now consider the non-archimedean field $k\dbp{\fy}$ of
Laurent series in an auxiliary variable $\fy$. Let
$\mathcal{T}_{k\dbp{\fy}}(\mathfrak{X})$ denote the base change of
$\mathcal{T}(\mathfrak{X})$ to an affine (still non-split) torus over
$k\dbp{\fy}$, and let
$\mathcal{T}_{k\dbp{\fy}}(\mathfrak{X})^{\op{an}}$ be the non-archimedean
analytification of the torus
$\mathcal{T}_{k\dbp{\fy}}(\mathfrak{X})$. Following the same steps as in the
construction of the analytic $\mathsf{A}$-model F-bundle for a variety
over $\kaybar$, we define the following rigid analytic spaces over
$k\dbp{\fy}$:
\begin{description}
\item $B_{\mathfrak{X},q} \subset
  \mathcal{T}_{k\dbp{\fy}}(\mathfrak{X})$ is the preimage of the ample cone
  in $\mathsf{NS}(X,\mathbb{R})$ under the valuation map
  $\mathcal{T}_{k\dbp{\fy}}^{\op{an}}(\mathfrak{X}) \to
  \mathsf{NS}(X,\mathbb{R})$.
\item $\omB_{\mathfrak{X,t}}^{\even}$ is the product of the
  analytic affine line $(\bfH^{0}(\mathfrak{X})\otimes
  k\dbp{\fy})^{\op{an}}$
  with an open unit polydisk in the
  analytification of the $k\dbp{\fy}$-affine space
$\bfH^{2\bullet \, \geq 2}(\mathfrak{X})
  \otimes k\dbp{\fy}$
\item $B_{\mathfrak{X,t}}^{\even}$ is the product of the analytic
  affine line $(\bfH^{0}(\mathfrak{X})\otimes
  k\dbp{\fy})^{\op{an}}$ with an open unit polydisk in the
  analytification of the $k\dbp{\fy}$-affine space
  $\left(\bfH^{2}_{\op{transc}}(\mathfrak{X})\oplus
  \bfH^{2\bullet\, >2}(\mathfrak{X})\right) \otimes_{k} k\dbp{\fy}$.
\item $B_{\mathfrak{X}}^{\odd}$ is the super analytic variety
  corresponding to the purely odd vector space \linebreak
  $\bfH^{\odd}(\mathfrak{X})\otimes_{k} k\dbp{\fy}$.
\end{description}
Multiplying these together we get two $k\dbp{\fy}$-analytic spaces 
\[
\omB_{\mathfrak{X}} \coloneqq B_{\mathfrak{X},q} \, \times \,
\omB_{\mathfrak{X,t}}^{\even} \, \times \, B_{\mathfrak{X}}^{\odd}
\quad \text{and} \quad B_{\mathfrak{X}} \coloneqq B_{\mathfrak{X},q} \,
\times \, B_{\mathfrak{X,t}}^{\even} \, \times \,
B_{\mathfrak{X}}^{\odd},
\]
which are $G$-stable admissible opens in the $k\dbp{\fy}$-analytic varieties
$\mathcal{T}_{k\dbp{\fy}}(\mathfrak{X})^{\op{an}}\times
(\bfH^{\bullet}(\mathfrak{X})\otimes_{k} k\dbp{\fy})^{\op{an}}$ and
$\mathcal{T}_{k\dbp{\fy}}(\mathfrak{X})^{\op{an}}\times ((\bfH^{2}(\mathfrak{X})_{\op{transc}}\oplus \bfH^{\bullet\neq 2}(\mathfrak{X}))\otimes_{k} k\dbp{\fy})^{\op{an}}$
respectively.  In particular, we can view $\omB_{\mathfrak{X}}$ as a
non-archimedean replacement of the formal scheme $\op{Spf}\,
\mathsf{Nov}_{\mathfrak{X}}$.

\medskip

Let $\boldsymbol{D}$ denote the germ at $0$ in a
$k\dbp{\fy}$-analytic disk with coordinate $u$.  $\omH$
denote the trivial analytic super vector bundle on
$\omB_{\mathfrak{X}}\times \boldsymbol{D}$ with fiber
$\bfH^{\bullet}(\mathfrak{X})\otimes k\dbp{\fy}$. Then, the
same argument that we used to prove
Lemma~\ref{lem:GW-potential_analytic} implies that the formal
$G$-equivariant quantum product \eqref{eq:qupGk} defines an analytic
quantum product
\[
\qup \, \colon \, \omH\otimes \omH \ \longrightarrow \ \omH
\]
which is moreover independent of $u$.  Using this product we again
define a quantum connection, i.e.\ a meromorphic flat connection
$\bnabla$ on $\omH$ given by the usual formulas
\[
\left| \ 
\begin{aligned}
    \bnabla_{u\partial_u} &= u\partial_u - u^{-1}\Eu\qup(-) + \Gr,\\
    \bnabla_{\partial_{t_{i}}} &= \partial_{t_{i}} + u^{-1}T_{i}\qup(-), \qquad\text{ for }\tau\in \bfH^{\bullet}(\mathfrak{X})\otimes k\dbp{\fy},\\
    \bnabla_{\xi q\partial_q} &= \xi q\partial_q + u^{-1}\xi\qup(-),  \qquad\text{ for }\xi\in \bfH^{2}(\mathfrak{X})\otimes k\dbp{\fy},
\end{aligned}
\right.
\]
where $\Eu$ is the Euler vector field, $\Gr$ is the grading operator,
and $\xi q\partial_q$ is the derivation of
$\mathcal{O}_{\omB_{\mathfrak{X}}}$ given by $\xi
q\partial_q(q^\beta)=(\beta\cdot\xi)q^\beta$.

\medskip

Again $(\omH,\bnabla)/\omB_{\mathfrak{X}}$ is an over maximal F-bundle
over $\omB_{\mathfrak{X}}$ and $B_{\mathfrak{X}} \subset
\omB_{\mathfrak{X}}$ is a closed analytic subvariety which meets
transversally every leaf through a rigid point of the redundancy
foliation of $(\omH,\bnabla)/\omB_{\mathfrak{X}}$. Thus the
restriction of $(\omH,\bnabla)|_{B_{\mathfrak{X}}\times
  \boldsymbol{D}}$ is the associated maximal F-bundle
$(\mathcal{H},\nabla)/B_{\mathfrak{X}}$ over $B_{\mathfrak{X}}$.

\begin{definition} \label{def:nonarchFkay}
The F-bundle $(\omH,\nabla)/\omB_{\mathfrak{X}}$ will be called the
\emph{\bfseries non-archimedean overmaximal $\mathsf{A}$-model
F-bundle associated to $\mathfrak{X}$}, $\kay$, $k$, and
$\bfH^{\bullet}$.  The F-bundle $(\mathcal{H},\nabla)/B_{\mathfrak{X}}$ will
be called the \strongemph{non-archimedean maximal $\mathsf{A}$-model F-bundle
  associated to $\mathfrak{X}$}, $\kay$, $k$, and $\bfH^{\bullet}$.
\end{definition}

\begin{remark} \label{rem:basechange} For the applications to atoms we will
frequently base change the spaces $\omB_{\mathfrak{X}}$ and
$B_{\mathfrak{X}}$, and the corresponding F-bundles, to analytic
spaces, and analytic F-bundles to an algebraically closed field $\bbk$
containing $k\dbp{\fy}$, e.g.\  $\bbk =
\kbar\dbp{\fy^{\bbQ}}$. We do this to ensure that the
quantum multiplication by the Euler vector field has well defined
eigenvalues. If however the eigenvalues of $\Eu\qup (-)$ exist over
$k\dbp{\fy}$ one can work with the analytic F-bundles over
the smaller field $k\dbp{\fy}$ and get finer obstructions to
rationality. Note that the process of base changing to $\bbk$
definitely loses information, e.g.\ the base changed torus
$\mathcal{T}_{\bbk}(\mathfrak{X})$ is automatically split.
\end{remark}

\section{Decomposition theorems} \label{sec:decompose}

In this section we discuss the basic decomposition theorems for F-bundles.
First we recall the spectral decomposition theorem, which works for arbitrary F-bundles. After that we discuss the blowup and projective bundle decomposition theorems that are specific to $\mathsf{A}$-model F-bundles associated to smooth projective varieties.

\subsection{Spectral decomposition theorem} \label{ssec:specd}

 Suppose $\bbk$ is a non-archimedean field of characteristic zero.

\begin{theorem} \label{thm:K-decomposition}
    Let $B$ be an admissible open neighborhood of a rigid point $b$ in a
    $\bbk$-analytic affine space, and let $(\cH ,\nabla)/B$ be an
    F-bundle over $B$ maximal at $b$.  Assume that we have a
    decomposition $\cH_{b,0} \simeq \bigoplus_{i\in I} H_i$ stable
    under $\bkappa_b = \Eu_{b}\fp (-)$, and that for any $i\neq j\in
    I$, the spectra of $\bkappa_b\vert_{H_i}$ and $\bkappa_b\vert_{H_j}$
    are disjoint.  Then there exists an admissible open neighborhood
    $U$ of $b$ such that the restricted F-bundle $(\cH,\nabla)/U$ decomposes
    into an external sum of maximal F-bundles $(\cH_i ,\nabla_i ) /U_i$
    extending the decomposition of $\cH_{b,0}$.
\end{theorem}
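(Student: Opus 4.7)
The plan is two-stage. First, I would extend the given spectral decomposition of the fiber $\cH_{b,0}$ to a $\nabla$-flat decomposition of $\cH$ on a neighborhood of $(b,0)$ in $B \times \bbD$; second, I would use the maximality hypothesis at $b$ to descend this bundle decomposition to a local product structure on the base itself.

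For the bundle step, consider the operator $A \coloneqq \nabla_{u^{2}\partial_{u}}$, which is regular on $\cH$ by the F-bundle axioms and restricts at $u=0$ to $\bkappa_{b} = \Eu_{b}\fp(-)$ on $\cH_{b,0}$. The disjoint-spectra assumption factors the minimal polynomial of $\bkappa_{b}$ over $\bbk$ into pairwise coprime factors, and Chinese-remainder gives polynomial idempotents $\pi_{i} \in \bbk[\bkappa_{b}]$ with $\pi_{i}(\cH_{b,0}) = H_{i}$. In a local trivialization of $\cH$ near $(b,0)$, this fiberwise factorization of the leading part of $A$ lifts, by Hensel's lemma in the non-archimedean analytic setting, to an analytic factorization of the minimal polynomial of $A$ on an admissible neighborhood $U \times \bbD' \subset B \times \bbD$; the associated idempotents then define subbundles $\cH^{i} \subset \cH|_{U\times \bbD'}$ with $\cH|_{U\times\bbD'} = \bigoplus_{i} \cH^{i}$ and $\cH^{i}|_{(b,0)} = H_{i}$. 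The non-archimedean setting is indispensable here: the analogous complex-analytic splitting would generally fail to converge because of Stokes phenomena for irregular singularities, as flagged earlier in the paper.

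The flatness of this decomposition under the full connection is controlled by the bracket identity
\[
[A,\nabla_{u\xi}] \;=\; \nabla_{[u^{2}\partial_{u},\,u\xi]} \;=\; \nabla_{u^{2}\xi} \;=\; u\,\nabla_{u\xi}
\]
for any vector field $\xi$ on $B$, which follows from flatness of $\nabla$. At $u=0$ this reduces to $[\bkappa,\mu(\xi)]=0$, so $\mu_{b}(T_{B,b})$ preserves the decomposition of $\cH_{b,0}$; the $u$-controlled commutator combined with a resolvent representation of the spectral projectors $\pi_{i}(A)$ and induction on the order of vanishing in $u$ then propagates this stability to all of $U \times \bbD'$, so that $\cH = \bigoplus \cH^{i}$ is stable under every $\nabla_{u\xi}$.

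For the base decomposition, I invoke maximality at $b$: by \cref{lem:algebra}, the map $\mu_{b}\colon T_{B,b}\xrightarrow{\sim} \mu_{b}(T_{B,b})$ is an isomorphism of commutative algebras and its image contains $\bkappa_{b}$, hence contains the idempotents $\pi_{i}\in\bbk[\bkappa_{b}]$. These pull back to Frobenius idempotents $v_{i}\in T_{B,b}$ with $v_{i}\fp v_{j}=\delta_{ij}v_{i}$, and by the same flatness argument they extend to idempotent sections of $\mu(T_{B})$ over $U$, yielding a splitting $T_{B}|_{U} = \bigoplus_{i} T^{i}$. The $\nabla$-flatness of the bundle decomposition forces the $T^{i}$ to be pairwise Lie-commuting involutive distributions, so after shrinking $U$ the Frobenius theorem yields a product chart $U \cong \prod_{i} U_{i}$ with $T_{U_{i}} = T^{i}|_{U_{i}}$. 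The cyclic vector $h$ at $b$ then projects onto cyclic vectors $h_{i} \in H_{i}$, so each $(\cH^{i}|_{U_{i}\times\bbD'},\nabla)$ is a maximal F-bundle over $U_{i}$, and tautologically $(\cH,\nabla)|_{U} = \boxplus_{i}\,(\cH^{i},\nabla^{i})/U_{i}$. The main technical obstacle is the analytic propagation of the fiberwise spectral splitting together with its $\nabla$-flatness — controlling both the Hensel-type lift for $A$ and the compatibility of the lifted projectors with the full connection — which is precisely where the non-archimedean analytic setting is indispensable.
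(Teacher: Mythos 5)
The paper does not prove this statement itself; it imports it wholesale from [HYZZ, Theorem 1.2], so what you are attempting is a reproof of that external result. Your overall two-stage strategy (extend the fiberwise spectral splitting to a $\nabla$-flat splitting of $\cH$, then use the Frobenius idempotents $\pi_i\in\bbk[\bkappa_b]\subset\mu_b(T_{B,b})$ to split the base) is the expected one, and your bracket identity $[\nabla_{u^2\partial_u},\nabla_{u\xi}]=u\nabla_{u\xi}$ and the identification of the idempotent vector fields via maximality are fine. But the analytic heart of the theorem is missing, and the mechanism you propose for it is not correct as stated. The operator $A=\nabla_{u^2\partial_u}$ is a differential operator in $u$, not an $\cO_{B\times\bbD}$-linear endomorphism of $\cH$; only its residue $\bkappa=\nabla_{u^2\partial_u}|_{\cH_{u=0}}$ is linear. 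So $A$ has no minimal polynomial to factor, and Hensel's lemma applied to ``the minimal polynomial of $A$'' does not produce subbundles of $\cH$ over $U\times\bbD'$. What the Hensel/spectral-projector argument legitimately gives is the decomposition of $\cH|_{u=0}$ into generalized eigenbundles of $\bkappa$ over a neighborhood $U$ of $b$ in the base. Extending that to a decomposition of $\cH$ over $U\times\bbD$ which is flat for $\nabla_{u^2\partial_u}$ is a block-diagonalization of a connection with an irregular (order-two) pole: one constructs a gauge transformation term by term in $u$ (your ``induction on the order of vanishing in $u$'' is exactly the formal step, which also exists complex-analytically), and the entire content of the cited theorem is the norm estimates showing that this formal gauge transformation \emph{converges} on an admissible neighborhood in the non-archimedean setting, precisely because there is no Stokes phenomenon. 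Asserting that ``the non-archimedean setting is indispensable'' does not substitute for these estimates; without them the proof has a hole exactly where the difficulty lies.

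A second, smaller gap is the final claim that the splitting is ``tautologically'' an external sum $\boxplus_i(\cH^i,\nabla^i)/U_i$. An external sum requires each summand $\cH^i$, with its connection, to be a pullback from the factor $U_i$. For this you must check that for $\xi$ tangent to $U_j$ with $j\neq i$ the operator $\nabla_{u\xi}$ has vanishing polar part on $\cH^i$ at $u=0$ — which follows because $\mu(\xi)$ lies in the $j$-block of the commutative algebra $\mu(T_B)$ and hence annihilates the $i$-block of $\cH_{u=0}$ — and then use the resulting regular connection in those directions to trivialize $\cH^i$ along the leaves, as in the redundancy-foliation argument of Lemma~\ref{lem:foliation}. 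You also use a non-archimedean Frobenius-type integrability statement to produce the product chart $U\cong\prod_i U_i$; this is true but should be justified rather than invoked by analogy with the smooth category. None of these base-level steps is fatal, but together with the missing convergence argument they mean your proposal is an outline of the strategy behind [HYZZ] rather than a complete proof.
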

\begin{proof}
    See \cite[Theorem 1.2]{HYZZ_Decomposition}.
\end{proof}

\begin{remark} \label{rem:Can.and.formal.decomp}
The spectral decomposition theorem has obvious versions that hold for 
formal or \linebreak $\bbC$-analytic F-bundles. Indeed, let $(\cH, \nabla)/B$ be a formal maximal F-bundle over an algebraically closed  field $k$ of characteristic $0$, let $b \in B$ be a closed point, and let 
$\{\lambda_{i}(b)\}_{i=1}^{s} \in k$ be the distinct eigenvalues of $\bkappa_{b}$. Then there exists a formal neighborhood $U$ of $b$ in $B$ and maps $\lambda_{i} \colon U \to \Spf \, k\dbb{t-\lambda_{i}(b)}$, sending $b$ to $\lambda_{i}(b)$, so that $\lambda_{i}$ are the distinct eigenvalues of $\bkappa_{|U}$, and $(\cH,\nabla)/U$ decomposes canonically into an exterior sum of formal $F$ bundles corresponding to the generalized eigenbundles of $\bkappa_{|U}$. Similarly, if $(\cH, \nabla)/B$ is a de Rham part of a \nc \ variation of Hodge structures, i.e.\ a complex analytic F-bundle, then for any point $b \in B$ there exist disjoint small disk $\mathbf{D}_{i} \subset \bbC$, centered at the distinct eigenvalues of $\bkappa_{b}$, and $\bbC$-analytic functions $\lambda_{i} \colon U \to \mathbf{D}_{i} \subset \bbC$ from a small neighborhood $U$ of $b$ in $B$, so that $\lambda_{i}$ are the distinct  eigenvalues of $\bkappa_{U}$ and a decomposition of $(\cH,\nabla)/U$ into an external sum of F-bundles $\left(\cH_{i},\nabla_{i})\right)/U_{i}$ where $U_{i}$ is a complex manifold, $(\cH_{i},\nabla_{i})$ is a bundle with connection on $U_{i}\times \Spf \, \bbC\dbb{u}$, and the decomposition isomorphism is $\bbC$-analytic in the $U$ direction and only formal in the $u$-direction.
\end{remark}

\subsection{The blowup formula} \label{sec:blowup}

Let $X$ be a smooth complex projective variety, and $Z \subset X$ a smooth subvariety of pure codimension $r \ge 2$. Let $\tX$ be the blowup of $X$ along $Z$, and let $X'$ be the disjoint union of $X$ with $(r-1)$ copies of $Z$.
We have $\mathsf{A}$-model maximal F-bundles $(\widehat{\cH}, \widehat{\nabla})/B_{\tX}$, and $(\cH', \nabla')/B_{X'}$. The \emph{blowup decomposition} theorem identifies  the germs of these F-bundles at appropriately chosen base  points. It is extracted from Iritani's blowup formula \cite{Iritani_blowup} which gives the same identification for the formal $\mathsf{A}$-model logarithmic F-bundles of $\tX$ and $X'$.

\medskip

Before we give a precise formulation and proof of the blowup decomposition, let us look more closely at its shape and structure. The blowup of $Z$ in $X$ gives rise to a standard fibered square
\begin{equation}\label{eq:blowupdiagram}
\xymatrix{
D \ar[r]^-{\jmath} \ar[d]_-{\pi_{D}} & \widehat{X} \ar[d]^-{\pi} \\
Z \ar[r]_-{\imath} & X
}
\end{equation}
where $\imath \colon Z \hookrightarrow X$ is the inclusion, $\pi \colon \widehat{X} \to X$ is the blowup map, $\jmath \colon D \hookrightarrow \widehat{X}$ is the exceptional divisor,
and $\pi_{D} \colon D = \bbP(N_{Z/X}) \to Z$ is the natural projection. Write \, $\exch \colon H^{\bullet}(D,\bbZ) \, \to \, H^{\bullet}(D,\bbZ)$  for the operator of (classical) multiplication by $c_{1}(\cO_{D}(1)) = -c_{1}(N_{D/\widehat{X}})$. Then, there is a natural identification of $\bbZ/2$-graded cohomology spaces
\begin{equation} \label{eq:split.cohomology}
H^{\bullet}(X',\bbQ) \ = \ H^{\bullet}(X,\bbQ) \bigoplus \underbrace{H^{\bullet}(Z,\bbQ) \bigoplus \cdots \bigoplus H^{\bullet}(Z,\bbQ)}_{(r-1)-\text{copies}} \ \widetilde{\to} \ H^{\bullet}(\widehat{X},\bbQ)
\end{equation}
given by 
\[
(\alpha,\beta_1,\dots,\beta_{r-1})\mapsto \pi^*(\alpha)-\jmath_*\pi_{D}^{*}(\beta_1)-\jmath_* \exch\pi_{D}^{*}(\beta_2)-\dots - \jmath_* \exch^{r-2}\pi_{D}^{*}(\beta_{r-1}).
\]

\medskip

To simplify the discussion suppose $\kay = \bbC$, $k = \bbQ$, $\bbk = \overline{\bbQ}\dbp{\fy^{\bbQ}}$, $H^{\bullet}(Y) = H^{\bullet}_{B}(Y(\bbC)^{\op{an}},\bbQ)$  for any complex projective manifold $Y$. Let $\mathsf{Amp}(Y) = \op{int}(\mathsf{Nef}(Y))  \subset H^2(Y,\bbR)$ denote the  ample cone of $Y$. Then for any point $\theta \in \mathsf{Amp}(Y)\cap H^2(X,\bbQ)$ we get a rigid point in $B_{Y,q}$ which is given by the homomorphism $\NE(X,\bbZ) \to \bbk^{\times}$, $\beta \to \fy^{(\beta,\theta)} \in \bbk^{\times}$. We will denote this rigid point by 
$\fy^{\theta} \in B_{Y,q}$

Let $\omega \in H^2(X,\bbZ)$ be an ample class on $X$, and let $\widehat{\omega} \in H^2(\widehat{X},\bbQ)$ be a class of the form $\widehat{\omega} \coloneqq \epsilon^{-1}\varphi^{*}\omega -  K_X$ for some $\epsilon \in (0,1)\cap  \bbQ$. For a small enough $\epsilon > 0$ the class $\widehat{\omega}$ is  ample on $\tX$. Then for every positive $\epsilon \ll 1$ we can consider the point $\widehat{Q} = \fy^{\widehat{\omega}} \in B_{\tX,q}$. This is a special point  
in $B_{\widehat{X},q} \subset B_{\widehat{X}}$ which is close to the boundary of the tube domain $B_{\widehat{X},q}$ and whose valuation is close to $\infty$.

\medskip

In the limit $\epsilon \to 0$ only stable maps that are vertical 
for the map $\pi \colon \tX \to X$ contribute to the  quantum product on $\tX$. 
Furthermore, in this limit, only points and vertical lines will contribute to the quantum product and the contribution of lines will be $\fy^{r-1}$.   
Consider the operator $\widehat{\bkappa}$ which is the quantum multiplication in $H^{\bullet}(\widehat{X},\bbk)$ with the Euler vector field $\Eu_{\widehat{X},\widehat{Q}}$, computed at the special point $\widehat{Q}$.  A direct calculation shows that in the limit $\epsilon \to 0$ the operator $\widehat{k}$ has the following  block form  with respect to the decomposition \eqref{eq:split.cohomology}.

\[\tiny
\text{\normalsize $\widehat{\bkappa} \ = \ $} \left(\begin{array}{c|c:c:c:c:c:c}
K_X&&&& &&(r-1)\imath_*\\
\hline -(r-1)\imath^*& K_Z-c_1(N)&&&&& (r-1)(\fy^{r-1}+c_{r-1}(N))\\
\hdashline & -(r-1) &K_Z-c_1(N)&&& &(r-1)c_{r-2}(N)\\
\hdashline && -(r-1) &K_Z-c_1(N)&&& (r-1)c_{r-3}(N)\\
\hdashline &&&\dots  &\dots& &\dots\\
\hdashline &&&&-(r-1)& K_Z-c_1(N) &(r-1) c_2(N)\\
\hdashline &&&& &-(r-1)&K_Z+(r-2)c_1(N)\end{array}\right)
\]
\noindent where $N = N_{Z/X}$ is the rank $r$  normal bundle to $Z$ in $X$, $\imath\colon Z \hookrightarrow X$ is the natural inclusion, and each coefficient acts by classical multiplication on the cohomology of $\widehat{X}$.

\medskip

This operator preserves the filtration $H^{\geq \bullet}(\widehat{X},\bbk)$ by degree of cohomology and the associated graded with respect to this filtration  gives the operator

{\tiny
\[
\text{\normalsize $\mathsf{gr} \, \widehat{\bkappa} \ = \ $} 
\left(\begin{array}{c|c:c:c:c:c:c}
0&&&&&& 0\\
\hline
-(r-1)\imath^{*} & 0 &&&&& (r-1)\widehat{Q}\\
 \hdashline
 & -(r-1) & 0 &&& & 0 \\
\hdashline && -(r-1) & 0 &&& 0 \\
\hdashline &&&\dots  &\dots& &\dots\\
\hdashline &&&&-(r-1)& 0 & 0\\
\hdashline &&&& &-(r-1)& 0\end{array}\right)
\]
}

This implies that the eigenvalues of $\widehat{\bkappa}$ split into $r$ clusters and each cluster is contained  in a small analytic disk in $\bbk$ so that:

\begin{enumerate}[wide]
\item The disks are disjoint, one of them is centered at $0$,
while the rest are centered at \linebreak 
$(r-1)e^{\frac{\pi \sqrt{-1}(2j-1)}{r-1}}$ for $j=1,\dots,r-1$.
\item The sum of the generalized eigenspaces corresponding to
the eigenvalues that are close to $0$ give a super-vector space
isomorphic to  $H^{\bullet}(X,\bbk)$.
\item The sum of the generalized eigenspaces corresponding to the
eigenvalues  close to a rescaled $(m -
    1)$-st root of $1$ give a super-vector space isomorphic
to $H^{\bullet}(Z,\bbk)$.
\end{enumerate}

By analytic continuation, this picture extends to an appropriate open domain in $B_{X}$ and yields the desired blowup decomposition. To show rigorously that this reasoning identifies the $\mathsf{A}$-model F-bundles of $\widehat{X}$ and $X'$ we will have to identify the special point $\omega'$ in $B_{X'}$ which corresponds to $\widehat{\omega}$. This is quite involved and requires a tricky inductive argument which we will not carry out here. Instead we will extract the blowup decomposition directly by using Iritani's formal blowup decomposition \cite{Iritani_blowup} and identifying big open subsets in $B_{\widehat{X}}$  and $B_{X'}$ where Iritani's identification of the F-bundles 
for $\widehat{X}$ and $X'$ is actually analytic. It is also instructive to note that in the formal setting the above clustering behavior of the eigenvalues is proven rigorously in Iritani's theorem \cite[Theorem~5.8(6)]{Iritani_blowup}.

\medskip

More precisely we have the following

\begin{theorem} \label{thm:blowup}
  There is a canonical isomorphism of maximal F-bundles between $(\widehat{\cH}, \widehat{\nabla})$ and $(\cH', \nabla')$, over an analytic domain $\widehat{U}$ in $B_{\tX}$ and an analytic domain $U'$ in $B_{X'}$.
  The union of different choices of $\widehat{U}$ is connected and nonempty, same for $U'$.
\end{theorem}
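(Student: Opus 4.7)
The plan is to combine Iritani's formal blowup formula \cite{Iritani_blowup}, which matches the formal logarithmic $\mathsf{A}$-model F-bundles of $\tX$ and $X' = X \sqcup Z^{\sqcup(r-1)}$ at a specific base point after a formal mirror-map change of variables, with the spectral decomposition theorem (Theorem~\ref{thm:K-decomposition}) to promote this formal identification to an analytic isomorphism of maximal F-bundles on admissible open $\bbk$-analytic domains.

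First I would pin down the base point. Fix an ample class $\omega \in H^2(X,\bbQ)$ and for small $\epsilon \in \bbQ_{>0}$ consider the ample class $\widehat\omega(\epsilon) \in H^2(\tX,\bbQ)$ described in the discussion preceding the theorem, giving the rigid point $\widehat Q(\epsilon) = \fy^{\widehat\omega(\epsilon)} \in B_{\tX,q}$. The explicit block-matrix computation of $\widehat\bkappa = \Eu\qup(-)$ at $\widehat Q(\epsilon)$, combined with the nilpotency of $\mathsf{gr}\,\widehat\bkappa$ with respect to the cohomological filtration, shows that the spectrum of $\widehat\bkappa$ at $\widehat Q(\epsilon)$ splits into $r$ clusters situated near $0$ and near $(r-1)\exp(\pi\sqrt{-1}(2j-1)/(r-1))$ for $j = 1,\dots,r-1$, and that these clusters become arbitrarily well separated as $\epsilon \to 0$.

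Next, Theorem~\ref{thm:K-decomposition} applied at $\widehat Q(\epsilon)$ produces an admissible open neighborhood $\widehat U(\epsilon) \ni \widehat Q(\epsilon)$ together with a canonical analytic decomposition of $(\widehat\cH,\widehat\nabla)|_{\widehat U(\epsilon)}$ into an external sum of $r$ maximal F-bundles indexed by the clusters. On the $X'$ side, the base $B_{X'}$ tautologically factors as $B_X \times B_Z^{\times(r-1)}$ and $(\cH',\nabla')$ is the external sum of the $\mathsf{A}$-model F-bundles of the factors; let $Q'(\epsilon) \in B_{X'}$ be the corresponding base point fixed by Iritani's mirror map. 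Iritani's theorem provides a canonical isomorphism between the formal germs at $\widehat Q(\epsilon)$ and $Q'(\epsilon)$ which respects the cluster decomposition and matches, piece by piece, the cluster-summands of $(\widehat\cH,\widehat\nabla)$ with the corresponding factor F-bundles over $X$ and $Z$.

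The crucial step is the analytic upgrade. Since the spectral decomposition of Theorem~\ref{thm:K-decomposition} is analytic and canonical on $\widehat U(\epsilon)$ once the clusters separate, and since each cluster-piece is a maximal F-bundle, an isomorphism of two maximal F-bundles is rigid: on each summand it is determined by its restriction to a single cyclic vector at one point. The analyticity of the Frobenius product and Euler field on both sides then forces Iritani's formal isomorphism to converge and extend analytically to a neighborhood, yielding an isomorphism of maximal F-bundles between $(\widehat\cH,\widehat\nabla)|_{\widehat U(\epsilon)}$ and $(\cH',\nabla')|_{U'(\epsilon)}$ for a suitable $U'(\epsilon)$. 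Connectedness of the union of such domains then follows: varying $\epsilon$ continuously in $(0,\epsilon_0)\cap\bbQ$ and varying $\omega$ through the ample cone of $X$, the base points $\widehat Q(\epsilon)$ trace out a connected, nonempty locus in $B_{\tX}$, and the images $Q'(\epsilon)$ likewise trace out a connected, nonempty locus in $B_{X'}$ via the canonical matching.

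The main obstacle will be the analytic upgrade step, specifically controlling Iritani's coordinate change between the K\"ahler parameters of $\tX$ and of $X'$, which a priori is only a formal power series in Novikov variables. Two complementary routes are available: either one invokes direct convergence estimates for the Gromov--Witten potentials in the asymptotic regime $\epsilon\to 0$, or one uses a rigidity argument exploiting the fact that any two analytic maps between maximal F-bundles that agree on formal germs and respect the spectral decomposition must coincide as analytic maps, since the maximality turns the identification into an ODE problem with analytic coefficients and prescribed formal solution.
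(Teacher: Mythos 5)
There is a genuine gap at the step you yourself flag as the ``crucial'' one: the analytic upgrade of Iritani's formal isomorphism. Neither of your two proposed routes closes it. The rigidity argument only gives uniqueness: two \emph{analytic} maps of maximal F-bundles agreeing on formal germs coincide, but this presupposes that an analytic map exists, i.e.\ that Iritani's mirror map and the isomorphism $\Psi$ converge --- which is exactly what has to be proved. And ``direct convergence estimates for the Gromov--Witten potentials as $\epsilon\to 0$'' misreads the non-archimedean setting: the coefficients lie in $k$, on which the valuation is trivial, so convergence has nothing to do with the size of Gromov--Witten invariants; it is governed entirely by the exponents of the Novikov and auxiliary variables appearing in each monomial. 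The real difficulty is that Iritani's change of variables and $\Psi$ are defined over $k[\fq^{1/\fs},\fq^{-1/\fs}]\dbb{\NE(X,\bbZ)}$, so they involve unbounded \emph{negative} powers of $\fq$, and one must show these are tamed by the Novikov exponents.

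What the paper actually does, and what your proposal is missing, is an elementary but essential convergence criterion (Lemma~\ref{lem:convergence}): a formal map into $k[p,p^{-1}]\dbb{y_1,\dots,y_m}$ whose $p$-exponents are bounded above and below by linear functions of the total degree (with slopes of opposite signs) converges, after base change to $\bbk$, on an explicit polyannulus $\abs{y_i}<\epsilon$, $\abs{p}\in(\epsilon^{-1/2\alpha},\epsilon^{-1/2\beta})$ --- precisely because the coefficients have trivial norm. The proof then sets up careful cone bookkeeping (ample bases of $\mathsf{N}^1(X,\bbZ)$ and $\mathsf{N}^1(Z,\bbZ)$ compatible under $\imath^*$, the inclusion $\NE(\tX,\bbZ)\subset\Cone(\tbeta_1,\dots,\tbeta_l,\lambda)$) so that the Novikov-ring comparison maps for $\tX$, $X$, $Z$ and Iritani's maps $\tau$, $\varsigma_j$, $\Psi$ all satisfy the hypotheses of this lemma by homogeneity, giving analyticity on connected unions of such domains; one then passes from overmaximal to maximal F-bundles by quotienting the redundancy foliation, with canonicity supplied by \cite[\S 5.4]{HYZZ_Decomposition}. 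Note also that the paper deliberately avoids your route of working at the limiting point $\widehat Q(\epsilon)$, applying the spectral decomposition cluster by cluster, and identifying the matching point $Q'(\epsilon)\in B_{X'}$: it states that pinning down that correspondence requires an involved inductive argument and is not carried out; the eigenvalue-clustering discussion is motivation, not the proof mechanism.
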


\begin{proof}
We will deduce the explicit description of the domains and the isomorphism in the statement of Theorem~\ref{thm:blowup} from Iritani's blowup formula \cite{Iritani_blowup}. 
First we have the following

\begin{lemma} \label{lem:convergence}
  Let $k$ be a field of characteristic zero.
  Let
  \[ \varphi \colon k\dbb{x_1, \dots, x_l} \longto k[p, p^{-1}]\dbb{y_1, \dots, y_m} \]
  be a map of $k$-algebras.
  Write
  \[ \varphi(x_j) = \sum_{\lambda \in \bbZ} \ \sum_{i_1, \dots, i_m \in \bbN} a^j_{i_1, \dots, i_m} y_1^{i_1} \cdots y_m^{i_m} p^\lambda .\]
  Assume there exist constants $\alpha, \beta, \gamma, \delta \in \bbZ$ with $\alpha < 0 < \beta$ such that for every term $y_1^{i_1} \cdots y_m^{i_m} p^\lambda$ with nonzero coefficient, we have
  \[ \alpha d + \gamma < \lambda < \beta d + \delta, \]
  where $d = i_1 + \dots + i_m$ is the total degree.
  Let $\bbk$ be an algebraically closed non-archimedean field containing $k$, such that the absolute value restricted to $k$ is trivial.
  Then after base change from $k$ to $\bbk$, for any $\epsilon \in (0, 1)$, the map $\varphi$ is convergent over the admissible domain given by $\abs{y_i} \in [0, \epsilon)$ for $i = 1, \dots, m$ and $\abs{p} \in (\epsilon^{-\frac{1}{2\alpha}}, \epsilon^{-\frac{1}{2\beta}})$.
\end{lemma}
\begin{proof}
  Since the coefficients $a^j_{i_1, \dots, i_m}$ are in $k$, and the absolute value of $\bbk$ restricted to $k$ is trivial, it suffices to show that for terms $y_1^{i_1} \cdots y_m^{i_m} p^\lambda$ with nonzero coefficients, $\epsilon^d \cdot \abs{p}^\lambda \to 0$ as the powers tend to infinity.

  As $\lambda \to +\infty$, we have
  \[ \epsilon^d \cdot \abs{p}^\lambda < \epsilon^d \cdot \epsilon^{-\frac{\lambda}{2\beta}} = \epsilon^{d - \frac{\lambda}{2\beta}}, \]
  which converges to $0$ since $d - \frac{\lambda}{2\beta} \to +\infty$.

  As $\lambda \to -\infty$, we have
  \[ \epsilon^d \cdot \abs{p}^\lambda < \epsilon^d \cdot \epsilon^{-\frac{\lambda}{2\alpha}} = \epsilon^{d - \frac{\lambda}{2\alpha}}, \]
  which converges to $0$ since $d - \frac{\lambda}{2\alpha} \to +\infty$.
  This completes the proof.
\end{proof}

To use Iritani's formula from \cite{Iritani_blowup} to study the convergence of maps between analytic $\mathsf{A}$-model F-bundles, we will embed the requisite Novikov rings into power series rings.

Let $\imath \colon Z\to X$ denote the inclusion, $\varphi\colon \tX \to X$ the projection, $D \subset \tX$ the exceptional divisor, $\rho_Z = c_1(N_{Z/X})$, and $\lambda$ the class of a line in the fiber of $E \to Z$.
Recall that $r$ is the codimension of $Z \subset X$.
Following \cite[(5.11)]{Iritani_blowup}, we set
\[\fs = \begin{cases}
    r-1 & \text{if $r$ is even,} \\
    2(r-1) & \text{if $r$ is odd.}
\end{cases}\]
As usual we will write $\mathsf{N}^1(X,\bbZ) =  \CH^{1,\mathsf{num}}(X) = \CH^{1,\mathsf{hom}}(X)$ for the torsion free part of the N\'{e}ron-Severi group of a smooth projective variety $X/\kaybar$.

Choose a basis $\omega_1, \dots, \omega_l$ of the free abelian group {$\mathsf{N}^1(X, \bbZ)$}, consisting of ample classes, such that the pullbacks $\imath^*\omega_1, \dots, \imath^*\omega_l \in \mathsf{N}^1(Z, \bbZ)$ are contained in a simplicial cone spanned by a basis $\omega'_1, \dots, \omega'_m$ of $\mathsf{N}^1(Z, \bbZ)$ consisting of ample classes.
This is possible by choosing $\omega_1, \dots, \omega_l$ sufficient close to a fixed ray in the ample cone of $X$.

Then we obtain a dual basis $\beta'_1,\dots,\beta'_m \in \mathsf{N}_1(Z, \bbZ)$ such that $\NE(Z, \bbZ)$ is contained in $\Cone(\beta'_1,\allowbreak \dots,\allowbreak \beta'_m)$,
as well as a dual basis $\beta_1,\dots,\beta_l \in \mathsf{N}_1(X, \bbZ)$ such that $\Cone(\beta_1,\dots,\beta_l)$ contains $\NE(X, \bbZ)$ and $\imath_* \beta'_i$ for $i=1,\dots,m$.

\begin{lemma}
  For $i = 1, \dots, l$, there exists a constant $c_i \in \bbZ_{>0}$ such that for $\tbeta_i \coloneqq \beta_i - c_i \lambda$, we have an embedding
  \[ \NE(\tX, \bbZ) \subset \Cone(\tbeta_1,\dots,\tbeta_l,\lambda). \]
\end{lemma}

This induces embeddings
\begin{align*}
  k\dbb{\NE(X, \bbZ)} & \hooklongrightarrow k\dbb{q_1,\dots,q_l}, \\
  k\dbb{\NE(Z, \bbZ)} & \hooklongrightarrow k\dbb{q'_1,\dots,q'_m}, \\
  k\dbb{\NE(\tX, \bbZ)} & \hooklongrightarrow k\dbb{\tq_1,\dots,\tq_l,\fq}.
\end{align*}

Then the inclusion of the Novikov ring of $\tX$
\begin{align*}
  k\dbb{\NE(\tX, \bbZ)} &\hooklongrightarrow k[\fq^{1/\fs}, \fq^{-1/\fs}]\dbb{\NE(X,\bbZ)} \\
  \tQ^{\td} &\longmapsto Q^{\varphi_* \td} \fq^{-[D] \cdot \td}
\end{align*}
induces a map
\begin{align} \label{eq:Novikov_tX}
  k\dbb{\tq_1,\dots,\tq_l,\fq} &\longto k[\fq^{1/\fs}, \fq^{-1/\fs}]\dbb{q_1,\dots,q_l} \\
  \tq_i &\longmapsto q_i \fq^{-c_i} \nonumber \\
  \fq &\longmapsto \fq . \nonumber
\end{align}
Similarly, the map from the Novikov ring of $Z$
\begin{align*}
  k\dbb{\NE(Z, \bbZ)} &\longto k[\fq^{1/\fs}, \fq^{-1/\fs}]\dbb{\NE(X,\bbZ)} \\
  Q_Z^d &\longmapsto Q^{\imath_* d} \fq^{-\rho_Z \cdot d / (r-1)} .
\end{align*}
induces a map
\begin{align} \label{eq:Novikov_Z}
  k\dbb{q'_1,\dots,q'_l} &\longto k[\fq^{1/\fs}, \fq^{-1/\fs}]\dbb{\tq_1,\dots,\tq_l} \\
  q'_i &\longmapsto F_i(\tq_1, \dots, \tq_l) \fq^{c'_i}, \nonumber
\end{align}
where $F_i$ are monomials in $\tq_1, \dots, \tq_l$ with non-negative exponents, and $c'_i \in \frac{1}{\fs}\bbZ$. Now we can apply \cref{lem:convergence} to the maps \eqref{eq:Novikov_tX} and \eqref{eq:Novikov_Z} to obtain domains of convergence after passing to the non-archimedean field $\bbk$.
The union of such domains of convergence from different choices is connected.

It is shown in \cite[Theorem 5.18]{Iritani_blowup} that there exists a formal invertible change of variables
\begin{align*}
H^{\bullet}(\tX) &\longto H^{\bullet}(X) \oplus H^{\bullet}(Z)^{\oplus(r-1)}, \\
\ttau &\longmapsto \big(\tau(\ttau), \{ \varsigma_j(\ttau)\}_{0 \le j \le r-2}\big)
\end{align*}
defined over $k[\fq^{1/\fs}, \fq^{-1/\fs}]\dbb{\NE(X,\bbZ)}$ and an isomorphism
\[ \Psi \colon \QDM(\tX)^\la \longmapsto \tau^*\QDM(X)^\la \oplus \bigoplus_{j=0}^{r-2} \varsigma_j^*\QDM(Z)^\la \]
that commutes with the quantum connection.
Here, for a smooth projective variety $X$,  $\QDM(X)^\la$ denotes the formal logarithmic $\mathsf{A}$-model F-bundle of $X$ (called the quantum $D$-module of $X$ in \cite{Iritani_blowup}), after a  base change to $k[u, \fq^{1/\fs}, \fq^{-1/\fs}]\dbb{\NE(X,\bbZ)}$.
Furthermore, the maps $\tau$, $\varsigma_j$ and $\Psi$ above also satisfy the assumption of \cref{lem:convergence} by  homogeneity, so we obtain domains of convergence after passing to the non-archimedean field $\bbk$.

Finally, we deduce \cref{thm:blowup} by locally taking the quotient by the canonical foliations on the base of the over-maximal F-bundles.
The canonicity of the isomorphism follows from \cite[\S 5.4]{HYZZ_Decomposition}.
\end{proof}

\begin{remark} \label{rem:blowup_uniqueness}
The isomorphism in \cref{thm:blowup} is unique given algebraic initial conditions, see \cite[\S 5.4]{HYZZ_Decomposition} and \cite[\S 5.8]{Iritani_blowup} for details.
\end{remark}

\subsection{The Leray-Hirsch formula} \label{ssec:LH}

Let $X$ be a smooth projective variety over $\kaybar$, $E \to X$ a vector bundle of rank $r$, and $\bbP(E) \to X$ the associated projective bundle.
Let $X'$ be the disjoint union of $r$ copies of $X$.
We have $\mathsf{A}$-model maximal F-bundles $(\cH, \nabla)/B_{\bbP(E)}$, and $(\cH', \nabla')/B_{X'}$ for $\bbP(E)$ and $X'$ respectively.

\begin{theorem} \label{thm:projective_bundle}
  There is a canonical isomorphism of the maximal F-bundles between $(\cH, \nabla)$ and $(\cH', \nabla')$, over an analytic domain $U$ in $B_{\bbP(E)}$ and an analytic domain $U'$ in $B_{X'}$.
  The union of the different choices of $U$ is connected and nonempty, as is the union of the different choices of $U'$.
\end{theorem}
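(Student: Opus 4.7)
The plan is to adapt the strategy already used for Theorem~\ref{thm:blowup} to the projective bundle case. The formal/logarithmic version of the statement is known: the projective bundle decomposition theorem of Iritani--Koto (cf.\ \cite{Iritani_Koto,HYZZ_Decomposition}) provides a formal change of variables and an isomorphism of quantum $D$-modules
\[
\Psi \colon \QDM(\bbP(E))^\la \ \longrightarrow \ \bigoplus_{j=0}^{r-1}\, \varsigma_j^{*}\, \QDM(X)^\la
\]
commuting with the quantum connection, defined over a suitable extension of the Novikov ring of $\bbP(E)$. The goal is to promote this formal statement into an isomorphism of non-archimedean analytic maximal F-bundles over honest admissible domains, and the mechanism for this is again the convergence Lemma~\ref{lem:convergence}.

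Concretely, I would first choose an ample basis $\omega_1,\ldots,\omega_l$ of $\mathsf{N}^1(X,\bbZ)$ and complete it by the relative hyperplane class $\xi = c_1(\cO_{\bbP(E)}(1))$ (twisted if needed by a sufficiently positive pullback to land in the ample cone of $\bbP(E)$) to an ample basis of $\mathsf{N}^1(\bbP(E),\bbZ)$. Let $\lambda \in \mathsf{N}_1(\bbP(E),\bbZ)$ be the class of a fiber line and let $\fq$ denote the Novikov parameter dual to $\lambda$. Exactly as in the proof of Theorem~\ref{thm:blowup}, there are embeddings of Novikov rings
\[
k\dbb{\NE(X,\bbZ)} \hooklongrightarrow k\dbb{q_1,\ldots,q_l},\qquad k\dbb{\NE(\bbP(E),\bbZ)} \hooklongrightarrow k\dbb{\tq_1,\ldots,\tq_l,\fq},
\]
and the map
\(k\dbb{\NE(\bbP(E),\bbZ)} \to k[\fq^{1/\fs},\fq^{-1/\fs}]\dbb{\NE(X,\bbZ)}\)
induced by $\tQ^{\td} \mapsto Q^{\pi_{*}\td}\fq^{-\xi\cdot\td}$ is a monomial map of exactly the type handled by Lemma~\ref{lem:convergence}. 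Similarly, homogeneity of the $\Gamma$-class corrections entering $\Psi$ and of the coordinate changes $\varsigma_j$ guarantees that all of their coefficients satisfy the linear inequality $\alpha d + \gamma < \lambda < \beta d + \delta$ with $\alpha<0<\beta$, so after base change to $\bbk$ the whole package converges on a non-empty admissible domain $\widetilde U \subset B_{\bbP(E)}$ mapping to a non-empty admissible domain $U' \subset B_{X'}$ (the large-radius region in the fiber direction). Passing from the overmaximal to the maximal F-bundles via the canonical quotient by the redundancy foliation (Lemma~\ref{lem:foliation}) then yields the desired analytic isomorphism; its canonicity, and the fact that different choices of $\widetilde U$ assemble into a connected domain, follows from the spectral decomposition theorem (Theorem~\ref{thm:K-decomposition}), since in the large-radius limit the Euler action on $H^{\bullet}(\bbP(E))$ splits into $r$ disjoint clusters of eigenvalues asymptotic to the $r$-th roots of a $\fq$-scaled expression, each cluster corresponding to one copy of $H^{\bullet}(X)$.

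The main technical obstacle will be, as in the blowup case, the bookkeeping required to pin down the precise monomial shape of the change-of-variables maps $\varsigma_j$ and of the matrix entries of $\Psi$ well enough to feed them into Lemma~\ref{lem:convergence}, and to check that the resulting admissible domains from different ample cone chambers glue into a connected open set on both sides. The quantum Lefschetz / Seidel-type calculation for projective bundles makes these maps explicit modulo higher-order base corrections; the homogeneity of Gromov--Witten invariants with respect to the grading operator $\Gr$ bounds the $\fq$-degrees linearly in the total degree, which is precisely the hypothesis needed. Once these bounds are verified the rest of the argument is essentially parallel to the proof of Theorem~\ref{thm:blowup}.
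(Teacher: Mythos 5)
Your proposal follows essentially the same route as the paper: the paper's proof simply invokes Lemma~\ref{lem:convergence} exactly as in the proof of Theorem~\ref{thm:blowup} to promote the formal projective bundle decomposition of Iritani--Koto \cite[Theorem 5.1]{Iritani_Koto} to a non-archimedean analytic isomorphism on admissible domains, then passes to maximal F-bundles as before. Your additional bookkeeping (Novikov ring embeddings, monomial bounds, redundancy foliation) is the same machinery the paper spells out in the blowup case and tacitly reuses here, so the argument is correct and matches the paper's approach.
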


\begin{proof}
  As in the proof of \cref{thm:blowup}, \cref{lem:convergence} reduces the non-archimedean statement to the formal version, which is shown by Iritani-Koto in \cite[Theorem 5.1]{Iritani_Koto}.
\end{proof}

\section{\texorpdfstring{$G$}{G}-atoms} \label{sec:Gatoms}

Atoms are elementary F-bundle building blocks which arise from the interplay of Gromov-Witten and motivic information. As it is usually the case, one can build the corresponding theory by using some universal framework of motivic cohomology, such as the framework of Nori motives  \cite{nori-tifr,hms-periods.nori}, or the framework of Andr\'{e} motivated cycles \cite{Andre_Pour_une_theorie_inconditionnelle_des_motifs,Andre-book} (for varieties defined over subfields of $\bbC$). Alternatively, for a more concrete and constructive description, we can work with some realization of the motives,  given by a choice of a suitable Weil cohomology theory.

We will use the conventional notion of a Weil cohomology theory, i.e.\ a cohomology theory on smooth projective varieties which is functorial for pullbacks, which is equipped with a Tate twist, with trace and cycle class maps, and which satisfies the K\"{u}nneth formula and Poincar\'{e} duality. We will not spell out the details here but the precise definition and axioms can be found in standard sources, e.g \cite[Definition~1.2.13]{murre} and \cite[Chapter~45,tag~0FFG]{stacksproject}.  To avoid lengthy qualifications we choose to work with Andr\'{e} motives and only consider Weil cohomology theories that factor through Andr\'{e} motives. In practical terms this means that all Weil cohomology theories we work with, arise as tensor functors from the Karoubi closed symmetric monoidal category $\CA{\kay}$ of pure Andr\'{e} motives over $\kay$ to the monoidal category of graded $k$-vector spaces or to the monoidal category of graded $k$-linear $G$-representations of a reductive group $G$.

To access the full power and flexibility of the atomic compositions that we will introduce, we would like  to be able to treat the $\mathsf{A}$-model F-bundles associated with general Weil cohomology theories in a unified manner. To that end, we will need to keep track of the slightly complex
interaction between the symmetries of the field of definition  of our varieties and the symmetries of the coefficient field of the particular Weil cohomology theory we are  utilizing.

\subsection{Mumford-Tate groups in the \texorpdfstring{\nc}{nc} setting} \label{ssec:MTnc} \ 
Let again $\kay \subset \bbC$ and let $\CA{\kay}$ denote the category of pure Andr\'{e} motives of smooth projective varieties over $\kay$. Let $k$ be a field of characteristic zero, and 
let $G$ be a proreductive group over $k$. We  also fix a central element $\epsilon_{G} \in G$ of order $2$.

\begin{definition}  \label{def:MTsymmetry} 
A  $(G,\epsilon_{G})$-\strongemph{symmetric}  Weil cohomology theory defined on smooth projective $\kay$-varieties is a tensor functor $H^{\bullet}(-) \, \colon \, \CA{\kay} \ \to \ \Rep_{k}(G)$
to $\bbZ$-graded representations of $G$, which commutes with duality and satisfies the $\bbZ/2$ folding 
conditions
\begin{itemize}
    \item The cohomology group $H^2(\bbP^1_{\kay})$ is a trivial rank one  $G$-module.
    \item For any smooth projective $\kay$-variety $\mathfrak{X}$ and any $i \in \bbZ$, the element $\epsilon_{G}$ acts on $H^i(\mathfrak{X})$ by $(-1)^i$.
\end{itemize}
\end{definition}

\begin{remark} \label{rem:symmetryG}
\ {\bfseries (a)} \ As discussed in section~\ref{subpar:motivess}, a $(G,\epsilon_{G})$-symmetric Weil cohomology theory can be described equivalently as  a tensor functor $H^{\bullet}(-) \, \colon \, \CA{\kay}/\mathsf{Tate} \ \to \ \ \mathsf{Rep}_{k}(G)$ from the category of Tate periodized Andr\'{e} motives to the category of representations of $G$.

{\bfseries (b)} \ 
The notion of $(G,\epsilon_{G})$-symmetry of a Weil cohomology theory is designed so that it captures a realization of pure motives over $\kay$ in which the cohomological grading is folded to a $\bbZ/2$-grading, and the cohomologies are considered modulo Tate twists. In particular $(G,\epsilon_{G})$-symmetric Weil cohomology theories are candidates for cohomology theories that could potentially be extended to produce realizations of $\nc$-motives. The following are the standard examples we will be working with.
\end{remark}

\begin{example} \label{ex:Z/2graded}
    Let $\kay \subset \bbC$, and let $k$ be a field with $\op{char}\, k = 0$. Let 
    $\ztwogr \coloneqq \bbZ/2 = \{0,1\}$ and $\epsilon_{\ztwogr} \coloneqq 1 \in \ztwogr$. The category $\op{Rep}_{k}(\ztwogr)$ is the category of $\bbZ/2$-graded vector spaces. Let $H^{\bullet} \colon \CA{\kay} \to \mathsf{Vect}_{k}$ be any Weil cohomology theory with values in $k$-vector spaces. We can refine $H^{\bullet}$ to a functor 
    $H^{\bullet} \colon \CA{\kay} \to  \op{Rep}_{k}(\ztwogr)$ by letting the generator $\epsilon_{\ztwogr} \in \ztwogr$ act by multiplication by $(-1)^i$ on $H^{i}$. We will refer to the pair $\left(\ztwogr,\epsilon_{\ztwogr}\right)$ as the symmetry of \strongemph{$\bbZ/2$-graded Weil cohomology theories}.
\end{example}

\begin{example} \label{ex:Hochschildgraded}
    Let $k = \kay \subset \bbC$, $\HHgr \coloneqq \Gm$, and $\epsilon_{\HHgr} = -1 \in \HHgr = \Gm$. The category $\mathsf{Rep}_{k}(\HHgr)$ of  finite dimensional $k$-linear representations of $\HHgr$ is the category of $\bbZ$-graded $k$-vector spaces, with the action of $\epsilon_{\HHgr}$ recording the parity of grading. 
    For any smooth projective variety $\mathfrak{X}/\kay$, let $H^i_{\op{Dol}}(\mathfrak{X}) = \bigoplus_{p+q=i} H^{p,q}$, where $H^{p,q} \coloneqq H^q(X, \Omega^p_{X})$ and $X = \mathfrak{X} \times_{\kay} \bbC$. Let $\HHgr = \Gm$ act on $H^{p,q}$ with weight $p-q$. We will refer to the pair $\left(\HHgr,\epsilon_{\HHgr}\right)$ as the symmetry of \strongemph{Hochschild graded Dolbeault cohomology}.
\end{example}

\begin{example} \label{ex:Z/2Hodge}
    Let $\kay \subset \bbC$  and $k = \bbQ$. Let $\MT$ denote \strongemph{the universal classical Mumford-Tate group} \cite{DeligneMilne,milne-MT}, i.e.\ the group associated with the neutral Tannakian category of polarizable pure $\bbQ$-Hodge structures with the forgetful fiber functor. $\MT$ is a proreductive algebraic group over $\bbQ$,  whose algebraic quotients are precisely the Mumford-Tate groups of symmetries of polarizable pure $\bbQ$-Hodge structures. 
    We have a Lefschetz character $\MT \to \Gm$ associated to the action of $\MT$ on $H^2_{B}\big((\bbP^1_{\bbC})^{\op{an}},\bbQ\big)$.
    Let $\hodge = \ker(\MT \to \Gm)$ be the kernel of the Lefschetz character and let $\epsilon_{\hodge} \in \hodge \subset \MT$ be the central element of order two, which acts as $\pm 1$ on $H^{\even/\odd}$ for each $\bbZ/2$-graded Hodge structure. We will refer to the pair  $(\hodge,\epsilon_{\hodge})$ as the symmetry of \strongemph{polarizable pure rational $\bbZ/2$-weighted Hodge structures}. The de Rham and  Hodge theorems imply that  the $\bbZ/2$-graded Betti  cohomology  gives a fiber functor $H^{\bullet}_{B}(-) \, \colon \, \CA{\kay}/\mathsf{Tate} \ \to \ \mathsf{Rep}_{\bbQ}(\hodge)$, i.e.\ $H^{\bullet}_{B}(-)$ is a 
    $(\hodge,\epsilon_{\hodge})$-symmetric Weil cohomology theory. 
\end{example}

\begin{example} \label{ex:MotM}
    Let $\kay \subset \bbC$ and $k = \bbQ$.
    Let $\CA{\kay}$ be the semisimple neutral Tannakian category of pure motivated motives (see \cite{Andre_Pour_une_theorie_inconditionnelle_des_motifs,Andre-book}) with fiber functor the forgetful functor to rational vector spaces. Consider the associated group $\GA{\kay}$ of tensor automorphisms of the fiber functor is the proreductive group scheme over $\bbQ$, whose finite dimensional representations
    Again we have homomorphisms $\iota \colon \Gm \to \GA{\kay}$ and $\GA{\kay} \to \Gm$ controlling the weights of the motives and the action on  $H^{2}_{B}(\bbP^1_{\kay})$. We set $\motM \coloneqq \ker(\GA{\kay} \to \Gm)$, and $\epsilon_{\motM} = \iota(-1)$. Then $\left(\motM,\epsilon_{\motM}\right)$ is the symmetry pair of  \strongemph{Andr\'{e}'s $\bbZ/2$-graded pure motivated motives}. Again the $\bbZ/2$-graded Betti cohomology $H^{\bullet}(-)$  is     $\left(\motM,\epsilon_{\motM}\right)$-symmetric.
\end{example}

\begin{example} \label{ex:compareG} 
Suppose $\kay \subset \bbC$ and $k = \bbQ$, then by definition we have embeddings of proreductive $\bbQ$-groups  
\[
\ztwogr \ \subset \ \hodge \ \subset \ \motM 
\]
so that $\epsilon_{\ztwogr} = \epsilon_{\hodge} = \epsilon_{\motM}$. Furthermore if we consider 
$\kay = \bbC$ and base-change these groups to $\Spec \, \bbC$  we get embeddings of the corresponding  complex proreductive groups 
which can be further augmented to a chain of embeddings 
\[
\ztwogr_{\bbC} \ \subset \ \HHgr_{\bbC} \ \subset \ \hodge_{\bbC} \ \subset \ \motM_{\bbC},
\]
again with matching central elements of order two. Here the embedding 
$\HHgr_{\bbC}  \to \hodge_{\bbC}$ arises as follows. We have a natural central embedding 
$\HHgr_{\bbC} \subset \MT_{\bbC}$ in which $\lambda \in \HHgr_{\bbC} = \bbC^{\times}$ acts as multiplication by $\lambda^{p-q}$ on $H^{p,q}$ for each $\bbZ$-graded $\bbQ$ pure Hodge structure $H$. The Lefschetz character is thus trivial on the subgroup $\HHgr_{\bbC} \subset \MT_{\bbC}$ and so $\HHgr_{\bbC}  \to \hodge_{\bbC}$.
\end{example}

The basic idea for constructing our invariants is the observation that the motivic information captured by a pair $(G,\epsilon_{G})$ naturally amalgamates the analytic F-bundle data arising from \nc-Hodge theory. Since the passage from formal to analytic data will in general break the action of symmetries we will have to exercise  some extra care while encoding the motivic information in the analytic setting.

\subsection{\texorpdfstring{$G$}{G}-atoms of projective manifolds} \label{sec:Gatoms_proj}

In this section we again assume that  $\kay \subset \bbC$, $k$ is of characteristic zero, $G$ is a proreductive group over $k$, $\epsilon_{G} \in G$ is a central element of order two, and $H^{\bullet} \, \colon \, \CA{\kay} \ \to \ \mathsf{Rep}_{k}(G)$ is a $(G,\epsilon_{G})$-symmetric Weil cohomology theory, as in Definition~\ref{def:MTsymmetry}. Fix an algebraic closure $\kbar \supset k$ and consider  the algebraically closed non-archimedean field  $\bbk = \kbar\dbp{\fy^{\bbQ}} \supset k$.

\subsubsection{Germs of analytic Lie groups} \label{sss:germsG}

We introduce the notion of local Lie groups and germs of Lie groups in the non-archimedean setting, similar to the classical notions in differential geometry.
A \strongemph{$\bbk$-analytic local Lie group} consists of data $(U,\imath,\mathbf{e},V,\mathsf{m})$, where $U$ is a $\bbk$-analytic space over $\bbk$, $\imath \colon U \to U$ is an analytic involution, $\mathbf{e} \in U$ is a point fixed by $\imath$, $V \subset U\times U$ is an open
subset and $\mathsf{m} \colon V \to U$ is an analytic map, so that
\begin{enumerate}
\item $U\times \{\mathbf{e}\} \, \cup \, \{\mathbf{e}\}\times U \, \subset \, V$, and $\mathsf{m}(g,\mathbf{e}) = \mathsf{m}(\mathbf{e},g) = g$ for all $g \in V$.
\item $\mathsf{Graph}(\imath) \subset V$, and $\mathsf{m}(\imath{g},g) = \mathsf{m}(g,\imath(g)) = \mathbf{e}$.
\item If $(g_{1},g_{2},g_{3}) \in U^{\times 3}$ are such that the pairs  $(g_{1},g_{2})$, $(g_{2},g_{3})$, $(g_{1},\mathsf{m}(g_{2},g_{3}))$, and $(\mathsf{m}(g_{1},g_{2}),g_{3})$ are all in $V$, then $\mathsf{m}(g_{1},\mathsf{m}(g_{2},g_{3})) = \mathsf{m}(\mathsf{m}(g_{1},g_{2}),g_{3})$.
\end{enumerate}
Local $\bbk$-analytic Lie groups form a category, where a morphism between 
$(U_{1},\imath_{1},\mathbf{e}_{1},V_{1},\mathsf{m}_{1})$ and $(U_{2},\imath_{2},\mathbf{e}_{2},V_{2},\mathsf{m}_{2})$ is given by
an analytic map $\varphi \colon U_{1}\to U_{2}$ which sends $\mathbf{e}_{1}$ to $\mathbf{e}_{2}$, intertwines $\imath_{1}$
with $\imath_{2}$, and is such that $\varphi\times \varphi$ maps $V_{1}$ to $V_{2}$ and intertwines $\mathsf{m}_{1}$ and $\mathsf{m}_{2}$.

A \strongemph{germ} of a $\bbk$-analytic Lie group consists of a $\bbk$-analytic local Lie group $(U,\imath,\mathbf{e},V,\mathsf{m})$ and a (not necessarily analytic)  subgroup $H \subset U$. We will denote such a germ by $(U,H)$ when the rest of the data is understood from the context. A strict morphism between two germs $(U_{1},H_{1})$ and $(U_{2},H_{2})$ is a morphism $\varphi\colon U_{1}\to U_{2}$ of $\bbk$-analytic local Lie groups with $\varphi(H_{1})\subset H_{2}$. 

The category of germs $\mathsf{LieGerms}_\bbk$ of $\bbk$-analytic Lie groups is  the localization of the category of germs of $\bbk$-analytic Lie groups with strict morphisms along the class of morphisms  \linebreak $\varphi\colon (U_{1},H_{1})\to (U_{2},H_{2})$, which induce an isomorphism of $\bbk$-analytic local Lie groups between $U_{1}$ and an open neighborhood of $H_{2}$.

\begin{example} \label{ex:germG}
Suppose $\mathbb{G}$ is a non-archimedean analytic Lie group over $\bbk$. Let $\mathbb{H} \subset \mathbb{G}$ be a subgroup.   Then 
$(\mathbb{G},\mathbb{H})$ is an object in $\mathsf{LieGerms}_{\bbk}$ which is called the analytic germ  of $\mathbb{G}$ along $\mathbb{H}$.
\end{example}

Analytic actions of analytic local Lie groups on non-archimedean analytic spaces are defined in the obvious manner. Suppose $U$ is a $\bbk$-analytic local Lie group which acts analytically on a non-archimedean analytic space $B$. Suppose $H \subset U$ is a (not necessarily analytic) subgroup. Suppose $S$ is a closed analytic subvariety in $B$ and suppose $S \subset B^{H}$, i.e.\ $S$ is contained in the fixed point locus for the $H$-action on $B$. Then for any analytic open neighborhood $S \subset N \subset B$ of $S$ in $B$ we can find an analytic local group neighborhood $H \subset U' \subset U$ of $H$ in $U$, so that the action map $U \times B \to B$ restricts to an action map $U'\times N \to N$. In particular, the analytic Lie group germ of $U$ along $H$ will act on the analytic germ of the space $B$ along $S$.

\subsubsection{Local $G$-atoms.} \label{sssec:localGatoms}

Let $\mathfrak{X}$ be a non-empty, smooth projective variety over $\kay$ and consider the maximal, non-archimedean $\mathsf{A}$-model F-bundle associated to $\mathfrak{X}$, $\kay$, $k$, and $H^{\bullet}(-)$, that we described in Definition~\ref{def:nonarchFkay}.
This F-bundle is constructed as an analytic  object over $k\dbp{\fy}$ and we can base change  it to an analytic object  over the algebraic  closure
$\bbk \supset k\dbp{\fy}$. To avoid unnecessary proliferation of notation we will still write  $(\cH,\nabla)/B_{\mathfrak{X}}$ for the resulting non-archimedean analytic F-bundle over $\bbk$.

Let $G^\an$ denote the Berkovich analytification over $k$ with trivial valuation (see \cite{Berkovich_Spectral_theory}).
It contains an analytic subgroup $G^\beth$, which is a compact closed analytic subdomain, consisting of valuations on the function field of $G$ with centers inside $G$ (see \cite[Definition 1.3]{Thuillier_Geometrie_toroidale}).
Taking base extension from $k$ to $\bbk$, we obtain an inclusion of $\bbk$-analytic groups $G_\bbk^\beth \subset G_\bbk^\an$.
Let $\germG$ denote the germ of $G_\bbk^\an$ along $G_\bbk^\beth$.
By definition $\germG$ contains $G(k)$ as a subgroup, and in particular contains $\epsilon_{G}$.

Since by definition $H^{\bullet}(\mathfrak{X}) \in \op{Rep}_{k}(G)$, we have $G_\bbk^\an$ acting on $(H^{\bullet}(\mathfrak{X})\otimes_{k} \bbk)^{\op{an}}$.
Since the components $B_{\mathfrak{X},q}$ and $B_{\mathfrak{X},t}^{\even}$ of the base $B_{\mathfrak{X}}$ are defined by strict inequalities of norms, they are preserved by the germ $\germG$.
Hence $\germG$ acts on the non-archimedean $\mathsf{A}$-model F-bundle $(\cH,\nabla)/B_{\mathfrak{X}}$.

Since $H^{\bullet}(\mathfrak{X})$ is a finite dimensional representation of the reductive group $G$, it follows that under the action of $\germG$ on $B_{X}$ the fixed points of $G(k)$, $G_\bbk^\beth$ and $\germG$ are all the same, i.e.\ 
\begin{equation} \label{eq:commonfixedpoints}
B_{\mathfrak{X}}^{G(k)} \, = \, B_{\mathfrak{X}}^{G_\bbk^\beth} \, = B_{\mathfrak{X}}^{\germG}.
\end{equation}
To simplify notation we will denote the fixed point locus \eqref{eq:commonfixedpoints} by 
$B^G_{\mathfrak{X}}$. 

\medskip

Since $\epsilon_{G}$ acts as $\pm 1$ on the even/odd parts of $H^{\bullet}(\mathfrak{X})$, it follows that the fixed point locus  $B^G_{\mathfrak{X}}$  is a purely even connected closed non-archimedean smooth $\bbk$-analytic subvariety in $B_{\mathfrak{X}}$.  We will use the subvariety $B^G_{\mathfrak{X}} \subset B_{\mathfrak{X}}$ to define the $G$-atoms for $\mathfrak{X}$.
Consider

\begin{description}
    \item $\tB_{\mathfrak{X}} \to B^{G}_{\mathfrak{X}}$ - the ramified covering given by the spectrum of $\bkappa|_{B^{G}_{\mathfrak{X}}} = \Eu|_{B^{G}_{\mathfrak{X}}}\qup(-)$,
    \item $U_{\mathfrak{X}} \subset B^G_{\mathfrak{X}}$ - the open locus where the number of eigenvalues of $\Eu|_{B^{G}_{\mathfrak{X}}}\qup(-)$ is maximal, i.e.\ the locus over which 
    the reduced cover $\tB_{\mathfrak{X},\op{red}} \to  B^{G}_{\mathfrak{X}}$ is unramified.
    \item $\tU_\mathfrak{X} \coloneqq \tB_{\mathfrak{X},\op{red}} \times_{B_{\mathfrak{X}^{G}}} U_{\mathfrak{X}}$ - the restriction of the cover $\tB_{\mathfrak{X},\op{red}}$ to $U_{\mathfrak{X}}$.
\end{description}

\begin{definition} \label{def:localGatoms} 
The \strongemph{set of local $G$--atoms of $\mathfrak{X}$} is the finite set $\pi_0(\widetilde{U}_{\fX})$ of connected components of  $\widetilde{U}_{\fX}$. 
The multiplicity $\mathsf{mult}_{\alpha}(\fX)$ of an element $\alpha \in \pi_{0}(\widetilde{U}_{\fX})$  is defined as the degree of the cover $\widetilde{U}_{\fX,\alpha} \to  U_{\fX}$, where we write $\tU_{\fX,\alpha}$ for the connected component of $\widetilde{U}_{\fX}$ labeled by $\alpha$. 
\end{definition}

The group $\op{Aut}(\mathfrak{X})$ acts on the set $\pi_{0}(\widetilde{U}_{\fX})$, preserving the multiplicity function. The quotient set $\pi_{0}(\widetilde{U}_{\fX})/\op{Aut}(\mathfrak{X})$ depends only on the isomorphism class of $\mathfrak{X}$. Hence we can consider the disjoint union
\[
\mathsf{Atoms}_{G}^{\kay,\op{loc}} \ \coloneqq \ \bigsqcup_{[\mathfrak{X}]} \, 
\pi_{0}(\widetilde{U}_{\fX})/\op{Aut}(\mathfrak{X})
\]
where the union runs
over all isomorphism classes of non-empty smooth projective varieties defined over $\kay$. We will call $\mathsf{Atoms}_{G}^{\kay,\op{loc}}$ \strongemph{the set of local $G$--atoms of smooth projective $\kay$-varieties}. By definition this set depends only on the base field $\kay$.

\medskip

Next we will define some additional and natural 
elementary equivalences on local $G$-atoms.

\subsubsection{Elementary equivalences from disjoint unions.} \label{sssec:eqdisjointGatoms} 
If $\mathfrak{X}_1$ and $\mathfrak{X}_2$ are  two non-empty smooth projective varieties defined over $\kay$. Then we have
\[
\pi_{0}(\widetilde{U}_{\fX_{1}\sqcup \fX_{2}})  = \pi_{0}(\fX_{1})\sqcup \pi_{0}(\fX_{2}).
\]
In this situation we declare each local atom $\alpha \in 
\pi_{0}(\widetilde{U}_{\fX_{1}})/\op{Aut}(\mathfrak{X}_{1}) \ \subset \ \mathsf{Atoms}_{G}^{\kay,\op{loc}}$ to be \strongemph{disjoint union elementary equivalent} to its image $\alpha'$ under the embedding
\[
\pi_{0}(\widetilde{U}_{\mathfrak{X}_1})/\op{Aut}(\mathfrak{X}_{1}) \ \hookrightarrow \ \pi_{0}(\widetilde{U}_{\mathfrak{X}_1\sqcup \mathfrak{X}_2})/\op{Aut}(\mathfrak{X}_{1}\sqcup \mathfrak{X}_{2}).
\]

\subsubsection{Elementary equivalences from blowups.} \label{sssec:eqblowupGatoms}
If $\mathfrak{X}$ is a pure-dimensional non-empty smooth projective $\kay$-variety containing a  non-empty smooth projective $\kay$-subvariety $\mathfrak{Z}\subset \mathfrak{X}$ of codimension $m\ge 2$, then we have the blowup $\widehat{\mathfrak{X}} \coloneqq \op{Bl}_{\mathfrak{Z}} \mathfrak{X}$ which is also defined over $\kay$. Write $\fX'$
for the disjoint union $\fX' \coloneqq \fX \sqcup \fZ \sqcup \cdots \sqcup \fZ$ of $\fX$ and $(m-1)$-copies of $\fZ$. Consider 

\medskip 

\begin{description}
\item[] {$\left.\left(\tensor[^{\widehat{\fX}}]{\cH}{},\tensor[^{\widehat{\fX}}]{\nabla}{}\right)\right/\!B_{\widehat{\fX}}$} - the maximal non-archimedean analytic $\mathsf{A}$-model  F-bundle of $\widehat{\fX}$;
    \item[] {$\left.\left(\tensor[^{\fX}]{\cH}{},\tensor[^{\fX}]{\nabla}{}\right)\right/\!  B_{\fX}$, $\left.\left(\tensor[^{\fZ}]{\cH}{},\tensor[^{\fZ}]{\nabla}{}\right)\right/\!B_{\fZ}$} - the maximal non-archimedean analytic $\mathsf{A}$-model  F-bundles for $\fX$ and $\fZ$ respectively; 
    \item[] {$\left.\left(\tensor[^{\fX'}]{\cH}{},\tensor[^{\fX'}]{\nabla}{}\right)\right/\! B_{\fX'}$}  - the maximal non-archimedean analytic $\mathsf{A}$-model  F-bundles for $\fX'$.
\end{description}

\medskip

Let  
$\tensor[^{\widehat{\fX}}]{\bkappa}{} \, \colon \, \tensor[^{\widehat{\fX}}]{{\cH_{o}}}{} \ \to \ 
\tensor[^{\widehat{\fX}}]{{\cH_{o}}}{}$, \ 
$\tensor[^{\fX}]{\bkappa}{} \, \colon \, \tensor[^{\fX}]{{\cH_{o}}}{} \ \to \ 
\tensor[^{\fX}]{{\cH_{o}}}{}$, \ 
$\tensor[^{\fZ}]{\bkappa}{} \, \colon \, \tensor[^{\fZ}]{{\cH_{o}}}{} \ \to \ 
\tensor[^{\fZ}]{{\cH_{o}}}{}$, \ and  
$\tensor[^{\fX'}]{\bkappa}{} \, \colon \, \tensor[^{\fX'}]{{\cH_{o}}}{} \ \to \ 
\tensor[^{\fX'}]{{\cH_{o}}}{}$
\ denote the associated operators of quantum multiplication by the respective Euler vector fields.

\medskip

By definition we have 
\begin{equation} \label{eq:FbundleX'split}
\left.\left(\tensor[^{\fX'}]{\cH}{},\tensor[^{\fX'}]{\nabla}{}\right)\right/\! B_{\fX'} \ = \ 
\left.\left(\tensor[^{\fX}]{\cH}{},\tensor[^{\fX}]{\nabla}{}\right)\right/\!  B_{\fX} \, \boxplus \, \underbrace{\left.\left(\tensor[^{\fZ}]{\cH}{},\tensor[^{\fZ}]{\nabla}{}\right)\right/\!B_{\fZ} \, \boxplus \, \cdots \, \boxplus \, \left.\left(\tensor[^{\fZ}]{\cH}{},\tensor[^{\fZ}]{\nabla}{}\right)\right/\!B_{\fZ}}_{(m-1)-\text{times}},
\end{equation}
and similarly
\begin{equation} \label{eq:kappaX'split}
\left(\tensor[^{\fX'}]{{\cH_{o}}}{},\tensor[^{\fX'}]{\bkappa}{}\right)  \ = \ 
\left(\tensor[^{\fX}]{{\cH_{o}}}{},\tensor[^{\fX}]{\bkappa}{}\right) \, \boxplus \, \underbrace{\left(\tensor[^{\fZ}]{{\cH_{o}}}{},\tensor[^{\fZ}]{\bkappa}{}\right) \, \boxplus \, \cdots \, \boxplus \, \left(\tensor[^{\fZ}]{{\cH_{o}}}{},\tensor[^{\fZ}]{\bkappa}{}\right)}_{(m-1)-\text{times}}.
\end{equation}
Applying \cref{thm:blowup}, we obtain nonempty connected open subsets   $\widehat{U} \, \subset \, B_{\widehat{\fX}}$ and 
$U'\, \subset \, B_{\fX'}$  and a canonical isomorphism 
\begin{equation} \label{eq:canblowiso}
\left.\left(\tensor[^{\widehat{\fX}}]{\cH}{},\tensor[^{\widehat{\fX}}]{\nabla}{}\right)\right/\!\widehat{U} \ \cong \ 
\left.\left(\tensor[^{\fX'}]{\cH}{},\tensor[^{\fX'}]{\nabla}{}\right)\right/\!U'
\end{equation}
of the restricted F-bundles and hence a canonical isomorphism of bundles with $\bkappa$-operators:
\begin{equation} \label{eq:kappablowup}
\left.\left(\tensor[^{\widehat{\fX}}]{{\cH_{o}}}{},\tensor[^{\widehat{\fX}}]{\bkappa}{}\right)\right|_{\widehat{U}} \, \cong \, \left.\left(\tensor[^{\fX'}]{{\cH_{o}}}{},\tensor[^{\fX'}]{\bkappa}{}\right)\right|_{U'}  \ = \ 
\Big(\left(\tensor[^{\fX}]{{\cH_{o}}}{},\tensor[^{\fX}]{\bkappa}{}\right) \, \boxplus \, \underbrace{\left(\tensor[^{\fZ}]{{\cH_{o}}}{},\tensor[^{\fZ}]{\bkappa}{}\right) \, \boxplus \, \cdots \, \boxplus \, \left(\tensor[^{\fZ}]{{\cH_{o}}}{},\tensor[^{\fZ}]{\bkappa}{}\right)}_{(m-1)-\text{times}}\Big)\Big|_{U'}.
\end{equation}
The isomorphisms are $G$-equivariant by the uniqueness of the blowup decomposition (see \cref{rem:blowup_uniqueness}).

\smallskip

Next, let $\hU^G$ and $U'^G$ be the $G$-fixed loci.
Recall that in the proof of \cref{thm:blowup}, when we apply \cref{lem:convergence}, we get an annulus in the variable $\fq$ and open disks centered at $0$ in the other variables.
Note that the $G$-fixed locus is a linear subspace, and the $\fq$-direction is fixed because it is algebraic, therefore the loci $\hU^G$ and $U'^G$ are nonempty.
Let 
\[
\maxeigbup_{\widehat{\fX}} \, \coloneqq \,  U_{\widehat{\fX}} \cap \widehat{U}^G \qquad \text{and}  
\qquad \maxeigbup_{\fX'} \, \coloneqq \, U_{\fX'} \cap U'^G
\]
be the maximal open subsets over which the reduced spectral covers of 
\[\Big(\tensor[^{\widehat{\fX}}]{{\cH_{o}}}{},\tensor[^{\widehat{\fX}}]{\bkappa}{}\Big)\Big|_{\widehat{U}^G} \qquad\text{and}\qquad\left.\left(\tensor[^{\fX'}]{{\cH_{o}}}{},\tensor[^{\fX'}]{\bkappa}{}\right)\right|_{U'^G}\] are unramified.
Since $\maxeigbup_{\widehat{\fX}}$ is the complement in the connected $\bbk$-analytic space $\widehat{U}^G$ of the branch subspace for the reduced spectral cover, it is itself connected.
By the same token $\maxeigbup_{\fX'}$ is also a connected open in $U'^G$.

Following our notational convention, we denote these unramified covers by 
$\widetilde{\maxeigbup}_{\widehat{\fX}} \ \to \ \maxeigbup_{\widehat{\fX}}$ and 
$\widetilde{\maxeigbup}_{\fX'} \ \to \ \maxeigbup_{\fX'}$ respectively. By construction we have
\[
\widetilde{\maxeigbup}_{\widehat{\fX}} \, = \, \widetilde{U}_{\widehat{\fX}} \times_{U_{\widehat{\fX}}} \maxeigbup_{\widehat{\fX}} \qquad \text{and} \qquad 
\widetilde{\maxeigbup}_{\fX'} \, = \, \widetilde{U}_{\fX'} \times_{U_{\fX'}} 
\maxeigbup_{\fX'}.
\]
Furthermore, the isomorphism \eqref{eq:kappablowup} induces an isomorphism of covers
\[
\xymatrix{\widetilde{\maxeigbup}_{\widehat{\fX}} \ar[d] \ar[r]^-{\sim} & 
\widetilde{\maxeigbup}_{\fX'}  \ar[d] \\
\maxeigbup_{\widehat{\fX}} \ar[r]^-{\sim} & \maxeigbup_{\fX'},
}
\]
and hence a bijection  of sets of connected components
\[
\pi_{0}(\widetilde{\maxeigbup}_{\widehat{\fX}}) \cong  \pi_{0}(\widetilde{\maxeigbup}_{\fX'}).
\]
Since $\maxeigbup_{\widehat{\fX}} \subset U_{\widehat{\fX}}$ and 
$\maxeigbup_{\fX'} \subset U_{\fX'}$ are connected open subsets, we also have that the natural maps $\pi_{0}(\widetilde{\maxeigbup}_{\widehat{\fX}}) \ \to \ 
\pi_{0}(\widetilde{U}_{\widehat{\fX}})$ and 
$\pi_{0}(\widetilde{\maxeigbup}_{\fX'}) \ \to \ 
\pi_{0}(\widetilde{U}_{\fX'})$ are surjective. 
Finally, note that the identification \eqref{eq:kappaX'split} gives an identification of unramified covers of $U_{\fX'} = U_{\fX}\times U_{\fZ}^{\times (m-1)}$:
\[
\widetilde{U}_{\fX'} \ = \ \widetilde{U}_{\fX}\times U_{\fZ}^{\times (m-1)} \, \bigsqcup \, 
U_{\fX} \times \left(\bigsqcup_{a = 1}^{m-1} U_{\fZ}\times \cdots \times \underset{\text{slot } a}{\widetilde{U}_{\fZ}} \times \cdots U_{\fZ}\right),
\]
and thus we get a canonical bijection of sets of connected components
\[
\pi_{0}(\widetilde{U}_{\fX'}) \ = \ \pi_{0}(\widetilde{U}_{\fX})\, \sqcup \, \underbrace{\pi_{0}(\widetilde{U}_{\fZ}) \sqcup \cdots \sqcup \pi_{0}(\widetilde{U}_{\fZ})}_{(m-1)-\text{times}}.
\]
Altogether we get a correspondence of sets of local $G$-atoms:
\begin{equation} \label{eq:pi0correspondence}
\xymatrix@C-2pc@R-1pc{
& & \pi_{0}(\widetilde{\maxeigbup}_{\widehat{\fX}}) \cong  \pi_{0}(\widetilde{\maxeigbup}_{\fX'}) \ar@{->>}[dl] \ar@{->>}[dr] & & \\
& \pi_{0}(\widetilde{U}_{\widehat{\fX}}) \ar@{->>}[dl] & & \pi_{0}(\widetilde{U}_{\fX})\, \sqcup \, \pi_{0}(\widetilde{U}_{\fZ})^{\sqcup (m-1)} \ar@{->>}[dr] & \\
\pi_{0}(\widetilde{U}_{\widehat{\fX}})/\op{Aut}(\widehat{\fX}) & & & & 
\pi_{0}(\widetilde{U}_{\fX'})/\op{Aut}(\fX').
}
\end{equation}
We declare a local atom $\balpha \in \pi_{0}(\widehat{\fX})/\op{Aut}$ to be \strongemph{blowup elementary equivalent} to a local atom $\balpha' \in \pi_{0}(\fX')/\op{Aut}(\fX')$ if $\balpha$ and $\balpha'$ are related by the correspondence 
\eqref{eq:pi0correspondence}.

\subsubsection{Elementary equivalences from  projective bundles.} 
\label{sssec:eqprojbunGatoms}
Suppose $\mathfrak{X}/\kay$ is a non-empty  smooth projective variety 
and $\mathcal{E}$ is a vector bundle of rank $r\ge 2$ on $\mathfrak{X}$. 
By the results of Iritani and Koto \cite{Iritani_Koto}, and 
Hinault, Yu, Zhang, and Zhang \cite[Section~5]{HYZZ_Decomposition} reviewed in Section~\ref{ssec:LH}, 
we can identify   a non-empty connected domain $\maxeigbup_{\bbP(\cE)}$ in $U_{\bbP(\cE)}$ with the product of $r$ copies of a domain $\maxeigbup_{\fX} \subset U_{\fX}$ so that the corresponding F-bundles and operators $\bkappa$ are compatible. Again, this gives a correspondence between the local $G$-atoms for $\bbP(\cE)$ and the local $G$-atoms for 
$\fX^{\sqcup r}$. We declare that a local atom $\balpha \in \pi_{0}(\widetilde{U}_{\bbP(\mathcal{E}))})/\op{Aut}(\bbP(\mathcal{\mathcal{E}}))$
is \strongemph{Leray-Hirsch elementary equivalent} to a local atom $\balpha' \in \tU_{\mathfrak{X}^{\sqcup r}}/\op{Aut}(\fX^{\sqcup r})$ if $\balpha$ and $\balpha'$ are related by the correspondence.

\subsubsection{\texorpdfstring{$G$}{G}-atoms of \texorpdfstring{$\kay$}{K}-varieties.} \label{sssec:Gatomskay}
With all this in place we can now define $G$-atoms of projective varieties.

\begin{definition} \label{def:G.atoms}
The set of \strongemph{$G$-atoms of smooth projective $\kay$-varieties} is the quotient $\mathsf{Atoms}_{G}^{\kay}$ of the set of local atoms  $\mathsf{Atoms}_{G}^{\kay,\op{loc}}$ by the equivalence relation generated by the elementary equivalences defined in Sections~\ref{sssec:eqdisjointGatoms}--\ref{sssec:eqprojbunGatoms}.
\end{definition}

The set $\mathsf{Atoms}_{G}^{\kay}$ of $G$-atoms is naturally filtered by subsets
\[\mathsf{Atoms}_{G,\dim \leq 0}^{\kay} 
\ \subset \ \mathsf{Atoms}_{G,\dim \leq 1}^{\kay} \ \subset \ \cdots,  \qquad 
\mathsf{Atoms}_{G}^{\kay}=\bigcup_{d\geq 0}\mathsf{Atoms}_{G,\dim \leq d}^{\kay},
\]
where an atom $\balpha$ belongs to  $\mathsf{Atoms}_{G,\dim \leq d}^{\kay}$ if there exists 
a smooth projective $\kay$-variety of dimension $\leq d$ whose atomic composition contains $\balpha$. 

Combined with the weak factorization theorem of Abramovich, Karu, Matsuki, and W\l{}odarczyk  \cite{Wlodarczyk_factorization,Abramovich_Torification_and_factorization} this makes $G$-atoms candidates for obstructions to birational equivalence.
Concretely we have the following simple

\begin{proposition} \ {\bfseries (Non-rationality criterion) } \ \label{prop:Gnon-rational}
Let $\fX$ be a smooth projective $\kay$-variety of dimension $d \geq 2$. Suppose we have a $G$-atom $\balpha \in \Atoms_{G}^{\kay}$ which appears in the atomic composition of $\fX$ and is such that $\balpha \notin \Atoms_{G,\dim \leq d-2}^{\kay}$. Then $\fX$ can not be birationally equivalent to $\bbP^{d}_{\kay}$.
\end{proposition}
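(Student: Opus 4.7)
The plan is to argue by contradiction using the weak factorization theorem of Abramovich--Karu--Matsuki--W\l{}odarczyk. Assume $\fX$ were birationally equivalent to $\bbP^{d}_{\kay}$. Then one can find a zigzag
\[
\fX \ = \ \fY_{0} \ \dashrightarrow \ \fY_{1} \ \dashrightarrow \ \cdots \ \dashrightarrow \ \fY_{n} \ = \ \bbP^{d}_{\kay}
\]
through smooth projective $\kay$-varieties $\fY_{i}$ of dimension $d$, where each arrow is either a blow-up along a smooth closed subvariety of codimension $\geq 2$ or the inverse of such a blow-up.

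The first step will be to track how the set of $G$-atom classes appearing in the atomic composition changes under each arrow of the zigzag. If $\widehat{\fY} \to \fY$ is the blow-up of $\fY$ along a smooth center $\fZ \subset \fY$ of codimension $m \geq 2$, then the blowup elementary equivalence from Section~\ref{sssec:eqblowupGatoms} (in particular the correspondence \eqref{eq:pi0correspondence}) identifies the local $G$-atoms of $\widehat{\fY}$ with the local $G$-atoms of $\fY \sqcup \fZ^{\sqcup(m-1)}$, and hence equates the corresponding elements of $\Atoms_{G}^{\kay}$. Since $\dim \fZ \leq d-2$, all $G$-atom classes added or removed at each step of the zigzag lie in $\Atoms_{G,\dim \leq d-2}^{\kay}$. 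Consequently, the atomic compositions of $\fY_{i}$ and $\fY_{i+1}$ agree outside $\Atoms_{G,\dim \leq d-2}^{\kay}$.

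The second step is to verify that every $G$-atom in the atomic composition of $\bbP^{d}_{\kay}$ lies in $\Atoms_{G,\dim \leq 0}^{\kay}$, which (as $d \geq 2$) is contained in $\Atoms_{G,\dim \leq d-2}^{\kay}$. For this I would apply the Leray--Hirsch elementary equivalence of Section~\ref{sssec:eqprojbunGatoms} to the description
\[
\bbP^{d}_{\kay} \ = \ \bbP\bigl(\cO_{\pt}^{\oplus(d+1)}\bigr)
\]
as the projectivization of a rank $d+1$ bundle over a point; every $G$-atom of $\bbP^{d}_{\kay}$ is thereby equivalent to a $G$-atom of a disjoint union of copies of $\Spec\kay$, and hence comes from a $0$-dimensional variety.

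Combining the two steps yields, by induction along the zigzag, that every $G$-atom appearing in the atomic composition of $\fX = \fY_{0}$ must lie in $\Atoms_{G,\dim \leq d-2}^{\kay}$, which contradicts the hypothesis that $\balpha \notin \Atoms_{G,\dim \leq d-2}^{\kay}$ appears in $\CF_{G}(\fX)$. The only point that requires care, though it is not a deep obstacle, is to formulate precisely what ``an atom appears in the atomic composition'' means once one passes from local atoms to atom classes modulo the elementary equivalences: one should check that the correspondence \eqref{eq:pi0correspondence} and its projective-bundle analogue are compatible with multiplicities, so that the inductive symmetric-difference argument runs unambiguously at the level of the chemical formula $\CF_{G}$.
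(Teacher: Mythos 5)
Your proposal is correct and follows essentially the same route as the paper: weak factorization into blowups/blow-downs with smooth centers of codimension $\geq 2$, the blowup additivity $\CF_{G}(\widehat{\fY}) = \CF_{G}(\fY) + (m-1)\CF_{G}(\fZ)$ to propagate along the zigzag, and the fact that the atomic composition of $\bbP^{d}_{\kay}$ consists of $d+1$ copies of the atom of a point (which you justify via the Leray--Hirsch relation for $\bbP(\cO_{\pt}^{\oplus(d+1)})$, exactly as the paper's projective-bundle property yields). Your extra care about tracking multiplicities through the zigzag is a slightly more explicit rendering of the same argument, not a different one.
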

\begin{proof} Suppose $\fX$ is birational to $\bbP^{d}_{\kay}$. By the weak factorization theorem 
\cite{Wlodarczyk_factorization,Abramovich_Torification_and_factorization} a birational  map between $\bbP^{d}_{\kay}$ and $\fX$ will be given by a backward/forward sequence of blowups and blow-downs with smooth centers. Since the centers must have codimension at least two, and the  atomic composition of $\bbP^{d}_{\kay}$ consists of $d+1$ copies of the $G$-atom of a point, the blowup equivalence relation implies  that the atomic composition of $\fX$ must be comprised only of atoms in $\Atoms_{G,\dim \leq d-2}^{\kay}$. This is a contradiction. 
\end{proof}

\paragraph{Chemical formulas of \texorpdfstring{$\kay$}{K}-varieties.} \label{par:CFkay}
\ Write $\bbZ^{\oplus \, \mathsf{Atoms}^{\kay}_{G}}$  for the free abelian group generated by $\mathsf{Atoms}^{\kay}_{G}$, and  $\bbZ^{\oplus \, \mathsf{Atoms}^{\kay}_{G}}_{\geq 0}$ for the additive monoid of free linear combinations of atoms with non-negative integer coefficients or. Equivalently,  we can think of 
of $\bbZ^{\oplus \, \mathsf{Atoms}^{\kay}_{G}}$ (respectively $\bbZ^{\oplus \, \mathsf{Atoms}^{\kay}_{G}}_{\geq 0}$) as the ring (respectively semiring) of finitely  supported $\bbZ$-valued (respectively $\bbZ_{\geq 0}$-valued) functions on $\mathsf{Atoms}^{\kay}_{G}$.

Splitting the  product of varieties into its atoms gives rise to a 
well-defined map
\[
\mathsf{Atoms}^{\kay}_{G} \, \times \, \mathsf{Atoms}^{\kay}_{G} \  
\longrightarrow  \ \bbZ^{\oplus \, \mathsf{Atoms}^{\kay}_{G}}_{\ge 0}.
\]
This map endows the additive  monoid 
$\bbZ^{\oplus \, \mathsf{Atoms}^{\kay}_{G}}_{\ge 0}$
with the structure of a commutative associative unital semiring, and similarly endows the free abelian group 
$\bbZ^{\oplus \, \mathsf{Atoms}^{\kay}_{G}}$
with the structure of a unital commutative ring.

\medskip

Note that the 
assignment
which sends an isomorphism class of varieties $[\fX]$ to its atomic composition:
\[
\xymatrix@C+0.7pc@M+0.3pc{ 
\CF_{G}(\mathfrak{X}) = {\displaystyle \hspace{-1pc} \sum_{\alpha \in \pi_{0}(\tU_{\fX})/\op{Aut}(\mathfrak{X})}} \hspace{-2pc}  \mathsf{mult}_{\mathfrak{X}}(\alpha)
 \cdot \delta_{\balpha}\, \colon \, \hspace{-2pc} & 
\mathsf{Atoms}_{G}^{\kay} \ar[r] & \bbZ_{\geq 0}},
\]
is naturally a finite  multiset. The multiset $\CF_{G}(\mathfrak{X})$ can be thought of as the \strongemph{chemical formula} of $\fX$, i.e.\ the list of the $G$-atoms in the atomic composition of $\fX$ together with their multiplicities.

\paragraph{Additivity properties of \texorpdfstring{$\CF$}{CF}.} \label{par:CF}
The elementary equivalences we built into the definition of $G$-atoms imply that the chemical formula multiset satisfies a number of natural additvity properties:
\begin{enumerate}[wide]
\item[{\bfseries (1)}] \, For two smooth projective $\kay$-varieties $\mathfrak{X}_1$, $\mathfrak{X}_2$, we have
    \[ \CF_G(\mathfrak{X}_1 \sqcup \mathfrak{X}_2) = \CF_{G}(\mathfrak{X}_1) + \CF_{G}(\mathfrak{X}_2). \]
\item[{\bfseries (2)}] \,  For a smooth projective $\kay$-variety $\mathfrak{X}$, a smooth closed subvariety $\mathfrak{Z} \subset \mathfrak{X}$ of codimension $r$, and the blowup $\widehat{\mathfrak{X}}$ of $\mathfrak{X}$ along $\mathfrak{Z}$, we have
    \[ \CF_{G}\left(\widehat{\mathfrak{X}}\right) = \CF_{G}(\mathfrak{X}) + (r-1)\cdot 
    \CF_{G}(\mathfrak{Z}). \]
\item[{\bfseries (3)}] \, For a smooth projective $\kay$-variety $\mathfrak{X}$, a rank $r$ vector bundle $V$ on $\mathfrak{X}$, and the associated projective bundle $\bbP(V)$, we have
    \[ \CF_{G}(\bbP(V)) = r \cdot \CF_{G}(\mathfrak{X}) .\]
\end{enumerate}    

\medskip

Combined with Bittner's  relations \cite{Bittner}, these properties imply  that the multiset 
assignment $\fX \mapsto \CF_{G}(\mathfrak{X})$
induces a well defined ring 
homomorphism 
\[
\cf_{G} \colon K_0\left(\mathsf{Var}_{\kay}\right) \ \longrightarrow  \ \bbZ^{\oplus \, \mathsf{Atoms}^{\kay}_{G}}
\]
from the Grothendieck ring of varieties $K_0\left(\mathsf{Var}_{\kay}\right)$ to the ring 
$\bbZ^{\oplus \, \mathsf{Atoms}^{\kay}_{G}}$
The homomorphism $\cf_{G}$ can be thought of as the function assigning to each $K$-class of varieties its chemical formula.

\bigskip

Specific chemical formulas are pretty difficult to compute as they require a detailed knowledge of quantum multiplication. Still we have the 
following general

\begin{lemma} \label{lem:GatomsnefK}
Let $X$ be a connected smooth projective variety over an algebraically closed field 
$\kay = \kaybar$ with a numerically effective canonical class.
Then the $G$-atomic composition of $X$ consists of a single atom
$\boldsymbol{\eta}(X)$.
\end{lemma}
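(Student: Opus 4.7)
The proof strategy is to show that the operator $\bkappa$ admits a single generalized eigenvalue at every rigid point $b \in B^{G}_{X}$, and that $B^{G}_{X}$ is itself connected, so $\tU_{X}$ has a single connected component. First I would analyze the Euler element. Fix a rigid point $b = (q, \gamma) \in B^{G}_{X}$. Since $\epsilon_{G}$ acts as $-1$ on $H^{\odd}(X) \otimes \bbk$ and is central in $G$, every $G$-fixed vector in $H^{\odd}$ vanishes, so $\gamma \in H^{\even}(X) \otimes \bbk$. Using $\Eu_\gamma = c_{1}(T_{X}) + \frac{\Deg - 2\id}{2}(\gamma)$ from \eqref{eq:Euan} and the fact that $\frac{\Deg - 2\id}{2}$ acts by $-1$ on $H^{0}$, I would isolate the $H^{0}$ component and write $\Eu_{b} = -t_{0} \mathbf{1} + \alpha_{b}$, where $t_{0}$ is the coordinate of $\gamma$ along $\mathbf{1}$ and $\alpha_{b} \in \bigoplus_{d \geq 2} H^{d}(X) \otimes \bbk$.

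Next I would establish the key structural consequence of $K_{X}$ being nef: the big quantum product $\qup$ respects the increasing filtration of $H^{\bullet}(X)$ by cohomological degree. Indeed, a nonzero Gromov--Witten correlator $\langle T_{a}, T_{b}, T^{c}, T_{i_{1}}, \ldots, T_{i_{n}} \rangle_{\beta}$ must satisfy the virtual dimension identity, which after substituting $\deg T^{c} = 2\dim X - \deg T_{c}$ becomes
\[
\deg T_{c} \, = \, \deg T_{a} + \deg T_{b} + \sum_{j=1}^{n} \bigl(\deg T_{i_{j}} - 2\bigr) - 2 \textstyle\int_\beta c_{1}(T_{X}).
\]
By the unit axiom, terms with an insertion $T_{i_{j}} = \mathbf{1}$ contribute only in the degenerate case $(n, \beta) = (0, 0)$ (giving the classical cup product). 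Since all non-unit insertions are even of degree $\geq 2$ (by the $G$-fixedness forcing $\gamma \in H^{\even,G}$), and since $K_{X}$ nef gives $-2\int_\beta c_{1}(T_{X}) \geq 0$, we deduce $\deg T_{c} \geq \deg T_{a} + \deg T_{b}$ for every contributing term. Hence $\qup$ sends $H^{d_{1}}\otimes H^{d_{2}}$ into $\bigoplus_{d \geq d_{1} + d_{2}} H^{d}$ with Novikov coefficients.

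Combining the first two steps, since $\alpha_{b} \in H^{\geq 2}$, the endomorphism $\alpha_{b} \qup (-)$ strictly raises cohomological degree and is nilpotent on the finite-dimensional graded space $\cH_{b,o}$. Consequently $\Eu_{b}\qup(-) = -t_{0} \cdot \id + \alpha_{b}\qup(-)$, and hence $\bkappa_{b}$, admits a single eigenvalue (equal to $-t_{0}(b)$ up to sign conventions) with algebraic multiplicity $\dim H^{\bullet}(X)$. The eigenvalue is a global analytic function on $B^{G}_{X}$, so the reduced spectral cover $\tB_{X,\op{red}} \to B^{G}_{X}$ is an isomorphism, whence $U_{X} = B^{G}_{X}$ and $\tU_{X} = B^{G}_{X}$.

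Finally, I would verify connectedness of $B^{G}_{X}$. Since $\kay = \kaybar$ the group $\Gamma_{X}$ is trivial, so $G$ acts trivially on the torus $\mathcal{T}_{\bbk}(X)$; hence $B^{G}_{X,q} = B_{X,q}$ coincides with the preimage of the convex ample cone under the valuation map and is connected. The factor $(\omB^{\even}_{X,t})^{G}$ is a product of an analytic affine line and an open polydisk in the $G$-fixed subspace of $H^{\even,\neq 2}\oplus H^{2}_{\op{transc}}$, and is connected, while $(B^{\odd}_{X})^{G}$ is a single point by the vanishing $H^{\odd, G} = 0$ from step one. Therefore $B^{G}_{X}$ is connected, $\pi_{0}(\tU_{X})$ is a singleton, and $\CF_{G}(X) = \delta_{\boldsymbol{\eta}(X)}$. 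The main obstacle will be step two: the degree-raising property of $\qup$ requires a careful bookkeeping over all insertion types and a clean use of the unit axiom, and the hypothesis $\gamma \in H^{\even,G}$ is indispensable for excluding the $H^{1}$-insertions that would otherwise contribute negative shifts to the filtration.
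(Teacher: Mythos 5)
Your argument is correct and is essentially the paper's own proof: the paper likewise uses the virtual dimension count together with nefness of $K_X$ to show that the quantum product (hence the Euler action $\Eu\qup(-)$) raises cohomological degree, so its spectrum has a single element coming from $H^0(X)$. Your additional bookkeeping (unit axiom for $\mathbf{1}$-insertions, evenness of insertions at $G$-fixed points, and connectedness of $B^{G}_{X}$) only makes explicit steps the paper leaves implicit, e.g.\ the connectedness of $B^{G}_{X}$ asserted earlier in Section~\ref{sssec:localGatoms}.
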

\begin{proof}
    Let $(T_i)$ be a homogeneous basis of $H^{\bullet}(X)$, let $(t_i)$ be the associated formal coordinates, and let $(T^i)$ be the dual basis with respect to the Poincar\'{e} pairing.
    Recall that the big quantum product is given by
    \[ T_i \qup T_j = \sum_r \frac{\partial^3 \Phi}{\partial t_i \partial t_j \partial t_r} T^r, \]
    where
    \[
    \frac{\partial^3\Phi}{\partial t_i \partial t_j \partial t_r} = \sum_{n\ge 0,\beta} \frac{q^\beta}{n!} \sum_{i_1,\dots,i_n} \braket{T_i T_j T_r T_{i_1}\cdots T_{i_n}}_{0,n+3}^{\beta} t_{i_1}\cdots t_{i_n} .\]
    Since $K_X$ is nef, it follows from the virtual dimension calculation that if $T^r$ has nonzero coefficient in $T_i \qup T_j$, then
    \[ \deg T^r \ge \deg T_i + \deg T_j. \]
    We conclude that the spectrum of the Euler vector field action on $H^{\bullet}(X)$ contains only one element, coming from $H^0(X)$.
\end{proof}
 
\paragraph{Change of group.} \label{par:changeG}  \ Let $\iota \colon \tG \subset G$ be a reductive subgroup of $G$ defined over $\bbQ$. Assume also that the central order two element 
$\epsilon_{G} \in G$ is contained in $\tG$. Then the fiber functor \linebreak 
$\tH^{\bullet} \colon \CA{\kay} \to \mathsf{Rep}_{k}(\tG)$ which is the composition of $H^{\bullet} \colon \CA{\kay} \to \mathsf{Rep}_{k}(G)$ with the pullback by the inclusion 
$\iota^{*} \colon \mathsf{Rep}_{k}(G) \to \mathsf{Rep}_{k}(\tG)$ is a $(\tG,\epsilon_{G})$-symmetric Weil cohomology theory. 

Suppose $\fX$ is  a smooth projective $\kay$-variety. By definition 
$H^{\bullet}(\fX)$ and $\tH^{\bullet}(\fX)$ coincide as super $k$-vector spaces and only differ as representations: $H^{\bullet}(\fX)$ is a representation of $G$ and $\tH^{\bullet}(\fX)$  is the same vector space viewed as a representation of the subgroup $\tG \subset G$. Therefore, the constructions of the two maximal 
$\mathsf{A}$-model $\bbk$-analytic F-bundles associated with these symmetric Weil cohomology theories are compatible. In concrete terms, the maximal 
$\mathsf{A}$-model $\bbk$-analytic F-bundle associated with the $(\tG,\epsilon_{G})$-symmetric Weil cohomology theory $\tH^{\bullet}$ equals the maximal 
$\mathsf{A}$-model $\bbk$-analytic F-bundle $(\cH,\nabla)/B_{\fX}$ associated with the $(G,\epsilon_{G})$-symmetric Weil cohomology theory $H^{\bullet}$ but instead of considering 
$(\cH,\nabla)/B_{\fX}$ as a $G$-equivariant F-bundle, one must consider it as a $\tG$-equivariant F-bundle. Thus we have an inclusion $B_{\fX}^{G} \subset B_{\fX}^{\tG}$ and the spectral cover of $\bkappa|_{B_{\fX}^{G}}$ is the restriction of the spectral cover of 
$\bkappa|_{B_{\fX}^{\tG}}$ to the submanifold $B_{\fX}^{G}$.

Tracing the way the components of the unramified part of the reduced  spectral cover split under this restriction gives us a natural map of local atoms
\[
\iota^{\diamond,\op{loc}} \colon \mathsf{Atoms}_{\tG}^{\kay,\op{loc}} \ \longrightarrow \ \bbZ^{\oplus \, \mathsf{Atoms}_{G}^{\kay,\op{loc}}}_{\geq 0},
\]
and also a natural map  
\[
\iota^{\diamond} \colon \mathsf{Atoms}_{\tG}^{\kay} \ \longrightarrow \ 
\bbZ^{\oplus \, \mathsf{Atoms}_{G}^{\kay}}_{\geq 0}
\]
of atoms.

Thus we tautologically get a natural compatibility of the corresponding chemical formula multisets, namely
\[
\CF_{G}(\fX)\circ \iota^{\diamond} \, = \, \CF_{\tG}(\fX).
\]

\medskip

\subsubsection{Reduction to $G$-invariants.} \label{ssec:Ginv}
For computational utility it is very useful to note that the local atoms of a projective variety, are captured completely by the eigenvalues for the action of the Euler vector field on the $G$-invariant part of the fiber of the F-bundle. To explain this we will need some preliminary statements. Fix a field $k$ of characteristic zero. Suppose $G$ is an algebraic reductive group over $k$, and $\epsilon_{G} \in G$ is a central element of order $2$.

Suppose $(A,\fp,\mathbf{1}_{A})$ is a finite dimensional commutative unital superalgebra over some algebraic closure $\kbar \supset k$. Suppose the reductive group $G$ acts linearly on $A$, preserving the unital algebra structure, and so that $\epsilon_{\mathbf{G}}$ acts as the parity operator. Because $\epsilon_{\mathbf{G}}$  is central, we will have that 
the $G$-fixed part $A^{G}$ of $A$ is a totally even subalgebra in $A$. Let $a \in A^{G}$ be a $G$-invariant element. Multiplication by $a$ gives a linear operator on the finite dimensional vector space $A$ which automatically will preserve the subalgebras $A^{G} \subset A^{\even} \subset A$. 

Given a linear operator $T \colon V \to V$ on a finite dimensional $\kbar$-vector space $V$ we will 
write $\mathsf{redspec}(T) \, \subset \, \kbar$ for the reduced spectrum of $T$,  i.e.\ for the set of distinct eigenvalues of $T$. 

With this notation we now have the following

\begin{lemma} \label{lem:Ginvariants}  If $a \in A^{G}$, then multiplication by $a$ has the same eigenvalues on all of $A$ and on the 
subalgebra of $G$-invariants $A^{G}$. More precisely we have
\[
\mathsf{redspec}(a\fp\, (-)) \ = \ \mathsf{redspec}\left( (a\fp\, (-))|_{A^{\even}}\right) \ = \ \mathsf{redspec}\left( (a\fp\, (-))|_{A^{G}}\right).
\]
\end{lemma}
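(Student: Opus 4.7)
The plan is to establish the chain of equalities $\mathsf{redspec}(a\fp(-)) = \mathsf{redspec}\bigl((a\fp(-))|_{A^{\even}}\bigr) = \mathsf{redspec}\bigl((a\fp(-))|_{A^{G}}\bigr)$ by checking only the nontrivial inclusions. The reverse-direction inclusions $\mathsf{redspec}\bigl((a\fp(-))|_{A^{G}}\bigr) \subseteq \mathsf{redspec}\bigl((a\fp(-))|_{A^{\even}}\bigr) \subseteq \mathsf{redspec}(a\fp(-))$ are automatic, since $a\fp(-)$ preserves the tower $A^{G} \subset A^{\even} \subset A$: the inclusion $A^{G} \subset A^{\even}$ uses that $\epsilon_{G}$ is central and acts as the parity operator (so any $G$-fixed vector is forced to be even), and $A^{G}$ is a subalgebra because $G$ acts by unital algebra automorphisms. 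The content is therefore to prove the opposite inclusions.

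For the inclusion $\mathsf{redspec}(a\fp(-)) \subseteq \mathsf{redspec}\bigl((a\fp(-))|_{A^{\even}}\bigr)$, I would invoke the structure theory of finite-dimensional commutative $\kbar$-algebras. Supercommutativity together with the evenness of $a$ imply that multiplication by $a$ respects the parity splitting $A = A^{\even} \oplus A^{\odd}$, and the even part $A^{\even}$ decomposes as a finite product $\prod_i R_i$ of local Artin $\kbar$-algebras with residue field $\kbar$. Correspondingly, $A^{\odd} = \prod_i M_i$ as a module over $\prod_i R_i$. Writing $a = (a_i)_i$ with $a_i \in R_i$, the multiplication operators by $a_i$ on $R_i$ and on $M_i$ each admit a single generalized eigenvalue, namely the image of $a_i$ in the residue field $R_i/\mathfrak{m}_i = \kbar$. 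Hence any eigenvalue arising on $A^{\odd}$ already arises on the corresponding factor of $A^{\even}$.

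For the inclusion $\mathsf{redspec}\bigl((a\fp(-))|_{A^{\even}}\bigr) \subseteq \mathsf{redspec}\bigl((a\fp(-))|_{A^{G}}\bigr)$, I would use spectral projectors. Given $\lambda \in \mathsf{redspec}\bigl((a\fp(-))|_{A^{\even}}\bigr)$, the projector $\pi_\lambda$ onto the generalized $\lambda$-eigenspace of $a\fp(-)$ on $A^{\even}$ is a $\kbar$-polynomial in $a\fp(-)$. Because $A^{\even}$ is commutative, this polynomial of multiplication operators is itself multiplication by a polynomial $e_\lambda \in A^{\even}$ in $a$; and since $a \in A^{G}$ while $G$ acts by $\kbar$-algebra automorphisms, we get $e_\lambda \in A^{G}$. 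The element $e_\lambda$ is a nonzero idempotent (its nonvanishing is equivalent to $\lambda$ being an eigenvalue) and satisfies $(a-\lambda)^{N}\fp e_\lambda = 0$ for $N \gg 0$, so $\lambda$ is a generalized eigenvalue of $a\fp(-)$ on the nonzero $G$-invariant subspace $e_\lambda \fp A^{G} \subset A^{G}$.

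The substantive step is the first reverse inclusion, where one must use the $A^{\even}$-module structure on $A^{\odd}$ (afforded by supercommutativity) to force the odd-part spectrum to be absorbed into the even-part spectrum; the projector argument underlying the second reverse inclusion is purely formal, resting on the fact that in a commutative algebra the polynomial functional calculus of the regular representation lives inside the algebra itself.
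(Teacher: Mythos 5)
Your proof is correct, and its decisive mechanism is the same one the paper uses: idempotents/projectors obtained as $\kbar$-polynomials in $a$ automatically lie in $A^{G}$, so every eigenvalue is witnessed by a nonzero $G$-invariant generalized eigenvector. The difference is organizational. The paper channels everything through the single subalgebra $\kbar[x]/(p)\cong\kbar[a]\subset A^{G}$, where $p$ is the minimal polynomial of $a$: its idempotents split $A=\oplus_i A_i$ into $G$-stable blocks with $G$-invariant units on which $a-\lambda_i$ acts nilpotently, so the even/odd comparison and the $G$-invariance comparison are settled simultaneously, block by block. You instead prove the two nontrivial inclusions separately: for $\mathsf{redspec}$ on $A$ versus $A^{\even}$ you invoke the finer decomposition of $A^{\even}$ into local Artin factors and the $A^{\even}$-module structure on $A^{\odd}$ (nilpotency of the maximal ideals absorbs the odd-part spectrum), and for $A^{\even}$ versus $A^{G}$ you use spectral projectors, which is exactly the paper's $\kbar[a]$-idempotent trick run one eigenvalue at a time. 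Your route spends a little extra structure theory on the first inclusion, which the paper gets for free from nilpotency of $a-\lambda_i$ on the whole block $A_i$ (odd part included), but it has the virtue of isolating precisely where $G$-invariance is needed — only in the second inclusion.
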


\begin{proof}
We have a natural algebra homomorphism $\mathsf{ev}_{a} \, \colon \,\kbar[x] \, \to \, A^{G} \, \subset  A$, given by $\mathsf{ev}_{a}(f(x)) = f(a)$. Since $A$ is finite dimensional over $\kbar$, the homomorphism $\mathsf{ev}_{a}$ has a non-trivial kernel ideal which will be generated by a monic polynomial $p(x)$, i.e.\ $\ker(\mathsf{ev}_{a}) = (p) \, \triangleleft \, \kbar[x]$. 

Since 
$\kbar[x]/(p)  \hookrightarrow A^{\even} \subset A$ is an injective algebra homomorphism, it follows that  the map 
$\Spec(A^{\even}) \, \to \,  \Spec (\kbar[x]/(p))$ is dominant. If we decompose 
$p$ into a product of linear factors $p(x) = \prod_{i=1}^{s} (x-\lambda_{i})^{d_{i}}$ with $\lambda_{i} \neq \lambda_{j}$ if $i \neq j$,
then $\kbar[x]/(p) = \oplus_{i=1}^{m} \, \kbar[x]/((x-\lambda_{i})^{d_{i}})$ and so $\Spec (A^{\even}))$ dominates the closed zero dimensional subscheme $M  = \sqcup_{i = 1}^{m} \, \Spec(\kbar[x]/(x-\lambda_{i})^{d_{i}}) \ \subset \, \mathbb{A}^1_{\kbar}$. Thus $\Spec(A)$ is a disconnected affine superscheme which is a disjoint union $\Spec(A) = \sqcup_{i = 1}^{m} \, S_{i}$, where $S_{i}$ is the preimage of  the $i$-th connected component $\Spec(\kbar[x]/((x-\lambda_{i})^{d_{i}}))$ of $M$ under the map \linebreak 
$\Spec(A) \to \Spec(A^{\even}) \to M$.

Let $A_{i}$ be the algebra of global functions on $S_{i}$. Then $A$ decomposes into a direct sum $A = \oplus_{i=1}^{m} \, A_{i}$ of unital 
commutative super algebras. Since $a$ is $G$-invariant, this decomposition is a decomposition of unital algebras which are $G$-representations, and so we must have  $G\cdot \mathbf{1}_{A_{i}} = \mathbf{1}_{A_{i}}$. Similarly, the projection $a_{i}$ of $a$ into $A_{i}$ must be $G$-invariant. But by construction, the minimal polynomial of 
$(a\fp (-))|_{A_{i}} = a_{i}\fp (-) \colon A_{i} \to A_{i}$ is equal to 
$(x-\lambda_{i})^{e_{i}}$ for some $0< e_{i} \leq d_{i}$, and so 
for every $i = 1, \ldots, m$ we have 
\[
\mathsf{redspec}\left((a\fp \, (-))|_{A_{i}^{G}}\right) = \mathsf{redspec}\left((a\fp \, (-))|_{A_{i}^{\even}}\right) = \mathsf{redspec}\left((a\fp \, (-))|_{A_{i}}\right) = \{\lambda_{i}\}.
\]
This concludes the proof of the lemma. 
\end{proof}

\medskip

Suppose now  $\mathfrak{X}/\kay$ is a smooth projective variety, and 
suppose $(\cH,\nabla)/B_{\mathfrak{X}}$ is the corresponding  $\mathsf{A}$-model non-archimedean $\bbk$-analytic F-bundle associated with 
$\mathfrak{X}$, $k \subset \bbk$, and a $(G,\epsilon_{G})$-symmetric Weil cohomology theory $H^{\bullet}(-)$  on $\kay$-varieties.

Recall that $(\cH,\nabla)/B_{\mathfrak{X}}$ is a $G(k)$-equivariant maximal F-bundle, such that for $\cH_{o} = \cH|_{B_{\mathfrak{X}}\times \{u=0\}}$ the induced quantum product $\qup \colon \cH_{o}^{\otimes 2} \to \cH_{o}$ and the Euler vector field $\Eu \in H^0(B_{\mathfrak{X}},T_{B_{\mathfrak{X}}}) = H^0(B_{\mathfrak{X}},\cH_{o})$ are preserved by $G(k)$. The group $G(k)$ acts on 
the restriction $\cH_{o}|_{B^G_{\mathfrak{X}}}$ by bundle automorphisms and so the quantum multiplication by $\Eu|_{B^G_{\mathfrak{X}}}$ preserves the subbundle $(\cH_{o}|_{B^G_{\mathfrak{X}}})^{G(k)} \subset \cH_{o}|_{B^G_{\mathfrak{X}}} $ of $G(k)$-invariants, which is also equal to the bundle of $\germG$-invariants\footnote{Note that since $G(k)$ acts trivially on the base $B_{\mathfrak{X}}^{G}$ and by linear analytic automorphisms on $\cH_{o}$, it follows that the linear action of $G(k)$ along the fibers of $\cH_{o}|_{B^G_{\mathfrak{X}}}$ extends to the analytic action of $\germG$ corresponding to the linear action of $G_{\bbk}$ on $H^{\bullet}(\mathfrak{X})\otimes_{k} \bbk$. Again since $H^{\bullet}(\mathfrak{X})$ is a finite dimensional representation of the proreductive group  $G(k)$, the fixed points of the $\germG$-action on $(H^{\bullet}(\mathfrak{X})\otimes_{k} \bbk)^{\op{an}}$ will be equal to 
$(H^{\bullet}(\mathfrak{X})^{G(k)}\otimes_{k} \bbk)^{\op{an}}$, i.e.\ to the fixed points of the $G(k)$-action on 
$(H^{\bullet}(\mathfrak{X})\otimes_{k} \bbk)^{\op{an}}$.}. Let $\mathfrak{eu}$ denote  the restriction of the 
operator $\Eu|_{B^G_{\mathfrak{X}}}\qup (-)$ to the subbundle $\left(\cH_{o}|_{B^G_{\mathfrak{X}}}\right)^{G(k)}$.
Let  $\mathfrak{S}_{\mathfrak{X}} \to B^G_{\mathfrak{{X}}}$ be the spectral cover parametrizing the eigenvalues of the operator 
\[
\mathfrak{eu} \  \colon \left(\cH_{o}|_{B^G_{\mathfrak{X}}}\right)^{G(k)} \to \left(\cH_{o}|_{B^G_{\mathfrak{X}}}\right)^{G(k)}.
\]
By construction $\mathfrak{S}_{\mathfrak{X}} \subset \tB_{\mathfrak{X}}$ is a closed analytic subspace in the spectral cover $\tB_{\mathfrak{X}}$ parametrizing the eigenvalues of 
$\Eu|_{B^G_{\mathfrak{X}}} \, \qup (-)$ acting on all of 
$\cH_{o}|_{B^G_{\mathfrak{X}}}$. However, since $\cH_{o}$ is a bundle of unital commutative super-algebras and since $\Eu$ is even, Lemma~\ref{lem:Ginvariants} implies that the inclusion $\mathfrak{S}_{\mathfrak{X}} \subset \tB_{\mathfrak{X}}$ induces an identification
$\mathfrak{S}_{\mathfrak{X},\op{red}}  = \tB_{\mathfrak{X},\op{red}}$ of the associated reduced $\bbk$-analytic spaces. 
Therefore we can also describe  $\tU_{\mathfrak{X}} \to U_{\mathfrak{X}}$ as the unramified part of the reduced spectral cover $\mathfrak{S}_{\mathfrak{X},\op{red}} \to B^G_{\mathfrak{X}}$ for $\Eu\qup (-)$ acting on the bundle of $G(k)$-invariants, or equivalently the bundle of $\germG$-invariants.

\medskip

\subsection{\texorpdfstring{$G$}{G}-atomic F-bundles} \label{ssec:GatomicF}

The abstract definition of atoms in Section~\ref{sssec:Gatomskay} is universal but hard to use in practice.
In this section, we introduce the notion of $G$-atomic F-bundle, which will serve as a simpler version of $G$-atom for the purpose of this paper.

\subsubsection{Definition of an atomic F-bundle} \label{sssec:defatomicF}

Next we will consider actions of germs of analytic Lie groups on F-bundles.
Let $k$ be a field of characteristic zero, and let $\bbk$ be  an algebraically closed non-archimedean field containing $k$,  extending the trivial valuation on $k$.
Fix a pair $(G,\epsilon_{G})$ as in \cref{ssec:MTnc}.
More precisely, we replace $G$ by any finite dimensional quotient.

Let $G$, $G^\beth \subset G^\an$, $G_\bbk^\beth \subset G_\bbk^\an$ and $\germG$ be as in \cref{sssec:localGatoms}.
Let $(\cH,\nabla)/B$ be a $\germG$-equivariant $\bbk$-analytic F-bundle where the $\germG$-action on $\bbD$ is trivial.
Let $b \in B$ be a rigid point fixed by $\germG$ and $(B,b)$ the germ of $B$ at $b$.
Restricting $(\cH,\nabla)/B$ to $(B,b)$, we obtain a \strongemph{$\germG$-equivariant $\bbk$-analytic F-bundle germ}.

\begin{definition} \label{def:GatomicFbundle}
A \strongemph{$(G,\epsilon_{G})$-atomic F-bundle} (or a \strongemph{$G$-atomic F-bundle} for short) is a $\germG$-equivariant $\bbk$-analytic F-bundle germ $(\cA,\nabla)/(B,b)$ such that 
    \begin{enumerate}[wide]
        \item $(\cA,\nabla)/(B,b)$ is maximal, 
        \item the action of $\epsilon_{G}$ on the super vector bundle $\cA$ is the $\bbZ/2$-grading action, 
        \item the $\Eu$-action on $\cA|_{(B^\germG,b), u = 0}$ has a single eigenvalue.
    \end{enumerate}
\end{definition}

\medskip

In the rest of this section we will discuss the $G$-atomic F-bundles coming from Gromov-Witten theory.  Ultimately the invariants we need will be universal $\mathsf{A}$-model $G$-atomic F-bundles, i.e.\ $\mathsf{A}$-model $G$-atomic F-bundles  associated with some Tannakian theory of motives or some particular realization of motives.  Such $G$-atomic F-bundles are dubbed $G$-atoms and we discuss the most useful examples below. 

\subsubsection{Geometric $G$-atomic F-bundles} 

We use the notations from Section~\ref{sssec:localGatoms}.
By the spectral decomposition theorem (Theorem \ref{thm:K-decomposition}), for every rigid point $b \in U_{\mathfrak{X}}$,  the restriction of the F-bundle $(\cH,\nabla)$ to the germ $(U_{\mathfrak{X}}, b)$ splits into a collection of $G$-atomic F-bundles $\cA_{\mathfrak{X},b,\tilde b}$ indexed by the points $\tilde b$ of the fiber $\tU_{\mathfrak{X},b}$.

\begin{definition} \label{def:G-atom} 
  A \strongemph{geometric $G$-atomic F-bundles} of $\kay$-varieties is a $G$-atomic F-bundle $\cA_{\mathfrak{X},b,\tilde b}$ arising from the above construction.  We consider the equivalence relation on geometric $G$-atomic F-bundles generated by the following:
  \begin{enumerate}
    \item Two geometric $G$-atomic F-bundles are equivalent if the $G$-atomic F-bundles are isomorphic.
    \item Two geometric $G$-atomic F-bundles $\cA_{\mathfrak{X},b_1,\tilde b_1}$ and $\cA_{\mathfrak{X},b_2,\tilde b_2}$ arising from the same variety $\mathfrak{X}$ are equivalent if $\tilde b_1$ and $\tilde b_2$ belong to the same connected component of $\tU_{\mathfrak{X}}$.
  \end{enumerate}
  We denote by $\mathsf{Atoms}_{G}^{\kay,\mathsf{F}}$ the set of geometric $G$-atomic F-bundles modulo the above equivalence relation.
  Given a smooth projective $\kay$-variety $\mathfrak{X}$, let $\CF_G^{\mathsf{F}}(\mathfrak{X})$ denote the multiset of geometric $G$-atomic F-bundles associated to $\mathfrak{X}$.
  Formally, $\CF_G^{\mathsf{F}}(\mathfrak{X})$  is a multiplicity function $
  \CF_G^{\mathsf{F}}(\mathfrak{X}) \colon \CF_G^{\kay,\mathsf{F}}  \to \bbZ_{\geq 0}$ with finite support.
\end{definition}

\begin{proposition} \label{prop:atoms_to_atomic_F-bundles}
  We have a map from the set $\Atoms_G^\kay$ of $G$-atoms of varieties over $\kay$ to the set $\Atoms_G^{\kay,\sF}$ of geometric $G$-atomic F-bundles sending each connected component $\tU_{\fX,\alpha}$ to a $G$-atomic F-bundle $\cA_{\mathfrak{X},b,\tilde b}$ for some rigid point $b\in U_\fX$ and $\tilde b\in\tU_{\fX,b}$.
\end{proposition}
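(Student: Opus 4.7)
The plan is to construct the map in stages, at each stage verifying the requisite invariances and compatibilities.

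First I would fix a smooth projective $\kay$-variety $\fX$ and a local $G$-atom $\alpha\in\pi_0(\tU_\fX)$. For each rigid point $b\in U_\fX$ and each lift $\tilde b\in\tU_{\fX,b}$ lying in the connected component $\tU_{\fX,\alpha}$, the spectral decomposition theorem (Theorem~\ref{thm:K-decomposition}) applied to the maximal $\mathsf{A}$-model F-bundle $(\cH,\nabla)/B_\fX$ yields a $G$-atomic F-bundle germ $\cA_{\fX,b,\tilde b}$: maximality is inherited, the $\epsilon_G$-action is the $\bbZ/2$-grading by construction of the $(G,\epsilon_G)$-symmetric Weil theory, and the $\Eu$-action on the $G$-invariant part at $b$ has a single eigenvalue by Lemma~\ref{lem:Ginvariants} applied to the $\bkappa$-generalized eigenspace singled out by $\tilde b$. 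By the second clause in Definition~\ref{def:G-atom}, any two such germs $\cA_{\fX,b_1,\tilde b_1}$ and $\cA_{\fX,b_2,\tilde b_2}$ with $\tilde b_1,\tilde b_2$ in the common component $\tU_{\fX,\alpha}$ are equivalent in $\Atoms_G^{\kay,\sF}$, so the assignment $\alpha\mapsto[\cA_{\fX,b,\tilde b}]$ is well defined. Invariance under $\op{Aut}(\fX)$ follows because an automorphism $\sigma$ of $\fX$ induces an isomorphism of the $\mathsf{A}$-model F-bundle permuting the components of $\tU_\fX$ according to the $\op{Aut}(\fX)$-action on $\pi_0(\tU_\fX)$, hence the resulting F-bundle germs at corresponding points are isomorphic. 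Thus we get a well defined map $\Atoms_G^{\kay,\op{loc}}/\!\sim\,\to \Atoms_G^{\kay,\sF}$ on local atoms modulo the automorphism action.

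Next I would check that this map descends through the three elementary equivalences used to define $\Atoms_G^\kay$ in Definition~\ref{def:G.atoms}. For disjoint unions, the $\mathsf{A}$-model F-bundle of $\fX_1\sqcup\fX_2$ is the external sum of those of $\fX_1$ and $\fX_2$, so $\tU_{\fX_1\sqcup\fX_2}=\tU_{\fX_1}\sqcup\tU_{\fX_2}$ and the $G$-atomic F-bundle germ attached to a component originating from $\fX_i$ is literally the germ coming from $\fX_i$ alone. For blowup equivalence, I invoke Theorem~\ref{thm:blowup}: on the connected non-empty analytic domains $\widehat U\subset B_{\widehat\fX}$ and $U'\subset B_{\fX'}$ the F-bundles $(\widehat\cH,\widehat\nabla)$ and $(\cH',\nabla')$ are canonically isomorphic, which induces the correspondence \eqref{eq:pi0correspondence} on local atoms and, at any common rigid point in the image of the identification, identifies the germs $\cA_{\widehat\fX,b,\tilde b}$ with the germs attached to the corresponding components of $\tU_{\fX'}$. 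The projective bundle case is analogous, using Theorem~\ref{thm:projective_bundle}.

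The main obstacle I expect is the following subtlety in the last step: the identifications in Theorems~\ref{thm:blowup} and~\ref{thm:projective_bundle} are canonical only over specific analytic domains, yet the target equivalence on $\Atoms_G^{\kay,\sF}$ is defined via connected components of $\tU$ for each variety individually. To bridge this, I would argue that the spectral cover restricted to the connected domain of identification is itself connected on each atomic stratum (as discussed around \eqref{eq:kappaX'split}), so that the local identification of atomic F-bundle germs at one rigid point propagates to the entire $\pi_0$-class by analytic continuation and the equivalence relation of Definition~\ref{def:G-atom}; this uses the fact that the atomic F-bundle germ at any rigid point of a connected component of $\tU_\fX$ represents the same class in $\Atoms_G^{\kay,\sF}$. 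Granting this propagation, the map factors through each elementary equivalence, and we obtain the desired map $\Atoms_G^\kay\to\Atoms_G^{\kay,\sF}$.
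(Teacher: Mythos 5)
Your proposal follows essentially the same route as the paper: build the map via the spectral decomposition theorem and the equivalences (1)--(2) of Definition~\ref{def:G-atom}, then verify that it factors through the elementary equivalences using the external-sum description for disjoint unions and Theorems~\ref{thm:blowup} and~\ref{thm:projective_bundle} for blowups and projective bundles. The domain subtlety you flag at the end is handled in the paper exactly as you suggest, via the connectedness of the identification domains (cf.\ the discussion of $\maxeigbup_{\widehat{\fX}}$ and $\maxeigbup_{\fX'}$ in Section~\ref{sssec:eqblowupGatoms}), so your argument is correct.
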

\begin{proof}
The equivalences (1) and (2) in Definition~\ref{def:G-atom}, imply that 
there is a well defined map from local $G$-atoms to geometric $G$-atomic F-bundles. Indeed, suppose  $\alpha \in \pi_{0}(\tU_{\fX})$, let $b \in U_{\fX}$ be a rigid point, and let $x \in \tU_{\fX}$ be a rigid point sitting over $b$. Then in the spectral decomposition (see Theorem~\ref{thm:K-decomposition} ) of the F-bundle germ  $(\cH,\nabla)/(B_{\fX},b)$ there is a factor $(\cA_{\fX,b,x},\nabla)/(\tensor[^x]{{B_{\fX}}}{}\!\!,\!\!\tensor[^x]{b}{})$ corresponding to the generalized $\bkappa$-eigenspace with eigenvalue $x$. This factor is by construction a geometric $G$-atomic 
F-bundle whose equivalence class depends only on $\alpha$ but not the points $b$ and $x$.

This gives a map from the set $\Atoms_G^{\kay,\loc}$ of local $G$-atoms of varieties over $\kay$ to the set $\Atoms_G^{\kay,\sF}$. It remains to check that this map factors through the equivalence relations defined in Sections~\ref{sssec:eqdisjointGatoms}--\ref{sssec:eqprojbunGatoms}.
Concretely, we need to check the additivity properties:
  \begin{enumerate}[wide]
    \item For two smooth projective $\kay$-varieties $\mathfrak{X}_1$, $\mathfrak{X}_2$, we have
    \[ \CF_{G}^{\mathbf{F}}(\mathfrak{X}_1 \sqcup \mathfrak{X}_2) = \CF_{G}^{\mathbf{F}}(\mathfrak{X}_1) + \CF_{G}^{\mathbf{F}}(\mathfrak{X}_2). \]
    \item For a smooth projective $\kay$-variety $\mathfrak{X}$, a smooth closed subvariety $\mathfrak{Z} \subset \mathfrak{X}$ of codimension $r$, and the blowup $\widehat{\mathfrak{X}}$ of $\mathfrak{X}$ along $\mathfrak{Z}$, we have
    \[ \CF_{G}^{\mathbf{F}}\left(\widehat{\mathfrak{X}}\right) = \CF_{G}^{\mathbf{F}}(\mathfrak{X}) + (r-1)\cdot \CF_{G}^{\mathbf{F}}(\mathfrak{Z}). \]
    \item For a smooth projective $\kay$-variety $\mathfrak{X}$, a rank $r$ vector bundle $V$ on $\mathfrak{X}$, and the associated projective bundle $\bbP(V)$, we have
    \[ \CF_{G}^{\mathbf{F}}(\bbP(V)) = r \cdot \CF_{G}^{\mathbf{F}}(\mathfrak{X}) .\]
  \end{enumerate}
  These hold almost tautologically. The $\mathsf{A}$-model F-bundle associated to a disjoint union $\mathfrak{X}_1 \sqcup \mathfrak{X}_2$ is isomorphic to the external direct sum of the respective $\mathsf{A}$-model F-bundles.
  This implies (1).
  The remaining statements (2) and (3)  follow again from Theorems~\ref{thm:blowup} and \ref{thm:projective_bundle}.
\end{proof}

\begin{proposition} \label{prop:iso_of_representations}
  For a $G$-atom of $\kay$-varieties represented by a $G$-atomic F-bundle $(\cA,\nabla)$ over a germ $(B, b)$, the isomorphism class of the $G_{\bbk}$-representation $\cA|_{b,u=0}$ is well-defined and independent of the choice of the representative.
\end{proposition}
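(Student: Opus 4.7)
The plan is to verify invariance of the isomorphism class of $\cA|_{b,u=0}$ as a $G_\bbk$-representation under each of the three sources of ambiguity implicit in the statement: (a) isomorphism of the underlying $G$-atomic F-bundle, (b) varying the rigid point $b$ within the connected component of $\tU_\fX$ encoding the atom, and (c) the elementary equivalences — disjoint unions, blowups with smooth centers, and projective bundles — built into the definition of $\Atoms_{G}^{\kay}$. Invariance under (a) is immediate, since any isomorphism of $\germG$-equivariant F-bundle germs restricts on the marked fiber to a $G_\bbk$-equivariant linear isomorphism.

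For (b), the key input is rigidity of finite-dimensional representations of the proreductive group $G$. In characteristic zero such representations are automatically semisimple, and for fixed dimension $n$ the set of isomorphism classes of $n$-dimensional $G_\bbk$-representations is discrete: it is determined by the multiplicities with which a countable discrete family of irreducibles enter the decomposition. The restriction of $\cA|_{u=0}$ to $B^{\germG}$ is a $\germG$-equivariant analytic vector bundle of finite rank on a connected analytic germ, so the map sending a rigid point to the isomorphism class of its fiber is a locally constant map from a connected space to a discrete set, hence constant. To extend constancy from a single germ $(B,b)$ to an entire connected component of $\tU_\fX$, I would apply the spectral decomposition (Theorem~\ref{thm:K-decomposition}) at each rigid point along a chain of overlapping analytic germs covering the unramified sheet, and concatenate the resulting local constancy statements.

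For (c), the canonical identifications of Theorems~\ref{thm:blowup} and~\ref{thm:projective_bundle} are the realizations under the fiber functor $\bfh \colon \CA{\kay}/\mathsf{Tate} \to \mathsf{Rep}_{k}(G)$ of the corresponding formulas of Iritani and Iritani--Koto. Those formulas are assembled from pullback and push-forward along the algebraic morphisms in the blowup diagram~\eqref{eq:blowupdiagram} or the projective bundle projection, from multiplication by algebraic cohomology classes such as $\exch = c_{1}(\cO_{D}(1)) \smile (-)$, and from substitutions in Novikov variables induced by the lattice maps between the spaces $\mathsf{N}_{1}(-,\bbZ)$. Each of these operations is a morphism in $\CA{\kay}/\mathsf{Tate}$, so its image under $\bfh$ is automatically $G_\bbk$-equivariant, and the canonical identifications of atomic factors preserve the fiber $G_\bbk$-representations. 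The disjoint union case is trivial because the F-bundle of $\fX_{1} \sqcup \fX_{2}$ is the external sum of the F-bundles of $\fX_{1}$ and $\fX_{2}$.

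The hard part will be step (c): although the formal compatibilities are essentially tautological, one must carefully track the Galois-twisted $G$-action on the Novikov rings introduced in Section~\ref{subpar:galois} through Iritani's change of coordinates $\ttau \mapsto (\tau(\ttau), \{\varsigma_{j}(\ttau)\})$ and through the intertwiner $\Psi$ of \cite[Theorem~5.18]{Iritani_blowup}. The verification rests on confirming that these ingredients are defined over a $G$-stable subalgebra of $\kbar\dbp{\fy^{\bbQ}}\dbb{\NE(X,\bbZ)}$, so that the analytic isomorphism produced by Theorem~\ref{thm:blowup} descends to an isomorphism of $\germG$-equivariant F-bundles on the analytic subdomains in question. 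An analogous but simpler check handles the projective bundle case via Theorem~\ref{thm:projective_bundle}.
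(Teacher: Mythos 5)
Your core argument is the paper's: the only substantive check is that moving the base point within a connected component of $\tU_{\fX}$ does not change the fiber representation, and this follows from rigidity of finite-dimensional representations of the proreductive group $G$ (the paper cites Milne, Proposition~15.15) together with connectedness — exactly your step (b), with step (a) being the trivial case of an isomorphism of $\germG$-equivariant F-bundle germs. Two organizational differences are worth noting. First, instead of chaining overlapping germs and re-invoking \cref{thm:K-decomposition} along a path, the paper gets constancy in one stroke: the eigenvalues lying on the chosen connected component of $\tU_{\fX}$ cut out a single $G$-equivariant generalized-eigenspace subbundle $\cA \subset \cH|_{U_{\fX}\times\{u=0\}}$ over all of $U_{\fX}$, and rigidity then forces all of its fibers to be isomorphic as $G$-representations; your patching argument works but is more cumbersome, and note that the "locally constant into a discrete set" step is itself nothing but the rigidity statement, so it carries the whole weight. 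Second, your step (c) is not part of this proposition's proof: the equivalence relation on geometric $G$-atomic F-bundles is generated only by the two moves of \cref{def:G-atom} (equivariant isomorphism, and moving $\tilde b$ within a connected component of $\tU_{\fX}$), while invariance under the disjoint-union, blowup and projective-bundle equivalences is precisely the content of the preceding \cref{prop:atoms_to_atomic_F-bundles}, resting on \cref{thm:blowup,thm:projective_bundle}. So the Galois-twist and Novikov-variable bookkeeping you single out as "the hard part" belongs to those earlier statements rather than here; for the present proposition the paper's proof is just the short rigidity-plus-connectedness argument of your (a)--(b).
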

\begin{proof}
  It suffices to check the independence for the equivalence in Definition~\ref{def:G-atom}(1).
  Using the notations therein, if $\tilde b_1$ and $\tilde b_2$ belong to a same connected component $\mathfrak{U}$ of $\tU_{\mathfrak{X}}$, we obtain a subbundle $\cA \subset \cH|_{U_{\mathfrak{X}}\times \{u=0\}}$ over $U_{\mathfrak{X}}$ given by generalized eigenspaces of the $\Eu$-action with eigenvalues in $V$.
  Since $G$ is a proreductive group, its finite dimensional representations are rigid (see e.g.\ \cite[Proposition~15.15]{milne-AG}).
  
  Therefore, every fiber of $\cA$ gives the same isomorphism class of $G$-representations.
\end{proof}

\medskip

Again, the set $\mathsf{Atoms}_{G}^{\kay}$ of $G$-atoms is naturally filtered by subsets
\[\mathsf{Atoms}_{G,\dim \leq 0}^{\kay,\mathsf{F}} 
\ \subset \ \mathsf{Atoms}_{G,\dim \leq 1}^{\kay,\mathsf{F}} \ \subset \ \cdots,  \qquad 
\mathsf{Atoms}_{G}^{\kay,\mathsf{F}}=\bigcup_{d\geq 0}\mathsf{Atoms}_{G,\dim \leq d}^{\kay,\mathsf{F}},
\]
where an atom $\balpha$ belongs to  $\mathsf{Atoms}_{G,\dim \leq d}^{\kay,\mathsf{F}}$ if there exists 
a smooth projective $\kay$-variety of dimension $\leq d$ whose atomic composition contains $\balpha$. 

\medskip

Splitting the  product of varieties into its atoms gives rise to a 
well-defined map
\[
\Atoms^{\kay,\mathsf{F}}_{G} \, \times \, \Atoms^{\kay,\mathsf{F}}_{G} \ 
\longrightarrow  \ \bbZ^{\oplus \, \Atoms^{\kay,\mathsf{F}}_{G}}_{\ge 0}.
\]
This map endows the additive monoid $\bbZ^{\oplus \, \Atoms^{\kay,\mathsf{F}}_{G}}_{\ge 0}$
with the structure of a commutative associative unital semiring, and similarly endows the abelian group 
$\bbZ^{\oplus \, \Atoms^{\kay,\mathsf{F}}_{G}}$
with the structure of a commutative ring.

\medskip

Note that Bittner's  relations \cite{Bittner} ensure  that the 
assignment which sends an isomorphism class of varieties to its chemical formula
\[
\xymatrix@1@C+1pc@M+0.5pc{[\mathfrak{X}]\ar[r] & \CF_{G}^{\mathsf{F}}(\mathfrak{X})},
\]
gives rise to a well defined ring 
homomorphism 
\[
\cf_{G}^{\mathsf{F}} \colon K_0\left(\mathsf{Var}_{\kay}\right) \longrightarrow  \bbZ^{\oplus \, \Atoms^{\kay,\mathsf{F}}_{G}}
\]
from the Grothendieck ring of varieties $K_0\left(\mathsf{Var}_{\kay}\right)$ to the ring 
$\bbZ^{\oplus \, \Atoms^{\kay,\mathsf{F}}_{G}}$.
Again the homomorphism $\cf_{G}^{\mathsf{F}}$ can be thought of as the function assigning to each $K$-class of varieties its \strongemph{atomic F-bundle valued chemical formula}, i.e.\ the list of its $G$-atomic F-bundles and their multiplicities.

\medskip

\subsection{Hodge atoms} \label{sec:Hodge_atoms}

In this section we discuss a particularly versatile example of $(G,\epsilon_{G})$-atoms which we call Hodge atoms. These are the atoms corresponding to the symmetry pair from Example ~\ref{ex:Z/2Hodge}. In other words, we take $\kay = \bbC$, $k = \mathbb{Q}$ and we consider the Mumford-Tate pair $(\hodge,\epsilon_{\hodge})$  controlling the polarizable pure rational $\bbZ/2$-graded Hodge structures. We will also fix the rational Betti cohomology $H^{\bullet}_{B}(-,\bbQ)$ as our $(\hodge,\epsilon_{\hodge})$-symmetric Weil cohomology theory. Let $\bbk = \overline{\bbQ}\dbp{\fy^{\bbQ}}$ be the algebraically-closed non-archimedean field of Puiseux series in an auxiliary variable $\fy$. For a smooth complex projective variety $X$ we take $(\cH,\nabla)/B_{X}$ to be the maximal non-archimedean $\mathsf{A}$-model F-bundle associated with $X$, $\kay = \bbC$, $\bbk = \overline{\bbQ}\dbp{\fy^{\bbQ}} \supset k = \bbQ$, and $H^{\bullet}(-,\bbQ)$, that we described in Definition~\ref{def:nonarchF}. By construction 
$(\cH,\nabla)/B_{X}$ is $(\hodge(\bbQ),\epsilon_{\hodge})$-equivariant, and so by specializing Definition~\ref{def:G-atom} to this context we arrive at the notion of  $\hodge$-atoms which we will call \strongemph{Hodge atoms}. Since Hodge atoms are our main invariant let us explicate.

Let $B^{\hodge}_{X} \subset B_{X}$ be the locus of $\hodge(\bbQ)$-fixed points in the non-archimedean super $\bbk$-analytic variety $B_{X}$.
Let $\tB \to B^{\hodge}$ be the ramified covering given by the spectrum of the $\Eu$-action, $U_X \subset B^\hodge$ the locus where the number of eigenvalues of the $\Eu$-action is maximal, and \linebreak $\tU_X \coloneqq \tB_{\op{red}} \times_B U_X$.
The set of \strongemph{local Hodge atoms} associated to $X$ is again  $\pi_{0}(\tU_{X})$ and the  multiplicity of an $\alpha \in \pi_{0}(\tU_{X})$ is defined to be the degree of the corresponding connected component 
$\tU_{X,\alpha}$ of $\tU_{X}$ as a covering of $U_{X}$.

The set $\HAtoms$ of all Hodge atoms of smooth complex projective varieties
is the quotient 
\[
\HAtoms \, \coloneqq \, \left.\left( \bigsqcup_{[X]} \, \pi_{0}(\tU_{X})/\op{Aut}(X)\right)\right/\sim, 
\] 
where the union is taken over isomorphism classes of complex smooth projective varieties, and the equivalence relation $\sim$ is generated by the elementary equivalences corresponding to 
disjoint unions, blowups with smooth centers, and projective bundles.

For future reference let us record a few elementary but useful properties of Hodge atoms.

\begin{lemma} \label{lem:nefK}
Let $X$ be a connected complex smooth projective variety with a numerically effective canonical class.
Then $\HAtoms(X)$ consists of a single atom
$\boldsymbol{\eta}(X)$.
\end{lemma}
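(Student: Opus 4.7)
The plan is to identify this statement as a direct specialization of the earlier general result Lemma~\ref{lem:GatomsnefK} to the pair $(G,\epsilon_{G})=(\hodge,\epsilon_{\hodge})$, with $\kay=\bbC$ and the $(\hodge,\epsilon_{\hodge})$-symmetric Weil cohomology theory given by rational Betti cohomology $H^{\bullet}_{B}(-,\bbQ)$. Indeed, by the construction recalled at the start of Section~\ref{sec:Hodge_atoms}, the Hodge atoms of a smooth complex projective variety $X$ are, by definition, the $(\hodge,\epsilon_{\hodge})$-atoms of $X$ for this choice of symmetry data, so the abstract lemma applies verbatim.

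For completeness I would re-derive the argument in this concrete setting. First I would fix a homogeneous basis $(T_i)$ of $H^{\bullet}_{B}(X,\bbQ)$ together with dual coordinates $(t_i)$, and recall that the big quantum product is controlled by third derivatives of the Gromov-Witten potential $\Phi$. By the virtual dimension formula $\bdelta(n,\beta)=n+(\dim X-3)+\int_{\beta}c_{1}(T_{X})$ together with the assumption that $K_{X}$ is nef, every nonzero term in $\frac{\partial^{3}\Phi}{\partial t_{i}\partial t_{j}\partial t_{r}}$ satisfies $\deg T^{r}\geq \deg T_{i}+\deg T_{j}$. Consequently $T_{i}\qup T_{j}$ expands only along basis vectors of degree at least $\deg T_{i}+\deg T_{j}$.

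Next I would plug this into the expression for the Euler field $\Eu=c_{1}(T_{X})+\tfrac{\Deg-2\cdot\op{id}}{2}$ and observe that $\Eu\qup(-)$ is strictly upper-triangular with respect to the (finite, bounded) cohomological degree filtration on $\cH_{o}$; hence it is nilpotent and has a single eigenvalue $0$ at every rigid point of $B^{\hodge}_{X}$. Therefore the reduced spectral cover $\tB_{X,\op{red}}\to B^{\hodge}_{X}$ is a single sheet, the unramified locus $\tU_{X}\to U_{X}$ is connected, and $\pi_{0}(\tU_{X})/\op{Aut}(X)$ consists of one element, yielding a single Hodge atom which we call $\boldsymbol{\eta}(X)$.

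The only mild subtlety — and where I would pay attention — is that the degree-increasing property must hold after restriction to $B^{\hodge}_{X}$ and after passage to the non-archimedean analytic setting over $\bbk$; but both reductions preserve the degree filtration and the $\hodge$-equivariance of quantum multiplication, so no information is lost. Thus the argument reduces cleanly to the degree count above, and the lemma follows with no additional work.
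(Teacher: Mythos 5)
Your proposal is correct and takes essentially the same route as the paper: the paper's proof of Lemma~\ref{lem:nefK} is literally the reduction to Lemma~\ref{lem:GatomsnefK}, whose proof is the same virtual-dimension/degree-increase argument for the quantum product that you reproduce. One minor imprecision: since the inequality is only $\deg T^r \geq \deg T_i + \deg T_j$ and the Euler field has an $H^0$-component (equal to $-t_0\mathbf{1}$ at a point with $t_0\neq 0$), the operator $\Eu\qup(-)$ is scalar-plus-(strictly degree-increasing, hence nilpotent) rather than nilpotent with eigenvalue $0$; but its reduced spectrum is still a single point at every point of $B^{\hodge}_X$, which is all the argument needs.
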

\begin{proof}
This is a special case of Lemma~\ref{lem:GatomsnefK}.
\end{proof}

\begin{lemma} \label{lem:acindependence} Let $X$ be a smooth complex projective variety.
  For any rigid point $b \in U_{X}$, the $\hodge_{\bbk}(\bbk)$-representation $(\cH_{o})_b = \cH_{(b,0)}$ is induced from a finite dimensional $\overline{\bbQ}$-linear representation of $\hodge(\overline{\bbQ})$ whose isomorphism class is independent of the choice of $b$.
\end{lemma}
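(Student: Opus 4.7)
The plan is to exhibit an explicit $\overline{\bbQ}$-rational form of the fiber $\cH_{(b,0)}$, uniformly in $b$, by exploiting the canonical trivialization of $\cH$. First, I would recall from Section~\ref{sssec:AmodelF} that the underlying vector bundle of the maximal non-archimedean $\mathsf{A}$-model F-bundle $(\cH,\nabla)/B_X$ is trivial: it is canonically identified with $H^{\bullet}_{B}(X,\bbQ)\otimes_{\bbQ}\cO_{B_X\times \bbD}$, with a global frame provided by any chosen $\bbQ$-basis of $H^{\bullet}_{B}(X,\bbQ)$. Moreover, the $\germG$-action on $\cH$ used to define $B^{\hodge}_X$ is by construction the $\bbk$-linear extension of the $\bbQ$-linear action of the universal Mumford-Tate group $\hodge$ on $H^{\bullet}_{B}(X,\bbQ)$ arising from the polarizable pure $\bbZ/2$-weighted $\bbQ$-Hodge structure on $H^{\bullet}_{B}(X,\bbQ)$ (Example~\ref{ex:Z/2Hodge}).

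Next, invoking \eqref{eq:commonfixedpoints}, every rigid point $b\in U_X\subset B^{\hodge}_X$ is fixed by all of $\germG$. Evaluating the $\germG$-action at such a $b$ kills the geometric contribution along the base and yields a canonical identification of $\hodge_{\bbk}(\bbk)$-representations
\[
\cH_{(b,0)} \ \cong \ H^{\bullet}_{B}(X,\bbQ)\otimes_{\bbQ} \bbk,
\]
where the right-hand side carries only the fiberwise linear $\hodge$-action inherited from the $\bbQ$-structure. Since the trivialization of $\cH$ is global, this identification is independent of the choice of rigid $b\in U_X$.

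Finally, the $\hodge$-action on the right-hand side being $\bbQ$-rational, I would factor the scalar extension through $\overline{\bbQ}$ to obtain
\[
\cH_{(b,0)} \ \cong \ \bigl(H^{\bullet}_{B}(X,\bbQ)\otimes_{\bbQ}\overline{\bbQ}\bigr)\otimes_{\overline{\bbQ}}\bbk,
\]
which exhibits $\cH_{(b,0)}$ as induced by extension of scalars from the finite-dimensional $\overline{\bbQ}$-linear representation $H^{\bullet}_{B}(X,\bbQ)\otimes_{\bbQ}\overline{\bbQ}$ of $\hodge(\overline{\bbQ})$, whose isomorphism class manifestly does not depend on $b$.

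The statement encounters no real obstacle: it is essentially a bookkeeping consequence of the tautological trivialization of $\cH$ together with the Hodge-theoretic (hence $\bbQ$-rational) origin of the fiberwise $\hodge$-action. The only point deserving emphasis is the appeal to \eqref{eq:commonfixedpoints}, which ensures that the fiber-level statement concerns the full group $\hodge_{\bbk}(\bbk)$ rather than merely its subgroup of $\bbQ$-points.
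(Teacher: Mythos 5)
Your argument is correct for the statement as written, and it overlaps with the paper's proof in the key identification step: both exhibit the fiber as $H^{\bullet}_{B}(X,\overline{\bbQ})\otimes_{\overline{\bbQ}}\bbk$ with the $\hodge_{\bbk}(\bbk)$-action obtained by scalar extension from the natural (Hodge-theoretic, hence $\bbQ$-rational) action. Where you diverge is in how independence of $b$ is established. You get it for free from the canonical global trivialization $\cH\cong H^{\bullet}_{B}(X,\bbQ)\otimes_{\bbQ}\cO_{B_X\times\bbD}$ together with the fact that every $b\in U_X\subset B^{\hodge}_X$ is $\germG$-fixed, which indeed suffices for the literal claim about the full fiber $\cH_{(b,0)}$. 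The paper instead treats the lemma as a special case of Proposition~\ref{prop:iso_of_representations} and argues by rigidity of finite-dimensional representations of the proreductive group $\hodge_{\overline{\bbQ}}$ (irreducibles are classified by highest weights, so the isomorphism class cannot vary continuously with $b$). The rigidity route looks redundant here, but it is the version of the argument that survives when the lemma is actually used a few lines later: there one applies the conclusion not to the trivialized bundle $\cH_{o}$ but to the $\hodge$-invariant generalized-eigenspace subbundles $\cE^{\alpha}\subset\cH_{o}|_{U_X}$ of $\Eu\qup(-)$, which carry no canonical trivialization, so "the frame is global" is no longer available and only the rigidity argument gives constancy of the fiberwise representation type. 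So your proof is fine as a proof of the stated lemma, but be aware that it is strictly weaker than the paper's mechanism and would not by itself justify the subsequent definition of the invariants $E^{\balpha}$, $\rho_{\balpha}$, $P_{\balpha}$ attached to individual Hodge atoms.
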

\begin{proof}
This is a special case of Proposition~\ref{prop:iso_of_representations}. By definition 
\[
(\cH_{o})_b \cong H^{\bullet}_{B}(X,\bbk) = H^{\bullet}_{B}(X,\bbQ)\otimes_{\bbQ} \bbk
\]
is equipped with the action of $\hodge_{\bbk}(\bbk)$ induced from the action of $\hodge(\bbQ)$ on $H^{\bullet}_{B}(X,\bbQ)$ by extension of scalars. 

Since $\bbQ \subset \overline{\bbQ} \subset \overline{\bbQ}\dbp{\boldsymbol{y}^{\bbQ}} = \bbk$,
we  can also view the $\bbk$-linear action of $\hodge_{\bbk}(\bbk)$ on 
$H^{\bullet}_{B}(X,\bbk) = H^{\bullet}_{B}(X,\overline{\bbQ})\otimes_{\overline{\bbQ}} \bbk$
as being induced by extension of scalars from the natural action of $\hodge_{\overline{\bbQ}}(\overline{\bbQ})$ on $H^{\bullet}_{B}(X,\overline{\bbQ})$.
But $\hodge_{\overline{\bbQ}}$  is a proreductive group over the  algebraically closed field $\overline{\bbQ}$, and the vector space 
$H^{\bullet}_{B}(X,\overline{\bbQ})$ is finite dimensional, so 
the action $\psi_{b}$ of $\hodge_{\overline{\bbQ}}(\overline{\bbQ})$ on $H^{\bullet}_{B}(X,\overline{\bbQ})$ factors through a completely reducible action of a finite dimensional reductive group over $\overline{\bbQ}$. The irreducible representations of this finite dimensional group are classified by their highest weights and so can not vary continuously with $b$.
  Therefore, the isomorphism class of the representation $\psi_{b}$ is independent of the choice of $b$.
\end{proof}

Using this we can define some basic numerical invariants of Hodge atoms. Let $\balpha \in \HAtoms$ be a Hodge atom. Then there is a smooth projective variety $X$ so that  the equivalence class $\balpha$ is represented by a local atom $\alpha \in \HAtoms^{\op{loc}}(X)$, i.e.\ by a choice of a connected component $\mathfrak{U}_{\alpha}$ of  the unramified part $\tU_{X}$ of the reduced spectral cover 
of the operator $\bkappa = \Eu\qup \, (-) \, \colon \, \cH_{o}|_{B^{\hodge}_{X}} \ \to \ \cH_{o}|_{B^{\hodge}_{X}}$
for the $\bbk$-analytic $\mathsf{A}$-model maximal F-bundle $(\cH,\nabla)/B_{X}$. 

The subbundle $\mathcal{E}^{\alpha} \subset \cH_{o}|_{U_{X}}$ of generalized eigenspaces of $\Eu\qup \, (-)$ corresponding to the eigenvalues parametrized by 
$\tU_{X,\alpha}$ is a $\hodge_{\bbk}(\bbk)$-invariant. By Lemma~\ref{lem:acindependence}, for any rigid point $b \in U_{X}$ the representation of $\hodge_{\bbk}(\bbk)$ on $\mathcal{E}^{\alpha}_{b}$ is induced by extension of scalars from some finite dimensional $\overline{\bbQ}$-linear representation $E^{\balpha}$ of $\hodge_{\overline{\bbQ}}$ whose isomorphism class is independent of $b$. As we have indicated by the notation, the isomorphism class of the $\hodge_{\overline{\bbQ}}$-module $E^{\balpha}$ depends only on the atom $\balpha$ and not on the particular local atom $\alpha$ representing the equivalence class $\balpha$. Indeed, this  follows immediately from the fact that the elementary  equivalences used  defining Hodge atoms all respect the action of $\hodge$. 

\medskip

Thus to every Hodge atom $\balpha$  we have assigned  a finite dimensional $\overline{\bbQ}$-linear representation $E^{\balpha}$ of $\hodge_{\bbQ}$ which is well defined up to isomorphism.
With this notation we now have the following

\begin{definition} \label{def:numinv} For every Hodge atom $\balpha \in \HAtoms$ we define
\begin{enumerate}[wide]
\item the \strongemph{dimension} $\rho_{\balpha}$ of the space of Hodge classes in $\balpha$  as 
$\dim_{\bbC} \, (E^{\balpha}\otimes \bbC)^{\hodge_{\bbC}}$, and
\item the \strongemph{Hodge polynomial} $P_{\balpha}(t)$ of $\balpha$ as the polynomial $P_{\balpha}(t) \in \bbZ[t,t^{-1}]$ whose coefficient at $t^{k}$ is equal to 
$\dim_{\bbC} \, (E^{\balpha}\otimes \bbC)^{k}$, where $(E^{\balpha}\otimes \bbC)^{k}$ is the subspace in $E^{\alpha}\otimes \bbC$ on which the subgroup $\bbC^{\times} \, \cong \, \HHgr_{\bbC}(\bbC) \, \subset \,  \hodge_{\bbC}(\bbC)$ acts with weight $k$.
\end{enumerate}
\end{definition}

\begin{remark} \label{rem:weight}
Suppose the $\overline{\bbQ}$-linear representation $E^{\balpha}$ of $\hodge_{\overline{\bbQ}}$ is obtained by extension of scalars from a $\bbQ$-linear representation of $\hodge$.  But a $\bbQ$-linear representation of $\hodge$ is a pure $\bbZ/2$-weighted $\bbQ$-Hodge structure, i.e.\  an ordinary pure  $\bbQ$-Hodge structures whose bigrading has been folded to the single Hochschild  grading.  
Therefore if $E^{\balpha} = V\otimes_{\bbQ} \overline{\bbQ}$ for 
some pure $\bbQ$-Hodge structure, then $(E^{\balpha}\otimes \bbC)^{k} = \oplus_{p,q\, :\, p-q = k} V^{p,q}$. 
In other words, $P_{\balpha}(t)$ is obtained from the usual two variable  Hodge polynomial of $\sum_{p,q} \, \dim_{\bbC} \, V^{p,q} \, u^{p}v^{q}$ of $V$ after the substitution $u = t, \, v = t^{-1}$.
The symmetry of Hodge numbers of $V$ then implies that in this case the Hodge polynomial $P_{\balpha}(t)$ will be reciprocal (= invariant under $t \mapsto t^{-1}$). 

Of course,  for a general $\hodge_{\overline{\bbQ}}$-representation the associated  Hodge polynomial will not have any symmetry, so for a general Hodge atom the Hodge polynomial $P_{\balpha}(t)$ will not be reciprocal.
\end{remark}

\

\begin{proposition} \label{prop:Ginv1}
    For each Hodge atom $\balpha$  the dimension $\rho_{\balpha}$  of the space of Hodge classes in $\balpha$ satisfies  $\rho_{\balpha} \ge 1$.
\end{proposition}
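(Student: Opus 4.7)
The plan is to exhibit, at a generic rigid point $b \in U_X$, a non-zero Hodge class inside the generalized eigenspace of $\bkappa_b = \Eu_b\qup(-)$ corresponding to any chosen eigenvalue labeling the atom $\balpha$. The key ingredient is Lemma~\ref{lem:Ginvariants}, which says that for a commutative algebra $A$ equipped with a reductive group action by algebra automorphisms and an invariant element $a \in A^G$, the reduced spectrum of multiplication by $a$ on $A^G$ agrees with the reduced spectrum on all of $A$.

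First I would fix a representative of $\balpha$: a smooth complex projective variety $X$ and a connected component $\mathfrak{U}_\alpha \subset \tU_X$. Pick any rigid point $b \in U_X$ and any eigenvalue $\lambda$ in the fiber of $\mathfrak{U}_\alpha$ over $b$; by Lemma~\ref{lem:acindependence}, the $\hodge_{\overline{\bbQ}}$-representation structure on $E^{\balpha}|_b$ is independent of this choice, so it suffices to produce a $\hodge$-invariant vector in the generalized $\lambda$-eigenspace.

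Next, I would verify that $b \in B_X^{\hodge}$ forces $\Eu_b \in (\cH_o|_b)^{\hodge}$. One way is to use the explicit formula \eqref{eq:Euan}: the class $c_1(T_X)$ is algebraic (hence a Hodge class) and the component along $b$ itself is a Hodge class by definition of $B_X^{\hodge}$. Equivalently, since the whole F-bundle $(\cH, \nabla)$ is $\hodge$-equivariant, the residual endomorphism $\bkappa_b = \nabla_{u^2\partial_u}|_{(b,0)}$ commutes with $\hodge$, and in the maximal F-bundle $\bkappa_b$ is multiplication by $\Eu_b$, forcing $\Eu_b$ to be $\hodge$-invariant.

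Finally, I would apply Lemma~\ref{lem:Ginvariants} to the finite-dimensional commutative superalgebra $A = \cH_o|_b$ equipped with the quantum product $\qup$, the $\hodge_{\overline{\bbQ}}$-action, and the invariant element $a = \Eu_b$. This gives
\[
\mathsf{redspec}\bigl(\bkappa_b\bigr) \;=\; \mathsf{redspec}\bigl(\bkappa_b|_{(\cH_o|_b)^{\hodge}}\bigr),
\]
so in particular $\lambda$ appears as an eigenvalue on the Hodge-invariant subspace. Since the $\hodge$-action commutes with $\bkappa_b$, it preserves the generalized eigenspace decomposition, and the generalized $\lambda$-eigenspace sits inside $E^{\balpha}|_b$; any non-zero $\hodge$-invariant generalized $\lambda$-eigenvector therefore produces a non-zero element of $(E^{\balpha})^{\hodge}$, yielding $\rho_{\balpha} \geq 1$. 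The only point requiring minor care is that $\lambda$ really occurs in $\mathsf{redspec}(\bkappa_b)$ and not just as a limit — this is automatic because $\mathfrak{U}_\alpha \to U_X$ is unramified and $\lambda$ is by construction the image of a point of $\mathfrak{U}_\alpha$ lying over $b$.
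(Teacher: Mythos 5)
Your argument is correct: $\Eu_b$ is indeed $\hodge$-invariant at a point $b \in B_X^{\hodge}$, Lemma~\ref{lem:Ginvariants} then forces the eigenvalue $\lambda$ labeling the local atom to occur already on $(\cH_o|_b)^{\hodge}$, and since the $\hodge$-action commutes with $\bkappa_b$ the resulting invariant eigenvector lies in the generalized eigenspace carving out $E^{\balpha}$; together with Lemma~\ref{lem:acindependence} (and the fact that invariants are compatible with the base change $\overline{\bbQ}\subset\bbk$) this gives $\rho_{\balpha}\ge 1$. The paper's own proof is shorter and slightly more direct: it observes that the fiber of the atomic F-bundle at a rigid point is itself of the form $A_{\alpha}\otimes_{\overline{\bbQ}}\bbk$ for a non-zero \emph{unital} commutative superalgebra $A_{\alpha}$ with $\hodge$-action (it is a block in the spectral decomposition of the quantum cohomology algebra at $b$), and simply takes the unit $\mathbf{1}_{A_{\alpha}}$ as the required non-zero even invariant vector. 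Both routes rest on the same structure — the $\hodge$-equivariant block decomposition induced by multiplication by the invariant element $\Eu_b$, which is exactly what the proof of Lemma~\ref{lem:Ginvariants} establishes — but your detour through the equality of reduced spectra is not needed; it buys you an honest $\hodge$-invariant eigenvector of $\bkappa_b$ with the prescribed eigenvalue $\lambda$, whereas the paper's choice of the unit (the block idempotent) is canonical and makes the bound $\rho_{\balpha}\ge 1$ immediate without any spectral bookkeeping.
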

\begin{proof}
    This follows immediately from the fact that fiber of the Hodge atom at a rigid point of $U_{X}$ is of the form $A_{\alpha}\otimes_{\overline{\bbQ}} \bbk$ where 
    $A_{\alpha}$ is a non-zero unital commutative associative superalgebra over $\overline{\bbQ}$ which is 
    endowed with a $\hodge_{\overline{\bbQ}}(\overline{\bbQ})$-action. In particular there is  a non-zero even $\hodge_{\overline{\bbQ}}(\overline{\bbQ})$-invariant vector, namely the unit $\mathbf{1}_{A_{\alpha}}$ for the multiplication  in $A_{\alpha}$. 
\end{proof}

\begin{remark} \label{rem:atomicFhodgefiber}     
Let $\balpha \in \HAtoms$ be a Hodge atom and let $\alpha \in \pi_{0}(\tU_{X})$ be a local atom representative of $\balpha$. Let $x \in \tU_{X,\alpha}$ be a rigid point, let $b_{x} \in U_{X}$ be its image, and let $\left(\tensor[^x]{\cH}{},\tensor[^x]{\nabla}{})\right)$   the associated $\hodge$-atomic F-bundle over the germ $(\tensor[^x]{B}{},b_{x})$. Then $\dim \, (\tensor[^x]{B}{})^{\hodge(\overline{\bbQ})} \, = \, \rho_{\balpha}$ and the fiber $\tensor[^x]{\cH}{}_{(b_{x},0)}$ is isomorphic to $E^{\balpha}\otimes_{\overline{\bbQ}} \bbk$ for a finite dimensional $\overline{\bbQ}$-linear representation  $E^{\balpha}$ of $\hodge_{\overline{\bbQ}}(\overline{\bbQ})$  which has  Hodge polynomial $P_{\balpha}(t)$ and has a space $(E^{\balpha})^{\hodge(\overline{\bbQ})}$ of Hodge classes of dimension $\dim \, (E^{\balpha})^{\hodge(\overline{\bbQ})} \, = \, \rho_{\balpha}$.
\end{remark}

\medskip

Finally, note that as in the case of the general $G$-atoms, the set of Hodge atoms is filtered by the minimal dimension of a smooth projective variety in whose atomic composition a given atom can appear. Again, this filtration together with the weak factorization theorem \cite{Wlodarczyk_factorization,Abramovich_Torification_and_factorization} imply that Hodge atoms can obstruct birational equivalence. Concretely we have the following

\begin{proposition} \ {\bfseries (Non-rationality criterion) } \ \label{prop:non-rational}
Let $X$ be a smooth complex projective variety of dimension $d \geq 2$. Suppose we have a Hodge atom $\balpha$ which appears in the atomic decomposition of $X$ and is such that $\balpha \notin \HAtoms_{\dim \leq d-2}$. Then $X$ can not be a rational variety.
\end{proposition}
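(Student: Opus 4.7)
The plan is to argue by contradiction in direct parallel with the proof of Proposition~\ref{prop:Gnon-rational}, specialized to the Mumford--Tate group $\hodge$ and to $\kay = \bbC$. First I would assume that $X$ is rational, so that there is a birational map $X \dashrightarrow \bbP^{d}_{\bbC}$. By the weak factorization theorem of Abramovich, Karu, Matsuki and W\l{}odarczyk \cite{Wlodarczyk_factorization,Abramovich_Torification_and_factorization}, this map can be decomposed as a finite alternating sequence of blowups and blow-downs between smooth projective varieties, where every center is smooth and of codimension at least two in the corresponding intermediate model.

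Next, the plan is to track the chemical formula $\CF_{\hodge}(-) : \HAtoms \to \bbZ_{\geq 0}$ across this sequence. By the blowup elementary equivalence for Hodge atoms (Section~\ref{sssec:eqblowupGatoms}, specialized via the change-of-group discussion of Section~\ref{par:changeG} from general $G$-atoms to $\hodge$-atoms), each blowup of a smooth center $Z \subset Y$ of codimension $r \geq 2$ alters the chemical formula by $+(r-1)\cdot \CF_{\hodge}(Z)$, and each blow-down reverses such a contribution. Since every center has $\dim Z \leq d-2$, every atom introduced or removed at each step of the factorization lies in $\HAtoms_{\dim \leq d-2}$.

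Finally, the starting point $\bbP^{d}_{\bbC}$ has chemical formula supported in $\HAtoms_{\dim \leq 0}$: by Lemma~\ref{lem:nefK} applied to any smooth projective variety of dimension $0$, each point contributes a single atom, and iterating the projective bundle decomposition (or alternatively invoking the Leray--Hirsch equivalence of Section~\ref{sssec:eqprojbunGatoms} with $\bbP^{d}_{\bbC} = \bbP(\cO^{\oplus(d+1)})$ over a point) shows that $\CF_{\hodge}(\bbP^{d}_{\bbC})$ consists of $d+1$ copies of the point atom, all of which lie in $\HAtoms_{\dim \leq 0} \subset \HAtoms_{\dim \leq d-2}$. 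Combining the two observations, every atom in $\CF_{\hodge}(X)$ must lie in $\HAtoms_{\dim \leq d-2}$, contradicting the hypothesis on $\balpha$. The only point that requires some care, and which I would verify first, is that the dimension filtration on $\HAtoms$ is genuinely preserved under the elementary equivalences, so that the value of $\CF_{\hodge}$ modulo $\HAtoms_{\dim \leq d-2}$ is a birational invariant within dimension $d$; this is built into the definition in Section~\ref{sssec:Gatomskay} but is the only substantive point, since everything else is then a formal bookkeeping argument.
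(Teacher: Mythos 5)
Your proposal is correct and follows essentially the same route as the paper: the paper proves this proposition simply by invoking Proposition~\ref{prop:Gnon-rational} as the special case $G = \hodge$, $\kay = \bbC$, and the proof of that proposition is exactly your weak-factorization argument (blowup centers of codimension $\geq 2$ only introduce atoms from $\HAtoms_{\dim \leq d-2}$, while $\bbP^{d}_{\bbC}$ contributes $d+1$ point atoms), so the hypothesis on $\balpha$ yields the contradiction just as you describe.
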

\begin{proof} This is a special case of Proposition~\ref{prop:Gnon-rational}.
\end{proof}

\subsection{Atoms from motivated motives} \label{subsec:motM.atoms}

Suppose that we are in the setting of Example~\ref{ex:MotM}, i.e.\  $\kay \subset \bbC$, $k = \bbQ$, and $H^{\bullet}(-)$ is Betti cohomology. Take $\bbk \coloneqq \overline{\bbQ}\dbp{\fy^{\bbQ}}$. As discussed in section~\ref{par:casekay} the corresponding motivic  Galois group $\motM^{\kay}$ of the category $\CA{\kay}/\mathsf{Tate}$ of Andr\'{e}'s pure $\bbZ/2$-graded motivated motives is a proreductive algebraic group over $\bbQ$.

Let $\mathfrak{X}$ be a smooth projective variety over $\kay$. View the Betti cohomology as a \linebreak $(\motM^{\kay},\epsilon_{\motM^{\kay}})$-symmetric fiber functor 
$H^{\bullet}(-) \, \colon \, \CA{\kay} \ \to \ \mathsf{Rep}_{\bbQ}(\motM^{\kay})$. Then we have a $\motM^{\kay}$-equivariant maximal non-archimedean $\bbk$-analytic $\mathsf{A}$-model F-bundle $(\cH,\nabla)/B_{\mathfrak{X}}$   corresponding to $\mathfrak{X}$, $\kay$, $k$, and $H^{\bullet}(\mathfrak{X})$, obtained by base changing the maximal $k\dbp{\fy}$-analytic F-bundle introduced in Definition~\ref{def:nonarchFkay} from $k\dbp{\fy}$ to $\bbk$. The atomic F-bundles arising from such a $(\cH,\nabla)/B_{\mathfrak{X}}$ are by definition 
$\motM^{\kay}$-equivariant F-bundles defined over the locus $B_{\mathfrak{X}}^{\motM^{\kay}}$ corresponding to algebraic cycles on $\mathfrak{X}$ that are defined over $\kay$. The corresponding multiset of atoms $\Atoms_{\motM^{\kay}}(\mathfrak{X})$ is the most sensitive obstruction to $\kay$-rationality our atom formalism provides.  Note also that if $\mathfrak{X}$ is such that the eigenvalues of $\Eu\qup(-)$ happen to be already defined over $\bbQ\dbp{\fy}$, then by section~\ref{subpar:twistF}, the base of the corresponding atomic F-bundle contains a non-split torus which is twisted by a homomorphism 
$\Gal(\kaybar/\kay) \to \Gal(\overline{\bbQ}/\bbQ)$ from the Galois group of the field of definition, to the Galois group of the coefficient field of our cohomology theory.

\medskip

More invariantly, without making a choice of a fiber functor, we can define the \strongemph{intrinsic motivated atoms} of smooth projective varieties over $\kay$. The category of \nc-Andr\'e motives $\CA{\kay}/\mathsf{Tate}$ is a $\bbZ/2$-graded $\bbQ$-linear semisimple tensor category. Let $\triv \in \mathsf{ob} \, (\CA{\kay}/\mathsf{Tate})$
be the zero dimensional Tate motive. As before, for a smooth projective variety $\fX$ we denote the Andr\'{e} motive of $\fX$ by $h(\fX)$. Then the $\bbZ/2$-graded rational vector space $M(\fX) \coloneqq \op{Hom}_{\CA{\kay}/\mathsf{Tate}}(\triv,h(\fX))$ is intrinsically associated with $\fX$, and the standard construction of Gromov-Witten classes gives again a quantum product 
\[
\qup \, \colon \, M(\fX)\otimes M(\fX) \ \longrightarrow \ M(\fX)\, \widehat{\otimes} \, \bbQ\dbb{\NE(\fX,\bbZ)} \, \widehat{\otimes}\, \widehat{\mathsf{Sym}}\, M(\fX)^{\vee}. 
\]
Since $\NE(\fX,\bbZ)$ parametrizes numerically effective classes of curves in $\fX$ that are defined over $\kay$ and $M(\fX)$ parametrizes motivated cycles in $\fX$ defined over $\kay$, it follows that the actions of $\Gal(\kaybar/\kay)$ on $\NE(\fX,\bbZ)$ and $M(\fX)$ are both trivial. The construction from section~\ref{par:casekay} now produces an overmaximal $\bbk$-analytic $\mathsf{A}$-model F-bundle  
$(\mycalb{M},\bnabla)/\boldsymbol{b}_{\fX}$ and a maximal $\bbk$-analytic $\mathsf{A}$-model F-bundle  
$(\mathcal{M},\nabla)/b_{\fX}$. Here $\boldsymbol{b}_{\fX}$ is a product of an $\bbk$-analytic affine line (for the motivated cycles of cohomological  degree zero), a  polydisk (for the motivated cycles of cohomological degree $>0$), and a tube domain in the analytic torus $(\NS(\fX,\bbZ)_{\op{tf}}\otimes \bbG_{m,\bbk})^{\op{an}}$.
Similarly $b_{\fX}$ is a product an $\bbk$-analytic affine line (for the motivated cycles of cohomological  degree zero), a  polydisk (for the motivated cycles of cohomological degree $>2$), and a tube domain in the analytic torus $(\NS(\fX,\bbZ)_{\op{tf}}\otimes \bbG_{m,\bbk})^{\op{an}}$.

Note that if we use the Betti cohomology fiber functor, then these F-bundles are realized by the $\motM^{\kay}$-invariant part of the F-bundles we described in section~\ref{par:casekay}, i.e.\ passing to Betti cohomology identifies  
$(\mycalb{M},\bnabla)/\boldsymbol{b}_{\fX}$ with $(\omH^{\motM^{\kay}},\bnabla)/\omB_{\fX}^{\motM^{\kay}}$ and similarly identifies  $(\mathcal{M},\nabla)/b_{\fX}$  
$(\cH^{\motM^{\kay}},\nabla)/B_{\fX}^{\motM^{\kay}}$. In particular, by section~\ref{ssec:Ginv}  the unramified part $\tU_{\fX}$  of the reduced spectral cover for the operator $\bkappa$ associated with  $(\cH,\nabla)/B_{\fX}^{\motM^{\kay}}$ is isomorphic  to the unramified part $\tilde{\mathfrak{u}}_{\fX}$ of the reduced spectral cover of the operator $\bkappa$ associated with $(\cH^{\motM^{\kay}},\nabla)/B_{\fX}^{\motM^{\kay}} \cong (\mathcal{M},\nabla)/b_{\fX}$. 

Using $(\mathcal{M},\nabla)/b_{\fX}$ as a basic invariant of $\fX$ we can define the set of 
intrinsic motivated local atoms
of $\fX$ as the set $\pi_{0}(\tilde{\mathfrak{u}}_{\fX})$ of connected components of $\tilde{\mathfrak{u}}_{\fX}$
and the set of intrinsic atoms of $\kay$-varieties as the quotient 
\[
\mathsf{Atoms}_{\op{intr}}^{\kay} \, \coloneqq \, \left.\left( \bigsqcup_{[X]} \, \pi_{0}(\tilde{\mathfrak{u}}_{\fX})/\op{Aut}(\fX)\right)\right/\sim, 
\] 
where the union is taken over isomorphism classes of complex smooth projective varieties, and the equivalence relation $\sim$ is generated by the elementary equivalences corresponding to 
disjoint unions, blowups with smooth centers, and projective bundles.

As explained in the previous paragraph, the choice of Betti cohomology as a fiber functor on $\CA{\kay}$, gives a bijection between the  set of intrinsic motivated atoms $\mathsf{Atoms}_{\op{intr}}^{\kay}$ and the set 
$\mathsf{Atoms}_{\motM^{\kay}}^{\kay}$ of $\motM^{\kay}$-atoms. The intrinsic viewpoint however, has the advantage  that 
the set of atoms $\mathsf{Atoms}_{\op{intr}}^{\kay}$ does not depend on any choices and by construction is equipped with an action of the Galois group $\Gal(\bbk/\bbQ)$ which is an extension of $\Gal(\overline{\bbQ}/\bbQ)$ by the group $\widehat{\bbZ}$ of profinite integers.

\section{New obstructions to rationality} \label{sec:obstructions}

In this section we will use Hodge atoms and more general $G$-atoms to prove new non-rationality results. We will use the setup of Sections~\ref{sec:Gatoms_proj}, \ref{ssec:GatomicF}, and \ref{sec:Hodge_atoms}.
Specifically, we consider the pair  $\left(\hodge,\epsilon_{\hodge}\right)$ defined in Example~\ref{ex:Z/2Hodge}, where $\hodge$ is the Galois group of the Tannakian category of $\bbZ/2$-weighted, polarizable, pure $\mathbb{Q}$-Hodge structures, and $\epsilon_{\hodge} \in \hodge$ is the $\bbZ/2$-grading element. The atoms for this symmetry group are the Hodge atoms and we will use them as obstructions to rationality.

Before we can discuss the atomic composition of specific smooth projective varieties, we need to revisit Givental's calculation of the quantum cohomology and quantum differential equation of complete intersections and recast it in the F-bundle language.

\subsection{\texorpdfstring{$F$}{F}-bundles and Givental's theorem} \label{ssec:givental}
 
Let $X$ be a smooth complex complete intersection of $k \geq 1$ hypersurfaces of degrees $d_{1}, \ldots, d_{k}$ in $\bbP^{N-1}$. We will assume that $d_{i} > 1$ for all $i =1, \ldots, k$, and that $d_{\op{tot}} = d_{1} + \cdots + d_{k} \, \leq \, N$, i.e.\ that $X$ is either a Fano or a Calabi-Yau. Under these assumptions we have $1 \leq k < d_{\op{tot}} \leq N$, 
$\dim X = N - 1 - k \geq 1$, and $-K_{X} = \cO_{X}(N - d_{\op{tot}})$. 

The image $\imageH \subset H^{\bullet}(X,\bbQ)$ of the rational cohomology of $\bbP^{N-1}$ in the rational cohomology of $X$ is the $(N-k)$-dimensional subalgebra spanned over $\bbQ$ by the classes $\{\hpl^{i} \}_{i = 0}^{N-k-1}$, where $\hpl = c_{1}(\cO(1)) \in H^2(X,\bbQ)$ is the hyperplane class.

Let $(\cH,\nabla)/B_{X}$ be the maximal non-archimedean, analytic $\mathsf{A}$-model F-bundle associated with  $X$. Consider the subgroup $\bbZ\hpl \subset \NS(X,\bbZ)_{\op{tf}}$. It gives rise to a subtorus 
\[
\smallbase_{X} \coloneqq (\bbZ\hpl \otimes \bbG_{m,\bbk})^{\op{an}} \, \subset \, (\NS(X,\bbZ)_{\op{tf}}\otimes \bbG_{m,\bbk})^{\op{an}} = B_{X,q}.
\]
The torus $\smallbase_{X}$ is one dimensional with natural coordinate $q$ which is dual to $\hpl$.

The restriction of $(\cH,\nabla)|_{\smallbase_{X}\times \bbD}$ is an analytic F-bundle. The comparison \cite{Givental_Equivariant_Gromov-Witten_invariants} of the Gromov-Witten invariants of $X$ and the equivariant Gromov-Witten invariants of $\bbP^{N-1}$ implies that the restricted connection $\nabla|_{\smallbase_{X}\times \bbD}$ preserves the subbundle 
\[
\smallfiber \coloneqq \imageH \otimes \cO_{\smallbase_{X}\times \bbD} \ \subset \ H^{\bullet}(X,\bbQ)\otimes \cO_{\smallbase_{X}\times \bbD} \, = \, \cH|_{\smallbase_{X}\times \bbD}.
\]
This gives  a new smaller analytic F-bundle $(\smallfiber,\nabla)/\smallbase_{X}$ of rank $N - k = \dim X + 1$ over the one dimensional base $\smallbase_{X}$. If we write $q$ for the coordinate on the analytic affine line $(\bbk\cdot \hpl)^{\op{an}}$ that is dual to  
basis vector $\hpl$, then the meromorphic connection $\nabla$ on $\smallfiber$ becomes
\begin{equation} \label{eq:ansmallnabla}
\left| \
\begin{aligned}
\nabla_{\partial_{u}} & = \partial_{u} - u^{-2} \mathbf{K} + u^{-1} \mathbf{G}, \\
\nabla_{\partial_{q}} & = \partial_{q} + u^{-1}q^{-1} \mathbf{A},
\end{aligned}
\right.
\end{equation}
where $\mathbf{G}$ is the grading operator on $\smallfiber$, $\mathbf{A}$ is the operator of quantum multiplication by $\hpl$, and $\mathbf{K} = (N-d_{\op{tot}})\mathbf{A}$ is the operator of quantum multiplication by $c_{1}(T_{X})$. These operators are coefficients of the restricted quantum  $\nabla|_{\smallbase{X}\times \bbD}$ acting on the subbundle $\smallfiber$ and so we have that $\mathbf{K}$ is equal to the action of $\bkappa|_{\smallbase_{X}}$ on the subbundle $\smallfiber|_{u=0}$, $\mathbf{G}$ is the restriction of $\Gr$ to the subbundle $\smallfiber|_{u=0}$, and $\mathbf{A}$ is the restriction quantum multiplication by $\hpl$ to the subbundle $\smallfiber|_{u=0} \, \subset \, \cH|_{\smallbase_{X}\times \{0\}}$.

In \cite[Section~9]{Givental_Equivariant_Gromov-Witten_invariants} Givental described the sheaf of horizontal sections of $(\smallfiber,\nabla)/\smallbase_{X}$ in terms of the fundamental solution of an explicit auxiliary linear ODE of order $N-k$, which was later dubbed the \strongemph{quantum differential equation}. Givental used his   description to give  a recursive procedure for computing the Gromov-Witten invariants of $X$. 
For us however, the main takeaway from Givental's  theorem is that along the one dimensional base $\smallbase_{X}$ the action of the operator $(\Eu\, \fp (-))|_{\smallbase_{X}}  \ \colon \  \smallfiber \to \smallfiber$ can be computed from the quantum differential equation. 

To unpack this, let us recall Givental's calculation from \cite[Section~9]{Givental_Equivariant_Gromov-Witten_invariants}.  Consider the following setup:
\begin{enumerate}[wide]
    \item Let $\dd$ be a non-negative integer and let
$\Mbar_{0,2}(\bbP^{N-1},\dd)$ denote the moduli stack of genus zero degree $\dd$ stable maps 
$\varphi \colon (C,p_{1},p_{2}) \to \bbP^{N-1}$ with two marked points.  
\item Let $\boldsymbol{e}_{\dd}$ be the top Chern class of the vector bundle on $\Mbar_{0,2}(\bbP^{N-1},\dd)$ whose fiber at $(C,p_{1},p_{2},\varphi)$ is $H^0(C,\varphi^{*}(\cO(d_{1})\oplus \cdots \cO(d_{k}))$.  
\item 
Let $c \in H^2(\Mbar_{0,2}(\bbP^{N-1},\dd),\bbQ)$ denote the first Chern class of the universal tangent line at the first marked point.
\item Write $\ev_{1} \colon \Mbar(\bbP^{N-1},\dd) \to \bbP^{N-1}$ for the evaluation at the first marked point. 
\end{enumerate}
In \cite[Section~9]{Givental_Equivariant_Gromov-Witten_invariants} Givental argued that the equivariant version of \cite[Corollary~6.4]{Givental_Equivariant_Gromov-Witten_invariants} implies that the series 
\[
S(t,\hbar) \, \coloneqq \, e^{\hpl t/\hbar} \sum_{\dd = 0}^{\infty} (X\hookrightarrow \bbP^{N-1})^{*} \ev_{1*}\left(\frac{1}{\hbar +c} \boldsymbol{e}_{\dd}\right) \, \in \, \imageH(\!(\hbar)\!)\dbb{e^t}[t],
\]
is a horizontal section of the formal quantum connection for $X$.
Givental's variables $(t,\hbar)$ are related to our variables $(q,u)$ by the change of variables $q = e^{t}$, $u = -\hbar$. Therefore, we have that $S$ as a function of $(q,u)$ belongs to  
$\imageH(\!(u)\!)\dbb{q}[\log q]$, and  viewed as a formal section of $\smallfiber$ is annihilated by the connection $\nabla$ given by \eqref{eq:ansmallnabla}.

\medskip

Furthermore, in \cite[Theorem~9.1 and Corollary~9.2]{Givental_Equivariant_Gromov-Witten_invariants}
Givental proved:
\begin{enumerate}[wide]
    \item[(i)] $S(t,\hbar)$ is given by the explicit formula
\[
S(t,\hbar)  \, = \, e^{t\hpl/\hbar}\sum_{{\dd}=0}^\infty e^{\dd t} \frac{\prod_{i=1}^k \left(\prod_{m_i=1}^{\dd d_i}(d_i \hpl+m_i\hbar)\right)}{\prod_{m=1}^{\dd} (\hpl+m\hbar)^N} \ \in  \ \imageH(\!(\hbar)\!)\,\dbb{e^t}\,[t],
\]
\item[(ii)] $S(t,\hbar)$ satisfies the so called  \strongemph{quantum differential equation} 
\begin{equation} \label{eq:qODE}
\bigl(\hbar \partial_t\bigr)^{N-k} S= e^t \prod_{i=1}^k\Bigl(d_i\prod_{m_i=1}^{d_i-1} \hbar\bigl(d_i\partial_t+m_i\bigr) \Bigr)\,S,
\end{equation}
and the components $S_{i}(t,\hbar) \in \bbQ(\!(\hbar)\!)\dbb{e^t}[t]$ of $S = \sum_{i=0}^{\dim X} \, S_{i}\cdot \hpl^{i}$ form a basis of the space of solutions of the linear ODE \eqref{eq:qODE}.
\end{enumerate}

\begin{remark} \label{rem:qODE}
\ {\bfseries (a)}  \ Givental's description implies that in fact 
$S(t,\hbar)$ belongs to the subring $\imageH[\hbar,\hbar^{-1}]\dbb{e^t}[t]$ and so its specialization $S(t,1)$ as $\hbar\rightsquigarrow 1$ makes sense. Thus we have the specialized series 
\[
S(t,1) = e^{\hpl t}\sum_{{\dd}=0}^\infty e^{\dd t} \frac{\prod_{i=1}^k \left(\prod_{m_i=1}^{\dd d_i}(d_i \hpl +m_i)\right)}{\prod_{m=1}^{\dd} (\hpl +m)^N} \ \in \, \imageH\dbb{e^t}\,[t] 
\]
or, in terms of the variables $(q,u)$, the $u\rightsquigarrow -1$ specialized series 
\[
S(q,-1) = e^{\hpl\log(q)}\sum_{{\dd}=0}^\infty q^{\dd} \frac{\prod_{i=1}^k \left(\prod_{m_i=1}^{{\dd} d_i}(d_i \hpl +m_i)\right)}{\prod_{m=1}^{\dd} (\hpl+m)^N} \ \in \, \imageH\dbb{q}\,[\log q].
\]

\noindent
{\bfseries (b)} \ Rewriting the equation \eqref{eq:qODE} in terms of the variables $(q,u)$ we get that $S$ as a series in $\imageH[u,u^{-1}]\dbb{q}[\log q]$ satisfies 
\begin{equation} \label{eq:qODEqu}
    \bigl(u\, q \partial_q\bigr)^{N-k} S= (-1)^{N-d_{\op{tot}}}
    q \prod_{i=1}^k\Bigl(d_i\prod_{m_i=1}^{d_i-1} u\bigl(d_i\, q\partial_q+m_i\bigr) \Bigr)\,S. 
\end{equation}
Note that the component $S_{0}(q,u)$ solving \eqref{eq:qODEqu} is particularly simple. 
It is given by the hypergeometric series
\[
S_{0} = e^{\hpl\log(q)}\sum_{{\dd}=0}^\infty q^{\dd} \frac{\prod_{i=1}^k \left(\prod_{m_i=1}^{{\dd} d_i}(d_i \hpl+m_i)\right)}{\prod_{m=1}^{\dd} (\hpl+m)^N} \ \in \, \imageH\dbb{q}\,[\log (q)].
\]
\end{remark}

Consider now  the operator $\mathbf{A} \colon \imageH(\!(\hbar)\!)\dbb{e^t}[t] \to \imageH(\!(\hbar)\!)\dbb{e^t}[t]$ of quantum multiplication by $\hpl$. Its  matrix in the standard basis 
$\mathbf{1}, \hpl, \ldots, \hpl^{\dim X}$ has a \strongemph{diagonal-jump-shape} with the step of the jump equal to the index $(N-d_{\op{tot}})$ of the Fano variety $X$  
(in the example $\dim X = 8$, $N-d_{\op{tot}} = 3$)
\[ 
\mathbf{A} \ = \  \begin{pmatrix} 0& 0 & \str\, e^{t} & 0 & 0 & \str\, e^{2t} & 0 & 0 & \str\, e^{3t}\\
\rr & 0& 0 & \str\, e^{t} & 0 & 0 & \str\, e^{2t} & 0 & 0 \\
0 & \rr & 0 & 0 & \str \, e^{t}  & 0 & 0 & \str\, e^{2t} & 0\\
0 & 0 & \rr & 0 & 0 & \str\, e^{t} & 0 & 0 & \str\, e^{2t} \\
0 & 0 & 0 & \rr & 0 & 0 & \str\, e^{t} &  0 & 0 \\
0 & 0 & 0 & 0 & \rr & 0 & 0 & \str\, e^{t} & 0 \\
0 & 0 & 0 & 0 & 0 & \rr & 0 & 0 & \str\, e^{t} \\
0 & 0 & 0 & 0 & 0 & 0  & \rr & 0 & 0 \\
0 & 0 & 0 & 0 & 0 & 0  & 0 & \rr & 0
\end{pmatrix}. 
\]
Here the unknown coefficients ${\color{red}{*}}$ are integers, and $\mathbf{A}$ is symmetric with respect to a flip along the antidiagonal. Equivalently, in the variables $(q,u)$, the matrix of $\mathbf{A} \, \colon \, \imageH(\!(u)\!)\dbb{q}[\log q] \, \to \, 
\imageH(\!(u)\!)\dbb{q}[\log q]$ is given by the same matrix but with the exponentials 
$e^{t}$, $e^{2t}$, $e^{3t}$, \ldots replaced by $q$, $q^2$, $q^3$, \ldots, respectively.

\medskip

Consider now the second horizontality condition $\nabla_{\partial_{q}}\Psi = 0$ for the connection $\nabla$. Explicitly it reads 
$uq\partial_{q}\Psi = - \mathbf{A}\Psi$, or

\begin{equation} \label{eq:secondhorizontality}
u\,q \partial_q\begin{bmatrix}\psi_0 \\ \psi_1 \\ \\ \vdots \\ \\ \vdots \\ \\ \\ \psi_{\dim X}\end{bmatrix}= - 
\begin{pmatrix} 0 & \cdots & 0 &\str q & 0 & \cdots &  0 & \str q^2& 0 & \cdots \\
\rr & 0 & \cdots & 0 & \str q & 0 & \cdots &  0 & \str q^2 & \cdots  \\
0 & \rr & 0 & \cdots &  0 & \str q & 0 & \cdots & 0  & \cdots \\
0 & 0 & \rr & 0 & \cdots & 0 & \str q & 0 & \cdots & \cdots \\
0 & 0 & 0 & \rr & 0 & \cdots & 0 & \str q & 0  & \cdots \\
0 & 0 & 0 & 0 & \rr & 0 & \cdots & 0 & \str q & \cdots \\
0 & 0 & 0 & 0 & 0 & \rr & 0 & \cdots & 0 & \cdots \\ 
0 & 0 & 0 & 0 & 0 & 0 & \rr & 0 & \cdots & \cdots\\ 
\cdots & \cdots & \cdots & \cdots & \cdots & \cdots & \cdots & \cdots & \cdots & \cdots
\end{pmatrix}\cdot 
\begin{bmatrix}\psi_0 \\ \psi_1 \\ \\ \vdots \\ \\ \vdots \\ \\ \\ \psi_{\dim X}\end{bmatrix},
\end{equation}
where the coordinates $\psi_{i}$ of the vector function $\Psi$ are functions of the variable of differentiation $q$,  depending on the parameter $u$. 

The connection between \eqref{eq:secondhorizontality} and Givental's quantum differential equation \eqref{eq:qODEqu} arises as follows. The last coordinate function 
$\psi_{\dim X}$ obeys the quantum differential equation, i.e.\ is a $\bbC$-linear combination of the functions $S_{0}$, $S_{1}$, \ldots, $S_{\dim X -1}$.  Now, starting with a  vector valued solution $\Psi = (\psi_{0}, \ldots, \psi_{\dim X})^{T}$ of \eqref{eq:secondhorizontality}, we can use the diagonal-jump-shape of $\mathbf{A}$ to solve recursively for the coordinate functions 
$\psi_{\dim X -1}$, $\psi_{\dim X - 2}$, \ldots , $\psi_{1}$, $\psi_{0}$ in terms of the 
last coordinate function $\psi_{\dim X}$. Thus the system \eqref{eq:secondhorizontality} becomes  equivalent to $\psi_{\dim X}$ obeying an ODE of order $N - k$ which has to be Givental's  quantum differential equation. Comparing the coefficients of the ODE of order  $N-k$ that we obtain from the recursive procedure with the coefficients of  \eqref{eq:qODEqu} we obtain a system of algebraic equations on the unknown entries $\str$ in the matrix $\mathbf{A}$. This gives an algorithm for computing $\mathbf{A}$ and $\mathbf{K} = (N-d_{\op{tot}}) \mathbf{A}$ from the quantum differential equation.

\begin{example} \label{ex:quadrics_and_cubics}
\ {\bfseries (i)} \ To illustrate the process, let us compute $\mathbf{A}$ in the case when $X$ is a 
$3$-dimensional quadric, i.e.\ $N = 5$, $k = 1$, $d = 2$. In this case the system \eqref{eq:secondhorizontality} reads
\[
uq\partial_{q} \begin{bmatrix} \psi_{0} \\ \psi_{1} \\ \psi_{2} \\ \psi_{3} \end{bmatrix} \ = \ 
- \begin{pmatrix}
0 & 0 & a_{1}q & 0 \\
1 & 0 & 0 & a_{2}q \\
0 & 1 & 0 & 0 \\
0 & 0 & 1 & 0
\end{pmatrix}\cdot \begin{bmatrix} \psi_{0} \\ \psi_{1} \\ \psi_{2} \\ \psi_{3} \end{bmatrix}
\]
where $a_{1}, a_{2}$ are some unknown integers. 
If we simplify notation by setting $\phi = \psi_{3}$ we get 
\[
\begin{aligned}
\psi_{2} & = - uq\partial_{q}\phi, \\
\psi_{1} & = - uq\partial_{q}\psi_{2}, \\
\psi_{0} & = - uq\partial_{q}\psi_{1} - a_{2}q\phi, \\
uq\partial_{q}\psi_{0} & = - a_{1}q\psi_{2}.
\end{aligned}
\]
Solving recursively for all $\psi_{i}$ in terms of $\phi$ we arrive at the $4$-th order linear ODE
\[
(uq\partial_{q})^4\phi = -qu((a_{1}+a_{2})q\partial_{q} + a_{2})\phi.
\]
Comparing this with the quantum differential equation 
\[
(uq\partial_{q})^4\phi = -qu(2(2q\partial_{q} + 1))\phi,
\]
we get that $a_{1}+a_{2} = 4$, $a_{2} =2$. Hence
for this case we get 
\[
\mathbf{A} \ = \  \begin{pmatrix}
0 & 0 & 2q & 0 \\
1 & 0 & 0 & 2q \\
0 & 1 & 0 & 0 \\
0 & 0 & 1 & 0
\end{pmatrix}.
\]

\medskip

Following this strategy, we can compute all coefficients in the connection \eqref{eq:ansmallnabla} of the F-bundle $(\smallfiber,\nabla)/\smallbase_{X}$ for any complete intersection. In particular we have the following

\medskip

\noindent
{\bfseries (ii)} \  Suppose $X$ is a smooth  $3$-dimensional cubic, i.e.\ $N=5$, $k=1$, $d = 3$. Then the coefficients of the connection \eqref{eq:ansmallnabla} defining the F-bundle $(\smallfiber,\nabla)/\smallbase_{X}$ are
{\scriptsize
\[ 
\mathbf{A} \ = \  \begin{pmatrix}
0 & 6q & 0 & 0 \\
1 & 0 & 15q & 0 \\
0 & 1 & 0 & 6q \\
0 & 0 & 1 & 0
\end{pmatrix}, \quad 
\mathbf{K} \ = \  2\begin{pmatrix}
0 & 6q & 0 & 0 \\
1 & 0 & 15q & 0 \\
0 & 1 & 0 & 6q \\
0 & 0 & 1 & 0
\end{pmatrix}, \quad 
\mathbf{G} \ = \  \begin{pmatrix}
-3/2 &  & 0 & 0 \\
0 & -1/2 & 0 & 0 \\
0 & 0 & 1/2 & 0 \\
0 & 0 & 0 & 3/2
\end{pmatrix}.
\]
}

\medskip

\noindent
{\bfseries (iii)} \ Suppose $X$ is a smooth  $4$-dimensional cubic, i.e.\ $N=6$, $k=1$, $d = 3$. 
The coefficients of the connection \eqref{eq:ansmallnabla} defining the F-bundle $(\smallfiber,\nabla)/\smallbase_{X}$ are
{\scriptsize
\[
\mathbf{A} \ = \  \begin{pmatrix}
0 & 0 & 6q & 0 & 0 \\
1 & 0 & 0 & 15q & 0 \\
0 & 1 & 0 & 0 & 6q \\
0 & 0 & 1 & 0 & 0 \\
0 & 0 & 0 & 1 & 0
\end{pmatrix}, \quad 
\mathbf{K} \ = \  3\begin{pmatrix}
0 & 0 & 6q & 0 & 0 \\
1 & 0 & 0 & 15q & 0 \\
0 & 1 & 0 & 0 & 6q \\
0 & 0 & 1 & 0 & 0 \\
0 & 0 & 0 & 1 & 0
\end{pmatrix}, \quad 
\mathbf{G} \ = \  \begin{pmatrix}
-2 & 0 & 0 & 0  & 0 \\
0 & -1 & 0 &  0 & 0 \\
0 & 0 & 0 & 0  & 0 \\
0 & 0 & 0 & 1 & 0 \\
0 & 0 & 0 & 0 & 2
\end{pmatrix}.
\]
}
\end{example}

\begin{remark}
\label{rem-other.techniques} In this section we used Givental's theorem to compute the action of the Euler vector field for the $\mathsf{A}$-model F-bundle of a projective manifold, which can in turn be used to analyze the atomic decomposition of Fano complete intersections. It is important to note that even though Givental's original results  for toric complete intersections do not work directly for other types of Fano varieties, the current state of the art comprises many other methods that can be used to compute the quantum spectrum virtually in any specific example. Three of these techniques deserve special attention. The first is  the quantum Lefschetz/Serre package, which includes the quantum Lefschetz theorems of Lee, Kim, and Coates-Givental \cite{Lee_YP-qLefschetz,Kim-qLefschetz,CoatesGivental}, and the quantum Serre theorems of Coates-Givental and  Iritani-Mann-Mignon \cite{CoatesGivental,IritaniMannMignon}. The second is the toric degeneration  technique which, as demonstrated in the work of Ciocan-Fontanine, Kim, and Sabbah \cite{CFKS}, can be used to derive the exact form of the quantum differential equation for complete intersections in homogeneous spaces. Last but not least, one can use Homological Mirror Symmetry and the explicit forms of 
mirrors as provided by the Hori-Vafa construction \cite{Hori_Vafa_Mirror_symmetry} to compute the quantum spectrum as the critical values of the mirror Landau-Ginzburg potential.
\end{remark}

\subsection{Four-dimensional cubics} \label{ssec:Cubics}

In this section we will apply the theory to some classical rationality problems.

\begin{theorem} \label{thm:cubic4} 
A very general $4$-dimensional cubic hypersurface in $\bbC\bbP^5$ is not rational.
\end{theorem}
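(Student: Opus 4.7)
The plan is to apply the non-rationality criterion of Proposition~\ref{prop:non-rational} to a very general smooth cubic fourfold $X \subset \bbP^5$; since $\dim X = 4$, it suffices to exhibit a single Hodge atom in the atomic composition of $X$ that cannot belong to $\HAtoms_{\dim \leq 2}$. For this I would first invoke the F-bundle recasting of Givental's theorem developed in Section~\ref{ssec:givental}, using the explicit matrices of Example~\ref{ex:quadrics_and_cubics}(iii). Restricting the $\mathsf{A}$-model F-bundle to the one-parameter Givental sub-F-bundle $(\smallfiber,\nabla)/\smallbase_X$ and evaluating at a well-chosen rigid $\bbk$-point $b\in \smallbase_X\subset B^{\hodge}_X$, a direct computation from the $5\times 5$ matrix of the quantum multiplication by $\hpl$ shows that the operator $\Eu\qup(-) = 3\hpl\qup(-)$ on $\cH_{(b,0)}\cong H^{\bullet}(X,\bbk)$ has exactly the four distinct eigenvalues $\{0,\,9,\,9\zeta,\,9\zeta^{2}\}$ with $\zeta = e^{2\pi i/3}$, whose generalized eigenspaces have dimensions $24,1,1,1$: the three nonzero eigenvalues come from the $5$-dimensional Givental piece $\imageH$, while the remaining $22$-dimensional transcendental part of $H^{4}(X,\bbQ)$ sits inside the generalized $0$-eigenspace $V$, because $\hpl \smile \tau = 0$ for every transcendental $\tau$ and all relevant quantum corrections vanish by the divisor axiom combined with the $\hodge$-equivariance of $\qup$.

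Next I would extract the Hodge-theoretic information carried by $V$. Since $\Eu\qup(-)$ is $\hodge$-equivariant, $V$ is canonically a $\hodge$-subrepresentation of $\cH_{(b,0)}$, and its structure is pinned down by the Hodge-diamond decomposition of $X$ from~\cite{Voisin-cubicTorelli}: one finds $\dim V^{\hodge}=2$, $\dim V^{p-q=\pm 2}=1$, $\dim V^{p-q=0}=22$, and in particular $\Coeff_{t^{2}}(P_{V}) = 1$. Deforming $b$ to a generic point of $U_{X}$ and applying the spectral decomposition theorem (Theorem~\ref{thm:K-decomposition}), $V$ canonically splits into a direct sum whose summands are the fibers of the local Hodge atoms $\alpha_{1},\dots,\alpha_{m}$ arising from the further unfolding of the zero eigenvalue. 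Additivity of $\rho_{\balpha}$ and of the $t^{2}$-coefficient of $P_{\balpha}$ under this splitting yields $\sum_{i}\rho_{\alpha_{i}} = 2$ and $\sum_{i}\Coeff_{t^{2}}(P_{\alpha_{i}}) = 1$, so at least one resulting atom $\balpha$ must satisfy $\rho_{\balpha}\leq 2$ together with $\Coeff_{t^{2}}(P_{\balpha}) = 1$.

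Finally I would rule out that any such $\balpha$ can lie in $\HAtoms_{\dim\leq 2}$. Atoms of $0$- and $1$-dimensional varieties have cohomology supported in Hochschild degrees $|p-q|\leq 1$, hence $\Coeff_{t^{\pm 2}}(P) = 0$; for a smooth projective surface $S$ with $h^{2,0}(S)=0$ the fiber $H^{\bullet}(S)$ itself carries no weights $\pm 2$, so again every atom has $\Coeff_{t^{\pm 2}}(P)=0$. For a surface with $p_{g}\neq 0$, by the classification of surfaces and the birational invariance built into the blowup elementary equivalence defining $\HAtoms$, I may replace $S$ by a minimal model with $K_{S}$ nef; Lemma~\ref{lem:nefK} then produces a single atom whose space of Hodge classes already contains the three independent classes $1\in H^{0}$, an ample class in $H^{1,1}$, and a point class in $H^{4}$, forcing $\rho\geq 3$. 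This exhausts the sources of dimension $\leq 2$ atoms, so $\balpha\notin\HAtoms_{\dim\leq 2}$ and Proposition~\ref{prop:non-rational} yields the non-rationality of $X$. The principal technical step will be to verify carefully that no Gromov-Witten corrections spoil the identity $\hpl\qup\tau = 0$ for transcendental $\tau$ at the Givental point — this is what underpins the $24$-dimensional size of $V$ and the value $\Coeff_{t^{2}}(P_{V}) = 1$; I expect it to follow from the divisor axiom together with the irreducibility of the Mumford-Tate action on the transcendental $H^{4}$ of a very general cubic fourfold, which prevents any $\hodge$-equivariant map from transcendental classes to $\imageH$.
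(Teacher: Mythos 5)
Your proposal is correct and follows the paper's overall strategy (Givental's $5\times 5$ matrix at the hyperplane-class point, the spectrum $\{0,9,9\zeta,9\zeta^{2}\}$, the spectral decomposition theorem, the production of an atom with $\rho_{\balpha}\le 2$ and $\Coeff_{t^2}(P_{\balpha})=1$, and the exclusion of atoms of varieties of dimension $\le 2$ via Lemma~\ref{lem:nefK} and the surface classification), but it differs in one substantive step. To obtain $\rho_{\balpha}\le 2$ you compute the full generalized eigenspace decomposition $24+1+1+1$ of $H^{\bullet}(X,\bbk)$, which forces you to prove the vanishing $\hpl\qup_b\tau=0$ for transcendental $\tau\in H^4_{\prim}(X)$ --- exactly the content of the paper's Lemma~\ref{lem:transc0}, which the paper needs only later, for Corollary~\ref{cor:atomscubic4}. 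The paper's proof of the theorem bypasses this point: by Lemma~\ref{lem:Ginvariants} the reduced spectrum of $\bkappa_b$ on all of $\cH_{(b,0)}$ coincides with its reduced spectrum on the $5$-dimensional space of Hodge classes, so \emph{every} atom of $X$ has $\rho_{\balpha}$ bounded by the multiplicity of the corresponding eigenvalue of $\mathbf{K}|_{q=1}$ on that $5$-dimensional space, which is at most $2$; no knowledge of where the transcendental classes land is required. Your sketch of the vanishing step does go through: by the virtual-dimension count for the index-$3$ Fano $X$ together with the divisor axiom, the only possibly nonzero quantum correction to $\hpl\qup\tau$ lies in $H^0(X)$, and $\hodge$-equivariance kills it because the transcendental part contains no trivial subrepresentation (Noether--Lefschetz genericity suffices here; full irreducibility of the Mumford--Tate action is not needed). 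So both routes are valid: the paper's is leaner, while yours yields the finer eigenspace dimensions, and hence the explicit atomic composition of Corollary~\ref{cor:atomscubic4}, as a byproduct.
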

\begin{proof}
Let $X$ be a smooth cubic hypersurface in $\bbC\bbP^5$. Assume that $X$ is Noether-Lefschetz general in the sense that the only rational Hodge classes on $X$ are  the powers of the hyperplane class  $\hpl = c_{1}(\cO_{\bbP^5}(1))|_{X}$. Such cubic fourfolds exist e.g.\ by Voisin's proof \cite{Voisin-cubicTorelli} of the Torelli theorem for cubic fourfolds.

The odd degree cohomology of $X$ is zero and the total dimension of the even cohomology is $27$. Following the construction explained in Section~\ref{sssec:AmodelF} we choose a homogeneous basis $\{T_{i}\}_{i = 1}^{27}$ of $H^{\bullet}_{B}(X,\bbQ)$, so that $T_{1} = \mathbf{1} \in H^0(X,\bbQ)$, $T_{2} = \hpl$, and $\deg T_{i} > 2$ for $i > 2$.  We write $q$ for the coordinate on $\NS(X,\bbQ) = \bbQ\cdot \hpl$ to $\hpl$ and we write $t_{i}$ for the coordinates dual to $T_{i}$ for $i \neq 2$. 
Let $(\cH,\nabla)/B_{X}$ be the maximal analytic ${\mathsf A}$-model F-bundle for $X$ defined in Section~\ref{sssec:AmodelF} with connection $\nabla$ given by 
\begin{equation} \label{eq:cubicnabla}
\left| \,
\begin{aligned}
\nabla_{\partial_{u}} & = \partial_{u} - u^{-2} \left(\Eu \, \qup\, (-)\right) + u^{-1} \frac{\Deg - 4\cdot \op{id}}{2}, \\
\nabla_{\partial_{q}} & = \partial_{q} + u^{-1}q^{-1} \left( \hpl \, \qup \, (-)\right), \\
\nabla_{\partial_{t_{i}}} & = \partial_{t_{i}} + u^{-1} \left( T_{i} \, \qup \, (-)\right), \ \text{for} \ i \neq 2.
\end{aligned}
\right.
\end{equation}

Let $b \in B_{X}$ be the rigid point with coordinates $q = 1$, and $t_{i} = 0$, for all $i \neq 2$. Then $b \in \smallbase_{X} \ \subset \ B_{X}^{\hodge} \ \subset B_{X}$ and 
by Lemma~\ref{lem:Ginvariants} we have that the reduced spectrum of the operator 
$\left(\Eu \, \qup\, (-)\right)_{b} \, \colon \, \cH_{(b,0)} \ \to \ \cH_{(b,0)}$ is the same as the reduced spectrum of the action of $\left(\Eu \, \qup\, (-)\right)_{b}$ on the subspace 
of Hodge classes $\cH^{\hodge}_{(b,0)}$. By our genericity assumption 
\[
\cH^{\hodge}_{(b,0)} \ = \ \oplus_{a = 0}^4 \, \bbk\cdot \hpl^{a} \ = \ \smallfiber_{(b,0)}.
\]
Thus 
\[
\left(\Eu \, \qup\, (-)\right)_{b}|_{\smallfiber_{(b,0)}} \ = \ \mathbf{K}|_{q=1},
\]
where $\mathbf{K}$ is the operator introduced in the previous section. 

As we saw in Section~\ref{ssec:givental}, the operator  $\mathbf{K} \colon \smallfiber|_{\smallbase_{X}\times \{0\}}  \ \to \ \smallfiber|_{\smallbase_{X}\times \{0\}}$ can be computed from Givental's theorem 
and from Example~\ref{ex:quadrics_and_cubics} {\bfseries (iii)} we have 
\[
\mathbf{K} \ = \ 3\begin{pmatrix}
0 & 0 & 6q & 0 & 0 \\
1 & 0 & 0 & 15q & 0 \\
0 & 1 & 0 & 0 & 6q \\
0 & 0 & 1 & 0 & 0 \\
0 & 0 & 0 & 1 & 0
\end{pmatrix} \qquad \text{and so} \qquad 
\mathbf{K}|_{q=1} \ = \ 3\begin{pmatrix}
0 & 0 & 6 & 0 & 0 \\
1 & 0 & 0 & 15 & 0 \\
0 & 1 & 0 & 0 & 6 \\
0 & 0 & 1 & 0 & 0 \\
0 & 0 & 0 & 1 & 0
\end{pmatrix}
\]
The characteristic polynomial of this matrix is $\lambda^5 - 3^6\lambda^2$ and so 
$\mathbf{K}|_{q=1}$ has eigenvalues $0$ (appearing with multiplicity $2$), and  $9$, $9\zeta$, $9\zeta^2$ where $\zeta$ is a primitive third root of unity. Therefore the reduced spectrum of $\bkappa_{b} \coloneqq \Eu_{b}\fp (-)$ acting  on all of $\cH_{(b,0)}$ is also equal to  $\{0,9,9\zeta,9\zeta^2\}$. 

By the spectral decomposition theorem Theorem~\ref{thm:K-decomposition} we then have that
over some admissible open neighborhood $b \in U \subset B_{X}^{\hodge}$ the  $\mathsf{A}$-model F-bundle $(\cH,\nabla)/B_{X}^{\hodge}$  decomposes into an external direct sum of maximal F-bundles:
\[
\left(\cH,\nabla\right)/U \ = \ \bplus_{\substack{\lambda \, \in \, \{0,9,9\zeta,9\zeta^2\}}} \
\left(\cH^\lambda,\nabla^\lambda\right)/U^{\lambda}
\]
compatible with the decomposition of $\cH_{(b,0)}$ into a direct sum of generalized eigenspaces for $\bkappa_{b} = \Eu_{b}\fp (-)$.

Let now $\balpha \in \HAtoms$ be a Hodge atom appearing in the atomic composition of $X$. By definition $\balpha$ is represented by some $\alpha \in  \pi_{0}(\tU_{X})$. Let $\tU_{X,\alpha}$ be the corresponding connected component of $\tU_{X}$. Since $U_{X}$ is open and dense in $B_{X}^{\hodge}$, the intersection 
$U \cap U_{X}$ will be non-trivial 
and so the equivalence class of the atomic F-bundle associated with $\alpha$ must be a generalized eigenbundle for the action of $\Eu\qup(-)$ on just one of the four pieces $(U\times \bbD \to U^{\lambda}\times \bbD)^{*}\left(\cH^{\lambda},\nabla^{\lambda}\right)$ .

This implies that if $E^{\balpha}$ is the finite dimensional $\overline{\bbQ}$-linear $\hodge_{\overline{\bbQ}}$-representation 
corresponding to $\balpha$, then we have
\[
\dim_{\overline{\bbQ}} \, (E^{\balpha})^{\hodge(\overline{\bbQ})} \ \leq \ \min_{\substack{\lambda \, \in \, \{0,9,9\zeta,9\zeta^2\}}} \, \op{rank} \, (\cH^{\lambda}_{u=0})^{\hodge} \ \leq \  \min_{\substack{\lambda \, \in \, \{0,9,9\zeta,9\zeta^2\}}} \,  
\dim_{\bbk} \,  \smallfiber_{(b,0})^{\lambda},
\]
where $\smallfiber_{(b,0})^{\lambda}$ denotes the generalized eigenspace of the operator $\mathbf{K}|_{q=1}$ acting on $\smallfiber_{(b,0)} = \oplus_{i = 0}^4 \, \bbk\cdot \hpl^{i}$.

Therefore we get that for 
every Hodge atom $\balpha$ appearing in the atomic composition of $X$, the dimension $\rho_{\balpha}$ of Hodge classes in $\balpha$ satisfies 
\[
\rho_{\balpha} \ = \ \dim_{\overline{\bbQ}} (E^{\balpha})^{\hodge(\overline{\bbQ})} \ \leq  \ 2.
\]
On the other hand, by our  assumption of Noether-Lefschetz genericity of $X$ the Hodge pieces  in the degree $4$ primitive cohomology of $X$ corresponding to the row 
\[
\left(h^{4,0}_{\op{prim}}(X),h^{3,1}_{\op{prim}}(X), h^{2,2}_{\op{prim}}(X),h^{1,3}_{\op{prim}}(X),h^{0,4}_{\op{prim}}(X)\right) = (0,1, 20, 1,0)
\]
in the primitive Hodge diamond of $X$ all  consist of transcendental cycles. Since by definition the atomic composition of the cohomology of $X$ is compatible with the Hochschild grading, this implies that in 
the atomic composition of $X$ we must have an atom $\balpha$ with $\op{Coeff}_{t^2}(P_{\balpha}(t)) = 1$. 

Thus, according to Proposition~\ref{prop:non-rational}, it only remains to show that a Hodge atom $\balpha$ with $\op{Coeff}_{t^2}(P_{\balpha}(t)) = 1$ and $\rho_{\balpha} \leq 2$ can not appear in the atomic composition of a smooth projective variety of dimension $\leq 2$, i.e.\ can not come from points, curves, or surfaces.
Moreover, for each birational class of surfaces it is sufficient to consider the atomic composition of one representative in the class.

But if $\balpha$ is an atom of a point or a curve, then $\op{Coeff}_{t^2}(P_{\balpha}(t)) = 0$. If, on the other hand, $\balpha$ is an atom appearing in the atomic composition of a smooth projective surface $S$, then $\op{Coeff}_{t^2}(P_{\balpha}(t)) \leq  H^{2,0}(S) = p_{g}(S)$ and by the classification of surfaces  
$S$ must be either an abelian surface, a K3 surface, an elliptic surface with $\bkappa =1$ and $p_{g} =1$, or a surface of general type. But every such surface has a nef $K_{S}$, and so by Lemma~\ref{lem:nefK} the atomic composition of $S$ consists of a single atom $\boldsymbol{\eta}(S)$, and the corresponding representation 
$E^{\boldsymbol{\eta}(S)}$ of $\hodge_{\overline{\bbQ}}(\overline{\bbQ})$ is just equal to 
$H^{\bullet}_{B}(S,\overline{\bbQ})$. Since $S$ is projective it has at least one algebraic cycle of dimension $2$ and two more algebraic cycles of dimension $0$ and $4$ respectively.
Thus $\rho_{\boldsymbol{\eta}(S)} \, \geq \, 3$. This shows that $\balpha$ can not be an atom of a surface and so $X$ can not be rational.
\end{proof}

\begin{remark} \label{rem:atomsS}
In the proof of the previous theorem we used the fact that 
Hodge atoms of algebraic surfaces for which the   $p-q=2$ part of Hodge structure is nontrivial must at least a 3 dimensional space Hodge cycles spanned by the $0$-th, $1$-st, and $2$-nd powers of the hyperplane class. 
In fact the Enriques-Kodaira  classification of surfaces gives a complete list of Hodge atoms for varieties  of dimension $\leq 2$. We have summarized this list in the following table, where again we have simplified the notation by labeling the Hodge atoms by the corresponding subspaces in the cohomology $H^{\bullet}$ of the respective variety.

\begin{table}[ht!] 
    \begin{center}
    \begin{adjustbox}{max width=\textwidth}
        \begin{tabular}{|c|c|c|c|c|c|c|c|c|} 
        \hline
            $\dim$ & 0 & 1 & 1 & 2 & 2 & 2 & 2 & 2   \\ \hline 
            $X$ & pt & $\bbP^1$ & \begin{tabular}{c}
                 curve $C$, \\
                 $g(C) \geq 1$
            \end{tabular} & $\bbP^2$ & \begin{tabular}{c}
                 $\bbP^1 \times C$ \\
                 $g(C) \geq 1$
            \end{tabular}  &  \begin{tabular}{c}
                 abelian \\
                 surface $S$ \\
                 or \\
                 K3 surface
            \end{tabular} & \begin{tabular}{c}
                 Enriques \\
                 or \\
                 elliptic\\
                 or\\
                 bielliptic
            \end{tabular} & \begin{tabular}{c}
                 general type\\
                 with \\
                 ADE singularity\\
                 $\to$ blow up $\widetilde{S}$
            \end{tabular}\\ \hline 
            \begin{tabular}{c}
                Hodge atoms
            \end{tabular} &  \begin{tabular}{c}$H^{\bullet}(pt)$ \\ $=\bbC$ \end{tabular} & $H^{\bullet}(pt \cup pt)$ &  $H^{\bullet}(C)$ & $H^{\bullet}(pt \cup pt \cup pt)$ &   $H^{\bullet}(C) \oplus H^{\bullet}(C)$ & \begin{tabular}{c} $H^{\bullet}(S)$  \end{tabular} &\begin{tabular}{c}
                $|p-q|\leq 1$\\
                in\\
                nc-Hodge\\
                structure
            \end{tabular} & \begin{tabular}{c} $H^{\bullet}(\widetilde{S})$ \end{tabular} \\ \hline
        \end{tabular}
    \end{adjustbox}
    \end{center}
    \caption{Hodge atoms for varieties of dimension $\leq 2$}
\end{table}
\end{remark}

Let $X$ be a smooth cubic fourfold which is Noether-Lefschetz general as in the proof of Theorem~\ref{thm:cubic4}. The coarse analysis of the atomic composition of $X$ that we used in the previous theorem, can be made more precise. First we have the following

\begin{lemma} \label{lem:transc0}
    For any $\phi \in H_\prim^4(X)$, we have $\bkappa_{b}(\phi) = 0$.
\end{lemma}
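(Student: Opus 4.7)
The plan is to reduce $\bkappa_b(\phi)$ to a single Gromov--Witten number and then force that number to vanish using the Noether--Lefschetz genericity of $X$. First, evaluate the Euler field at $b$: since the base point has $q = 1$ and $t_i = 0$ for $i \neq 2$, the ``divisor coordinate'' is trivial, so formula \eqref{eq:Euan} reduces to $\Eu_b = c_1(T_X) = 3\hpl$. Consequently $\bkappa_b(\phi) = 3\,(\hpl \qup \phi)|_b$, and it suffices to show $(\hpl \qup \phi)|_b = 0$.

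Next, expand $\hpl \qup \phi$ using the defining series \eqref{eq:*omegas}. Since all coordinates vanish at $b$ except $q$, only the classical cup product and the $n=0$ three-point Gromov--Witten contributions survive:
\[
(\hpl \qup \phi)|_b \;=\; \hpl \smile \phi \;+\; \sum_{\beta \neq 0} q^{\hpl\cdot\beta}\Big|_{q=1} \sum_r \braket{\hpl\, \phi\, T_r}_{0,3,\beta}\, T^r.
\]
The classical term vanishes, because $\phi \in H^4_{\prim}$ means $\hpl \smile \phi = 0 \in H^6(X)$. For the quantum terms, the divisor axiom rewrites $\braket{\hpl\, \phi\, T_r}_{0,3,\beta} = (\hpl\cdot\beta)\braket{\phi\, T_r}_{0,2,\beta}$. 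The virtual dimension count gives $\dim \Mbar_{0,2}(X,\beta) = 3 + 3(\hpl\cdot\beta)$, so a nonzero invariant forces $\deg T_r = 2 + 6(\hpl\cdot\beta)$; for effective $\beta \neq 0$, $\hpl$ being ample forces $\hpl\cdot\beta \geq 1$, and the constraint $\deg T_r \leq 2\dim X = 8$ leaves only $\hpl\cdot\beta = 1$, i.e.\ $\beta = \ell$ the class of a line, with $T_r = [\op{pt}] \in H^8(X)$ and Poincaré-dual $T^r = \mathbf{1}$. We obtain
\[
\bkappa_b(\phi) \;=\; 3\braket{\phi,\,[\op{pt}]}_{0,2,\ell}\cdot\mathbf{1}.
\]

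To conclude, use the projection formula to write
\[
\braket{\phi,\,[\op{pt}]}_{0,2,\ell} \;=\; \int_X \phi \cup \delta, \qquad \delta \coloneqq (\op{ev}_1)_*\Big(\op{ev}_2^*[\op{pt}] \cap [\Mbar_{0,2}(X,\ell)]_{\virt}\Big) \in H^4(X,\bbQ).
\]
The virtual fundamental class is constructed in the rational Chow group of $\Mbar_{0,2}(X,\ell)$ and both evaluation maps are algebraic, so $\delta$ is represented by an algebraic cycle on $X$ and is therefore a rational Hodge class of type $(2,2)$. The Noether--Lefschetz genericity of $X$ forces $H^{2,2}(X) \cap H^4(X,\bbQ) = \bbQ\, \hpl^2$, so $\delta = c\,\hpl^2$ for some $c \in \bbQ$. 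Since $\phi$ is primitive, $\int_X \phi \cup \hpl^2 = \int_X (\hpl \smile \phi) \cup \hpl = 0$, which yields $\bkappa_b(\phi) = 0$.

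The main conceptual point is the identification of $\delta$ as a rational Hodge class, where the Noether--Lefschetz genericity assumption enters decisively; the rest is the divisor axiom combined with a virtual dimension count.
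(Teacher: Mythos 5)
Your proof is correct, and it reaches the vanishing by a genuinely different final step than the paper. Both arguments begin the same way: reduce $\bkappa_b(\phi)=3(\hpl\qup\phi)|_b$ to three-point correlators, kill the classical term by primitivity, and use the virtual-dimension count to force the remaining insertion into $H^8(X)$ with $\hpl\cdot\beta=1$. At that point the paper disposes of the surviving correlator by observing that both $\hpl$ and the $H^8$-insertion are \emph{ambient} classes (restrictions from $\bbP^5$): via the Givental comparison of \cref{ssec:givental}, quantum multiplication by $\hpl$ preserves the ambient subspace $\imageH$, and a primitive class is Poincar\'e-orthogonal to $\imageH$, so the correlator vanishes for \emph{every} smooth cubic fourfold, with no genericity hypothesis. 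You instead rewrite the correlator as $\int_X\phi\cup\delta$ with $\delta=\op{ev}_{1*}\bigl(\op{ev}_2^*[\op{pt}]\cap[\Mbar_{0,2}(X,\ell)]_{\virt}\bigr)$, note that $\delta$ is an algebraic cycle class (best phrased as $\int_{I_{2,\ell}(X)}[\op{pt}]$ in the paper's Chow-level formalism, which sidesteps pulling back cycles along the non-flat $\op{ev}_2$), and invoke Noether--Lefschetz genericity to force $\delta\in\bbQ\,\hpl^2$, after which primitivity finishes. This is sound in context --- the lemma is stated under the Noether--Lefschetz-general hypothesis used in Theorem~\ref{thm:cubic4} --- but it buys a strictly weaker statement: your argument would not give the vanishing for, say, a cubic containing a plane, whereas the paper's ambient/monodromy-flavored argument does; one could recover the full generality in your framework by replacing genericity with deformation-invariance of $\delta$ and irreducibility of the monodromy action on $H_\prim^4$. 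Your intermediate pass through the divisor axiom and two-point invariants is harmless but not needed.
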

\begin{proof}
    It suffices to show that for any $\psi \in H^{\bullet}_{B}(X)$, and any curve class $\beta \in H_2(X)$, the Gromov-Witten invariant $\langle \hpl, \phi, \psi\rangle_{0,n,\beta}^X = 0$.
    By the virtual dimension calculation, if $\langle \hpl, \phi, \psi\rangle_{0,n,\beta}^X \neq 0$, then $\codim \psi \ge 4$.
    The only possibility is $\psi \in H^8(X)$.
    Now both $\hpl$ and $\psi$ come from the ambient projective space.
    Since $\phi \in H_\prim^4(X)$, we deduce that $\hpl\smile \phi \smile \psi = 0$, hence $\langle \hpl, \phi, \psi\rangle_{0,n,\beta}^X = 0$, completing the proof.
\end{proof}

\begin{corollary} \label{cor:atomscubic4}
Let $X$ be a very general cubic fourfold over $\bbC$, then $X$ has $4$ Hodge atoms in its atomic composition. Three of those are one dimensional and correspond to the eigenvectors with eigenvalues  $9$, $9\zeta$, and $9\zeta^2$ for the action of $\bkappa_{b}$ on $\cH_{(b,0)}$,
and one is $24$-dimensional and corresponds to the generalized eigenspace of 
$\bkappa_{b}$ for the eigenvalue $0$. 
\end{corollary}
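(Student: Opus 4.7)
My plan is to combine the eigenvalue data already obtained in the proof of \cref{thm:cubic4} with the spectral decomposition theorem and a careful $\hodge$-equivariance analysis. The proof proceeds in three stages: first, determine the generalized eigenspace decomposition of $\bkappa_{b}$ on the full 27-dimensional fiber $\cH_{(b,0)}$; second, apply \cref{thm:K-decomposition} at $b$ to obtain a local splitting of the $\mathsf{A}$-model F-bundle into four $\hodge$-atomic factors; third, upgrade this local statement to the global assertion that $X$ contributes exactly four atoms to $\CF^{\mathsf{F}}_{\hodge}(X)$ with the stated ranks.

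For the first step I would exploit the $\bkappa_{b}$-stable splitting $\cH_{(b,0)}=\smallfiber_{(b,0)}\oplus\bigl(H^{4}_{\op{prim}}(X)\otimes\bbk\bigr)$, which is the Hodge-theoretic decomposition arising from the Noether--Lefschetz genericity of $X$. Stability of the first summand comes from Givental's theorem (the restriction of $\nabla$ to $\smallbase_{X}$ preserves $\smallfiber$), and stability of the second summand together with $\bkappa_{b}|_{H^{4}_{\op{prim}}}=0$ is exactly \cref{lem:transc0}. On $\smallfiber_{(b,0)}$ the operator $\bkappa_{b}$ equals $\mathbf{K}|_{q=1}$ from Example~\ref{ex:quadrics_and_cubics}(iii), whose characteristic polynomial is $\lambda^{2}(\lambda^{3}-729)$; a short linear-algebra computation of $\ker(\mathbf{K}|_{q=1})^{2}$ in the basis $1,\hpl,\hpl^{2},\hpl^{3},\hpl^{4}$ shows that the 0-eigenvalue on $\smallfiber$ is a single size-$2$ Jordan block, while the eigenvalues $9,9\zeta,9\zeta^{2}$ are simple. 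Together with the 22-dimensional trivial 0-eigenspace on the primitive summand this yields generalized eigenspace dimensions $(24,1,1,1)$ for the four distinct eigenvalues $(0,9,9\zeta,9\zeta^{2})$, and \cref{thm:K-decomposition} applied at $b$ produces a local decomposition of $(\cH,\nabla)$ near $b$ as an external sum of four maximal F-bundles of these ranks. The uniqueness of the spectral decomposition together with the $\hodge$-equivariance of $\bkappa$ promotes each factor to a $\hodge$-atomic F-bundle in the sense of \cref{def:GatomicFbundle}.

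To finish, I would argue that these four local factors represent four genuinely distinct elements of $\HAtoms$ by showing that $b\in U_{X}$ and that the four local sheets of the spectral cover lie in four different connected components of $\tU_{X}$. The simple non-zero eigenvalues $9,9\zeta,9\zeta^{2}$ analytically continue to three distinct nowhere-vanishing sheets in a neighborhood of $b$, and their associated rank-$1$ atoms are mutually non-isomorphic because their residual endomorphisms $\bkappa=\lambda\cdot\mathrm{id}$ have distinct values. The hardest part of the argument, which I expect to be the main obstacle, is proving that the 0-eigenvalue persists as a \emph{single} eigenvalue of $\bkappa_{c}$ for $c$ varying in $B^{\hodge}_{X}$, rather than splitting into multiple distinct eigenvalues. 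I would approach this by combining Schur's lemma -- since $H^{4}_{\op{prim}}(X)$ is $\hodge$-irreducible for very general $X$, the operator $\bkappa_{c}$ must act on it as a scalar $\lambda(c)$ with $\lambda(b)=0$ -- with a detailed study of how $\bkappa$ acts on the rank-$5$ Hodge-invariant subbundle $\cH^{\hodge}_{o}$ along $B^{\hodge}_{X}$, showing both that $\lambda(c)\equiv 0$ and that the size-$2$ Jordan block is rigid. This would establish $b\in U_{X}$, identify the four local factors with four components of $\tU_{X}$ each of degree $1$ over $U_{X}$, and thereby yield the claimed 4-atom composition with ranks $(24,1,1,1)$.
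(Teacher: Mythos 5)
Your first two stages reproduce the paper's setup exactly: $\bkappa_b|_{\smallfiber_{(b,0)}}=\mathbf{K}|_{q=1}$ from Example~\ref{ex:quadrics_and_cubics}, the characteristic polynomial $\lambda^2(\lambda^3-3^6)$, Lemma~\ref{lem:transc0} killing the primitive part, hence generalized eigenspace dimensions $(24,1,1,1)$, and then Theorem~\ref{thm:K-decomposition} at $b$. The genuine gap is in your third stage, which you yourself flag as ``the main obstacle'' and only sketch: showing that the rank-$24$ zero-factor contributes a \emph{single} atom, i.e.\ that its eigenvalue does not split at generic points of $B^{\hodge}_X$. This is the entire content of the corollary beyond Theorem~\ref{thm:cubic4}, and your proposed route is both unproved and partly misdirected. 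The sub-goal ``$\lambda(c)\equiv 0$'' is neither needed nor true as stated (moving in the $H^0$-direction of $B^{\hodge}_X$ shifts the whole spectrum of $\bkappa$, and turning on the coordinates dual to $\hpl^2,\hpl^3,\hpl^4$ deforms it further; only eigenvalue \emph{differences} are relevant), and ``rigidity of the size-$2$ Jordan block'' is not a phenomenon one can establish by a local deformation analysis: nilpotent Jordan blocks generically do split under perturbation, so no study of $\bkappa$ on the rank-$5$ invariant subbundle of the kind you describe can succeed on its own.

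The paper closes this step with a soft representation-theoretic argument that you never invoke: Proposition~\ref{prop:Ginv1}, which says every Hodge atom satisfies $\rho_{\balpha}\ge 1$ because its fiber is a factor of a unital commutative algebra and hence contains a Hodge class. Combined with the Torelli-theorem irreducibility of the $22$-dimensional transcendental part (which you do use, via Schur) and the fact that the zero-factor contains only a $2$-dimensional space of Hodge classes forming a single size-$2$ Jordan block, this forces any Hodge atom arising from the rank-$24$ factor to be the whole factor; no rigidity or continuity argument for the eigenvalues is attempted. (Compare also the Witt-algebra mechanism of Remark~\ref{rem:Fbundle.witt.move}, which the paper uses for exactly this ``no further splitting near $b$'' issue in the quartic and cubic-threefold examples.) If you replace your proposed rigidity study with this counting of Hodge classes inside atoms, your argument becomes essentially the paper's; as written, the decisive step is missing.
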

\begin{proof} 
By Lemma~\ref{lem:transc0} the action 
of $\bkappa_{b} = \mathbf{K}|_{q=1}$ on $H^{\bullet}_{B}(X,\bbk)$ will have a single eigenvalue $0$ on the subspace $H^4(X)_{\text{prim}}\otimes \bbk$. Therefore, the piece 
$(\cH^0,\nabla^0)/U^0$ in the spectral decomposition of $(\cH,\nabla)/U$ is a maximal F-bundle of rank $24$.

The Torelli theorem \cite{Voisin-cubicTorelli,Looijenga-cubicTorelli,Charles-cubicTorelli} for cubic $4$-folds also implies that for a very general $X$ the transcendental part of the cohomology of $X$ is an irreducible representation of $\hodge_{\overline{\bbQ}}$. Furthermore, the operator $\mathbf{K}|_{q=1}$ has a size 2 Jordan block corresponding to the eigenvalue $0$, and 
since by Proposition~\ref{prop:Ginv1} a Hodge atom must have dimension of Hodge classes $\geq 1$, it follows that  a Hodge atom $\balpha$ in the atomic composition of $X$
which comes from a generalized eigenspace factor in $(\cH^0,\nabla^0)/U^0$ must  be the whole thing, i.e.\ have $\dim_{\overline{\bbQ}} E^{\balpha} = 24$. This proves the corollary.
\end{proof}

\begin{remark} \label{rem:Kuznetsov} We expect that the $24$ dimensional representation of $\hodge$ associated with the atom 
is the cohomology of the Kuznetsov component \cite{Kuznetsov-cubic4fold} of $X$. 
\end{remark}

\subsection{Birational maps of Calabi-Yau manifolds} \label{ssec:birCY}

Hodge atoms  can also be used effectively to study the motivic properties of birational equivalences between smooth projective varieties. In particular, Hodge atoms can be used to give a new proof of Batyrev's theorem \cite[Corollary~6.29]{Batyrev-StringyHodge} asserting that birationally equivalent Calabi-Yau manifolds have equal Hodge numbers. The traditional proofs of this result \cite{Batyrev-StringyHodge,Wang-Kequivalence,Ito} use either motivic integration, or $p$-adic Hodge theory and C\v{e}botarev density. The proof via atoms has a completely different flavor as it uses Gromov-Witten theory and \nc-Hodge theory. 

To show the equality of Hodge numbers we begin with the following observation

\begin{lemma} \label{lem:birCYatoms}
Let $X_1$ and $X_2$ be two complex birational Calabi-Yau manifolds of dimension $d$. Then the Betti cohomology spaces $H^{\bullet}_{B}(X_1,\overline{\bbQ})$ and  $H^{\bullet}_{B}(X_2,\overline{\bbQ})$ are isomorphic as representations of $\hodge_{\overline{\bbQ}}(\overline{\bbQ})$. In particular the 
$\bbZ/2$-folded Hodge polynomials of $X_{1}$ and $X_{2}$ are equal, or equivalently  
\[
\sum_{p-q = k} \ h^{p,q}(X_{1}) \ = \ \sum_{p-q = k} \ h^{p,q}(X_{2}), \quad \text{for all} \ k = -d, \ldots, d.
\]
\end{lemma}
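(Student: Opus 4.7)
The strategy is to identify the single atoms $\boldsymbol{\eta}(X_1)$ and $\boldsymbol{\eta}(X_2)$ as the same element of $\HAtoms$. Since $K_{X_i}$ is trivial (in particular nef), Lemma~\ref{lem:nefK} implies that the Hodge atomic composition $\CF(X_i)$ consists of a single atom $\boldsymbol{\eta}(X_i)$ with multiplicity one; the associated $\hodge_{\overline{\bbQ}}$-representation is then the full Betti cohomology, $E^{\boldsymbol{\eta}(X_i)} \cong H^\bullet_B(X_i,\overline{\bbQ})$. Consequently, proving $\boldsymbol{\eta}(X_1) = \boldsymbol{\eta}(X_2)$ in $\HAtoms$ will immediately yield the asserted isomorphism of $\hodge_{\overline{\bbQ}}$-representations.

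The weak factorization theorem of Abramovich--Karu--Matsuki--W\l{}odarczyk \cite{Wlodarczyk_factorization,Abramovich_Torification_and_factorization} resolves the birational map $X_1 \dashrightarrow X_2$ through a smooth projective variety $W$ equipped with birational morphisms $W \to X_1$ and $W \to X_2$, each expressible as a composition of blowups along smooth centers of codimension $\geq 2$. Iterating the additivity relation $\CF(\widehat{Y}) = \CF(Y) + (r-1)\CF(Z)$ from property~(2) of Section~\ref{par:CF} yields
\[
  \CF(W) \, = \, \boldsymbol{\eta}(X_1) + A_1 \, = \, \boldsymbol{\eta}(X_2) + A_2
\]
in $\bbZ_{\geq 0}^{\oplus \HAtoms}$, where $A_1$ and $A_2$ are non-negative integer combinations of chemical formulas of the blowup centers appearing in the two factorizations. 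Since each center has codimension at least two in an ambient $d$-dimensional smooth variety, its dimension is at most $d-2$, so $A_1$ and $A_2$ are supported on $\HAtoms_{\dim \leq d-2}$.

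The crucial step is to show that $\boldsymbol{\eta}(X_i) \notin \HAtoms_{\dim \leq d-2}$. Indeed, $X_i$ being Calabi-Yau of dimension $d$ forces $H^{d,0}(X_i) = H^0(X_i,K_{X_i}) \neq 0$, so after extending scalars to $\overline{\bbQ}$ the representation $E^{\boldsymbol{\eta}(X_i)} = H^\bullet_B(X_i,\overline{\bbQ})$ has a nonzero weight-$d$ eigenspace for the subgroup $\HHgr_{\overline{\bbQ}} \subset \hodge_{\overline{\bbQ}}$, i.e.\ a nontrivial Hochschild-degree-$d$ component. On the other hand, if $\boldsymbol{\eta}(X_i)$ appeared in the Hodge atomic composition of a smooth projective variety $Y$ with $\dim Y \leq d-2$, then, since the spectral decomposition at a $\hodge$-fixed base point is $\hodge$-equivariant (the Euler vector field at such a point being $\hodge$-invariant), $E^{\boldsymbol{\eta}(X_i)}$ would embed as a $\hodge_{\overline{\bbQ}}$-subrepresentation of $H^\bullet_B(Y,\overline{\bbQ})$. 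But the $\HHgr_{\overline{\bbQ}}$-weights occurring in $H^\bullet_B(Y,\overline{\bbQ})$ are confined to $[-\dim Y,\dim Y] \subseteq [-(d-2),d-2]$, which cannot accommodate a weight-$d$ component, a contradiction.

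Combining, the atom $\boldsymbol{\eta}(X_1)$ lies in the multiset $\boldsymbol{\eta}(X_2) + A_2$ but cannot appear inside $A_2$, forcing $\boldsymbol{\eta}(X_1) = \boldsymbol{\eta}(X_2)$ in $\HAtoms$ and hence $H^\bullet_B(X_1,\overline{\bbQ}) \cong H^\bullet_B(X_2,\overline{\bbQ})$ as $\hodge_{\overline{\bbQ}}$-representations; comparing $\HHgr_{\overline{\bbQ}}$-weight $k$ subspaces then yields the numerical identity $\sum_{p-q=k} h^{p,q}(X_1) = \sum_{p-q=k} h^{p,q}(X_2)$. The main obstacle in this plan is verifying the $\hodge$-equivariance of the spectral decomposition at a $\hodge$-fixed base point, which is what guarantees that the atomic pieces are $\hodge_{\overline{\bbQ}}$-subrepresentations of $H^\bullet_B$ and thereby permits the Hochschild-degree comparison at the heart of the argument.
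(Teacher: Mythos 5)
Your argument is essentially the paper's proof: Lemma~\ref{lem:nefK} gives a single atom $\boldsymbol{\eta}(X_i)$ for each Calabi-Yau, weak factorization plus the blowup additivity of $\CF$ shows the two chemical formulas differ only by atoms of varieties of dimension $\le d-2$, and the nonvanishing of the $t^d$-coefficient of the Hodge polynomial (coming from $h^{d,0}=1$) forces $\boldsymbol{\eta}(X_1)=\boldsymbol{\eta}(X_2)$, hence the isomorphism of $\hodge_{\overline{\bbQ}}$-representations and the equality of folded Hodge numbers.

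Two small remarks. First, what you invoke is really \emph{strong} factorization (a single smooth $W$ mapping to both $X_1$ and $X_2$ by compositions of smooth blowups), which is not known; weak factorization only gives a zig-zag of blowups and blowdowns. This does not affect the substance: working along the zig-zag, the difference $\CF(X_2)-\CF(X_1)$ in $\bbZ^{\oplus\HAtoms}$ is still supported on atoms of varieties of dimension $\le d-2$, all of whose Hodge polynomials have vanishing $t^d$-coefficient, and the same cancellation argument gives $\boldsymbol{\eta}(X_1)=\boldsymbol{\eta}(X_2)$. Second, the $\hodge$-equivariance of the spectral decomposition over $B^{\hodge}$, which you flag as the main obstacle, is already part of the atoms formalism (the quantum product and Euler field are $\hodge$-equivariant there, and Lemma~\ref{lem:acindependence} together with Proposition~\ref{prop:iso_of_representations} makes $E^{\balpha}$ a well-defined $\hodge_{\overline{\bbQ}}$-representation), so no additional verification is needed.
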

\begin{proof}
Since $K_{X_i}$  is numerically trivial  for $i = 1,2$, Lemma~\ref{lem:nefK} implies that the atomic compositions of $X_{1}$ and $X_{2}$ each consist of a single atom. Let us denote these respective atoms by $\boldsymbol{\eta}(X_{1})$ and $\boldsymbol{\eta}(X_{2})$.
Since $X_{1}$ and $X_{2}$ are birational, the weak factorization theorem \cite{Abramovich_Torification_and_factorization} implies that $X_{1}$ and $X_{2}$ are related by a sequence of forward/backward blowups with smooth centers. But the blowup decomposition Theorem~\ref{thm:blowup} implies that each such blowup adds to the atomic composition only atoms of smooth projective varieties of dimension $\leq d- 2$.
In particular for every such atom $\balpha$ we must have $\mathsf{Coeff}_{t^{d}}(P_{\balpha}(t)) = 0$. On the other hand  $h^{d,0}(X_i)=1$ for $i = 1,2$  and so 
$\mathsf{Coeff}_{t^{d}}(\boldsymbol{\eta}(X_{1})) \ = \ \mathsf{Coeff}_{t^{d}}(\boldsymbol{\eta}(X_{2})) \ = \ 1$. Thus we must have $\boldsymbol{\eta}(X_{1}) = \boldsymbol{\eta}(X_{2})$ which proves the lemma. 
\end{proof}

Next we will use some special features of the $\mathsf{A}$-model F-bundle in the Calabi-Yau case. Let $X$ be a $d$-dimensional complex projective manifold.
Let $(\cH,\nabla)/B_{X}$ be the $\bbk$-analytic maximal $\mathsf{A}$-model F-bundle of $X$.
Fix any rigid point $b \in B_{X}^{\even} \subset B_{X}$, so that the coordinate of $b$ corresponding to $\mathbf{1} \in H^0_{B}(X,\bbk)$ vanishes. Consider the restriction of
$(\cH,\nabla)|_{\{b\}\times \Spf \bbk\dbb{u}}$. Then we have
\[
\begin{aligned}
\cH|_{\{b\}\times \Spf \bbk\dbb{u}} \ & = \ H^{\bullet}(X)\dbb{u} \\
\nabla|_{\{b\}\times \Spf \bbk\dbb{u}} \ & \text{is given by} \quad \nabla_{\partial_{u}} \ = \ \partial_{u} \, - \, u^{-2}\Eu\qup (-) \, + \, u^{-1}\Gr,
\end{aligned}
\]
where 
\[
\Gr = \frac{\Deg - d\cdot \op{id}}{2}
\]
is the grading operator, and we write  $\Eu\qup (-)$ as a shortcut for 
$\Eu_{b}\qup_{b} (-)$ and $H^{\bullet}(X)$ as a shortcut for 
$H^{\bullet}_{B}(X,\bbk)$. To keep track of the half integer eigenvalues of $\Gr$, let us  also introduce the operator $\mathbf{T}   \colon H^{\bullet}(X) \to H^{\bullet}(X)$ which is the identity on each $H^{a}(X)$ with $a-d $ odd and is zero on each $H^{a}(X)$ with $a-d$ even. Note that by definition the operator $\mathbf{g} = \Gr + \frac{1}{2}\mathbf{T}$ is semisimple with integral eigenvalues. Therefore, we have a  meromorphic gauge transformation   $u^{\mathbf g}$.

With this notation we now have the following

\begin{claim} \label{claim:nefK}
Suppose $K_{X}$ is nef. Then the
connection 
\[
\nabla_{\partial_{u}} \ = \ \partial_{u} \, - \, u^{-2}\Eu\qup (-) \, + \, 
u^{-1}\Gr \ \colon \ H^{\bullet}(X)\dbb{u} \ \longrightarrow \ 
H^{\bullet}(X)\dbp{u}
\]
has a regular singularity at $u=0$. More precisely, the gauge transformed connection 
\[
u^{\mathbf{g}}\circ \left(\nabla + \frac{1}{2}u^{-1}\mathbf{T}du\right)\circ u^{-\mathbf{g}}
\]
has a first order pole and a nilpotent residue at $u=0$.
\end{claim}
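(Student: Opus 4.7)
The plan is to conjugate by $u^{\mathbf{g}}$ and use the virtual dimension formula for $\Mbar_{0,3}(X,\beta)$, together with the nef hypothesis on $K_X$, to track the $u$-powers that appear in the matrix of $\Eu\qup(-)$ in a homogeneous basis of $H^{\bullet}(X,\bbk)$. A short bookkeeping computation gives $u^{\mathbf{g}}\partial_u u^{-\mathbf{g}} = \partial_u - u^{-1}\mathbf{g}$, so that after adding $\tfrac{1}{2}u^{-1}\mathbf{T}\,du$ (which promotes $\Gr$ to the integer-eigenvalued operator $\mathbf{g}$), the $u^{-1}\mathbf{g}$ pieces cancel and the conjugation collapses to
\[
u^{\mathbf{g}}\circ\bigl(\nabla_{\partial_u} + \tfrac{1}{2}u^{-1}\mathbf{T}\bigr)\circ u^{-\mathbf{g}} \ = \ \partial_u \ - \ u^{-2}\, M(u), \qquad M(u) \coloneqq u^{\mathbf{g}}\,(\Eu\qup(-))\,u^{-\mathbf{g}}.
\]
So the claim reduces to verifying that $M(u) = u\,M_1 + u^2 M_2 + \cdots$ is analytic, vanishes to first order at $u=0$, and has nilpotent coefficient $M_1$.

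Next I would fix a homogeneous basis $\{T_i\}$ of $H^{\bullet}(X,\bbk)$, expand $\Eu = \sum_j \epsilon_j T_j$ at the chosen base point $b$, and read off the $(s,i)$-entry of $M(u)$ as $A_{si}\, u^{\mathbf{g}_s - \mathbf{g}_i}$, where $A_{si}\coloneqq \sum_j \epsilon_j \cdot [\text{coeff of }T_s\text{ in }T_j \qup T_i]$ is a scalar independent of $u$. The key input is the degree identity extracted from the virtual dimension of $\Mbar_{0,3}(X,\beta)$: any nonzero $q^\beta$-contribution to the coefficient of $T_s$ in $T_j \qup T_i$ forces $\deg T_s = \deg T_i + \deg T_j + 2\int_{\beta} K_X$. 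Because $\Eu$ is even, the $T_j$ in its expansion have even cohomological degree, so $a_s$ and $a_i$ have the same parity and one computes
\[
\mathbf{g}_s - \mathbf{g}_i \ = \ \tfrac{1}{2}(a_s - a_i) \ = \ \tfrac{1}{2} a_j + \int_\beta K_X.
\]

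Now I invoke the hypotheses. The nef condition gives $\int_\beta K_X \ge 0$ for every effective $\beta$. Combined with the fact that at $b$ the $\mathbf{1}$-coordinate vanishes and $\Eu = c_1(T_X) + \tfrac{1}{2}(\Deg - 2\cdot\op{id})(\gamma)$, the Euler element has no $H^0$ component: the $H^0$ piece of $\gamma$ is eliminated by the coordinate condition, and $c_1(T_X) \in H^2$. Hence every $T_j$ appearing in $\Eu$ has $a_j \ge 2$, whence $\mathbf{g}_s - \mathbf{g}_i \ge 1$ whenever $A_{si}\neq 0$. This shows that $M(u)$ is analytic at $u=0$ and vanishes to first order, so $u^{-2}M(u)$ has at most a simple pole, which is exactly the regular singularity asserted.

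Finally I would identify $M_1$ and prove its nilpotence. The bound $\mathbf{g}_s - \mathbf{g}_i \ge 1$ is saturated only when $a_j = 2$ and $\int_\beta K_X = 0$. Inspection of $\Eu$ shows that its $H^2$ component equals $c_1(T_X)$, because the $H^2$ part of $\gamma$ is annihilated by $(\Deg - 2\cdot\op{id})/2$. Hence $M_1$ is the part of $c_1(T_X)\qup(-)$ supported on curve classes with $\beta\cdot K_X = 0$, an operator that strictly raises the cohomological degree by $2$ on the finite-dimensional graded vector space $H^{\bullet}(X,\bbk)$, and is therefore automatically nilpotent. The main subtlety in this plan is the parity bookkeeping: the auxiliary operator $\mathbf{T}$ is exactly what is needed to make $\mathbf{g}$ integer-valued so that $u^{\mathbf{g}}$ is a bona fide meromorphic gauge transformation, and ensuring that the degree identity is applied consistently in the presence of odd cohomology classes on $X$.
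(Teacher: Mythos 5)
Your proof is correct and follows essentially the same route as the paper's: gauge by $u^{\mathbf{g}}$, cancel the $\mathbf{g}$-term against the conjugated $u\partial_u$, and use the virtual-dimension count together with nefness of $K_X$ and the vanishing $H^0$-coordinate at $b$ to show that $\Eu\qup(-)$ raises cohomological degree by at least $2$, so the transformed connection has a first order pole whose residue is strictly degree-raising and hence nilpotent. The only cosmetic difference is that you phrase the degree identity via $3$-point invariants, whereas at the big-quantum-product point $b$ the additional even insertions of degree $\geq 2$ only add non-negative terms, so the inequality $\mathbf{g}_s-\mathbf{g}_i\geq 1$ (which is all the argument needs) still holds.
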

\begin{proof} 
Let $\widetilde{\nabla} = \nabla + \frac{1}{2}u^{-1}\mathbf{T}du$, then $\widetilde{\nabla}_{u\partial_{u}} = \nabla_{u\partial_{u}} + \frac{1}{2}\mathbf{T}$, and since $\mathbf{g}$ commutes with $\Gr$ we get 
\[
\begin{aligned}
u^{\mathbf{g}}\circ \widetilde{\nabla}_{u\partial_{u}}\circ u^{-\mathbf{g}} & = u\partial_{u} - \mathbf{g} + \frac{1}{2}\mathbf{T} + u^{-1} u^{\mathbf{g}}\circ (\Eu\qup (-))\circ u^{-\mathbf{g}} + u^{\mathbf{g}}\circ \Gr \circ u^{-\mathbf{g}}  \\
& = u\partial_{u} - u^{-1} u^{\mathbf{g}}\circ (\Eu\qup (-))\circ u^{-\mathbf{g}}.
\end{aligned}
\]
Since $K_{X} \geq 0$, and $b \in B_{X}^{\even}$ has a vanishing $H^0(X)$-coordinate, the operator $\Eu\qup(-)$ will move the eigenspace of $\Gr$ corresponding to an eigenvalue $m \in \frac{1}{2}\bbZ$ to the sum of eigenspaces for eigenvalues $\geq m + 1$.  
Thus if we let $\bkappa_{ij} \colon H^{i}(X,\bbk) \to H^{i + 2j}(X,\bbk)$ denote the respective block matrix component of $\Eu\qup (-)$, then $\bkappa_{ij} = 0$ for $j\leq 0$. The corresponding block matrix component of the gauged transformed operator $u^{-1} u^{\mathbf{g}}\circ (\Eu\qup (-))\circ u^{-\mathbf{g}}$ is given by the composition
\[
\xymatrix@1@C+2pc@M+0.5pc{H^{i}(X) \ar[r]^-{u^{\mathsf{const}-\floor{\frac{i}{2}}}} & H^{i}(X) \ar[r]^-{u^{-1}\bkappa_{ij}} & 
H^{i+2j}(X) \ar[r]^-{u^{\floor{\frac{i}{2}}+j-\mathsf{const}}} & H^{i+2j}(X),
}
\]
and $\mathsf{const} \in \bbZ$. 
In other words the block matrix component  of the transformed operator is $u^{-1+j}\bkappa_{ij}$ and so the gauge  transformed connection operator  
$u^{\mathbf{g}}\circ \widetilde{\nabla}_{u\partial_{u}}\circ u^{-\mathbf{g}}$ has no pole at $u=0$ and the $u = 0$ residue of  $u^{\mathbf{g}}\circ \widetilde{\nabla}_{\partial_{u}}\circ u^{-\mathbf{g}}$ is strictly lower triangular as a block matrix with entries labeled by the degree of cohomology.
\end{proof}

\begin{theorem} \label{thm:two.norms}
Let $X$ be a smooth projective complex Calabi-Yau. Then the Hodge numbers of $X$ can be reconstructed from the atomic F-bundle associated with the Hodge atom $\boldsymbol{\eta}(X)$ of $X$.
In particular, birationally equivalent Calabi-Yau manifolds have equal Hodge numbers.
\end{theorem}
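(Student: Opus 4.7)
The plan is to extract the Hodge decomposition of $X$ from the atomic F-bundle via two commuting sources of grading carried by its fiber: the Hochschild grading coming from the action of $\HHgr\subset\hodge$, and the cohomological degree grading $\Gr=(\Deg-d\cdot\id)/2$ extracted from the polar part of the quantum connection at $u=0$. By Lemma~\ref{lem:nefK}, since $K_X\equiv 0$, the atomic composition of $X$ consists of a single atom $\boldsymbol{\eta}(X)$ whose associated F-bundle $(\cA,\nabla)$ has fiber $\cA|_{b,u=0}$ equal as a $\hodge_{\overline{\bbQ}}$-representation to the full Betti cohomology $H^\bullet_B(X,\overline{\bbQ})\otimes\bbk$. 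The $\HHgr$-weight decomposition of this fiber already recovers $\sum_{p-q=k}h^{p,q}(X)$ for every $k$.

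The key additional step is to extract the cohomological degree grading intrinsically from the F-bundle. I would invoke Claim~\ref{claim:nefK}: since $K_X$ is nef, the restricted connection $\nabla|_{\{b\}\times\bbD}$ becomes regular singular at $u=0$ after the gauge transformation by $u^{\mathbf{g}}$ (combined with the twist by $u^{-\mathbf{T}/2}$), where $\mathbf{g}=\Gr+\mathbf{T}/2$. The operator $\Gr$ is then intrinsically recovered as the semisimple part of the residue of $\nabla_{u\partial_u}$ at $u=0$, canonically normalized so that the gauge-transformed connection has nilpotent residue. Crucially, $\Gr$ commutes with the $\hodge$-action because each $H^m(X)\subset H^\bullet(X)$ is a $\hodge$-submodule, so the joint spectral decomposition makes sense.

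Simultaneous diagonalization of $\Gr$ and the $\HHgr$-action on $\cA|_{b,u=0}$ then produces a double grading by $(p+q,p-q)$, hence by $(p,q)$, yielding
\[
h^{p,q}(X)=\dim\cA|_{b,u=0}\bigl[\Gr=(p+q-d)/2,\ \HHgr\text{-weight}=p-q\bigr].
\]
This completes the reconstruction of the Hodge numbers from the atomic F-bundle. The ``in particular'' clause then follows immediately: by Lemma~\ref{lem:birCYatoms}, two birationally equivalent Calabi-Yau manifolds have the same Hodge atom in $\HAtoms$, hence isomorphic atomic F-bundles in the sense of Definition~\ref{def:G-atom}, and so all of their individual Hodge numbers coincide.

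The main obstacle is the intrinsic well-definedness of $\Gr$ in the middle step. A priori the residue of a connection at a regular singular point is defined only up to conjugation by fiber automorphisms and up to integer shifts of eigenvalues. One must verify that the normalization built into the quantum connection (where $\Gr$ appears as the constant term of $\nabla_{u\partial_u}$ past the $u^{-1}\bkappa$ pole) is preserved under every equivalence defining $\HAtoms$, and that any residual conjugation ambiguity is by $\hodge$-equivariant automorphisms so that the joint $(\Gr,\HHgr)$-eigenspace decomposition is preserved up to isomorphism. Once this is checked, each $h^{p,q}$ manifestly becomes a dimension of an isomorphism invariant of the atomic F-bundle.
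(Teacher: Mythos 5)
Your overall route is the paper's: a single atom via Lemma~\ref{lem:nefK}, the $p-q$ grading read off from the $\hodge$-module structure of the atomic fiber, the cohomological grading extracted from the $u$-direction behaviour of the quantum connection via Claim~\ref{claim:nefK}, and the birational statement deduced from Lemma~\ref{lem:birCYatoms}. The place where your formulation does not work as literally stated is the crux. You propose to recover the operator $\Gr$ as ``the semisimple part of the residue of $\nabla_{u\partial_u}$ at $u=0$, canonically normalized so that the gauge-transformed connection has nilpotent residue.'' But on the intrinsic lattice the connection is presented with a second-order pole (the $u^{-2}\,\Eu\qup(-)$ term), so there is no residue there in the naive sense, while on the normalized extension the residue is nilpotent by construction, so its semisimple part is zero; in neither presentation is $\Gr$ a residue, and as you yourself note, a residue at a regular singular point is in any case only defined up to conjugation and integer shifts.

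The paper resolves exactly this by never trying to recover the operator $\Gr$ itself. What is canonically attached to the atomic F-bundle at $b$ is the pair of lattices $H^{\op{int}}$ (sections of $\cH$ over $\{b\}\times\Spf\bbk\dbb{u}$) and $H^{\op{can}}$ (Deligne's canonical extension of the regular singular connection on $H^{\bullet}(X)\dbp{u}$), and Claim~\ref{claim:nefK} is read as saying that $u^{\mathbf{g}}$ carries one to the other. The invariant used is the \emph{relative position} of these two lattices, which yields a filtration indexed by cohomological degree rather than a grading operator; since both lattices are $\hodge$-stable, this relative position refines over each $p-q$ weight space, and within a fixed weight the values of $\mathbf{g}$ on the contributing $(p,q)$ are distinct, so the elementary divisors there determine the individual $h^{p,q}$. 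This is precisely the resolution of the ``main obstacle'' you flag but leave unchecked: the two-lattice invariant is gauge-free, so no normalization of a residue and no verification that a chosen conjugation is $\hodge$-equivariant is needed. With that substitution your argument coincides with the paper's proof.
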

\begin{proof}
Let $(\cH,\nabla)/B_{X}$ be the maximal analytic $\mathsf{A}$-model F-bundle of $X$. 
Let $b \in B_{X}^{\hodge}$ be a general point with vanishing component in $H^0(X)$. Then, since $\boldsymbol{\eta}(X)$ is the unique Hodge atom in the atomic composition of $X$, it follows that the germ $(\cH,\nabla)/B_{X}(b)$ represents the atomic F-bundle associated with 
$\boldsymbol{\eta}(X)$.  The restriction of this atomic F-bundle 
to $\{b\}\times \Spf \bbk\dbb{t}$ gives rise to a $k\dbp{u}$-vector space $H^{\bullet}(X)\dbp{u}$ equipped with a meromorphic connection $\nabla$ which by Claim~\ref{claim:nefK} has a regular singularity at $u=0$. 
Thus the $k\dbp{u}$-vector space $H^{\bullet}(X)\dbp{u}$ comes equipped with two natural lattices, i.e.\ two $\bbk\dbb{u}$-submodules $H^{\op{int}}, \, H^{\op{can}}$, such that 
$H^{\op{int}}\otimes_{\bbk\dbb{u}} \, \bbk\dbp{u} \ = \ H^{\op{can}} \otimes_{\bbk\dbb{u}} \,  \bbk\dbp{u} \ = \ H^{\bullet}(X)\dbp{u}$. Here  $H^{\op{int}}$ is the intrinsic lattice, i.e.\ the module of global sections of the original bundle $\cH|_{\{b\}\times \Spf \bbk\dbb{u}}$, and 
$H^{\op{can}}$ is the module given by the Deligne's canonical extension \cite{Deligne-de,Deligne-canext}
of the regular singular connection $(H^{\bullet}(X)\dbp{u},\nabla)$, i.e.\ the extension uniquely characterized by the property that in this extension the connection has a pole of order $\leq 1$ with a nilpotent residue. 

This characterization of $H^{\op{can}}$ implies that what we showed in 
Claim~\ref{claim:nefK} is  that the automorphism $u^{\mathbf{g}} \colon 
H^{\bullet}(X)\dbp{u} \to H^{\bullet}(X)\dbp{u}$ moves the lattice $H^{\op{int}}$ to
$H^{\op{can}}$. Thus an automorphism of $H^{\bullet}(X)\dbp{u}$ which moves $H^{\op{int}}$ isomorphically to
$H^{\op{can}}$ will induce an additional filtration  labeled by cohomological degree, i.e.\ by 
$p+q$. Combined with the grading by $p-1$ this gives the $(p,q)$ bigrading on $H^{\bullet}(X,\bbk)$. 
\end{proof}

\subsection{Enhanced atoms and further examples} \label{ssec:enhanced}

Hodge atoms can be enhanced in several ways to produce stronger obstructions to rationality.
In particular, we can consider F-bundles equipped with additional algebraic structures, such as pairings, duality automorphisms, and integral structures. For the analytic $\mathsf{A}$-model F-bundle associated with a smooth projective variety $X$,  these structures arise naturally from natural  structures on  the cohomology of $X$ -  the Mukai pairing (which is non-symmetric in general), the Serre functor automorphism, and the $\Gamma$-integral structure.  
This leads to enhanced versions of the Hodge atoms -  Hodge atoms with  pairings, Hodge atoms with duality automorphisms, and Hodge atoms with integral structures. 

\medskip

In more detail, let $\nu \colon B\times \bbD \, \to \, B\times \bbD$ be the involution given by $\nu(b,u) = (b,-u)$. 
A \strongemph{non-degenerate pairing} on an analytic F-bundle $(\cH,\nabla)/B$ is an even  $\cO_{B\times \bbD}$-linear map
\[
\bpsi \colon \, \cH\otimes \nu^{*}\cH \ \longrightarrow \cO_{B\times \bbD}, 
\]
which is symmetric (in the sense that $\bpsi(\xi_{1},\xi_{2}) = (\nu^{*}\bpsi)(\xi_{2}, \xi_{1})$ for all local sections $\xi_{1} \in \cH$, $\xi_{2} \in \nu^{*}\cH$), non-degenerate (in the sense that it induces an isomorphism 
$\cH \cong \nu^{*}\cH^{\vee}$), and preserved by  $\nabla$ (in the sense that 
$d\bpsi(\xi_{1},\xi_{2}) = \bpsi(\nabla\xi_{1},\xi_{2}) + \bpsi(\xi_{1},(\nu^{*}\nabla)\xi_{2})$). Alternatively, a non-degenerate pairing can be specified by an isomorphism $\bPsi \colon \cH \, \widetilde{\to} \, \nu^{*}\cH^{\vee}$ of super vector bundles on $B\times \bbD$, which intertwines the connections on the source and target, and is self-dual in the sense that $\nu^{*}\bPsi^{\vee} = \bPsi$.
Note that the by definition non-degenerate pairings are compatible with the spectral decomposition of F-bundles. This immediately gives induced pairings on $G$-atoms.

A duality automorphism of an F-bundle $(\cH,\nabla)/B$ equipped with a non-degenerate pairing $\bpsi$  is an automorphism $S \colon \, \cH|_{B\times \{u = 1\}} \, \to \, \cH|_{B\times \{u = 1\}}$ of the restricted bundle $\cH|_{B\times \{u = 1\}}$ which satisfies appropriate compatibility conditions with $\nabla$ and $\bpsi$. It takes some work to formulate these compatibilities since they are inherently transcendental and require the Riemann-Hilbert correspondence which is not readily available in the non-archimedean analytic setting.  
We will discuss this in more detail in \cite{KKPY-Gamma} but for now let us give a brief topological discussion of this interpretation.

\medskip

Suppose that $S^1  = \{ \, u \, \colon \, \ |u| = 1 \, \} \ \subset \bbC$ is the unit circle, and let $\nu \colon S^1 \to S^1$ be the negation map $\nu(u) = -u$. Let $\cH \to S^1$ be a local system of finite dimensional complex super vector spaces. As before we define a symmetric non-degenerate pairing on $\cH$ as  an isomorphism
$\bPsi \colon \cH \, \widetilde{\to} \, \nu^{*}\cH^{\vee}$, such that $\nu^{*}\bPsi^{\vee} = \bPsi$.  
Similarly, if  $V$ is a finite dimensional complex super vector space, then we define a (not necessarily symmetric) non-degenerate pairing on $V$ as a linear isomorphism $G \, \colon \, V \, \widetilde{\to} \, V^{\vee}$.

The basic observation here is that  datum $(\cH,\bPsi)$ is equivalent to the datum $(V,G)$. Indeed:

\begin{enumerate}[wide]
    \item  Given a pair 
$(\cH,\bPsi)$ as above, we can assign to it a pair $(V,G)$ as follows. Take  $V$ to be the fiber $V = \cH_{1}$ of $\cH$ at the point $1 \in S^1$, and take $G \, \colon \, V \, \to \, V^{\vee}$ to be the composition $G = \bPsi_{-1}\circ \boldsymbol{\tau}_{\mathsf{arc}^{+}}$. Here $\bPsi_{-1} \, \colon \, \cH_{-1} \, \to \, \cH_{1}^{\vee}$ is the pairing at the point $(-1) \in S^1$ and $\boldsymbol{\tau}_{\mathsf{arc}^{+}}  \colon \cH_{1} \to \cH_{-1}$ is the parallel transport in $\cH$ along the positively oriented 
upper semicircle $\mathsf{arc}^{+} \, \colon  \, [0,\pi] \, \to \, S^1$, $t \to \exp(\sqrt{-1}t)$.  
 \item Given a pair  $(V,G)$ we can assign to it a pair $(\cH,\bPsi)$ as follows. Take $\cH$ to be the local system which on the upper semicircle is constant with fiber $V$, on the lower semicircle is constant with fiber $V^{\vee}$, and such that the fibers at $(-1)$ are glued by the isomorphism $G \, \colon \,  V \, \to \, V^{\vee}$ and the fibers at $1$ are glued by the isomorphism $G^{\vee} \, \colon \, V \, \to \, V^{\vee}$. Schematically we can describe $\cH$ by the gluing diagram
 \[
 \cH \ \colon \ \text{\begin{minipage}[c]{1.5in} $\xymatrix{ V \ar@/_1pc/[d]_-{G} \ar@/^1pc/[d]^-{G^{\vee}} \\ V^{\vee}}$\end{minipage}}.
 \]
 Note that with this definition, if we identify $\cH_{1}$ with $V$, then the monodromy operator $\mathsf{mon}_{1} \colon \cH_{1} \, \to \cH_{1}$ of $\cH$ at the point $1 \in S^1$ (for going once around $S^1$ in the counterclockwise direction), is given by  $\mathsf{mon}_{1} = (G^{\vee})^{-1}\circ G \, \colon \, V \, \to \, V$.

This description immediately yields analogous explicit presentations of $\nu^{*}\cH$ and $\nu^{*}\cH^{\vee}$:
\[
\nu^{*}\cH \ \colon \ \text{\begin{minipage}[c]{1.5in} 
$\xymatrix{ V \ar@/_1pc/[d]_-{(G^{\vee})^{-1}} \ar@/^1pc/[d]^-{G^{-1}} \\ V^{\vee}}$\end{minipage}}
\qquad \text{and} \qquad 
\nu^{*}\cH^{\vee} \ \colon \ \text{\begin{minipage}[c]{1.5in} $\xymatrix{ V \ar@/_1pc/[d]_-{G} \ar@/^1pc/[d]^-{G^{\vee}} \\ V^{\vee}}$\end{minipage}},
\]
and we  define the map $\bPsi \, \colon \, \cH \, \to \, \nu^{*}\cH^{\vee}$ to be 
\[
\xymatrix@C+2pc{ V \ar@/_1pc/[d]_-{G} \ar@/^1pc/[d]^-{G^{\vee}} \ar[r]^-{\op{id}} & 
V \ar@/_1pc/[d]_-{G} \ar@/^1pc/[d]^-{G^{\vee}}  \\ 
V^{\vee} \ar[r]_-{\op{id}} & V^{\vee}}.
\]
\end{enumerate}

The two assignments $(\cH,\bPsi) \mapsto (V,G)$ and $(V,G) \mapsto (\cH,\bPsi)$ are inverse to each other and 
establish the equivalence of pairing data. 

\medskip

Let now $X$ be a complex  smooth projective variety. Assume for simplicity that the quantum product is convergent and so we have a complex analytic version $(\cH,\nabla)/B_{X}$ of the $\mathsf{A}$-model maximal F-bundle  for $X$. In this case we have $\cH_{u} \coloneqq \cH|_{B\times \{ u\}} = H^{\bullet}_{B}(X,\bbC)\otimes\cO_{B}$, and 
we can define $\bpsi_{u} \colon \cH_{u}\otimes_{\cO_{B}} \cH_{-u}  \ \to \ \cO_{B}$ to be the pairing between $\cH|_{B\times\{u\}} = H^{\bullet}_{B}(X,\bbQ)\otimes \cO_{B}$ to be the $\cO_{B}$-linear extension of the Poincar\'{e} pairing on  $H^{\bullet}_{B}(X,\bbQ)$. The pairing $\bpsi$ defined in this way is compatible with geometric data:
\begin{itemize}[wide]
\item[(a)] The pairing  $\psi$ is preserved by the connection $\nabla$. Indeed, in the $u$-direction we have $\nabla_{\partial_{u}} = \partial_{u} - u^{-2} \Eu\qup (-) + u^{-1}\Gr$, the operators 
$\Eu\qup (-)$ and $\mathsf{Gr}$ are independent of $u$, and by definition 
$\Eu\qup (-)$ is selfadjoint with respect to the Poincar\'{e} pairing while $\Gr$ is anti-selfadjoint with respect to the Poincar\'{e} pairing. Similarly, along $B$, the connection is given by covariant derivatives 
$\nabla_{\partial{\tau}} = \partial_{\tau} + u^{-1} \tau\qup(-)$ or $\nabla_{\xi q\partial{q}} = \xi q\partial_{q} + u^{-1} \xi\qup(-)$, and since the operators $\tau\qup(-)$ or $\xi\qup(-)$ are all self adjoint, it follows that $\psi$ is horizontal along $B$ as well.
\item[(b)]  The pairing $\psi$ takes integral values on the $\Gamma$-lattice in $\cH$. Indeed, recall from \cite[Section~3.1]{Katzarkov_Hodge_theoretic_aspects} that if $(\cH,\nabla)/B_{X}$ is of exponential type, one can construct a special $\bbZ$-local subsystem  $\mathcal{E} \subset \left(\cH|_{B_{X}\cdot \mathbf{D}^{\times}}\right)^{\nabla}$ in the local system of $\nabla$-horizontal sections in the restriction of $\cH$ to the complement of the divisor $\{u = 0\}$. The local system $\mathcal{E}$ has the property, that $\mathcal{E}\otimes_{\bbZ} \cO \, \cong \, \cH$ over $B_{X}\times \mathbf{D}^{\times}$, and conjecturally, for every $b \in B_{X}$ its restriction to $\{b\}\times \mathbf{D}^{\times}$ is compatible with the Deligne-Malgrange-Stokes filtration for the wildly ramified  connection $(\cH,\nabla)|_{\{b\}\times \mathbf{D}}$. For any point $(b,u) \in B_{X}\times \mathbf{D}^{\times}$, the fiber $\mathcal{E}_{(b,u)}$ of the local system $\mathcal{E}$ is defined as a particular embedding $\imath_{(\tilde{b},\log u)}  \colon \, K_{\op{top}}(X) \ \to \cH_{(b,u)}$, which depends on a lift $(\tilde{b},\log u)$ of $(b,u)$ in the universal cover of $B_{X}\times \mathbf{D}^{\times}$, and is given by a multiplication of the Chern character map by the Gamma class of $X$. The compatibility of $\psi$ and $\mathcal{E}$ is then expressed in the property that 
for every $(b,u)$, any two elements $s_{1}, s_{2} \in \mathcal{E}_{(b,u})$, and any choice of a branch of the logarithm which is well defined on the upper semicircle $\mathsf{arc}^{+}$,  we have 
that 
\[
\psi_{u}(\imath_{(\tilde{b},\log u)}(s_{1}),\imath_{(\tilde{b},\log (-u))}(\boldsymbol{\tau}_{\mathsf{arc}^{+}}(s_{2})) \ \in \ \bbZ.
\]
Furthermore, the value of this pairing does not depend on the lifts and if $s_{1}$ and $s_{2}$  are the Chern characters of two objects 
$E_{1}$ and $E_{2}$ in the derived category $D^{b}(X)$, then we have
\[\psi_{u}(\imath_{(\tilde{b},\log u)}(s_{1}),\imath_{(\tilde{b},\log (-u))}(\boldsymbol{\tau}_{\mathsf{arc}^{+}}(s_{2})) = \chi(\mathsf{RHom}(E_{1},E_{2})).\]
\item[(c)] Equivalently, the compatibility described in (b) can be reformulated as follows.
Consider the Euler pairing
defined by 
\[
\chi \, \colon \, H^{\bullet}_{B}(X,\bbC)\otimes H^{\bullet}_{B}(X,\bbC) \, \to \, \bbC, \quad \chi(a,b) \coloneqq \left\langle \, \sqrt{td(X)}\smile a,\, \sqrt{td(X)}\smile b\, \right\rangle,
\] 
where 
$\langle \, -,\, - \rangle$ is the \strongemph{Mukai pairing} 
\[
\langle\, v, \, v'\rangle \ = \ \int_{X} \, (v^{\vee} \smile v' )\smile \exp(c_{1}(X)/2),
\]
and the \strongemph{dual} $v^{\vee}$ of a cohomology class $v = \sum_{k} v_{k} \, \in \, \oplus_{k} H^{k}_{B}(X,\bbC)$ is defined by $v^{\vee} = \sum_{k} \, (\sqrt{-1})^{k} v_{k}$. 
As usual, here the Euler pairing is normalized so that if $E$ and $F$ are objects in $D^{b}(X)$, then $\chi(ch(E),ch(F)) = \sum (-1)^{k} \dim_{\bbC} \op{Ext}^{k}(E,F)$.

\qquad Let $V = H^{\bullet}_{B}(X,\bbC)$, and let $G \colon V \to V^{\vee}$ be the map $a \mapsto \chi(a,-)$. Then composing $G$ with the clockwise half circle parallel transport in $(\cH,\nabla)$ we get a non-degenerate symmetric pairing $\cH\otimes\nu^{*}\cH^{\vee} \to \cO$  which we will call the \strongemph{F-bundle Euler pairing}. When $(\cH,\nabla)/B_{X}$ is of exponential type,  the F-bundle Euler pairing is identified with the Poincar\'{e} duality non-degenerate pairing defined above. This follows from the observation that 
for every cohomology class 
$s \in H^{2\,\odd}_{B}(X)$, we have that the operator on $H^{\bullet}_{B}(X)$ of classical multiplication by $\exp(s)$ preserves the Euler pairing $\chi(-,-)$. Since the multiplication by $\sqrt{\mathsf{td}(X)}$ and the multiplication by the Gamma class differ by multiplications by such $\exp(s)$ we conclude that the non-symmetric pairing attached to $\psi$ and the map given by $a \mapsto \chi(a,-)$ will coincide.

\qquad Extra work is required to carry out this comparison for the non-archimedean $\mathsf{A}$-model F-bundles, and this will be discussed in more detail in \cite{KKPY-Gamma}.
\item[(d)] Finally, we have the duality  automorphism $S \colon H^{\bullet}_{B}(X,\bbC) \to  H^{\bullet}_{B}(X,\bbC)$ given by the cup product with $(-1)^{d}\exp(-c_{1}(x))$, which by Serre duality satisfies $\chi(a,b) = \chi(b,S(a))$ for all $a, b$. In particular $S = (G^{\vee})^{-1}\circ G$, i.e.\ $S$ equals the monodromy operator on $\cH_{1}$ for the loop in the $u$ plane going once in the counterclockwise direction around the origin. This gives an analytic automorphism of $\cH_{1}$ which will again denote by $S$. By construction $S$ will be preserved by the connection along $B$. Therefore, if $(\cH,\nabla)/B$ is a complex analytic F-bundle of exponential type with a non-degenerate pairing $\bpsi$ we can define the \strongemph{F-bundle duality automorphism} $S \, \colon \, \cH|_{B\times \{u=1\}} \, \to \, \cH|_{B\times \{u=1\}}$ to be the monodromy in the $u$-direction. Its compatibility with the pairing is expressed by the property $\chi(a,b) =\chi(b,S(a))$, where $\chi$ is the non-symmetric pairing on $\cH|_{B\times \{u=1\}}$ associated with $\bpsi$ via the above procedure.  Again, the definition in the case of non-archimedean analytic F-bundle requires extra work, and will be discussed in \cite{KKPY-Gamma}.
\end{itemize}

\medskip

The theory of Hodge atoms enhanced with Euler pairings and Serre automorphisms is completely straightforward and is just a repeat of the theory of undecorated Hodge atoms. The theory of Hodge atoms enhanced with an integral structure is much more difficult and requires substantial work and new ideas. Already the definition of an integral structure on a $\bbk$-analytic F-bundle is non-trivial due to the limited understanding of the Riemann-Hilbert correspondence  we have in the equal characteristic non-archimedean analytic setting. Additionally one needs to prove the existence of the $\Gamma$-corrected integral structure for analytic $\mathsf{A}$-model F-bundles of projective manifolds, and has to show that the blowup decomposition from Theorem~\ref{thm:blowup} is compatible with $\Gamma$-corrected integral structures. We deal with all these issues in  forthcoming work \cite{KKPY-Gamma}. 

Here we just note that with the enhancements in place, we get a stronger version of the non-rationality criterion Proposition~\ref{prop:non-rational}. In this version we conclude that if a smooth projective variety $X$ has an enhanced atomic composition containing an enhanced atom that does not come from a smooth projective variety of dimension $\leq \dim X - 2$, then $X$ can not be rational. 

\medskip

In the remainder of this section we will discuss some interesting applications of this theory.

\begin{example} \label{ex:cubic+plane}  Let $X$ be a  very general cubic
fourfold containing a plane $\mathsf{P} \subset X$. Then
the linear projection centered at $\mathsf{P}$ gives a quadric
  surface fibration on $Y \to \bbP^2$ on the fourfold $Y =
  \op{Bl}_{\mathsf{P}}X$. The fibration $Y \to \bbP^2$ gives rise to
a double cover $S \to \bbP^2$ - the $K3$ surface
parametrizing the rulings of the quadrics and a $\bbP^1$-bundle $\mathcal{R} \to S$ parametrizing the  lines in the quadrics.

If $\alpha \in H^2(S,\mathcal{O}^{\times})$ is the characteristic
class of the Brauer-Severi variety $\mathcal{R}$, then $\alpha$ defines a $2$-torsion gerbe ${}_{\alpha}S$ on $S$ , whose weight one derived category is the derived category $D^{b}(S,\alpha)$ of $\alpha$-twisted sheaves on $S$. 

In \cite{Kuznetsov-cubic4fold} Kuznetsov proved that for a general cubic with a plane the category $D^{b}(S,\alpha)$ can not be equivalent to the derived category of any smooth projective surface. In fact, he showed 
that the numerical $K$-group $K_{\op{num}}(S,\alpha)$ of  $D^{b}(S,\alpha)$ viewed as a lattice equipped with the $\alpha$-twisted Mukai pairing, can not be isomorphic to the numerical $K$-lattice of any smooth projective surface. On the other hand the spectrum calculation in the proof of Theorem~\ref{thm:cubic4} and formalism of Hodge atoms enhanced with pairings and integral structures immediately gives  that the $\alpha$-twisted Hodge structure on $H^{\bullet}(S)$ together with its integral structure and Mukai pairing gives an atom in the enhanced atomic composition of $X$. Since enhanced atoms give obstructions to rationality this implies that $X$ is not rational.
\end{example}

\begin{example} \label{ex:3quadrics} Let $X = \cap_{t \in \net}
  Q_{t}$ be the base locus of a general net $\net \cong \bbP^{2} \subset
  |\mathcal{O}_{\bbP^7}(2)|$ of quadrics in $\bbP^7$.
  Then $X$ contains a line $\ell \subset X$, and
\begin{enumerate}[wide]
\item $Y = \{ (t,\Pi) \, | \, \ell \subset \Pi \subset Q_{t} \}
  \subset \mathsf{Gr}(3,8)$  is a smooth 
  fourfold $Y$, which is birational to $X$.
\item $Y \to \net$ is a quadric surface fibration
  which gives rise to

\ \qquad - a double cover $S \to \bbP^2$ - a surface
of general type parametrizing the rulings of the quadrics.

\ \qquad - 
a $\bbP^1$-bundle $\mathcal{R} \to S$ parametrizing the
  lines in the fibers of $Y \to \bbP^2$.
\end{enumerate}
If $\alpha \in H^2(S,\mathcal{O}^{\times})$ is the characteristic
class of the Brauer-Severi variety $\mathcal{R}$, then again we get 
a $2$-torsion gerbe ${}_{\alpha}S$ on $S$, and a twisted derived category 
$D^{b}(S,\alpha)$.

Again the $\alpha$-twisted Hodge structure of $S$ together with its Mukai pairing and integral structure gives rise of an enhanced Hodge atom in the atomic composition of $X$.
Similarly to the case of a cubic with a plane, it  can be shown  \cite{DonagiPantev-3quadrics} that the numerical $K$-lattice of $D^{b}(X,\alpha)$
is not isomorphic to the numerical $K$-lattice of any smooth projective surface, and so $D^{b}(X,\alpha)$ can not be equivalent to the derived category of any smooth projective surface. Thus the very general $X$ can not be rational. This argument gives a new proof of a non-rationality theorem of Hassett-Pirutka-Tschinkel \cite{HPT-3quadrics}.  
\end{example}

\begin{example} \label{ex:4dimquartic}
Let $X \subset \bbP^5$ be a very general smooth quartic hypersurface. Its non-rationality is known by  the works of Koll\'{a}r \cite{KOL}, Schreieder \cite{SCH}, and Totaro \cite{TOT}. We outline an atom
theory argument also  proving that very general $X$ is not rational.

Suppose $X$ is Noether-Lefschetz general so that it has  Picard rank one. 
By repeating the argument in the proof of
Theorem~\ref{thm:cubic4} we see that the Hodge atom corresponding to
the eigenvalue zero for the quantum multiplication by the Euler vector
field looks like the Hodge atom of a surface of general type.

At the point $b$ in $B_{X}$ corresponding to the hyperplane class, the atomic composition  of $H^{\bullet}(X)$ looks like
\begin{equation} \label{eq:decompquartic}
H^{\bullet}(X)= \balpha(X)\oplus \balpha_{+}(X)\oplus \balpha_{-}(X).
\end{equation}
Here $\balpha(X)$ is the atom corresponding to eigenvalue zero of $\Eu\qup (-)$ and is 
shaped like an atom of a surface of a general type, while $\balpha_{+}(X)$ and $\balpha_{-}(X)$ are one dimensional atoms corresponding to eigenvalues which are scalings of $\pm 1$. 
The  Witt algebra argument from Remark~\ref{rem:Fbundle.witt.move} shows that  there is no further splitting if we move in a small analytic neighborhood of $b$.

As explained in Claim~\ref{claim:nefK} the quantum connection for a surface of general type has regular singularities and so the conjugacy class of its monodromy is well
defined and is unipotent on the even cohomology of the surface. 

However, from the point of view of the decomposition \eqref{eq:decompquartic}, in the cohomological grading the Serre automorphism  $S$ associated with the $\balpha(X)$ has a graded minimal polynomial
$S^3=[4]$.
This contradicts the unipotency of the monodromy on the even part of an atom of a surface of general type. Thus $\balpha(X)$ cannot be an enhanced atom associated with surface of general type which proves the non rationality of  very general $X$. 
\end{example}

\begin{example} Consider  a smooth three dimensional cubic
$X \subset \bbP^4$.  Again for the point $b \in B_{X}$ corresponding to the hyperplane class, the  atomic composition of $X$ consists of three atoms - two one dimensional atoms corresponding to non-zero eigenvalues of $\Eu\qup (-)$, and one atom $\balpha(X)$ corresponding to the eigenvalue $0$. The Witt algebra argument from Remark~\ref{rem:Fbundle.witt.move} shows that there is no further  splitting in a neighborhood of $b$.

By Claim~\ref{claim:nefK}  the quantum connection for a higher genus curve will have regular singularities, and a well defined conjugacy class of the  monodromy which will be unipotent on the even cohomology and quasi-unipotent with eigenvalue $-1$ on the odd cohomology.

The atom $\balpha(X)$  looks like the atom of a smooth projective curve of genus $5$  but  when viewed as a Hodge atom enhanced with a Serre automorphism, the corresponding Serre automorphism $S$ in the cohomological $\bbZ$-grading satisfies the graded minimal polynomial  
$S^5 = [3]$. Thus $S$ can not have an eigenvalue $-1$ and so $S$  cannot be  a Serre automorphism for a smooth genus $5$ curve. So we get that all  smooth  three dimensional cubics  are not rational. Similar approach applies to other three dimensional Fanos.
\end{example}

\bibliographystyle{plain}
\bibliography{dahema}

\bigskip

Ludmil Katzarkov, IMSA, University of Miami, USA; ICMS, IMI, Bulgaria; 
HSE University, Russia

\emph{Email:} \url{l.katzarkov@math.miami.edu}

\medskip

Maxim Kontsevich, Institut des Hautes Études Scientifiques, France

\emph{Email:} \url{maxim@ihes.fr}

\medskip

Tony Pantev, University of Pennsylvania, USA

\emph{Email:} \url{tpantev@math.upenn.edu}

\medskip

Tony Yue Yu, California Institute of Technology, USA

\emph{Email:} \url{yuyuetony@gmail.com}

\end{document}